\let\oldtocsection=\tocsection
\let\oldtocsubsection=\tocsubsection
\let\oldtocsubsubsection=\tocsubsubsection
\renewcommand{\tocsection}[2]{\hspace{0em}\oldtocsection{#1}{#2}\textbf}
\renewcommand{\tocsubsection}[2]{\hspace{1em}\oldtocsubsection{#1}{#2}}
\renewcommand{\tocsubsubsection}[2]{\hspace{2em}\oldtocsubsubsection{#1}{#2}}
\newcommand{\dv}{\text{\rm div}}
\newcommand{\dmin}{d_\text{\rm min}}
\newcommand{\rinj}{r_\text{\rm inj}}
\renewcommand{\o}{\text{\rm o}}
\renewcommand{\O}{{\mathcal O}}
\renewcommand{\d}{\text{\rm d}}
\newcommand{\capa}{\text{\rm cap}}
\newcommand{\e}{\varepsilon}
\renewcommand{\exp}{\text{\rm exp}}
\newcommand{\I}{\text{\rm I}}
\newcommand{\Id}{\text{\rm Id}}
\newcommand{\Ker}{\text{\rm Ker}}
\newcommand{\Ran}{\text{\rm Ran}}
\newcommand{\R}{{\mathbb R}}
\newcommand{\dist}{\text{\rm dist}}
\newcommand{\D}{{\mathbb D}}
\newcommand{\N}{{\mathbb N}}
\newcommand{\calU}{{U}}
\begin{document}
\newtheorem{theorem}{Theorem}[section]
\newtheorem{remark}{Remark}[section]
\newtheorem{definition}{Definition}[section]
\newtheorem{lemma}{Lemma}[section]
\newtheorem{corollary}{Corollary}[section]
\newtheorem{proposition}{Proposition}[section]
\numberwithin{equation}{section}

\title{Small perturbations in the type of boundary conditions for an elliptic operator}
\author{
E. Bonnetier\textsuperscript{1}, C. Dapogny \textsuperscript{2} and M.S. Vogelius \textsuperscript{3}
}
\maketitle
\begin{center}
\emph{\textsuperscript{1} Institut Fourier, Universit\'e Grenoble-Alpes, BP 74, 38402 Saint-Martin-d'H\`eres Cedex, France}.\\
\emph{\textsuperscript{2} Univ. Grenoble Alpes, CNRS, Grenoble INP, LJK, 38000 Grenoble, France}.\\
\emph{\textsuperscript{3} Rutgers University,  Department of Mathematics, New Brunswick, NJ, USA.}
\end{center}

\begin{abstract}
In this article, we study the impact of a change in the type of boundary conditions of an elliptic boundary value problem. 
In the context of the conductivity equation we consider a reference problem with mixed homogeneous Dirichlet and Neumann boundary conditions.
Two different perturbed versions of this ``background'' situation are investigated, when 
(i) The homogeneous Neumann boundary condition is replaced by a homogeneous Dirichlet boundary condition 
on a ``small'' subset $\omega_\e$ of the Neumann boundary; and when
(ii) The homogeneous Dirichlet boundary condition is replaced by a homogeneous Neumann boundary condition on 
a ``small'' subset $\omega_\e$ of the Dirichlet boundary. 
The relevant quantity that measures the ``smallness'' of the subset $\omega_\e$ differs in the two cases: while it is 
the harmonic capacity of $\omega_\e$ in the former case, we introduce a notion of ``Neumann capacity'' to handle 
the latter. 
In the first part of this work we derive representation formulas that catch the structure of the first non trivial term 
in the asymptotic expansion of the voltage potential, for a general $\omega_\e$, under the sole assumption that it 
is ``small''  in the appropriate sense.
In the second part, we explicitly calculate the first non trivial term in the asymptotic expansion of the voltage potential, in the 
particular geometric situation where the subset $\omega_\e$ is a vanishing surfacic ball.
\end{abstract}

\bigskip
\hrule
\tableofcontents
\vspace{-0.5cm}
\hrule
\bigskip
\bigskip

\section{General setting of the problem}

\noindent Understanding the perturbations in physical fields caused by the presence of small inhomogeneities in a known ambient medium is  crucial for a variety of purposes.
For instance, it allows one to appraise the robustness of the behavior of a body with respect to alterations of its constituent material, 
or to reconstruct ``small'' inclusions with unknown locations, shapes and properties
inside this body; see \cite{ammari2004reconstructionbook} for an overview of such applications. 
From the mathematical point of view, this task translates into the asymptotic analysis of the solution $u_\e$ to a ``physical'' partial differential equation, whose defining domain or material coefficients are perturbed at a small scale, parametrized by the vanishing parameter $\e$.
Many instances of this general question have been investigated: beyond the model setting of the conductivity equation, addressed for instance in \cite{capdeboscq2003general,ammari2003accurate,cedio1998identification},
let us mention the studies \cite{ammari2002complete,beretta2012small} in the context of the linearized elasticity system, or the works \cite{ammari2001asymptotic,griesmaier2011general} devoted to the Maxwell equations. 

Here we investigate, in the physical context of the conductivity equation, an interesting variant of the aforementioned  problems, namely the variant when the type of the boundary condition is changed on small sets.

 Throughout this article, $\Omega \subset \R^d$ is a smooth, bounded domain ($d=2$ or $3$), whose boundary is decomposed as follows 
\begin{equation}\label{eq.decpOm}
 \partial \Omega = \overline{\Gamma_D} \cup \overline{\Gamma_N}, \text{ where } \Gamma_D, \: \Gamma_N \text{ are disjoint, non empty, open Lipschitz subsets of } \partial \Omega~,
 \end{equation}
where we refer to \cref{def.lipset} below for the definition of an open Lipschitz subset of $\partial \Omega$.
The regions $\Gamma_D$ and $\Gamma_N$ correspond to homogeneous Dirichlet, and homogeneous Neumann conditions for the voltage potential, respectively; see \cref{fig.illussetting} for an illustration of this setting (in the case $d=3$).
The domain $\Omega$ is occupied by a medium with smooth isotropic conductivity $\gamma \in {\mathcal C}^\infty(\overline\Omega)$, satisfying the bounds
\begin{equation}\label{eq.ellconduc}
 \forall x \in \Omega, \:\: \alpha \leq \gamma(x) \leq \beta,
 \end{equation}
for some fixed constants $0 < \alpha \leq \beta$. 
The ``background'' voltage potential $u_0$, in response to a smooth external source $f \in {\mathcal C}^\infty(\overline\Omega)$, is the unique
$H^1(\Omega)$ solution  to the mixed boundary value problem
\begin{equation}\label{eq.bg}
\left\{ 
\begin{array}{cl}
-\dv(\gamma \nabla u_0) = f & \text{in } \Omega, \\
u_0 = 0 & \text{on } \Gamma_D, \\
\gamma\frac{\partial u_0}{\partial n} = 0 & \text{on } \Gamma_N. 
\end{array}
\right.
\end{equation}
We notice that, as a consequence of the classical regularity theory for elliptic partial differential equations, 
$u_0$ is smooth except at the interface, $\Sigma$, between $\Gamma_D$ and $\Gamma_N$, where the boundary condition changes type; 
see e.g. \cite{brezis2010functional,gilbarg2015elliptic}. 


In this paper we analyze perturbed versions of \cref{eq.bg}, where the boundary conditions are modified on a ``small'', open Lipschitz subset  $\omega_\e$ of the boundary $\partial \Omega$. 
More precisely, we are interested in two different situations: 
\begin{itemize}
\item The case where the homogeneous Neumann boundary condition is replaced by a homogeneous Dirichlet boundary condition
on a ``small'' open Lipschitz subset $\omega_\e$, lying strictly inside the region $\Gamma_N$. In this situation, the voltage potential $u_\e$ is the unique $H^1(\Omega)$ solution to the boundary value problem
\begin{equation}\label{eq.uepsdir}
\left\{ 
\begin{array}{cl}
-\dv(\gamma \nabla u_\e) = f & \text{in } \Omega, \\
u_\e = 0 & \text{on } \Gamma_D \cup \omega_\e, \\
\gamma\frac{\partial u_\e}{\partial n} = 0 & \text{on } \Gamma_N \setminus \overline{\omega_\e}. 
\end{array}
\right.
\end{equation}
\item The case where the homogeneous Dirichlet boundary condition on $\Gamma_D$ is replaced by a homogeneous Neumann boundary condition on a ``small'' open Lipschitz
subset $\omega_\e$, located strictly inside $\Gamma_D$. The voltage potential $u_\e$ is then the unique $H^1(\Omega)$  solution to the boundary value problem 
\begin{equation}\label{eq.uepsneu}
\left\{ 
\begin{array}{cl}
-\dv(\gamma \nabla u_\e) = f & \text{in } \Omega, \\
u_\e = 0 & \text{on } \Gamma_D \setminus \overline{\omega_\e}, \\
\gamma\frac{\partial u_\e}{\partial n} = 0 & \text{on } \Gamma_N \cup \omega_\e. 
\end{array}
\right.
\end{equation}
\end{itemize}
In either case, we assume that $\omega_\e$ lies ``far'' from the transition region $\Sigma$, in the sense that
\begin{equation}\label{assum.far}
\text{There exists a constant } \dmin >0 \text{ such that, for all } \e >0, \:\: \dist(\omega_\e, \Sigma) \geq \dmin.
\end{equation}

\begin{figure}[!ht]
\centering
\begin{minipage}{0.6\textwidth}
\includegraphics[width=1.0\textwidth]{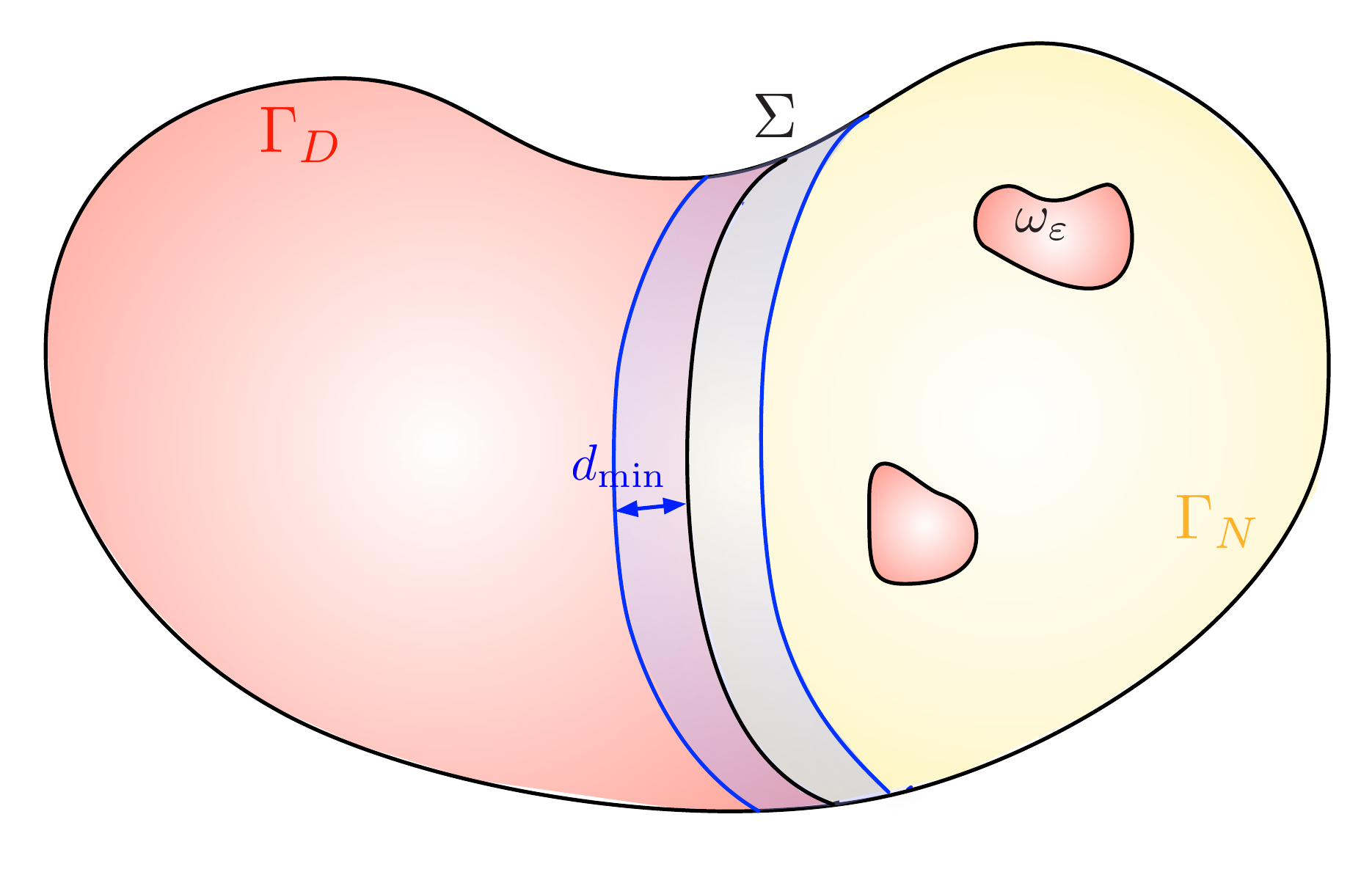}
\end{minipage} 
\caption{\it The considered setting when the Neumann region $\Gamma_N$ is perturbed by a ``small'' subset $\omega_\e$ bearing homogeneous Dirichlet boundary conditions.}
\label{fig.illussetting}
\end{figure} 

Such problems show up in multiple physical applications. 
The former situation, where homogeneous Neumann boundary conditions are replaced by Dirichlet boundary conditions, is sometimes referred to as the ``narrow escape problem'' in the literature. Originating from acoustics, it has recently attracted much attention due to its significance in the field of biology. 
In this setting indeed, $\Omega$ represents a cavity whose boundary is reflecting except on the small absorbing window $\omega_\e \subset \partial \Omega$. 
The particles inside $\Omega$ are guided by a Brownian motion; they may only leave through the region $\omega_\e$ and 
the solution $u_\e$ to \cref{eq.uepsdir} then represents their mean exit time. 
We refer to \cite{holcman2014narrow} and the references therein for an overview of the physical relevance of this problem and for an account of recent developments.
In this context, the asymptotic behavior of $u_\e$ in the limit where $\e$ vanishes has been analyzed in \cite{cheviakov2010asymptotic,pillay2010asymptotic} 
by means of formal matched asymptotic expansions; the rigorous proofs of these results were later provided in \cite{chen2011asymptotic,ammari2012layer} for ``simple" sets $\omega_\e$. 
Let us also mention the interesting variant of this ``narrow escape problem'', tackled in \cite{li2014matched,li2017asymptotic}, 
where the vanishing exit region $\omega_\e$ is connected to a thin, elongated channel, whose presence is modeled through the replacement of homogeneous Neumann conditions by Robin (and not Dirichlet) boundary conditions on $\omega_\e$.

The case \cref{eq.uepsneu} where homogeneous Dirichlet boundary conditions are replaced by homogeneous Neumann boundary conditions 
on the vanishing region $\omega_\e \subset \partial \Omega$ seems to have been more rarely considered. 
Let us however mention the early investigations conducted in \cite{gadyl1986asymptotics,gadyl1996perturbation}, 
where the asymptotics of the eigen elements of the Laplace operator are examined as the boundary condition passes from Dirichlet to Neumann type 
on a small surfacic ball $\omega_\e \subset \partial \Omega$.
An analogous study is found in \cite{planida2004asymptotics}, where, in three space dimensions, 
the small subset $\omega_\e$ is shaped as a thin neighborhood of a curve on $\partial \Omega$. 
An interesting physical motivation for this problem
was recently provided in the work \cite{kaina2014hybridized}, devoted to the construction of metasurfaces capable of affecting such changes in boundary conditions. This mechanism was analyzed from the mathematical point of view, and the corresponding asymptotic behavior of $u_\e$ was derived in 2d in \cite{ammari2019mathematical}, under the technical assumption that the boundary of $\Omega$ is completely flat in a neighborhood of the $\omega_\e$; 
these results were then used in \cite{ammari2020wave} so as to determine the optimal placement of such metasurfaces.

The present paper addresses both situations \cref{eq.uepsdir,eq.uepsneu}: our goal is to understand the asymptotic behavior of $u_\e$ as $\e \to 0$, a limiting regime in which 
the small inclusions $\omega_\e$, where boundary conditions are changed, ``vanish'' in an appropriate sense. As we shall see, the relevant measure of ``smallness'' for the set $\omega_\e$ depends on which one  of the above situations we are in. 
Our investigations go in two complementary directions. In the first part of this paper, we work from a quite abstract point of view, 
making minimal assumptions about the inclusion set $\omega_\e$, apart from ``smallness''.
We derive the general structure of the lowest order terms in the asymptotic expansion of the perturbation $u_\e-u_0$. In the second part of this paper, we consider a more specific situation 
as far as the geometry of the inclusion set $\omega_\e$ is concerned: we assume that $\omega_\e$ is a surfacic ball of radius $\e$ on $\partial \Omega$. 
In the two- and three-dimensional instances of \cref{eq.uepsdir,eq.uepsneu}, 
we precisely calculate the lowest order terms in the asymptotic expansion of the perturbation  $u_\e-u_0$, thus offering four non trivial examples of our more abstract formulas. 
As we shall see, our mathematical treatment of these four cases, based on an integral equation method, 
displays some similarities but also important differences. To emphasize both aspects, we shall use the same notation for corresponding quantities, as far as possible.

This paper is organized as follows. 
In \cref{sec.prel}, we recall some background material from functional analysis and potential theory, 
which is essential for the rest of our investigation. 
In \cref{sec.dir} we analyze, from an abstract point of view, the general structure of the (lowest order terms of the) perturbation $u_\e-u_0$, when the homogeneous Neumann boundary condition is replaced with a homogeneous Dirichlet boundary condition on a small subset $\omega_\e \Subset \Gamma_N$. 
In \cref{sec.neu} we investigate the case  when the homogeneous Dirichlet boundary condition is replaced with a homogeneous Neumann boundary condition on a small subset $\omega_\e \Subset \Gamma_D$. 
\cref{sec.calcdir,sec.calcneu} are then devoted to the explicit asymptotic expansion of $u_\e-u_0$ for  both scenarios in the particular case where $\omega_\e$ is a small surfacic ball, 
lying in $\Gamma_N$ or $\Gamma_D$, respectively. 
In \cref{sec.persp} we outline a few natural ideas for future work, suggested by the present study.
At the end this article are four appendices, collecting several useful results from the litterature, as well as some technical calculations in close connection with the topics discussed in the main parts of the text.

\section{Preliminary material}\label{sec.prel}

\noindent We initiate our study by collecting some essential background material. 
In \cref{sec.Hs}, we outline classical results about fractional Sobolev spaces defined on the boundary of a smooth domain $\Omega$, or on a relatively open Lipschitz subset $\Gamma \subset \partial \Omega$;
in the latter case we emphasize the difference between the spaces $H^s(\Gamma)$ and $\widetilde H^s(\Gamma)$.  
In \cref{sec.pot} we summarize the main properties of layer potential operators, and in \cref{sec.fundasol} we make a few remarks about the construction of fundamental solutions to boundary value problems with variable coefficients. 
Finally, in \cref{sec.capacity} we introduce and discuss the notion of $H^1$ capacity, 
which turns out to be the relevant measure of smallness for sets supporting Dirichlet boundary conditions. 

\subsection{The Sobolev spaces $H^s(\partial \Omega)$, $H^s(\Gamma)$ and $\widetilde{H}^{s}(\Gamma)$}\label{sec.Hs}

\noindent As is customary in the literature, for an arbitrary integer $n \geq 0$, 
$H^n(\partial\Omega)$ stands for the Sobolev space of functions $u  \in L^2(\partial \Omega)$ defined on the boundary of $\Omega$, 
whose tangential derivatives up to order $n$ also belong to $L^2(\partial \Omega)$, and the space $H^{-n}(\partial \Omega)$ is the topological dual of $H^n(\partial \Omega)$. 

The definition of Sobolev spaces with fractional exponents on the closed hypersurface $\partial \Omega$, 
or on an open Lipschitz subset $\Gamma \subset \partial \Omega$ gives rise to some subtleties, 
which we briefly describe in this section, referring to \cite{mclean2000strongly} and \cite{grisvard2011elliptic,lions1968problemes} for more details. 

Let us first consider Sobolev spaces of functions attached to the whole boundary $\partial \Omega$. Given a real number $0<s<1$, there are several equivalent ways of defining a norm on the fractional Sobolev Space $H^s(\partial\Omega)$; we use the following definition
$$
\Vert v \Vert^2_{H^s(\partial\Omega)} = \Vert v \Vert_{L^2(\partial\Omega)}^2 + \int_{\partial\Omega}{\int_{\partial\Omega}{\frac{|v(x)-v(y)|^2}{|x-y|^{d-1+2s}}\:\d s(x)}\:\d s(y)}~.
$$
Note that, in the literature the geodesic distance $d^{\partial \Omega}(x,y)$ between two points $x,y \in \partial \Omega$ is often used in place of the Euclidean one $|x-y|$ in the above formula. 
However, since $\Omega$ is smooth and compact, the resulting norms are equivalent (with a constant depending on $\Omega$); see \cref{lem.eqdist}. 

When $-1< s \leq 0$, $H^s(\partial \Omega)$ is the topological dual of $H^{-s}(\partial \Omega)$.

\par\medskip

We next turn to Sobolev spaces $H^s(\Gamma)$, defined on a proper region $\Gamma \subset \partial \Omega$, 
and to this end, we introduce a definition. 
\begin{definition}\label{def.lipset}
An open, connected subset $\Gamma \subset \partial \Omega$ is called a Lipschitz subdomain if locally at its boundary, $\Gamma$ consists of all points located on one side of the graph of a Lipschitz function. A Lipschitz subset $\Gamma \subset \partial \Omega$ is then defined to be the reunion of a finite number of Lipschitz subdomains, the closures of which do not intersect.
\end{definition}

Let then $\Gamma \subset \partial \Omega$ be a Lipschitz subset of $\partial \Omega$.
For any real number $0<s<1$ we introduce the following two classes of Sobolev spaces on $\Gamma$: 
\begin{itemize}
\item $\widetilde{H}^s(\Gamma)$ denotes the space of (restrictions to $\Gamma$ of) functions in $H^s(\partial \Omega)$ with compact support inside $\overline{\Gamma}$. This space is equipped with the norm $|| \cdot ||_{H^{s}(\partial \Omega)}$; it is the closure in $H^s(\partial \Omega)$ of the set of ${\mathcal C}^\infty$ functions on $\partial \Omega$ with compact support inside $\Gamma$. Equivalently, $u$ belongs to $\widetilde H^s(\Gamma)$ if and only if its extension by $0$ to all of $\partial \Omega$, 
which we throughout the following still denote by $u$, belongs to $H^s(\partial \Omega)$.
\item  $H^s(\Gamma)$ is the space of the restrictions to $\Gamma$ of functions in $ H^s(\partial \Omega)$. This space is equipped with the norm:
\begin{equation}\label{eq.normHs}
 \Vert v \Vert^2_{H^s(\Gamma)} = \Vert v \Vert_{L^2(\Gamma)}^2 + | v |^2_{H^s(\Gamma)}, \text{ where } | v |^2_{H^s(\Gamma)}:=\int_{\Gamma}{\int_{\Gamma}{\frac{|v(x)-v(y)|^2}{|x-y|^{d-1+2s}}\:\d s(x)}\:\d s(y)}~, 
\end{equation}
which is equivalent to the quotient norm induced by that of $H^s(\partial \Omega)$, up to constants that may depend on $\Gamma$.
\end{itemize}
Let us point out a few facts about the relation between both types of spaces: 
\begin{itemize}
\item When $0<s<1/2$, the spaces  $\widetilde{H}^s(\Gamma)$ and  $H^s(\Gamma)$ are identical, with equivalent norms. 
On the other hand, when $\frac{1}{2} \leq s<1$,  $\widetilde{H}^s(\Gamma)$ is a proper subspace of  $H^s(\Gamma)$.
\item When $\frac{1}{2}<s<1$, the space $\widetilde H^s(\Gamma)$ coincides with $H^s_0(\Gamma)$, the closure in $H^s(\Gamma)$ (for the natural norm \cref{eq.normHs}) of the set of ${\mathcal C}^\infty$ functions with compact support $K \Subset \Gamma$. 
\end{itemize}

For any real number $-1<s<0$, $H^s(\Gamma)$ is still defined as the space of restrictions to $\Gamma$ of distributions in $H^s(\partial \Omega)$ (equipped with the quotient norm). This space can be identified with the topological  
dual of  $\widetilde{H}^{-s}(\Gamma)$, using as a pairing the natural extension of the $L^2(\Gamma)$ inner product, that we denote by: 
$$ \langle u ,v \rangle, \:\: u \in H^s(\Gamma), \:\: v \in \widetilde H^{-s}(\Gamma).$$ 
Similarly, $\widetilde{H}^{s}(\Gamma)$  is the space of distributions in $H^s(\partial \Omega)$ 
with compact support inside $\overline\Gamma$. It is identified with
the dual space of  $H^{-s}(\Gamma)$, using the same pairing (with the same notation).\par\medskip

The case when $s= 1/2$ is particular: $\widetilde H^{1/2}(\Gamma)$ is a proper subspace of $H^{1/2}(\Gamma)$, with a strictly stronger norm, while the latter space, incidentally, coincides with $H^{1/2}_0(\Gamma)$. To better appraise this distinction between $\widetilde{H}^{1/2}(\Gamma)$ and $H^{1/2}(\Gamma)$, we calculate the norm $\Vert u \Vert_{\widetilde H^{1/2}(\Gamma)}= \Vert u \Vert_{H^{1/2}(\partial \Omega)}$ of an arbitrary function $u \in \widetilde{H}^{1/2}(\Gamma)$:
$$
\begin{array}{>{\displaystyle}cc>{\displaystyle}l}
 \Vert u \Vert ^2_{\widetilde H^{1/2}(\Gamma)}  &=&  || u||^2_{L^2(\Gamma)} + \int_{\Gamma}{\int_{\Gamma}{\frac{|u(x)-u(y)|^2}{|x-y|^{d}} \:\d s(x)}\:\d s(y)} + 2 \int_{\Gamma}{\rho_{\Gamma}(x) | u(x)|^2 \:\d s(x)} \\
 &= &  \Vert u \Vert^2_{H^{1/2}(\Gamma)} + 2 \int_{\Gamma}{\rho_{\Gamma}(x) | u(x)|^2 \:\d s(x)}~.
\end{array}
$$
The weight $\rho_{\Gamma}$ is here defined by 
$$\forall x \in \Gamma,\quad \rho_{\Gamma}(x) = \int_{\partial \Omega \setminus \Gamma}{\frac{1}{|x-y|^d}\:\d s(y)}~.$$
The above norm on the space $\widetilde{H}^{1/2}(\Gamma)$ is stronger than that on $H^{1/2}(\Gamma)$, and in particular
\begin{equation}
\label{normcalc}
\left( \int_{\Gamma}{\rho_{\Gamma}(x) | u(x)|^2 \:\d s(x)}\right)^\frac12 \le \frac{1}{\sqrt 2} \Vert u \Vert _{\widetilde H^{1/2}(\Gamma)}~.
\end{equation}

\par\medskip

The spaces with exponents $\pm \frac12$ are particularly relevant in the context of variational solutions to  boundary value problems like \cref{eq.bg}.
By a variational  solution to  \cref{eq.bg} we understand a function $u_0$ in the functional space
$$ H^1_{\Gamma_D}(\Omega) := \left\{ u \in H^1(\Omega), \:\: u = 0\text{ on } \Gamma_D \right\}$$
of $H^1(\Omega)$ functions with vanishing trace on $\Gamma_D$ (in other words $u_0|_{\Gamma_N} \in \widetilde H^{1/2}(\Gamma_N)$), and for which
$$
\int_{\Omega}\gamma \nabla u_0 \cdot \nabla v~\d x = \int_\Omega f v~\d x~,
$$
for all $v\in H^1_{\Gamma_D}(\Omega)$ (i.e., $v|_{\Gamma_N} \in \widetilde H^{1/2}(\Gamma_N)$). Using integration by parts, this identity asserts that:
$$
\gamma \frac{\partial u_0}{\partial n} =0 \text{ as an element in } H^{-1/2}(\Gamma_N), \text{ and so } \gamma \frac{\partial u_0}{\partial n} \in \widetilde H^{-1/2}(\Gamma_D)~.
$$


\subsection{A short review of layer potentials} \label{sec.pot}

\noindent In the present section, we denote by $D \subset \R^d$ a smooth bounded domain, and 
we briefly recall some background material about layer potential operators associated with $\partial D$; 
we refer to \cite{ammari2007polarization,folland1995introduction,mclean2000strongly,sauter2011boundary} for more details about such operators.

Let $G(x,y)$ be the fundamental solution of the operator $-\Delta$ in the free space $\R^d$ 
\begin{equation}\label{eq.GreenLap}
G(x,y) = \left\{
\begin{array}{cl}
-\frac{1}{2\pi} \log|x -y | & \text{if } d= 2~, \\
\frac{1}{4\pi |x-y |} & \text{if } d= 3~. 
\end{array}
\right.
\end{equation}
For $x \in \R^d$, the function $y \mapsto G(x,y)$ satisfies 
$$ -\Delta_y G(x,y) = \delta_{y=x}~,$$
in the sense of distributions in $\R^d$, where $\delta_{y=x}$ is the Dirac distribution at $x$. 

For a smooth density function $\phi \in {\mathcal C}^\infty(\partial D)$, the single layer potential associated with $\phi$ is defined by 
\begin{equation}\label{eq.SD}
 {\mathcal S}_D\phi(x) = \int_{\partial D}{G(x,y) \phi(y) \: \d s(y)}, \quad x \in \R^d \setminus \partial D,
 \end{equation}
and the corresponding double layer potential is defined by
$$ {\mathcal D}_D\phi(x) = \int_{\partial D}{\frac{\partial G}{\partial n_y}(x,y) \phi(y) \: \d s(y)}, \quad x \in \R^d \setminus \partial D.$$
The operators ${\mathcal S}_D$ and ${\mathcal D}_D$ extend to bounded operators 
from $H^{-1/2}(\partial D)$ into $H^1_{\text{loc}}(\R^d)$, and from $H^{1/2}(\partial D)$ into $H^1(D) \cup H^1_{\text{loc}}(\R^d \setminus \overline D)$, respectively. In addition, the functions ${\mathcal S}_D \phi$ and ${\mathcal D}_D \phi$ are both harmonic on $D$ and $\R^d \setminus \overline{D}$. 
Of particular interest are their behavior at the interface $\partial D$. 
Let us denote by 
\begin{equation}\label{eq.oslimits}
 g^\pm(x) := \lim\limits_{t \downarrow 0} g(x \pm t n(x)), \quad x \in \partial D
 \end{equation}
the one-sided limits of a function $g$ which is smooth enough from either side of $\partial D$ and by $[g](x) := g^+(x) - g^-(x)$ the corresponding jump across $\partial D$. 
The functions ${\mathcal S}_D \phi$ and ${\mathcal D}_D \phi$ satisfy the well-known jump relations: 
\begin{equation}\label{eq.jumpSD}
 [{\mathcal S}_D \phi] = 0 \text{ and } \left[ \frac{\partial }{\partial n}({\mathcal S}_D \phi)\right] = -\phi, 
 \end{equation}
and 
\begin{equation}\label{eq.jumpDD}
[{\mathcal D}_D \phi] = \phi \text{ and } \left[ \frac{\partial }{\partial n}({\mathcal D}_D \phi)\right] = 0. 
\end{equation}

The first and the last of these four jump relations allow to introduce the integral operators $S_D$ and $R_D$, defined for a smooth density function $\phi \in {\mathcal C}^\infty(\partial D)$ by:
$$ S_D \phi = ({\mathcal S}_D \phi) \lvert_{\partial D}, \quad S_D \phi(x)  = \int_{\partial D}{G(x,y) \phi(y) \: \d s(y)}~,~~~ x \in \partial D, $$
and 
$$ R_D \phi =\frac{\partial }{\partial n} \left( {\mathcal D}_D \phi  \right), \quad R_D \phi(x) =  \underset{\eta \downarrow 0}{\text{\rm f.p.} }\int_{\partial D \setminus B_\eta(x) }{\frac{\partial ^2 G}{\partial n_x \partial n_y}(x,y) \phi(y) \: \d s(y)}~, ~~~ x\in \partial D, $$
where f.p. refers to a finite part integral in the sense of Hadamard.
These operators extend as bounded mappings $S_D : H^{-1/2}(\partial D) \to H^{1/2}(\partial D)$ and $R_D : H^{1/2}(\partial D) \to H^{-1/2}(\partial D)$.
 
Lastly, we recall the decay properties of the single and double layer potentials at infinity. 
For a given density $\phi \in H^{-1/2}(\partial D)$, it follows from the explicit expression \cref{eq.GreenLap} of the fundamental solution $G(x,y)$ that, for $d= 3$
\begin{equation}\label{eq.decayS3d}
 {\mathcal S}_D \phi(x) = {\mathcal O}\left( \frac{1}{|x|}\right), \text{ and } \lvert \nabla {\mathcal S}_D \phi(x)\lvert = {\mathcal O}\left( \frac{1}{|x|^{2}}\right),
 \end{equation}
 where we have used the convenient notation ${\mathcal O}\left( \frac{1}{|x|^{2}}\right)$ to represent a function whose modulus is bounded by $\frac{C}{|x|}$ when $|x|$ is large enough, 
 for some constant $C>0$. 
 
The case $d= 2$ is a little more subtle, and in general one only has
$$ {\mathcal S}_D \phi(x) = {\mathcal O}\left(|\log|x| |\right), \text{ and } \lvert \nabla {\mathcal S}_D \phi(x)\lvert = {\mathcal O}\left( \frac{1}{|x|}\right)~,$$
however, if $\int_{\partial D}{\phi \: \d s} = 0$, then it holds additionally
\begin{equation} \label{eq.decaySDmean0}
{\mathcal S}_D \phi(x) = {\mathcal O}\left( \frac{1}{|x|} \right), \text{ and } \lvert \nabla {\mathcal S}_D \phi(x)\lvert = {\mathcal O}\left( \frac{1}{|x|^2}\right)~.
\end{equation}
As far as the double layer potential is concerned, one has for $\phi \in H^{1/2}(\partial D)$ and $d= 2, 3$
\begin{equation}\label{eq.decayDD}
 {\mathcal D}_D \phi(x) = {\mathcal O}\left( \frac{1}{|x|^{d-1}}\right), \text{ and } \lvert \nabla {\mathcal D}_D \phi(x)\lvert = {\mathcal O}\left( \frac{1}{|x|^{d}}\right)~.
 \end{equation}

\subsection{The fundamental solution $N(x,y)$ to the background equation \cref{eq.bg}}\label{sec.fundasol}

\noindent We turn our attention to the case when the reference problem under consideration is not the free-space Laplace equation, but rather the background boundary value problem \cref{eq.bg}. 
The fundamental solution $N(x,y)$ to the latter is constructed from that $G(x,y)$ associated to the operator $-\Delta$ in the free space, given by \cref{eq.GreenLap}, in a way which we now briefly describe. We refer, e.g., to \cite{friedman1989identification} or \cite{ammari2009layer} for similar results.

For any point $x \in \Omega$, the function $y \mapsto N(x,y)$ satisfies the following equation
 \begin{equation}\label{fundsol}
  \left\{
 \begin{array}{cl}
 -\dv_y(\gamma(y) \nabla_y N(x,y)) = \delta_{y=x} &\text{in } \Omega~,\\
 N(x,y) = 0 & \text{for } y \in \Gamma_D~, \\
 \gamma(y) \frac{\partial N}{\partial n_y}(x,y) = 0 & \text{for } y \in \Gamma_N~.
 \end{array}
 \right.
 \end{equation}
 This means that, for any function $\varphi \in {\mathcal C}^1(\overline\Omega)$ such that $\varphi = 0 $ on $\Gamma_D$, one has
 $$ \varphi(x) = \int_\Omega{\gamma(y) \nabla_y N(x,y) \cdot \nabla \varphi (y) \: \d y }, \quad x \in \Omega.$$
 By an easy adaptation of the proof of Lemma 2.36 in \cite{folland1995introduction}, one sees that the function $N(x,y)$ is symmetric in its arguments. Furthermore, it is related to the fundamental solution $G(x,y)$ to the Laplace equation in free space via the relation 
 $$ N(x,y) = \frac{1}{\gamma(x)}G(x,y) + R(x,y)~,$$
 where for given $x \in \Omega$, $y \mapsto R(x,y)$ is the solution to the equation 
 $$
   \left\{
 \begin{array}{cl}
 -\dv_y(\gamma(y) \nabla_y R(x,y)) =  \frac{1}{\gamma(x)}  \nabla \gamma(y) \cdot \nabla_y G(x,y)&\text{in } \Omega~,\\
 R(x,y) = - \frac{1}{\gamma(x)} G(x,y) & \text{for } y \in \Gamma_D~, \\
 \gamma(y) \frac{\partial R}{\partial n_y}(x,y) = -\frac{\gamma(y)}{\gamma(x)} \frac{\partial G}{\partial n_y}(x,y) & \text{for } y \in \Gamma_N~.
 \end{array}
 \right.
  $$
The precise functional characterization of $R(x,y)$ follows from standard elliptic regularity theory, 
depending on the singularity of $G(x,y)$, see \cite{brezis2010functional,gilbarg2015elliptic}. 
Without entering into technicalities, let us just mention that, for fixed $x \in \Omega$, 
the function $y \mapsto R(x,y)$ belongs (at least) to $H^1(\Omega)$.
Moreover, for every open subset $U \Subset \R^d \setminus (\Sigma \cup \{x\})$, it is of class ${\mathcal C}^\infty$ on $\overline \Omega \cap U$. 

%
%
%


\subsection{The capacity of a subset in $\R^d$}\label{sec.capacity}

\noindent
In one of the two scenarios studied in this article, namely when $\omega_\e$ accounts for Dirichlet boundary conditions being imposed inside the Neumann region $\Gamma_N$ (cf.  \cref{sec.dir}), the key quantity to measure the ``smallness'' of the set $\omega_\e$ will be the $H^1(\R^d)$ capacity.
For the convenience of the reader, we briefly recall the definition 
and two simple results related to this notion, referring to \cite{henrot2018shape} for further details.

\begin{definition}\label{def.capa}
The capacity $\capa(E)$ of an arbitrary subset $E \subset \R^d$ is defined by: 
\begin{equation}\label{eq.cap}
\capa (E ) = \inf \left\{ || v ||^2_{H^1(\R^d)}, \:\: v(x) \geq 1 \text{ a.e. on an open neighborhood of } E  \right\}.
\end{equation}
\end{definition}

%

A slightly different  formula for $\capa(E)$ is that of the following lemma.
\begin{lemma}
For an arbitrary subset $E \subset \R^d$, it holds
\begin{equation}\label{eq.cap1}
\capa (E ) = \inf \left\{ || v ||^2_{H^1(\R^d)}, \:\: v(x) = 1 \text{ a.e. on an open neighborhood of } E  \right\}.
\end{equation}
\end{lemma}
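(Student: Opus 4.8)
The plan is to show the two infima in \cref{eq.cap} and \cref{eq.cap1} coincide. One inequality is immediate: any admissible competitor $v$ for \cref{eq.cap1} (i.e.\ $v = 1$ a.e.\ on an open neighborhood of $E$) automatically satisfies $v \geq 1$ a.e.\ on that same neighborhood, hence is admissible for \cref{eq.cap}. Therefore $\capa(E) \leq \inf\{\Vert v\Vert^2_{H^1(\R^d)} : v = 1 \text{ a.e.\ on an open neighborhood of } E\}$.

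For the reverse inequality, the idea is truncation. Given any $\delta > 0$, pick $v \in H^1(\R^d)$ with $v \geq 1$ a.e.\ on an open set $U \supset E$ and $\Vert v\Vert^2_{H^1(\R^d)} \leq \capa(E) + \delta$. Set $w := \min(v^+, 1)$, where $v^+ = \max(v,0)$; equivalently $w = \min(\max(v,0),1)$. By the standard fact that composing an $H^1$ function with a globally Lipschitz function (here $t \mapsto \min(\max(t,0),1)$, with Lipschitz constant $1$) again yields an $H^1$ function, we have $w \in H^1(\R^d)$, with $0 \leq w \leq 1$ a.e., and $|\nabla w| \leq |\nabla v|$ a.e.\ (indeed $\nabla w = \mathds{1}_{\{0 < v < 1\}} \nabla v$ a.e.). Since $0 \leq w \leq |v|$ pointwise, also $\Vert w\Vert_{L^2(\R^d)} \leq \Vert v\Vert_{L^2(\R^d)}$. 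Combining, $\Vert w\Vert^2_{H^1(\R^d)} \leq \Vert v\Vert^2_{H^1(\R^d)} \leq \capa(E) + \delta$. Moreover, on the open neighborhood $U$ of $E$ we have $v \geq 1$, hence $w = 1$ a.e.\ on $U$, so $w$ is admissible for \cref{eq.cap1}. Letting $\delta \to 0$ gives the reverse inequality, and the two infima agree.

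The only genuinely non-routine ingredient is the chain rule / truncation property for Sobolev functions — that $\min(\max(v,0),1) \in H^1(\R^d)$ with the stated gradient bound — which is classical (see, e.g., the references to \cite{henrot2018shape}, or any standard treatment of Sobolev spaces). No subtlety arises from the passage to open neighborhoods, since truncation is performed pointwise and does not shrink the set where $v$ (and hence $w$) equals $1$. I do not anticipate any real obstacle here; the statement is essentially a normalization remark ensuring that capacity can be computed with the more convenient constraint $v = 1$ near $E$.
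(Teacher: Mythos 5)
Your proof is correct and follows essentially the same route as the paper: both establish the easy inequality by inclusion of admissible sets, and both obtain the reverse inequality by truncating a near-minimizer at the level $1$ and invoking the standard Sobolev truncation lemma to show the $H^1$ norm does not increase. The paper uses the single truncation $w = \min(v,1)$ (citing a textbook proposition directly), whereas you use the double truncation $\min(\max(v,0),1)$ and verify the norm bounds by hand; this is a cosmetic variant, not a different argument.
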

\begin{proof}
On the one hand, it follows immediately from the definition \cref{eq.cap} that
$$\capa (E ) \leq \inf \left\{ || v ||^2_{H^1(\R^d)}, \:\: v(x) = 1 \text{ a.e. on an open neighborhood of } E  \right\}~.$$
Conversely, if $v_n \in H^1(\R^d)$ is a sequence of functions such that
$$ v_n \geq 1 \text{ a.e. on an open neighborhood of } E, \:\: || v_n ||^2_{H^1(\R^d)} \xrightarrow{n \to \infty} \capa(E)~,$$
then $w_n = \min(v_n,1)$ defines a sequence of functions in $H^1(\R^d)$ which satisfies
$$ || w_n ||_{H^1(\R^d) }^2 \leq  || v_n ||_{H^1(\R^d) }^2~;$$
see for instance \cite{henrot2018shape}, Proposition 3.1.11. As a result,
$$ \inf \left\{ || v ||^2_{H^1(\R^d)}, \:\: v(x) = 1 \text{ a.e. on an open neighborhood of } E  \right\} \leq || w_n ||^2_{H^1(\R^d)} \leq || v_n ||^2_{H^1(\R^d)} \xrightarrow{n \to \infty} \capa(E)~,$$
which proves \cref{eq.cap1}.
\end{proof}





We now provide a useful lemma, whereby
the capacity of a subset $\omega$ of the boundary $\partial \Omega$ of a smooth domain $\Omega \subset \R^d$ can be estimated in terms of the energy norm of a function whose trace equals $1$ on $\omega$.

\begin{lemma}\label{lem.capatrace}
Let $\Omega$ be a smooth bounded domain in $\R^d$, $\omega$ be a Lipschitz open subset of $\partial\Omega$, and let $u$ be a function in $H^1(\R^d)$. 
If $u=1$ on $\omega$ in the sense of traces in $H^{1/2}(\omega)$, then
$$ \capa(\omega) \leq || u ||^2_{H^1(\R^d)}.$$ 
\end{lemma}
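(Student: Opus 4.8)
The plan is to use $u$ to manufacture competitors for $\capa(\omega)$ whose $H^1(\R^d)$--energy tends to $\Vert u\Vert^2_{H^1(\R^d)}$. In view of the definition \cref{eq.cap} of the capacity, it suffices to construct a family $(v_\eta)_{\eta\in(0,1)}\subset H^1(\R^d)$ such that each $v_\eta$ satisfies $v_\eta\geq 1$ a.e.\ on some open neighbourhood of $\omega$ in $\R^d$ and $\limsup_{\eta\to 0}\Vert v_\eta\Vert^2_{H^1(\R^d)}\leq\Vert u\Vert^2_{H^1(\R^d)}$: indeed, each such $v_\eta$ is then admissible in \cref{eq.cap}, so that $\capa(\omega)\leq\Vert v_\eta\Vert^2_{H^1(\R^d)}$, and it remains to pass to the limit $\eta\to 0$.

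The core of the construction is to replace $u$ by a nearby function that is \emph{continuous} and takes the value $1$ on $\omega$. Mollification yields $u_\rho:=u*\phi_\rho\in{\mathcal C}^\infty(\R^d)\cap H^1(\R^d)$ with $u_\rho\to u$ in $H^1(\R^d)$ as $\rho\downarrow 0$; since $u_\rho$ is continuous, its trace on $\partial\Omega$ is the pointwise restriction $u_\rho|_{\partial\Omega}$, and it converges to the trace $Tu$ of $u$ in $H^{1/2}(\partial\Omega)$. Restricting to $\omega$, where $Tu=1$ by hypothesis, we obtain $g_\rho:=(1-u_\rho)|_\omega\to 0$ in $H^{1/2}(\omega)$. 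Since $\omega$ is a Lipschitz subset of $\partial\Omega$, functions in $H^{1/2}(\omega)$ extend to $H^{1/2}(\partial\Omega)$ with control of the norm, and this may be done so as to preserve continuity; composing such an extension with a bounded right inverse of the trace operator $T:H^1(\R^d)\to H^{1/2}(\partial\Omega)$ that maps continuous data to continuous functions (obtained, say, by solving the Dirichlet problem in $\Omega$ and in a bounded collar around $\partial\Omega$ and extending by $0$, which is licit since $\partial\Omega$ is smooth) produces $e_\rho\in H^1(\R^d)\cap{\mathcal C}^0(\R^d)$ with $e_\rho=g_\rho$ on $\omega$ and $\Vert e_\rho\Vert_{H^1(\R^d)}\leq C\Vert g_\rho\Vert_{H^{1/2}(\omega)}\to 0$, where $C$ depends only on $\Omega$ and $\omega$. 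The function $\widehat u_\rho:=u_\rho+e_\rho$ is then continuous, equals $1$ on $\omega$, and converges to $u$ in $H^1(\R^d)$.

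To conclude, I would fix $\eta\in(0,1)$, choose $\rho=\rho(\eta)$ so small that $\Vert\widehat u_{\rho}\Vert^2_{H^1(\R^d)}\leq\Vert u\Vert^2_{H^1(\R^d)}+\eta$, and set $\widehat u:=\widehat u_{\rho(\eta)}$. As $\widehat u$ is continuous and equals $1$ on $\omega$, the set $O_\eta:=\{\widehat u>1-\eta\}$ is open and contains $\omega$. Then $v_\eta:=\min\bigl((1-\eta)^{-1}\widehat u,\,1\bigr)\in H^1(\R^d)$ equals $1$ on $O_\eta$, hence is admissible, and $\Vert v_\eta\Vert_{H^1(\R^d)}\leq(1-\eta)^{-1}\Vert\widehat u\Vert_{H^1(\R^d)}$, because truncation at the level $1$ increases neither the $L^2$ norm of a function nor the $L^2$ norm of its gradient. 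Consequently $\Vert v_\eta\Vert^2_{H^1(\R^d)}\leq(1-\eta)^{-2}\bigl(\Vert u\Vert^2_{H^1(\R^d)}+\eta\bigr)\to\Vert u\Vert^2_{H^1(\R^d)}$ as $\eta\to 0$, which, together with the first paragraph, finishes the proof.

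I expect the sole delicate point to be the construction of $e_\rho$ — namely the statement that $H^{1/2}(\omega)$ functions admit norm-controlled, continuity-preserving extensions to $H^{1/2}(\partial\Omega)$ — which is precisely where the Lipschitz regularity of $\omega$ (equivalently, of its relative boundary $\partial\omega$) enters, the difficulty being concentrated near $\partial\omega$. An alternative route would bypass the dilation--truncation step altogether, by invoking directly the density — again valid for Lipschitz $\omega$ — of $\{w\in H^1(\R^d):w\equiv 1\text{ on a neighbourhood of }\omega\text{ in }\R^d\}$ inside $\{w\in H^1(\R^d):w=1\text{ on }\omega\text{ in the trace sense}\}$, since then any $H^1(\R^d)$--approximant of $u$ drawn from the first set is already admissible in \cref{eq.cap}.
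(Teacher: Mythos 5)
Your proof is correct, but it follows a genuinely different route from the paper's. The paper fixes a $\mathcal{C}^\infty_c(\R^d)$ function $\phi$ equal to $1$ on a neighborhood of $\omega$, writes $u=\phi+(u-\phi)$, observes that $u-\phi$ has vanishing trace on $\omega$, and invokes the density of $\mathcal{C}^\infty_c(\R^d\setminus\overline\omega)$ in $\{w\in H^1(\R^d):w|_\omega=0\}$ to produce $v_n$ with $\|u-\phi-v_n\|_{H^1}\le 1/n$; since $\phi+v_n$ is identically $1$ on an open neighborhood of $\omega$, it is admissible in \cref{eq.cap}, and the reverse triangle inequality finishes the proof. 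This is exactly the ``alternative route'' you mention in your last paragraph but chose not to carry out. Your main argument instead builds a \emph{continuous} $H^1$ approximant $\widehat u_\rho$ of $u$ that equals $1$ pointwise on $\omega$, by mollifying $u$ and then repairing the trace deficit with a small, continuous correction $e_\rho$, followed by a dilation--truncation step to manufacture admissible competitors. Both arguments are sound, and both place the burden of the Lipschitz hypothesis on a standard-but-nontrivial lemma: for the paper it is the spectral-synthesis characterization of the kernel of the trace on $\omega$; for you it is the existence of a bounded, continuity-preserving extension $H^{1/2}(\omega)\to H^{1/2}(\partial\Omega)$, which you correctly identify as the delicate point. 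The paper's proof is shorter and avoids the dilation--truncation machinery and the explicit right inverse of the trace; your proof has the mild virtue of making the approximating competitor continuous and explicit, and of not needing the precise density characterization of $\{w:w|_\omega=0\}$. The intermediate claims you use (pointwise bound $|\min(w,1)|\le|w|$ and $|\nabla\min(w,1)|\le|\nabla w|$ a.e., convergence of $u_\rho|_{\partial\Omega}$ to $Tu$ in $H^{1/2}(\partial\Omega)$, solvability of the Dirichlet problem with data that is both continuous and $H^{1/2}$) are all correct.
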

\begin{proof}
Let $\calU$ be an open neighborhood of $\omega$ in $\R^d$, and let $\phi$ be a $C^{\infty}_c(\R^d)$ function which equals $1$ identically on $\calU$. 
We decompose the function $u$ as:
$$
u= \phi + u-\phi ~.
$$
Since the set of functions $v \in H^1(\R^d)$ with vanishing trace on $\omega$ is exactly the closure of $C^{\infty}_c(\R^d\setminus \overline \omega)$ in $H^1(\R^d)$ (see \cref{sec.Hs}), and since the trace of $u-\phi$ equals $0$ on $\omega$, there exists a sequence $v_n \in C^{\infty}_c(\R^d\setminus \overline \omega)$ such that
$$
\Vert u -\phi -v_n  \Vert_{H^1(\R^d)} \le \frac{1}{n}~.
$$
We now estimate
\begin{eqnarray*} 
\Vert u \Vert_{H^1(\R^d)}&=& \Vert \phi + v_n +  u - \phi - v_n  \Vert_{H^1(\R^d)} \\
&\ge& \Vert  \phi +v_n \Vert_{H^1(\R^d)}- \Vert u - \phi -v_n \Vert_{H^1(\R^d)} \\
&\ge&\Vert  \phi +v_n \Vert_{H^1(\R^d)} -\frac{1}{n}~.
\end{eqnarray*}
The function $ \phi +v_n$ lies in $H^1(\R^d)$ (actually it lies in $C_c^\infty(\R^d)$) and it equals $1$ on an open neighborhood of $\omega$ in $\R^d$. From the definition of $ \capa(\omega)$ it follows that 
$$
\Vert  \phi +v_n \Vert_{H^1(\R^d)} \ge\left( \capa(\omega)\right)^{1/2} ~,
$$
and so by combination with the previous estimate we get:
$$
\Vert u \Vert_{H^1(\R^d)}\ge \left(\capa(\omega)\right)^{1/2} -\frac{1}{n}~.
$$
By passing to the limit as $n \rightarrow \infty$ we arrive at the desired conclusion.

\end{proof}

\begin{remark}\label{rem.capadir}
From the physical point of view, the capacity of a compact subset $E \subset \R^d$
is the total energy of the electric field in the whole ambient space $\R^d$, in the equilibrium regime where the potential is constant on $E$ (equal to $1$). 
Different notions of capacity are found in the literature, depending on the kernel relating the charge distribution (i.e., the source term) to the induced potential. 
A very natural notion of capacity is attached to the fundamental solution to the Laplace operator with homogeneous Dirichlet boundary conditions on a ``ground surface'' $A$ (where the potential is set to $0$); this concept is often associated with the name condenser capacity. 
The version proposed in \cref{def.capa} is convenient for our purpose, since it is somehow ``universal'' (it does not depend on the choice of a fixed grounding subset $A$ for the potential), and it  involves the Bessel kernel.
It is equivalent to the notion of condenser capacity, up to constants depending on the subset $A$; see for instance  \cref{lem.capas} for a result in this direction.
We refer to \cite{adams2012function} for an extensive discussion of the concept of capacity; see also \cite{landkof1972foundations,doob2012classical}.
\end{remark}

\begin{remark}\label{rem.explicitcapa}
In \cref{sec.calcdir} we shall be particularly interested in subsets of $\R^d$ of the form
\begin{equation}\label{eq.defDecapa}
 \D_\e := \left\{ x = (x_1,\ldots,x_{d-1},0 ) \in \R^d, \:\: | x | < \e \right\}~.
 \end{equation}
$\D_\e$ is a line segment of length $2\e$, when $d=2$, and a planar disk with radius $\e$, when $d=3$.  
The following estimates of the capacity of $\D_\e$ will come in handy
\begin{equation}\label{eq.capaDe}
 \capa(\D_\e) \leq \frac{C_2}{|\log \e|} \text{ when } d= 2, \quad \capa(\D_\e) \leq C_3\e \text{ when } d= 3, 
 \end{equation}
where $C_2$ and $C_3$ are universal constants; see for instance Chap. 2 in \cite{landkof1972foundations}.
\end{remark}

\section{Replacing Neumann conditions by Dirichlet conditions on a ``small set"}\label{sec.dir} 

\noindent In this section, we consider an arbitrary sequence $\omega_\e$ of open, Lipschitz subsets of the Neumann region $\Gamma_N$, 
which are all ``well-separated'' from the Dirichlet region $\Gamma_D$ in the sense that the assumption \cref{assum.far} holds. 
The homogeneous Neumann boundary condition 
satisfied on $\Gamma_N$ by the background potential $u_0$ (see \cref{eq.bg})
is dropped on $\omega_\e$,
where it is replaced by a homogeneous Dirichlet condition. The perturbed potential $u_\e$ in this situation is the solution to the equation \cref{eq.uepsdir}.  

As we shall see, the potential $u_\e$ converges to $u_0$ as $\e \to 0$, when the set $\omega_\e$ vanishes in an appropriate sense.
In this general setting, where no additional hypothesis is made about $\omega_\e$, 
our aim is to establish an abstract representation formula
for the first non-trivial term in the limiting asymptotics of $u_\e-u_0$. 

\subsection{Some preliminary estimates}\label{sec.prelestdir}

\noindent We start with some a priori estimates related to modified versions of the perturbed boundary value problem \cref{eq.uepsdir}. 
The first of these results 
is concerned with the unique solution $\chi_\e \in H^1(\Omega)$ to the problem  
\begin{equation}\label{eq.chieps}
\left\{ 
\begin{array}{cl}
-\Delta \chi_\e = 0 & \text{in } \Omega~, \\
\chi_\e = 1 & \text{on } \omega_\e~, \\
\chi_\e = 0 & \text{on } \Gamma_D~, \\
\frac{\partial \chi_\e}{\partial n} = 0 & \text{on } \Gamma_N \setminus \overline{\omega_\e}~. 
\end{array}
\right.
\end{equation}
Let us recall that, by a solution to \cref{eq.chieps} we understand a function $\chi_\e \in H^1_{\Gamma_D}(\Omega)$ such that $\chi_\e = 1$ on $\omega_\e$ and 
\begin{equation}\label{eq.varfchiepsdir}
 \forall v \in H^1_{\Gamma_D}(\Omega) \text{ with } v = 0 \text{ on } \omega_\e, \quad \int_\Omega{\nabla \chi_\e\cdot \nabla v \: \d x} = 0~.
 \end{equation}

\begin{lemma}\label{lem.capas}
Let $\omega_\e$ be an open Lipschitz subset of the region $\Gamma_N \subset \partial \Omega$, 
which lies ``far'' from $\Gamma_D$ in the sense that \cref{assum.far} holds,
and let $\chi_\e$ be the solution to \cref{eq.chieps}.
There exist two constants $ 0 < m \leq M$ which depend only on $\Omega$, $\Gamma_D$ and the lower bound $\dmin$ on the distance from $\omega_\e$ to $\Gamma_D$, but are otherwise independent of $\omega_\e$,  such that 
\begin{equation}\label{eq.capas}
 m \: \capa(\omega_\e) \leq || \chi_\e ||_{H^1(\Omega)}^2 \leq M \capa(\omega_\e)~.
 \end{equation}
\end{lemma}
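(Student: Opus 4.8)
The plan is to prove the two inequalities in \cref{eq.capas} separately, exploiting on one side the variational (energy-minimizing) characterization of $\chi_\e$ and on the other side \cref{lem.capatrace}.

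For the upper bound $\Vert \chi_\e \Vert_{H^1(\Omega)}^2 \le M \capa(\omega_\e)$, the key observation is that $\chi_\e$ minimizes the Dirichlet energy $\int_\Omega |\nabla v|^2\,\d x$ over the affine subspace of $v \in H^1_{\Gamma_D}(\Omega)$ with $v = 1$ on $\omega_\e$; this is exactly the content of the weak formulation \cref{eq.varfchiepsdir}. I would build a competitor: take a function $w \in H^1(\R^d)$ with $w = 1$ on an open neighborhood of $\omega_\e$ in $\R^d$ and $\Vert w \Vert_{H^1(\R^d)}^2$ close to $\capa(\omega_\e)$ (which exists by \cref{eq.cap1}), multiply by a fixed cutoff $\eta \in C^\infty_c(\R^d)$ that equals $1$ on a $\dmin/2$-neighborhood of $\Gamma_N$ and vanishes on a neighborhood of $\Gamma_D$ — this cutoff is admissible precisely because $\dist(\omega_\e,\Gamma_D) \ge \dmin$, so it can be chosen uniformly in $\e$. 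Then $(\eta w)|_\Omega$ lies in $H^1_{\Gamma_D}(\Omega)$, equals $1$ on $\omega_\e$, and satisfies $\Vert \eta w\Vert_{H^1(\Omega)}^2 \le C(\eta)\Vert w\Vert_{H^1(\R^d)}^2$ with $C(\eta)$ depending only on $\Omega$, $\Gamma_D$, $\dmin$. By energy minimality of $\chi_\e$ together with a Poincaré inequality on $H^1_{\Gamma_D}(\Omega)$ (valid since $\Gamma_D$ is non-empty and open), $\Vert \chi_\e\Vert_{H^1(\Omega)}^2 \le C'\int_\Omega |\nabla \chi_\e|^2 \le C'\int_\Omega |\nabla(\eta w)|^2 \le C'\Vert \eta w\Vert_{H^1(\Omega)}^2$, and passing to the infimum over $w$ gives the claim with $M = C' C(\eta)$.

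For the lower bound $m\,\capa(\omega_\e) \le \Vert \chi_\e\Vert_{H^1(\Omega)}^2$, I would extend $\chi_\e$ to a function on all of $\R^d$. First extend $\chi_\e$ by a bounded linear extension operator $E\colon H^1(\Omega) \to H^1(\R^d)$ (available since $\Omega$ is smooth), then multiply by the same cutoff $\eta$ to obtain $\widetilde\chi_\e := \eta\, E\chi_\e \in H^1(\R^d)$; since $\eta \equiv 1$ near $\omega_\e \subset \Gamma_N$ and extension preserves traces, $\widetilde\chi_\e = 1$ on $\omega_\e$ in the sense of $H^{1/2}(\omega_\e)$. Then \cref{lem.capatrace} yields $\capa(\omega_\e) \le \Vert\widetilde\chi_\e\Vert_{H^1(\R^d)}^2 \le C(\eta)\Vert E\chi_\e\Vert_{H^1(\R^d)}^2 \le C(\eta)\Vert E\Vert^2 \Vert\chi_\e\Vert_{H^1(\Omega)}^2$, which is the desired inequality with $m = 1/(C(\eta)\Vert E\Vert^2)$.

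The main point requiring care — the ``obstacle'' — is making sure the constant $C(\eta)$ (and the extension and Poincaré constants) genuinely do not depend on $\omega_\e$: this is where hypothesis \cref{assum.far} is essential, since it lets one fix, once and for all, a single cutoff $\eta$ separating the $\dmin$-neighborhood of $\Gamma_N$ from $\Gamma_D$ that works simultaneously for every $\omega_\e$. The rest is a routine combination of the energy-minimality of $\chi_\e$, the Poincaré inequality on $H^1_{\Gamma_D}(\Omega)$, and \cref{lem.capatrace}. One should also note that in $d=2$ the capacity of a small set tends to $0$ only logarithmically (cf.\ \cref{eq.capaDe}), but this plays no role in the proof of \cref{eq.capas} itself, which is purely an equivalence of norms.
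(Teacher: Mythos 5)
Your proof is correct and follows essentially the same approach as the paper: \cref{lem.capatrace} combined with an extension operator for the lower bound, and a fixed cutoff plus a comparison of Dirichlet energies for the upper bound (you phrase the latter via the energy-minimization characterization of $\chi_\e$, whereas the paper plugs $\chi_\e - \chi h$ into the variational identity \cref{eq.varfchiepsdir} directly, but these are equivalent formulations of the same estimate). One small wording slip worth correcting: a cutoff cannot equal $1$ on a full $\dmin/2$-neighborhood of $\Gamma_N$ while vanishing near $\Gamma_D$, since $\overline{\Gamma_N}\cap\overline{\Gamma_D}=\Sigma\neq\emptyset$; what you need, and what the paper uses, is $\eta\equiv 1$ on $\{\dist(\cdot,\Gamma_D)>\dmin/2\}$ and $\eta\equiv 0$ on $\{\dist(\cdot,\Gamma_D)<\dmin/3\}$, which contains $\omega_\e$ in the region where $\eta=1$ precisely because of \cref{assum.far}.
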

\begin{proof}
\noindent \textit{We start with the proof of the inequality $ m \:  \capa(\omega_\e) \leq || \chi_\e ||_{H^1(\Omega)}^2$.}
Because of the smoothness of  $\Omega$, there exists an extension $\widetilde{\chi_\e} \in H^1(\R^d)$ to the whole space $\R^d$ 
such that 
\begin{equation}\label{eq.poincarechiA}
 || \widetilde{\chi_\e} ||_{H^1(\R^d)} \leq C || \chi_\e||_{H^1(\Omega)}~,
\end{equation}
where the constant $C$ depends only on $\Omega$ (see e.g. Appendix A in \cite{mclean2000strongly}). 
Using \cref{lem.capatrace} we may estimate the capacity of the subset $\omega_\e \subset \partial \Omega$, where the trace of $\widetilde{\chi_\e}$ equals $1$, by
$$ \capa(\omega_\e)  \leq  ||\widetilde{\chi_\e}||_{H^1(\R^d)}^2~.$$
This inequality, combined with \cref{eq.poincarechiA}, yields the desired result.\par\medskip 

\noindent \textit{We now prove that $  || \chi_\e ||_{H^1(\Omega)}^2 \leq M \capa(\omega_\e)$}. Let us first observe that, 
due to a classical variation of the Poincar\'e inequality, there exists a constant $C >0$ which depends only on $\Omega$ and $\Gamma_D$, such that
\begin{equation}\label{eq.Poincarecapa} 
|| \chi_\e ||_{H^1(\Omega)}^2 \leq C || \nabla \chi_\e ||^2_{L^2(\Omega)^d}~.
\end{equation}

Because of the separation assumption \cref{assum.far},
there exists a function $h \in C^1(\overline \Omega)$ such that, for any $\e >0$ small enough, one has
$$h(x)=1\text{ for } x \in \omega_\e, \:\: h(x)=0 \text{ for } x \in  \Gamma_D, \text{ and }\Vert h \Vert_{C^1(\overline \Omega)} \le C~,$$
where $C$ depends on $\dmin$, but is otherwise independent of $\omega_\e$. 
For any function $\chi \in H^1(\R^d)$ such that $\chi =1$ on an open neighborhood of $\omega_\e$, we now have
$$  \begin{array}{>{\displaystyle}cc>{\displaystyle}l}
|| \nabla \chi_\e ||^2_{L^2(\Omega)^d}  &=& \int_{\Omega}{\nabla \chi_\e \cdot \nabla \chi_\e \: \d x} \\[2ex]
& =&  \int_{\Omega}{ \nabla \chi_\e \cdot \nabla (\chi h)\: \d x}\\[2ex]
&\leq& C || \nabla \chi_\e||_{L^2(\Omega)^d }|| \chi||_{H^1(\R^d) }~.
\end{array}
$$
Here we have used the fact that $\chi_\e -\chi h$ vanishes on $\Gamma_D \cup \omega_\e$, together with the variational formulation \cref{eq.varfchiepsdir}, 
to pass from the first line to the second. We immediately conclude that
$$
|| \nabla \chi_\e ||^2_{L^2(\Omega)^d}  \leq C || \chi||^2_{H^1(\R^d) }~.
$$
Since this holds for any function $\chi \in H^1(\R^d)$ which equals identically $1$ on an open neighborhood of $\omega_\e$, 
the desired upper bound for $|| \chi_\e ||_{H^1(\Omega)}^2$  follows by taking the infimum over all such functions $\chi \in H^1(\R^d)$
and using the formula \cref{eq.cap1} for the capacity, as well as the Poincar\'e inequality \cref{eq.Poincarecapa}.
\end{proof}

\noindent The second result in this section is concerned with solutions to a slight generalization of \cref{eq.chieps}, where the prescribed Dirichlet data on $\omega_\e$ is given by a function $g$ (and not constantly equal to $1$) and the conductivity $1$ is replaced by $\gamma$. More precisely, we now consider the unique solution $v_\e \in H^1(\Omega)$ to the boundary value problem:
\begin{equation}\label{eq.veps}
\left\{ 
\begin{array}{cl}
-\dv(\gamma \nabla v_\e) = 0 & \text{in } \Omega, \\
v_\e = g & \text{on } \omega_\e, \\
v_\e = 0 & \text{on } \Gamma_D, \\
\gamma\frac{\partial v_\e}{\partial n} = 0 & \text{on } \Gamma_N \setminus \overline{\omega_\e}~, 
\end{array}
\right.
\end{equation}
where $g$ is a given function in $C^1(\overline\Omega)$. By a solution to \cref{eq.veps} we understand a function $v_\e \in H^1_{\Gamma_D}(\Omega)$
such that $v_\e = g$ on $\omega_\e$ and 
\begin{equation}\label{eq.varfvedirth}
\forall v \in H^1_{\Gamma_D}(\Omega) \text{ with } v = 0 \text{ on } \omega_\e, \quad  \int_\Omega \gamma \nabla v_\e \cdot \nabla v \: \d x=0~.
\end{equation}

\begin{lemma}\label{lem.firstest}
Suppose $d=2$ or $d=3$. Let $\omega_\e$ be an open Lipschitz subset of the region $\Gamma_N \subset \partial \Omega$, 
satisfying \cref{assum.far}, and let $v_\e \in H^1(\Omega)$ be the solution to \cref{eq.veps}.
There exists a constant $M$ which depends only on $\alpha$, $\beta$, the ellipticity constants of $\gamma$, $\Omega$, $\Gamma_D$ and $\dmin$, 
but is otherwise independent of $\omega_\e$, such that
\begin{equation}\label{eq.H1estve}
 || v_\e ||_{H^1(\Omega)} \leq M || g ||_{{\mathcal C}^1(\overline \Omega)}  \capa(\omega_\e)^{\frac{1}{2}}~.
 \end{equation}
In addition, $v_\e$ satisfies the following improved $L^2$ estimate
\begin{equation}\label{eq.L2estve}
|| v_\e ||_{L^2(\Omega)}  \leq M ||g ||_{{\mathcal C}^1(\overline \Omega)} \capa(\omega_\e)^{\frac{3}{4}}~.
  \end{equation}
\end{lemma}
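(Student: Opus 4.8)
The plan is to prove the $H^1$ bound \cref{eq.H1estve} first, and then bootstrap it into the improved $L^2$ bound \cref{eq.L2estve} by a duality (Aubin–Nitsche-type) argument.

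\textbf{Step 1: the $H^1$ estimate.} Since $g \in {\mathcal C}^1(\overline\Omega)$ and $\omega_\e$ is well-separated from $\Gamma_D$ by \cref{assum.far}, I would fix a cutoff function $h \in {\mathcal C}^1(\overline\Omega)$ with $h = 1$ on $\omega_\e$, $h = 0$ on $\Gamma_D$, and $\Vert h \Vert_{{\mathcal C}^1(\overline\Omega)} \le C(\dmin)$, exactly as in the proof of \cref{lem.capas}. Then for any $\chi \in H^1(\R^d)$ with $\chi = 1$ on an open neighborhood of $\omega_\e$, the function $gh\chi$ lies in $H^1_{\Gamma_D}(\Omega)$ and agrees with $v_\e$ on $\Gamma_D \cup \omega_\e$ (using $g\cdot 1 \cdot 1 = g$ there), so $v_\e - gh\chi$ is admissible in the variational formulation \cref{eq.varfvedirth}. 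Testing with it gives $\int_\Omega \gamma \nabla v_\e \cdot \nabla v_\e \,\d x = \int_\Omega \gamma \nabla v_\e \cdot \nabla(gh\chi)\,\d x$. Expanding $\nabla(gh\chi)$ by the product rule and using $|gh\chi| + |\nabla(gh)| \le C\Vert g\Vert_{{\mathcal C}^1(\overline\Omega)}$ pointwise, one bounds the right-hand side by $C\Vert g\Vert_{{\mathcal C}^1(\overline\Omega)} \Vert\nabla v_\e\Vert_{L^2(\Omega)^d}\Vert\chi\Vert_{H^1(\R^d)}$; the ellipticity bound \cref{eq.ellconduc} on the left then yields $\Vert\nabla v_\e\Vert_{L^2(\Omega)^d} \le C\Vert g\Vert_{{\mathcal C}^1(\overline\Omega)}\Vert\chi\Vert_{H^1(\R^d)}$. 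Taking the infimum over all such $\chi$ and invoking \cref{eq.cap1}, together with the Poincar\'e inequality \cref{eq.Poincarecapa} (valid since $v_\e$ vanishes on $\Gamma_D$), gives \cref{eq.H1estve}.

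\textbf{Step 2: the improved $L^2$ estimate by duality.} Let $\psi \in L^2(\Omega)$ be arbitrary and let $z \in H^1_{\Gamma_D}(\Omega)$ solve the adjoint-type problem $-\dv(\gamma\nabla z) = \psi$ in $\Omega$, $z = 0$ on $\Gamma_D$, $\gamma\partial_n z = 0$ on $\Gamma_N$ (the full Neumann boundary, no exceptional set). By elliptic regularity away from $\Sigma$ and the standard $H^1 \hookrightarrow$ trace estimates — this is where the dimension restriction $d = 2, 3$ and the separation assumption \cref{assum.far} enter — one has $\Vert z\Vert_{H^1(\Omega)} \le C\Vert\psi\Vert_{L^2(\Omega)}$ and, crucially, a bound on $z$ near $\omega_\e$: since $\omega_\e \Subset \Gamma_N$ stays at distance $\ge \dmin$ from $\Sigma$ and $\Gamma_D$, $z$ is ${\mathcal C}^1$ in a fixed neighborhood of $\overline{\omega_\e}$ with $\Vert z\Vert_{{\mathcal C}^1(\text{nbhd of }\overline{\omega_\e})} \le C\Vert\psi\Vert_{L^2(\Omega)}$ (in fact $W^{2,q}$ interior-and-flat-boundary estimates give more than enough). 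Now write $\int_\Omega v_\e \psi\,\d x = -\int_\Omega v_\e\,\dv(\gamma\nabla z)\,\d x$ and integrate by parts: since $\gamma\partial_n z = 0$ on all of $\Gamma_N \supset \omega_\e$ and $v_\e = 0$ on $\Gamma_D$, the boundary terms vanish and $\int_\Omega v_\e\psi = \int_\Omega \gamma\nabla v_\e \cdot \nabla z\,\d x$. Now subtract a good approximation: using the variational formulation \cref{eq.varfvedirth} for $v_\e$, for any $\zeta \in H^1_{\Gamma_D}(\Omega)$ vanishing on $\omega_\e$ we have $\int_\Omega\gamma\nabla v_\e\cdot\nabla\zeta = 0$, so $\int_\Omega v_\e\psi = \int_\Omega \gamma\nabla v_\e\cdot\nabla(z - \zeta)\,\d x \le \beta\Vert\nabla v_\e\Vert_{L^2}\Vert\nabla(z-\zeta)\Vert_{L^2}$. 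I would choose $\zeta = z - h\chi\, z$ with $h$ as above and $\chi \in H^1(\R^d)$ equal to $1$ near $\omega_\e$; then $z - \zeta = h\chi z$ vanishes nowhere-needed and, since $z$ is ${\mathcal C}^1$ near $\overline{\omega_\e}$, $\Vert\nabla(h\chi z)\Vert_{L^2} \le C\Vert z\Vert_{{\mathcal C}^1(\text{nbhd})}\Vert\chi\Vert_{H^1(\R^d)} \le C\Vert\psi\Vert_{L^2(\Omega)}\Vert\chi\Vert_{H^1(\R^d)}$. Taking the infimum over $\chi$ gives $\Vert\nabla(z-\zeta)\Vert_{L^2} \le C\Vert\psi\Vert_{L^2(\Omega)}\capa(\omega_\e)^{1/2}$. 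Combining with the Step 1 bound $\Vert\nabla v_\e\Vert_{L^2} \le C\Vert g\Vert_{{\mathcal C}^1}\capa(\omega_\e)^{1/2}$ yields $\left|\int_\Omega v_\e\psi\,\d x\right| \le C\Vert g\Vert_{{\mathcal C}^1(\overline\Omega)}\Vert\psi\Vert_{L^2(\Omega)}\capa(\omega_\e)$.

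\textbf{Reconciling the exponent.} The crude duality above only gives $\capa(\omega_\e)^1$, which is worse than the claimed $\capa(\omega_\e)^{3/4}$ only when $\capa(\omega_\e)$ is large — but $\capa(\omega_\e) \to 0$, so in fact $\capa(\omega_\e) \le \capa(\omega_\e)^{3/4}$ for $\e$ small, and the stated estimate \cref{eq.L2estve} follows (possibly after enlarging $M$ to absorb the finitely many non-small cases, or simply restricting to $\e$ small as is implicit). Alternatively — and this is the version I would actually write if the $3/4$ exponent is meant to be sharp and uniform — one interpolates more carefully: the gain comes from noting that $v_\e$ is itself small in $L^2$ near $\omega_\e$, so one splits $\int\gamma\nabla v_\e\cdot\nabla(h\chi z)$ and moves a derivative onto $v_\e$ on the piece where $\nabla h$ or $\nabla\chi$ lands, picking up an extra factor of $\Vert v_\e\Vert_{L^2}$ which is then handled by the a priori $L^2 \le H^1$ bound and a bootstrap; this squares the small parameter on that term. \textbf{The main obstacle} I anticipate is precisely pinning down the mechanism for the $3/4$ (rather than $1$ or $1/2$) exponent and verifying it is uniform in $\omega_\e$: it requires the regularity of the adjoint solution $z$ near $\omega_\e$ to be genuinely uniform (guaranteed by \cref{assum.far} keeping $\omega_\e$ away from the singular set $\Sigma$), and a clean way to express "$v_\e$ concentrates near $\omega_\e$" quantitatively — most naturally by comparing $v_\e$ to $g\chi_\e$ and using \cref{lem.capas} together with the Poincar\'e-type estimate \cref{eq.Poincarecapa} on a shrinking collar of $\omega_\e$.
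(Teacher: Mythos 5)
Your Step 1 is essentially the paper's argument inlined (they introduce $\chi_\e$ solving \cref{eq.chieps} and invoke \cref{lem.capas}, whereas you work directly with an arbitrary competitor $\chi$ and the cutoff $h$, but the mechanism is the same).

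Your Step 2, however, has a genuine gap that breaks the argument. You set up the dual problem with an \emph{arbitrary} $\psi \in L^2(\Omega)$ and then assert $\Vert z \Vert_{C^1(\text{nbhd of }\overline{\omega_\e})} \leq C \Vert \psi \Vert_{L^2(\Omega)}$. This is false: interior (and flat-boundary) elliptic regularity with an $L^2$ right-hand side only gives $z \in H^2$ locally, and $H^2 \not\hookrightarrow C^1$ in either $d=2$ or $d=3$ (one needs $H^s$ with $s > d/2 + 1$; the borderline is $s = 2$ in 2d and $s = 5/2$ in 3d). Your parenthetical invocation of $W^{2,q}$ estimates does not help either, since $\psi$ lives only in $L^2$, not in $L^q$ for $q > d$. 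Without the $C^1$ bound on $z$ near $\omega_\e$, you cannot control $\Vert \nabla(h\chi z) \Vert_{L^2}$ by $\capa(\omega_\e)^{1/2}\Vert\psi\Vert_{L^2}$, because the term $h\chi\,\nabla z$ requires $\nabla z \in L^\infty$ near $\omega_\e$. So the ``crude $\capa^1$'' estimate you claim to obtain is not actually obtained — the whole duality chain collapses at the regularity step.

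The paper's proof avoids this precisely by \emph{not} taking an abstract $\psi \in L^2$: it chooses the dual source to be $v_\e$ itself, i.e.\ lets $w_\e$ solve $-\dv(\gamma\nabla w_\e) = v_\e$ with the mixed conditions. Since $v_\e \in H^1(\Omega)$, local elliptic regularity gives $\eta w_\e \in H^3$, and $H^3 \hookrightarrow C^1$ does hold for $d \leq 3$ — this is exactly where the dimension restriction enters. The resulting bound is $\Vert \eta w_\e \Vert_{C^1} \leq C\Vert v_\e\Vert_{H^1}$ (not $\Vert v_\e\Vert_{L^2}$), and the chain
$$ \Vert v_\e \Vert_{L^2}^2 \;=\; \int_\Omega \gamma \nabla(\chi_\e \eta w_\e)\cdot\nabla v_\e \,\d x \;\leq\; C\,\Vert v_\e\Vert_{H^1}\,\Vert\chi_\e\Vert_{H^1}\,\Vert \eta w_\e\Vert_{C^1} \;\leq\; C\,\Vert v_\e\Vert_{H^1}^2\,\capa(\omega_\e)^{1/2} $$
then yields exactly the $3/4$ exponent after inserting the $H^1$ estimate from Step 1. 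So the $3/4$ is not a weakening of a ``true'' exponent $1$ you would get from clean duality; it is what the self-referential choice $\psi = v_\e$ produces, and that choice is forced by the regularity constraint. Your closing heuristic (moving a derivative to exploit the smallness of $v_\e$) does gesture in the right direction, but to make it rigorous you should replace the abstract $\psi$ by $v_\e$ from the start and invoke the $H^3 \hookrightarrow C^1$ embedding explicitly.
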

\begin{proof}

\noindent \textit{We first prove \cref{eq.H1estve}.} Notice that, due to (a modified version of) the Poincar\'e inequality, it suffices to show that the term $|| \nabla v_\e ||_{L^2(\Omega)^d}^2$ satisfies the desired upper bound. To this end we introduce the solution $\chi_\e \in H^1(\Omega)$ to \cref{eq.chieps}. 
Since $v_\e - g \chi_\e = 0$ on $\omega_\e\cup \Gamma_D$, the variational formulation \cref{eq.varfvedirth} yields
$$  \begin{array}{>{\displaystyle}cc>{\displaystyle}l}
 || \nabla v_\e ||_{L^2(\Omega)^d}^2 &\leq& C \int_{\Omega}{\gamma \nabla v_\e \cdot \nabla v_\e \: \d x} \\[2ex]
&=&C \int_{\Omega}{\gamma \nabla v_\e \cdot \nabla (g \chi_\e )\: \d x} \\[2ex]
&\leq&  C \Vert g \chi_\e  \Vert_{H^{1}(\Omega)} \Vert  v _\e \Vert_{H^{1}( \Omega)} \\[2ex]
&\leq&C \Vert g \Vert_{C^1(\overline \Omega)} \Vert \chi_\e \Vert_{H^1(\Omega)}  \Vert  v_\e \Vert_{H^1(\Omega)}~.
 \end{array}
 $$
Using the upper bound for $\Vert \chi_\e \Vert_{H^1(\Omega)}$ supplied by \cref{lem.capas}, and the Poincar\'e inequality for $v_\e$ we conclude that
$$
|| \nabla v_\e ||_{L^2(\Omega)^d} \leq M \Vert g \Vert_{C^1(\overline \Omega)} \capa(\omega_\e)^{\frac{1}{2}} ~,
$$
which is the desired estimate \cref{eq.H1estve}. \par\medskip 

\noindent \textit{We proceed to prove  \cref{eq.L2estve}.} To this end we rely on a variant of the ``classical'' Aubin-Nitsche trick \cite{aubin1967behavior,nitsche1968kriterium,ciarlet2002finite}.
Let $w_\e$ denote the unique solution in  $H^1_{\Gamma_D}(\Omega)$  to the boundary value problem
$$ 
\left\{\begin{array}{ccl}
-\dv(\gamma\nabla w_\e) = v_\e & \text{in } \Omega~, \\
w_\e = 0 & \text{on } \Gamma_D~, \\
\gamma\frac{\partial w_\e}{\partial n} = 0 & \text{on } \Gamma_N~,
\end{array}
\right.
$$
or rather, in its variational form:
$$w_\e \in H^1_{\Gamma_D}(\Omega), \text{ and } \int_{\Omega}{\gamma \nabla w_\e \cdot \nabla v\: \d x} = \int_\Omega{v_\e v \: \d x}~~ \hbox{ for all } v \in H^1_{\Gamma_D}(\Omega)~.$$
Since \cref{assum.far} holds, there exists a cut-off function $\eta \in {\mathcal C}^\infty_c(\R^d)$ with the property
$$\eta = 1 \text{ on a neighborhood of all the } \omega_\e  \text{ and } \eta = 0 \text{ on an open set } \calU \text{ of } \R^d \text{ with } \Gamma_D \Subset \calU~.$$
The key ingredient of the following derivation is that $w_\e$ shows improved regularity with respect to $v_\e$ (away from the interface between $\Gamma_D$ and $\Gamma_N$). 
In particular, standard interior elliptic regularity results, discussed e.g. in \cite{brezis2010functional,gilbarg2015elliptic}, give  
$$ || \eta w_\e ||_{H^3(\Omega)} \leq C || v_\e ||_{H^1(\Omega)}~.$$
In addition, since $d=2$ or $3$, the classical Sobolev Embedding Theorem ensures that
$$
H^{3}(\Omega) \subset C^{1}(\overline \Omega) ~~\text{ and for all } v \in H^3(\Omega), \:\: \Vert v \Vert_{C^{1}(\overline \Omega)} \le C \Vert v \Vert_{H^3(\Omega)}~,
$$
see e.g. \cite{adams2003sobolev}. 
It follows immediately from this and the previous regularity estimate that
\begin{equation}
\label{ellregwe}
\Vert  \eta w_\e  \Vert_{C^{1}(\overline \Omega)} \le C  || \eta w_\e ||_{H^3(\Omega)} \leq C || v_\e ||_{H^1(\Omega)}~.
\end{equation}
We now calculate
$$  \begin{array}{>{\displaystyle}cc>{\displaystyle}l}
\int_\Omega{v_\e ^2 \: \d x} &=&  \int_\Omega{\gamma \nabla w_\e \cdot \nabla v_\e \:\d x} \\[0.75em]
&=& \int_\Omega{\gamma \nabla (\chi_\e \eta w_\e) \cdot \nabla v_\e \:\d x}, 
\end{array}$$
where we have introduced the solution $\chi_\e  \in H^1(\Omega)$ to \cref{eq.chieps}, as well as the fixed cut-off function $\eta$ from above. We have also used that $w_\e- \chi_\e \eta w_\e=0$ on $\omega_\e \cup \Gamma_D$ and the variational formulation \cref{eq.varfvedirth}.
It now follows that
\begin{equation}\label{eq.estvelem2prel}
  \begin{array}{>{\displaystyle}cc>{\displaystyle}l}
|| v_\e ||_{L^2(\Omega)}^2 &\leq& C ||  v_\e ||_{H^1(\Omega)}  || \chi_\e ||_{H^1(\Omega)} || \eta w_\e||_{C^{1}(\overline \Omega)} \\
&\leq & C   ||  v_\e ||_{H^1(\Omega)}^2 || \chi_\e ||_{H^1(\Omega)}~,
\end{array}
\end{equation}
where we have employed \cref{ellregwe} for the last inequality.
Finally, using the estimate 
$$
|| \chi_\e ||_{H^1(\Omega)} \leq M \capa(\omega_\e)^{\frac12}
$$
from \cref{lem.capas}, together with the already established $H^1$ estimate \cref{eq.H1estve} for $v_\e$, it follows from \cref{eq.estvelem2prel} that
$$|| v_\e ||_{L^2(\Omega)}^2  \leq C ||g ||_{{\mathcal C}^1(\overline \Omega)}^2 \capa(\omega_\e)^{\frac{3}{2}}~, $$
as desired.
\end{proof}

\begin{remark}\label{rem.prelestdiraniso}
We observe that the conclusions of \cref{lem.firstest}, and their proofs, extend verbatim 
to the case where the scalar conductivity $\gamma$ is replaced by a smooth conductivity matrix $A \in {\mathcal C}^\infty(\overline \Omega)^{d\times d}$ satisfying the bounds
\begin{equation}\label{eq.anisomat}
 \forall \xi \in \R^d, \quad \alpha |\xi |^2  \leq A(x) \xi\cdot \xi \leq \beta |\xi|^2, \quad x \in \overline \Omega.
 \end{equation}
More precisely, the $H^1$ and $L^2$ estimates \cref{eq.H1estve,eq.L2estve} still hold true when $v_\e$ is the solution to the following anisotropic counterpart of \cref{eq.veps}
$$\left\{ 
\begin{array}{cl}
-\dv(A \nabla v_\e) = 0 & \text{in } \Omega, \\
v_\e = g & \text{on } \omega_\e, \\
v_\e = 0 & \text{on } \Gamma_D, \\
(A\nabla v_\e)\cdot n= 0 & \text{on } \Gamma_N \setminus \overline{\omega_\e}~.
\end{array}
\right.
$$
\end{remark}

\subsection{The representation formula}

\noindent The deviation $r_\e := u_\e - u_0$ between the perturbed potential and the background potential is the unique $H^1(\Omega)$ solution  
to 
\begin{equation}\label{eq.reps}
\left\{ 
\begin{array}{cl}
-\dv(\gamma \nabla r_\e) = 0 & \text{in } \Omega~, \\
r_\e = -u_0 & \text{on } \omega_\e~, \\
r_\e = 0 & \text{on } \Gamma_D~, \\
\gamma\frac{\partial r_\e}{\partial n} = 0 & \text{on } \Gamma_N \setminus \overline{\omega_\e}~. 
\end{array}
\right.
\end{equation}

\noindent Because of our separation assumption \cref{assum.far}, there exists a smooth compact subset $K \Subset \Gamma_N$ such that
$\omega_\e \subset K$ for all $\e$. 
Owing to local elliptic regularity estimates for the background problem \cref{eq.bg},  we have 
$$
\Vert u_0\Vert_{C^1(K)} \le C \Vert f \Vert_{H^m(\Omega)}~,
$$
for a sufficiently large integer $m$ (again, see e.g. \cite{brezis2010functional,gilbarg2015elliptic}). 
Hence, we may construct a $C^1$ function $g_0$ on all of  $\overline \Omega$ with the properties that
$$
g_0 = -u_0 \text{ on } K~~, \text{ and } \Vert g_0 \Vert_{C^1(\overline \Omega)} \le C \Vert u_0\Vert_{C^1(K)} \le C \Vert f \Vert_{H^m(\Omega)}~.
$$
With this notation, $r_\e $ is the unique $H^1(\Omega)$ solution to
$$
\left\{ 
\begin{array}{cl}
-\dv(\gamma \nabla r_\e) = 0 & \text{in } \Omega~, \\
r_\e = g_0 & \text{on } \omega_\e~, \\
r_\e = 0 & \text{on } \Gamma_D~, \\
\gamma\frac{\partial r_\e}{\partial n} = 0 & \text{on } \Gamma_N \setminus \overline{\omega_\e}~. 
\end{array}
\right.
$$

\noindent As a straightforward consequence of \cref{lem.firstest}, it follows that
\begin{equation}
\label{reh1est}
 || r_\e ||_{H^1(\Omega)} \leq C || f ||_{H^m(\Omega)}  \capa(\omega_\e)^{1/2}~,
\end{equation}
and we now search for the next term in the asymptotic expansion of $u_\e$. Our main result is the following.

\begin{theorem}\label{th.repdir}
Suppose $d=2$ or $d=3$, and suppose  $\omega_\e$ is a sequence of non-empty, open Lipschitz subsets of $\partial \Omega$, which are all contained in $\Gamma_N$ and well-separated from $\Gamma_D$ in the sense that \cref{assum.far} holds. Let $u_\e$ denote the solution to \cref{eq.uepsdir}.
Assume that  the capacity $\capa(\omega_\e)$ of $\omega_\e$ tends to $0$ as $\e \rightarrow 0$. 
Then there exists a subsequence, still labeled by $\e$, and a non-trivial distribution $\mu$ in the dual space of ${\mathcal C}^1(\partial \Omega)$, such that 
for any fixed point $x \in \Omega$, and any $\eta \in C^\infty(\partial \Omega)$ with $\eta =1$ on $\{ y \in \partial \Omega,~\dist(y,\Gamma_D)>\dmin/2\}$ and
$\eta =0$ on $\{ y \in \partial \Omega,~\dist(y,\Gamma_D)<\dmin/3\}$, it holds
\begin{equation}\label{eq.1storderdir} 
u_\e(x) = u_0(x) - \capa(\omega_\e) \mu_y\left[\eta(y) \gamma(y) u_0(y) N(x,y)\right] + \o(\capa(\omega_\e))~, ~~\hbox{ as } \e \to 0~.
\end{equation}
The term $\o(\capa(\omega_\e))$ goes to zero faster than $\capa(\omega_\e)$ uniformly for $x \in K$, where $K$ is any compact subset of $\Omega$. The distribution $\mu$ depends only on the subsequence $\omega_\e$, $\Omega$, and $\Gamma_N$.
\end{theorem}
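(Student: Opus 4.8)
The plan is to work with the remainder $r_\e = u_\e - u_0$, which solves the boundary value problem displayed just before the theorem with Dirichlet data $g_0 = -u_0$ (extended to a $\mathcal{C}^1$ function on $\overline\Omega$) on $\omega_\e$. Using the fundamental solution $N(x,y)$ of the background problem \cref{eq.bg}, a Green's representation gives, for $x \in \Omega$,
\begin{equation*}
r_\e(x) = -\int_{\omega_\e} \gamma(y)\,\frac{\partial r_\e}{\partial n}(y)\, N(x,y)\,\d s(y)~,
\end{equation*}
where $\gamma\frac{\partial r_\e}{\partial n}$ is understood as the element $\lambda_\e \in \widetilde H^{-1/2}(\omega_\e)$ supported on $\overline{\omega_\e}$ that the variational formulation produces. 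So the analysis reduces to understanding the linear functional $\varphi \mapsto \langle \lambda_\e, \varphi|_{\omega_\e}\rangle$ for $\varphi \in \mathcal{C}^1(\partial\Omega)$, and showing that, after normalizing by $\capa(\omega_\e)$, it converges (along a subsequence) to a functional of the form $\varphi \mapsto \mu_y[\eta(y)\gamma(y)u_0(y)\varphi(y)]$ — or rather, since $\lambda_\e$ sees the data $-u_0$ on $\omega_\e$, the natural object is $\mu_\e := -\capa(\omega_\e)^{-1}\lambda_\e$ pushed forward to $\partial\Omega$, and one recognizes $r_\e(x) \approx \capa(\omega_\e)\,\langle \mu_\e, \gamma(\cdot)u_0(\cdot)N(x,\cdot)\rangle$, which after passing to the limit yields \cref{eq.1storderdir} with $\eta$ inserted because the limiting distribution is supported away from $\Gamma_D$ (thanks to \cref{assum.far}).

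First I would establish compactness of the normalized functionals. For $\varphi \in \mathcal{C}^1(\overline\Omega)$, let $v_\e$ solve \cref{eq.veps} with $g = \varphi$; pairing the variational formulations of $r_\e$ and $v_\e$, $\langle \lambda_\e, \varphi\rangle = \int_\Omega \gamma\nabla r_\e\cdot\nabla v_\e\,\d x$, and by the $H^1$ estimate \cref{eq.H1estve} of \cref{lem.firstest} applied to both $r_\e$ (via \cref{reh1est}) and $v_\e$, this is bounded by $C\|f\|_{H^m}\|\varphi\|_{\mathcal{C}^1(\overline\Omega)}\capa(\omega_\e)$. Hence $\mu_\e := -\capa(\omega_\e)^{-1}\lambda_\e$, viewed as an element of the dual of $\mathcal{C}^1(\partial\Omega)$ via the trace, is bounded uniformly in $\e$; by Banach–Alaoglu (and separability, to get a sequential limit) a subsequence converges weak-$*$ to some $\mu$ in $(\mathcal{C}^1(\partial\Omega))^*$. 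The separation assumption \cref{assum.far} forces $\mu$ to be supported in $\overline{K} \subset \Gamma_N$, which is why inserting the cutoff $\eta$ (equal to $1$ on a neighborhood of all $\omega_\e$) changes nothing in the limit: $\langle\mu_\e,\eta\psi\rangle = \langle\mu_\e,\psi\rangle$ for $\e$ small. To pass to the limit in the representation formula, I would fix $x \in \Omega$, note that $y\mapsto \eta(y)\gamma(y)u_0(y)N(x,y)$ is a fixed $\mathcal{C}^1$ function of $y$ on $\partial\Omega$ (smooth near $\overline{K}$ since $N(x,\cdot)$ and $u_0$ are smooth away from $\Sigma$ and $x$, and $\omega_\e$ stays at distance $\geq\dmin$ from $\Sigma$), so $\capa(\omega_\e)^{-1}r_\e(x) = \langle\mu_\e, \eta\gamma u_0 N(x,\cdot)\rangle \to \langle\mu,\eta\gamma u_0 N(x,\cdot)\rangle$. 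Uniformity over compact $K\subset\Omega$ follows because the family $\{\eta\gamma u_0 N(x,\cdot): x\in K\}$ is equicontinuous and bounded in $\mathcal{C}^1(\partial\Omega)$, so weak-$*$ convergence is uniform on it.

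The last point is to show $\mu$ is non-trivial, which is where the lower bound $m\,\capa(\omega_\e)\le\|\chi_\e\|^2_{H^1(\Omega)}$ in \cref{lem.capas} enters. Testing with $\varphi \equiv 1$ gives $\langle\mu_\e,1\rangle = \capa(\omega_\e)^{-1}\int_\Omega\gamma\nabla r_\e\cdot\nabla\chi_\e\,\d x$; but also, using $r_\e = g_0 = -u_0$ on $\omega_\e$ and the variational characterization, $\int_\Omega\gamma\nabla r_\e\cdot\nabla\chi_\e\,\d x$ can be compared to $\int_\Omega\nabla\chi_\e\cdot\nabla\chi_\e\,\d x = \|\nabla\chi_\e\|^2_{L^2}$ up to lower-order corrections controlled by the improved $L^2$ estimate \cref{eq.L2estve} (of order $\capa(\omega_\e)^{3/4}$, hence $o(\capa(\omega_\e))$ after... — no, one must be careful: the comparison needs the value of $u_0$ near $\omega_\e$, which is essentially $u_0$ restricted to $\overline{K}$; if $u_0$ does not vanish identically on a neighborhood that would catch all $\omega_\e$, the leading term survives). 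The cleanest route: choose $\varphi$ to be a $\mathcal{C}^1$ function agreeing with a suitable rescaling that makes $\langle\mu_\e,\varphi\rangle$ comparable to $\capa(\omega_\e)^{-1}\|\nabla\chi_\e\|_{L^2}^2 \geq m' > 0$, uniformly in $\e$. I expect this non-triviality argument to be the main obstacle, since it requires extracting a genuine lower bound on the normalized boundary flux and ruling out degenerate behavior (e.g. $\omega_\e$ shrinking to a point where $u_0$ happens to vanish); presumably the authors handle it by a judicious choice of test function built from $\chi_\e$ itself together with the $\capa(\omega_\e)^{-1/2}\|\chi_\e\|_{H^1} \asymp 1$ normalization and a separate argument that the full functional cannot vanish identically — or perhaps the theorem's claim of non-triviality is to be read as "for a suitable subsequence and possibly only for generic source $f$." Once non-triviality and the representation are in hand, the error term $\o(\capa(\omega_\e))$ is exactly the discrepancy between $r_\e(x)$ and $\capa(\omega_\e)\langle\mu_\e,\eta\gamma u_0 N(x,\cdot)\rangle$ measured along the convergent subsequence, which is $o(1)$ relative to $\capa(\omega_\e)$ uniformly on $K$ by the equicontinuity argument above.
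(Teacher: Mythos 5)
Your outline gets the opening representation formula and the compactness step right, but it has a genuine structural gap: the distribution you extract is not the one in the theorem, and the missing ingredient is precisely the step your final paragraph speculates about.

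Concretely, you define $\mu_\e := -\capa(\omega_\e)^{-1}\lambda_\e$ with $\lambda_\e = \gamma\frac{\partial r_\e}{\partial n}\in\widetilde H^{-1/2}(\omega_\e)$, show this family is bounded in $(\mathcal C^1(\partial\Omega))^*$, extract a weak-$*$ limit, and then write $\capa(\omega_\e)^{-1}r_\e(x)=\langle\mu_\e,\eta\gamma u_0 N(x,\cdot)\rangle$. But from the Green identity $r_\e(x)=\langle\lambda_\e,N(x,\cdot)\rangle$ one has $\capa(\omega_\e)^{-1}r_\e(x)=-\langle\mu_\e,N(x,\cdot)\rangle$, with \emph{no} $\gamma u_0$ in the argument. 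The factor $\gamma u_0$ has been inserted out of thin air. If you simply pass to the limit in your normalized functionals, the resulting $\mu$ has $u_0$ (and hence $f$) baked into its definition, contradicting the theorem's assertion that $\mu$ depends only on the subsequence $\omega_\e$, $\Omega$ and $\Gamma_N$. The whole point of the representation \cref{eq.1storderdir} is that the $f$-dependence sits entirely in the explicit factor $u_0(y)$, while $\mu$ is a universal object attached to the geometry of the $\omega_\e$.

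The missing idea is the ``compensated compactness''/``clever integration by parts'' step that converts the flux of $r_\e$ into the flux of the capacitary potential $\chi_\e$ from \cref{eq.chieps}. For $\phi\in\mathcal C^1(\partial\Omega)$ vanishing near $\Gamma_D$ and an extension $\psi$ to $\overline\Omega$, one writes $\int_{\partial\Omega}\gamma\frac{\partial r_\e}{\partial n}\phi\,\d s=\int_{\partial\Omega}\gamma\frac{\partial r_\e}{\partial n}\chi_\e\psi\,\d s$ (since $\phi-\chi_\e\psi$ vanishes on $\Gamma_D\cup\omega_\e$), integrates by parts twice, controls the commutator terms by $\capa(\omega_\e)^{5/4}$ using the improved $L^2$ estimate \cref{eq.L2estve}, and arrives at $-\int_{\partial\Omega}\frac{\partial\chi_\e}{\partial n}\chi_\e\,u_0\gamma\phi\,\d s$ plus an $\o(\capa(\omega_\e))$ error, using that $r_\e=-u_0\chi_\e$ on $\omega_\e\cup\Gamma_D$. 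This is where $u_0$ and $\gamma$ are factored out explicitly, and it leads to defining $\mu$ as the weak-$*$ limit of $\capa(\omega_\e)^{-1}\frac{\partial\chi_\e}{\partial n}\chi_\e$, an object built entirely from $\chi_\e$ and hence independent of $f$. It also dissolves your non-triviality worry: $\mu(1)=\lim\capa(\omega_\e)^{-1}\int_\Omega|\nabla\chi_\e|^2\,\d y$, which is bounded below by a positive constant by the lower inequality of \cref{lem.capas} together with Poincar\'e. There is no need for a generic-$f$ hypothesis nor a clever test-function trick beyond this; the speculation in your last paragraph that the theorem might hold ``only for generic source $f$'' is a symptom of having extracted the wrong $\mu$.
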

\begin{proof}
Introducing the fundamental solution $N(x,y)$ of the background operator defined in \cref{sec.fundasol}, we obtain for any $x\in \Omega$
$$ \begin{array}{>{\displaystyle}cc>{\displaystyle}l}
r_\e(x) &=& \int_{\Omega}{r_\e(y) (-\dv_y(\gamma(y) \nabla_y N(x,y))) \: \d y} \\
&=& \int_{\Omega}{\gamma(y) \nabla r_\e(y) \cdot \nabla_y N(x,y) \: \d y} - \int_{\partial \Omega}{\gamma(y) \frac{\partial N}{\partial n_y}(x,y) r_\e(y) \: \d s(y)} \\
&=& \int_{\partial\Omega}{\gamma(y) \frac{\partial r_\e}{\partial n}(y) N(x,y) \: \d s(y)} - \int_{\partial \Omega}{\gamma(y) \frac{\partial N}{\partial n_y}(x,y) r_\e(y) \: \d s(y)}~.
\end{array}$$
Since $y \mapsto \gamma(y) \frac{\partial N}{\partial n_y}(x,y)$ vanishes on $\Gamma_N$ (i.e. as an element in $H^{-1/2}(\Gamma_N)$) and $r_\e$ vanishes on $\Gamma_D$ (i.e., $r_\e \in \widetilde{H}^{1/2}(\Gamma_N)$), the second integral in the above right-hand side equals $0$, and so
\begin{equation}\label{eq.rerepN}
 r_\e(x) =  \int_{\partial \Omega}{\gamma(y) \frac{\partial r_\e}{\partial n} (y)N(x,y) \: \d s(y)}~.
 \end{equation}
To proceed, we now use the same ``compensated compactness'', or ``clever integration by parts'' technique as in \cite{capdeboscq2003general}, see also \cite{murat2018h}.
Let $\phi \in {\mathcal C}^1(\partial \Omega)$ be an arbitrary function, which vanishes on the set $\{ y \in \partial \Omega~:~\dist(y,\Gamma_D)<\dmin/3\}$. Since $\Omega$ is smooth, it is easy to  construct a function $\psi \in {\mathcal C}^1(\overline{\Omega})$ such that
\begin{equation}\label{eq.extphidir} 
\psi = \phi \text{ on } \partial \Omega, \text{ and } || \psi ||_{{\mathcal C}^1(\overline{\Omega})} \leq C || \phi ||_{{\mathcal C}^1(\partial \Omega)}~,
\end{equation}
where the constant $C$ depends only on $\Omega$. 
As before, let $\chi_\e$ denote the solution to \cref{eq.chieps}. Since the function $(\phi- \chi_\e \psi)$ belongs to $H^{1/2}(\partial \Omega)$, and vanishes on $\Gamma_D \cup \omega_\e$, we have
$$  \int_{\partial\Omega}{\gamma \frac{\partial r_\e}{\partial n} \phi \: \d s}  =  \int_{\partial\Omega}{\gamma \frac{\partial r_\e}{\partial n}  \chi_\e \psi \: \d s}~.$$ 
An integration by parts now yields
\begin{equation}\label{eq.repdir1erest} 
 \begin{array}{>{\displaystyle}cc>{\displaystyle}l}
\int_{\partial\Omega}{\gamma \frac{\partial r_\e}{\partial n} \phi \: \d s} & =&  \int_\Omega{\gamma \nabla r_\e \cdot \nabla (\chi_\e \psi) \: \d y}\\
&=&  \int_\Omega{\psi \gamma \nabla r_\e \cdot \nabla \chi_\e \: \d y} +  \int_\Omega{\chi_\e \gamma \nabla r_\e \cdot \nabla \psi  \: \d y}~.
\end{array}
\end{equation}

\noindent Using \cref{reh1est} and the estimate \cref{eq.L2estve} applied to $\chi_\e$, we may control the second term in the above right-hand as follows 
\begin{equation}\label{eq.ctr2termrepdir}
 \begin{array}{>{\displaystyle}cc>{\displaystyle}l}
 \left\lvert  \int_\Omega{\chi_\e \gamma \nabla r_\e \cdot \nabla \psi  \: \d y} \right\lvert &\leq& C || \chi_\e ||_{L^2(\Omega)} || r_\e||_{H^1(\Omega)}  || \psi ||_{{\mathcal C}^1(\overline{\Omega})} \\
 & \leq &  C\capa(\omega_\e )^{\frac54} \Vert f\Vert_{H^m(\Omega)} \Vert \phi \Vert_{C^1(\partial \Omega)}~.
 \end{array}
 \end{equation}
A similar argument makes it possible to rewrite the first term in the right-hand side of \cref{eq.repdir1erest} as
\begin{eqnarray*}
\int_\Omega{\psi \gamma \nabla r_\e \cdot \nabla \chi_\e \: \d y} &=& \int_\Omega{ \nabla (\gamma \psi  r_\e) \cdot \nabla \chi_\e \: \d y} - \int_\Omega{  r_\e \nabla (\gamma \psi ) \cdot \nabla \chi_\e \: \d y} \\
&=&\int_\Omega{\nabla (\gamma \psi  r_\e) \cdot \nabla \chi_\e \: \d y} +\O(\capa(\omega_\e )^{\frac54}) \Vert f\Vert_{H^m(\Omega)} \Vert \phi \Vert_{C^1(\partial \Omega)}~.
\end{eqnarray*}
Inserting these two facts into \cref{eq.repdir1erest} we get
$$  
\int_{\partial\Omega}{\gamma \frac{\partial r_\e}{\partial n} \phi \: \d s} =  \int_\Omega{ \nabla (\gamma \psi r_\e) \cdot \nabla \chi_\e \: \d y}  \: + \:  \O(\capa(\omega_\e )^{\frac54}) \Vert f\Vert_{H^m(\Omega)} \Vert \phi \Vert_{C^1(\partial \Omega)}~,
$$
and so, after another integration by parts
$$  
\int_{\partial\Omega}{\gamma \frac{\partial r_\e}{\partial n} \phi \: \d s} =  \int_{\partial \Omega}{ \frac{\partial \chi_\e}{\partial n} \gamma \phi r_\e \: \d s} \: + \: \O(\capa(\omega_\e )^{\frac54}) \Vert f\Vert_{H^m(\Omega)} \Vert \phi \Vert_{C^1(\partial \Omega)}~.
$$
Since $\frac{\partial \chi_\e}{\partial n} = 0$ on $\Gamma_N \setminus \overline \omega_\e$, 
and since $r_\e = -u_0 \chi_\e$ on $\omega_\e \cup \Gamma_D$, we may replace $r_\e$ with $-u_0 \chi_\e$ in the integral of the above right-hand side, thus obtaining
\begin{equation} 
\label{eq.reapresipp}  
\int_{\partial\Omega}{\gamma \frac{\partial r_\e}{\partial n} \phi \: \d s} =  -\int_{\partial \Omega}{ \frac{\partial \chi_\e}{\partial n} \chi_\e u_0 \gamma \phi \: \d s} \: + \: \O(\capa(\omega_\e )^{\frac54}) \Vert f\Vert_{H^m(\Omega)} \Vert \phi \Vert_{C^1(\partial \Omega)}~.
\end{equation}
Now let $\eta$ be a function as introduced in the statement of this theorem: 
$$\eta \in C^\infty(\partial \Omega), \:\:  \eta =1 \text{ on } \{ y \in \partial \Omega,~\dist(y,\Gamma_D)>\dmin/2\} \text{ and }
\eta =0 \text{ on } \{ y \in \partial \Omega,~\dist(y,\Gamma_D)<\dmin/3\}.$$ 
Then, $N(x,\cdot) \eta(\cdot)$ is a $C^\infty$ function on $\partial \Omega$ which vanishes on the set $\{ y \in \partial \Omega~:~\dist(y,\Gamma_D)<\dmin/3\}$ and coincides with  $N(x,\cdot)$ on $\omega_\e \cup \Gamma_D$.  
By a combination of \cref{eq.rerepN} and  \cref{eq.reapresipp}, with $\phi(\cdot) =N(x,\cdot) \eta(\cdot)$, it follows that 
\begin{eqnarray}
\label{finaleq}
r_\e(x) &=&\int_{\partial \Omega}{\gamma(y) \frac{\partial r_\e}{\partial n} (y)N(x,y) \: \d s(y)} \nonumber \\
&=& \int_{\partial \Omega}{\gamma(y) \frac{\partial r_\e}{\partial n} (y)N(x,y)\eta(y) \: \d s(y)} \\
&=&  -\int_{\partial \Omega} { \frac{\partial \chi_\e}{\partial n}(y) \chi_\e(y) u_0(y)\gamma(y)  N(x,y) \eta(y) \: \d s(y)}+  {\mathcal O}(\capa(\omega_\e )^{\frac54}) \Vert f\Vert_{H^m(\Omega)} \Vert N(x,\cdot)\eta(\cdot) \Vert_{C^1(\partial \Omega)}~ . \nonumber
\end{eqnarray}
Finally, the upper bound in \cref{lem.capas} reveals that $\capa(\omega_\e)>0$ (since the $\omega_\e$ are non-empty), and that  for any function $\phi \in {\mathcal C}^1(\partial \Omega)$
$$ \left\lvert \frac{1}{\capa(\omega_\e)}\int_{\partial\Omega}{\frac{\partial \chi_\e}{\partial n} \chi_\e \phi \: \d s}  \right\lvert = \left\lvert \frac{1}{\capa(\omega_\e)} \int_{\Omega}{\nabla \chi_\e \cdot \nabla ( \chi_\e \phi ) \: \d y}  \right\lvert \leq  C || \phi ||_{{\mathcal C}^1(\partial\Omega)}~.$$
It follows from the Banach-Alaoglu theorem that, up to extraction of a subsequence, which we still label by $\e$, 
there exists a bounded linear functional $\mu$ on ${\mathcal C}^1(\partial \Omega)$ such that, for any $\phi \in {\mathcal C}^1(\partial\Omega)$: 
$$ \frac{1}{\capa(\omega_\e)} \int_{\partial\Omega}{\frac{\partial \chi_\e}{\partial n} \chi_\e \phi \: \d s} \xrightarrow{\e \to 0} \mu(\phi)~.$$
The lower bound in \cref{lem.capas}, in combination with Poincar\'e's inequality, reveals that $\mu(1)>0$, in other words that $\mu$ is non-trivial.
A combination of \cref{finaleq} and the above convergence result (with $\phi(\cdot) = u_0(\cdot) \gamma(\cdot) N(x,\cdot) \eta(\cdot )$) yields the desired representation formula
$$r_\e(x) = - \capa(\omega_\e) \mu_y\left[\eta(y) u_0(y)\gamma(y) N(x,y)\right] + \o(\capa(\omega_\e))~.$$
The uniformity of the convergence of the remainder term, when the point $x$ is confined to a fixed compact subset $K \Subset\Omega$, follows from the fact that the set of functions $\{ u_0(\cdot) \gamma(\cdot) N(x,\cdot) \eta(\cdot )\}_{x \in K} \subset {\mathcal C}^1(\partial \Omega)$ is compact in the ${\mathcal C}^1$ topology. 
\end{proof}

\subsection{Properties of the limiting distribution $\mu$}

\noindent The limiting distribution $\mu$ introduced in  \cref{th.repdir}
is a priori a distribution of order one on $\partial \Omega$, and as such it may depend on first-order derivatives of the argument function $\phi$. 
We now show that this is not the case, and that $\mu$ is actually a non negative Radon measure on $\partial\Omega$. 

\begin{proposition}\label{prop.caracmudir}
The distribution $\mu$ in \cref{eq.1storderdir} is a non-trivial, non negative Radon measure on $\partial \Omega$.
Moreover, the support of $\mu$ is contained in any compact subset $K \subset \partial \Omega$
such that $\omega_\e \subset K$ for $\e >0$ small enough.
\end{proposition}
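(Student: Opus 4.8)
The plan is to establish the three asserted properties of $\mu$ — that it is a non-negative measure (hence of order zero, not one), that it is non-trivial, and that its support lies in any common compact container $K$ of the $\omega_\e$ — by going back to the defining limit
$$\mu(\phi) = \lim_{\e \to 0} \frac{1}{\capa(\omega_\e)}\int_{\partial\Omega}{\frac{\partial \chi_\e}{\partial n}\chi_\e \phi\:\d s} = \lim_{\e \to 0}\frac{1}{\capa(\omega_\e)}\int_\Omega{\nabla\chi_\e\cdot\nabla(\chi_\e\phi)\:\d x},$$
and exploiting the structure $\nabla\chi_\e\cdot\nabla(\chi_\e\phi) = \phi|\nabla\chi_\e|^2 + \chi_\e\nabla\chi_\e\cdot\nabla\phi = \phi|\nabla\chi_\e|^2 + \tfrac12\nabla(\chi_\e^2)\cdot\nabla\phi$. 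Non-triviality is already in hand, since $\mu(1)>0$ was shown in the proof of \cref{th.repdir} via the lower bound in \cref{lem.capas} and Poincaré's inequality.

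\textbf{Non-negativity and order zero.} First I would note that by the Poincaré-type inequality \cref{eq.Poincarecapa} together with the $L^2$ estimate \cref{eq.L2estve} applied to $\chi_\e$ (with $g\equiv 1$), we have $\|\chi_\e\|_{L^2(\Omega)} \le M\capa(\omega_\e)^{3/4}$, while $\|\nabla\chi_\e\|_{L^2(\Omega)^d}^2 \le M\capa(\omega_\e)$. Hence the ``cross term'' is negligible:
$$\left|\frac{1}{\capa(\omega_\e)}\int_\Omega \tfrac12\nabla(\chi_\e^2)\cdot\nabla\phi\:\d x\right| = \left|\frac{1}{\capa(\omega_\e)}\int_\Omega \chi_\e\nabla\chi_\e\cdot\nabla\phi\:\d x\right| \le \frac{C}{\capa(\omega_\e)}\|\chi_\e\|_{L^2}\|\nabla\chi_\e\|_{L^2}\|\phi\|_{C^1} \le C\capa(\omega_\e)^{1/4}\|\phi\|_{C^1},$$
which tends to $0$. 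Therefore
$$\mu(\phi) = \lim_{\e\to 0}\frac{1}{\capa(\omega_\e)}\int_\Omega \phi\,|\nabla\chi_\e|^2\:\d x \qquad\text{for every }\phi\in C^1(\partial\Omega),$$
where on the right $\phi$ denotes any fixed $C^1(\overline\Omega)$ extension — and one checks the limit is independent of the extension, because changing the extension changes the integrand by a factor that vanishes on $\partial\Omega$ times $|\nabla\chi_\e|^2$, and $|\nabla\chi_\e|^2$ concentrates near $\partial\Omega$ in the appropriate weak sense; alternatively, and more cleanly, I would note that the measures $\nu_\e := \capa(\omega_\e)^{-1}|\nabla\chi_\e|^2\,\d x$ on $\overline\Omega$ have uniformly bounded total mass ($\nu_\e(\overline\Omega) = \capa(\omega_\e)^{-1}\|\nabla\chi_\e\|_{L^2}^2 \le M$ by \cref{lem.capas}), so along a further subsequence they converge weakly-$*$ to a non-negative Radon measure $\widetilde\mu$ on $\overline\Omega$; for $\phi\in C^1(\partial\Omega)$ extended to $C^1(\overline\Omega)$ we get $\mu(\phi) = \int_{\overline\Omega}\phi\:\d\widetilde\mu$, and since this is forced to equal the extension-independent value fixed by \cref{th.repdir}, the measure $\widetilde\mu$ is supported on $\partial\Omega$ and $\mu$ is its restriction, hence a non-negative Radon measure on $\partial\Omega$. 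In particular $\mu$ extends continuously to $C^0(\partial\Omega)$ and is of order zero. One subtlety to address: the limit defining $\mu$ was originally only known along a subsequence, and a priori different sub-subsequences could give different $\widetilde\mu$; but the boundary restriction is pinned down by \cref{th.repdir} (it is \emph{the} $\mu$ from that theorem), so any two weak-$*$ limits agree on $\partial\Omega$, which is all we need.

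\textbf{Support in $K$.} Suppose $\omega_\e\subset K$ for all small $\e$, with $K$ compact in $\partial\Omega$. I would take $\phi\in C^1(\partial\Omega)$ with $\operatorname{supp}\phi\cap K=\emptyset$ and an extension $\psi\in C^1(\overline\Omega)$ vanishing on a neighborhood of $K$ in $\overline\Omega$ (possible since $K$ is compact and disjoint from $\operatorname{supp}\phi$). Then $\psi\chi_\e$ vanishes on $\omega_\e$ (as $\psi\equiv 0$ there) and on $\Gamma_D$, so it is an admissible test function in the variational formulation \cref{eq.varfchiepsdir}: $\int_\Omega\nabla\chi_\e\cdot\nabla(\psi\chi_\e)\:\d x = 0$. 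But the boundary integral $\int_{\partial\Omega}\frac{\partial\chi_\e}{\partial n}\chi_\e\phi\:\d s$ equals exactly this quantity $\int_\Omega\nabla\chi_\e\cdot\nabla(\psi\chi_\e)\:\d x$ by the very integration by parts used to define $\mu$ (valid since $\chi_\e$ is harmonic in $\Omega$ and $\psi=\phi$ on $\partial\Omega$). Hence $\int_{\partial\Omega}\frac{\partial\chi_\e}{\partial n}\chi_\e\phi\:\d s = 0$ for all small $\e$, so $\mu(\phi)=0$. Since this holds for every $\phi\in C^1(\partial\Omega)$ with support disjoint from $K$, and $\mu$ is now known to be a measure, $\operatorname{supp}\mu\subset K$.

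\textbf{Main obstacle.} The delicate point is \emph{not} any single estimate — the cross-term bound and the support argument are routine — but rather the logical bookkeeping around subsequences and the independence of the extension. One must be careful that the object called $\mu$ in \cref{prop.caracmudir} is exactly the $\mu$ produced in \cref{th.repdir} (fixed along one chosen subsequence), and show that restricting further to make $\nu_\e\rightharpoonup\widetilde\mu$ does not change it; then that $\widetilde\mu$ is carried by $\partial\Omega$, which I would deduce either from the support argument above applied with $K=\partial\Omega$ (trivially $\omega_\e\subset\partial\Omega$, giving nothing) — no, rather from: for any $\psi\in C_c^\infty(\Omega)$, $\int_\Omega\psi|\nabla\chi_\e|^2 = \int_\Omega\nabla\chi_\e\cdot\nabla(\psi\chi_\e) - \tfrac12\int_\Omega\nabla(\chi_\e^2)\cdot\nabla\psi$, and since $\psi\chi_\e$ vanishes on $\omega_\e\cup\Gamma_D$ the first term is $0$ by \cref{eq.varfchiepsdir}, while the second is $O(\capa(\omega_\e)^{5/4})$ as in the cross-term estimate; dividing by $\capa(\omega_\e)$ and passing to the limit gives $\widetilde\mu(\psi)=0$, so $\widetilde\mu$ is supported on $\partial\Omega$. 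This is the argument I would write out in full; the remaining assertions then follow immediately.
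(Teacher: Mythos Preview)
Your proof is correct and follows essentially the same approach as the paper: split the integrand as $\phi|\nabla\chi_\e|^2 + \chi_\e\nabla\chi_\e\cdot\nabla\phi$, kill the cross term with the improved $L^2$ estimate \cref{eq.L2estve}, and use weak-$*$ compactness of the non-negative measures $\capa(\omega_\e)^{-1}|\nabla\chi_\e|^2\,\d x$, then handle the support via the variational formulation \cref{eq.varfchiepsdir} applied to $\psi\chi_\e$. The one place the paper is slightly cleaner is that it avoids your entire discussion of whether $\widetilde\mu$ is carried by $\partial\Omega$: by choosing the extension $\widetilde\psi\in C^1(\overline\Omega)$ with $\|\widetilde\psi\|_{C^0(\overline\Omega)} = \|\phi\|_{C^0(\partial\Omega)}$, it gets directly $|\mu(\phi)|\le C\|\phi\|_{C^0(\partial\Omega)}$, which is all that is needed to conclude $\mu$ is a Radon measure on $\partial\Omega$ --- no need to localize the limit measure $\nu$ to the boundary.
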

\begin{proof}
We recall from the proof of \cref{th.repdir} that the distribution $\mu$ is defined by: 
$$ \forall \phi \in {\mathcal C}^1(\partial \Omega), \:\: \mu(\phi) = \lim\limits_{\e \to 0}{\frac{1}{\capa(\omega_\e)}} \int_{\partial \Omega}{ \frac{\partial \chi_\e}{\partial n} \chi_\e \phi \: \d s}~,$$
where the limit is taken along a subsequence, and $\chi_\e \in H^1(\Omega)$ is the solution to the equation \cref{eq.chieps}. Let $\phi$ be an arbitrary function $\phi \in {\mathcal C}^1(\partial \Omega)$. Since $\partial \Omega$ is smooth, it is easy to construct a function $\widetilde \psi \in {\mathcal C}^1(\overline \Omega)$ 
such that
\begin{equation}
\label{psidef}
\widetilde \psi = \phi \text{ on } \partial \Omega, \text{ and } || \widetilde \psi||_{C^0(\overline \Omega)} = || \phi ||_{C^0(\partial \Omega)}~.
\end{equation}
Green's formula then yields
$$
\int_{\partial\Omega}{\frac{\partial \chi_\e}{\partial n} \chi_\e \phi \: \d s} = \int_\Omega{( \nabla \chi_\e \cdot \nabla \chi_\e )\widetilde \psi \: \d x} + \int_\Omega{(\nabla \chi_\e \cdot \nabla \widetilde \psi) \chi_\e \:\d x}~.
$$
As in the proof of \cref{th.repdir} (see \cref{eq.ctr2termrepdir}), the estimates of \cref{lem.firstest} show that
$$
\lim\limits_{\e \to 0}  \frac{1}{\capa(\omega_\e)} \int_\Omega{(\nabla \chi_\e \cdot \nabla \widetilde \psi) \chi_\e \:\d x} =0~,
$$ 
and as a consequence
\begin{equation} 
\label{mupart}
\mu(\phi) =\lim\limits_{\e \to 0} \frac{1}{\capa(\omega_\e)} \int_\Omega{(\nabla \chi_\e \cdot \nabla \chi_\e) \widetilde \psi \:\d x}~,
\end{equation}
for any function $\phi \in C^1(\partial \Omega)$, where $\widetilde \psi \in C^1(\overline \Omega)$ is related to $\phi$ by \cref{psidef}. On the other hand, using \cref{lem.capas}, there exists a constant $C >0$ such that
\begin{equation}\label{eq.boundedBAdir}
\forall \psi \in {\mathcal C}^0(\overline \Omega), \:\: \frac{1}{\capa(\omega_\e)} \left\lvert \int_\Omega ( \nabla \chi_\e \cdot \nabla \chi_\e) \psi \: \d x \right\lvert \leq C || \psi ||_{{\mathcal C}^0(\overline\Omega)}. 
\end{equation}
Hence, using again the Banach-Alaoglu theorem, there exists a subsequence of the $\e$'s and a non negative Radon measure $\nu$ on $\overline\Omega$ such that
$$ \forall \psi \in {\mathcal C^0}(\overline\Omega), \:\: \frac{1}{\capa(\omega_\e)} \int_\Omega{( \nabla \chi_\e \cdot \nabla \chi_\e) \psi \: \d x} \rightarrow \int_\Omega{\psi \:\d \nu}~.$$
Combining this with \cref{mupart} we conclude that:
$$\mu(\phi) = \int_\Omega{\widetilde \psi \:\d \nu}$$
for any $\phi \in C^1(\partial \Omega)$, where $\widetilde \psi \in C^1(\overline\Omega)$ is related to $\phi$ by \cref{psidef}. Moreover,
$$
\left| \int_\Omega{\widetilde \psi \:\d \nu} \right| \le C \Vert \widetilde \psi \Vert_{C^0(\overline \Omega)} = C || \phi ||_{C^0(\partial \Omega)}, 
$$
and we have thus proved that, for any $\phi \in {\mathcal C}^1(\partial \Omega)$
$$
|\mu(\phi)| \le C || \phi ||_{C^0(\partial \Omega)}~.
$$
This shows that $\mu$ is a Radon measure on $\partial \Omega$, the non negativity of which follows from that of $\nu$. 
Moreover, the proof of \cref{th.repdir} has already revealed that $\mu$ is non trivial since $\mu(1) > 0$.

Finally, let $K \Subset \partial \Omega$ be a compact subset of $\partial \Omega$ such that $\omega_\e \subset K$ for $\e >0$ small enough. Let 
$\phi\in C^1(\partial \Omega)$ be an arbitrary function with support in the relatively open subset $U := \partial \Omega \setminus K$.
Then, $\chi_\e \phi$ belongs to $H^{1/2}(\partial \Omega)$ and vanishes on $\omega_\e \cup \Gamma_D$, so that
$$
\int_{\partial\Omega}{\frac{\partial \chi_\e}{\partial n} \chi_\e \phi \: \d s}=0~.
$$
It follows that
$$
\mu(\phi) = \lim\limits_{\e \to 0}  \frac{1}{\capa(\omega_\e)} \int_{\partial\Omega}{\frac{\partial \chi_\e}{\partial n} \chi_\e \phi \: \d s} =0~.
$$
Since this holds true for any $\phi\in C^1(\partial \Omega)$ with support in $U$, the desired result about the support of $\mu$ follows.
\end{proof}

\cref{prop.caracmudir} immediately leads to the following Corollary to \cref{th.repdir}.

\begin{corollary}\label{repdir2}
Suppose $d=2$ or $d=3$. Let $\omega_\e$ be a sequence of non-empty, open Lipschitz subsets of $\partial \Omega$, which are all contained in $\Gamma_N$ and are well-separated from $\Gamma_D$ in the sense that \cref{assum.far} holds. Let $u_\e$ denote the solution to \cref{eq.uepsdir}.
Assume furthermore that  the capacity $\capa(\omega_\e)$ of $\omega_\e$ goes to $0$ as $\e \rightarrow 0$. 
Then there exists a subsequence, still denoted by $\e$, and a non-trivial, non negative Radon measure $\mu$ on $\partial \Omega$, such that for any fixed point $x \in \Omega$
$$ 
u_\e(x) = u_0(x) -\capa(\omega_\e) \int_{\partial \Omega}{u_0(y) \gamma(y) N(x,y) \:\d \mu(y)}  + \o(\capa(\omega_\e)).
$$
The measure $\mu$ depends only on the subsequence $\omega_\e$, $\Omega$, and $\Gamma_N$.
The support of $\mu$ lies inside any compact subset $K \subset \partial \Omega$ containing the $\omega_\e$ for $\e >0$ small enough, and the term $\o(\capa(\omega_\e))$ goes to zero faster than $\capa(\omega_\e)$ uniformly (in $x$) on compact subsets of $\Omega$. 
\end{corollary}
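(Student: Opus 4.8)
The plan is to deduce \cref{repdir2} directly from \cref{th.repdir} and \cref{prop.caracmudir}; the argument involves essentially no new analysis. First I would invoke \cref{th.repdir}, which under the stated hypotheses produces a subsequence (still labeled by $\e$), a cut-off function $\eta$ as in its statement, and a non-trivial distribution $\mu$ in the dual of $C^1(\partial \Omega)$, such that
\[
u_\e(x) = u_0(x) - \capa(\omega_\e)\, \mu_y\!\left[\eta(y)\gamma(y)u_0(y)N(x,y)\right] + \o(\capa(\omega_\e)),
\]
uniformly for $x$ in any fixed compact subset of $\Omega$. Next I would invoke \cref{prop.caracmudir}, which identifies $\mu$ with a non-trivial, non negative Radon measure on $\partial \Omega$ whose support is contained in any compact set $K$ with $\omega_\e \subset K$ for all small $\e$. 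By \cref{assum.far}, together with the inclusion $\omega_\e \subset \Gamma_N$, such a $K$ may be chosen inside $\Gamma_N$ and contained in $\{y \in \partial \Omega : \dist(y,\Gamma_D) > \dmin/2\}$; in particular the support of $\mu$ lies in this latter set.

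The only point deserving a word of explanation is the removal of the cut-off $\eta$, that is, the identity
\[
\mu_y\!\left[\eta(y)\gamma(y)u_0(y)N(x,y)\right] = \int_{\partial \Omega} u_0(y)\gamma(y)N(x,y)\, \d\mu(y).
\]
Since $x \in \Omega$ is an interior point, the decomposition $N(x,y) = \gamma(x)^{-1}G(x,y) + R(x,y)$, the regularity of $R(x,\cdot)$ recalled in \cref{sec.fundasol}, and the interior regularity of $u_0$ away from $\Sigma$ together show that $y \mapsto u_0(y)\gamma(y)N(x,y)$ is smooth in a neighborhood of $K$ in $\partial \Omega$; in particular it is continuous on the support of $\mu$, so the integral on the right makes sense. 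Because any admissible $\eta$ equals $1$ on $\{\dist(\cdot,\Gamma_D) > \dmin/2\}$, hence on $K$ and on the support of $\mu$, the two integrands $\eta(\cdot)\gamma(\cdot)u_0(\cdot)N(x,\cdot)$ and $\gamma(\cdot)u_0(\cdot)N(x,\cdot)$ coincide $\mu$-almost everywhere, and the displayed identity follows since $\mu$ is a measure. Inserting it in the expansion above yields exactly the formula asserted in \cref{repdir2}.

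All remaining claims are inherited without change: the uniformity of the term $\o(\capa(\omega_\e))$ on compact subsets of $\Omega$, the dependence of $\mu$ only on the subsequence $\omega_\e$, on $\Omega$ and on $\Gamma_N$, and the location of its support, come verbatim from \cref{th.repdir} and \cref{prop.caracmudir}. I do not anticipate any genuine obstacle; the one subtlety worth stating carefully is that the separation hypothesis \cref{assum.far} is precisely what guarantees, simultaneously, that $N(x,\cdot)$ is regular on the support of $\mu$ and that $\eta$ equals $1$ there, thereby justifying the passage from the abstract pairing $\mu_y[\,\cdot\,]$ to a concrete integral against the measure $\d\mu$.
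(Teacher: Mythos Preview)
Your proposal is correct and matches the paper's approach exactly: the paper gives no explicit proof, stating only that \cref{prop.caracmudir} ``immediately leads to'' the corollary from \cref{th.repdir}. Your write-up simply spells out the one detail the paper leaves implicit, namely that the cut-off $\eta$ can be dropped because $\eta\equiv 1$ on the support of $\mu$.
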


\begin{remark}\label{rem.cply}
Let us comment about the physical meaning of the representation formula of \cref{repdir2}.
\begin{itemize}
\item The first order term in this expansion arises as the superposition of the potentials $u_0(y) \gamma(y) N(x,y)$ created at $x$ by point sources (monopoles) 
which are distributed on the ``limiting location'' of the vanishing subsets $\omega_\e$. The negative sign in front of this term indicates that these point sources have been replaced by a ``ground'' (homogeneous Dirichlet boundary condition) when passing from the background physical situation to the perturbed one. 
\item Assuming for simplicity that $f$ has compact support inside $\Omega$, the fact that the term $\o(\capa(\omega_\e))$ (in \cref{repdir2})  is uniformly small on compact subsets of $\Omega$ leads to the following asymptotic expansion for the compliance (or power consumption) of $\Omega$:
$$
\int_{\Omega}f \, u_\e \: \d x = \int_{\Omega} f \, u_0 \: \d x -\capa(\omega_\e) \int_{\Omega} f(x) \int_{\partial \Omega}{\gamma(y) u_0(y) N(x,y) \:\d \mu(y)}\: \d x  + \o(\capa(\omega_\e))~.
$$
Due to the symmetry of the fundamental solution (see \cref{sec.fundasol}), we have
$$ u_0(y) = \int_\Omega{N(x,y) f(x) \: \d x}~, $$
and this now implies
$$ \int_{\Omega}f \, u_\e \: \d x = \int_{\Omega} f \, u_0 \: \d x -\capa(\omega_\e) \int_{\partial\Omega} { \gamma(x) u_0^2(x)\: \d \mu(x)}  + \o(\capa(\omega_\e))~.
$$
In particular, the emergence of a small Dirichlet region within the homogeneous Neumann zone $\Gamma_N$ always decreases the value of the compliance, 
which is consistent with physical intuition, 
since it amounts to enlarging the region of the boundary $\partial \Omega$ where the voltage potential is grounded.
\end{itemize}
\end{remark}


\section{Replacing Dirichlet conditions by Neumann conditions on a ``small set"}\label{sec.neu}

\noindent We presently turn to the opposite situation of that considered in \cref{sec.dir}. 
The considered sequence $\omega_\e$ of ``small'', open Lipschitz subsets of $\partial \Omega$ is now included in $\Gamma_D$, 
and it is well-separated from $\Gamma_N$ in the sense that \cref{assum.far} holds.
The homogeneous Dirichlet boundary condition satisfied by the ``background'' voltage potential $u_0$ on $\Gamma_D$ (see \cref{eq.bg})
is dropped on $\omega_\e$, where it is  replaced by a homogeneous Neumann boundary condition: the perturbed voltage potential $u_\e$ 
is then the solution to the equation \cref{eq.uepsneu}.
Like in \cref{sec.dir}, without any further assumption on $\omega_\e$, we aim to derive a representation formula for $u_\e -u_0$ as $\e\to 0$.\par\medskip

Let us start by defining the quantity $e(\omega_\e)$ which will measure the ``smallness'' of a set $\omega_\e$ in the present setting. 
When $\omega \subset \mathbb{R}^d$ is an arbitrary finite collection of disjoint Lipschitz hypersurfaces, we introduce:
\begin{equation} \label{eq.capaneu}
e(\omega) = \max\limits_{\kappa \in {\mathcal C}^\infty_c(\R^d), \atop \kappa(x) = \pm 1 \text{ for }  x\in\overline{\omega}} \Bigg\{  \int_{\R^d \setminus \overline\omega}{(z^2 + |\nabla z |^2 ) \: \d x}, \:\: z \in H^1(\R^d \setminus \overline \omega ) \text{ s.t. } \left\{
\begin{array}{cl}
- \Delta z + z = 0 & \text{in } \R^d \setminus \overline{\omega} , \\
\frac{\partial z}{\partial n} = \kappa & \text{on }\omega
\end{array}
\right. \Bigg\}~.
\end{equation}
In the above formulation, $n$ stands for any smooth unit normal vector field on (each connected component of) $\omega$, and the value of $e(\omega)$ does not depend on the choice of the particular direction(s) of $n$, due to the presence of the maximum. 
More precisely, when $\omega$ has only one connected component, $e(\omega)$ is the energy of the unique $H^1(\R^d \setminus \overline \omega)$ solution $z$ to the equation  
\begin{equation}\label{eq.funccapneu}
 \left\{
\begin{array}{cl}
- \Delta z + z = 0 & \text{in } \R^d \setminus \overline{\omega}~ , \\
\frac{\partial z}{\partial n} = 1 & \text{on }\omega~,
\end{array}
\right. 
\end{equation}
and the choice of an orientation for the normal vector $n$ to $\omega$ only affects the sign of $z$ and not the value of the energy $e(\omega)$. 
When $\omega$ has several connected components, a direction for $n$ can be set independently on each connected component of $\omega$; 
the possible choices for $\kappa$ in \cref{eq.capaneu} correspond to all possible configurations of the field $n$, and the quantity $e(\omega)$ captures the configuration with maximum energy. 

In view of the discussion in \cref{sec.capacity} (see notably \cref{rem.capadir}), it is very tempting to interpret $e(\omega)$ as a sort of ``capacity'' of the set $\omega$,
which, in a Neumann context, measures the energy of the potential in an ``equilibrium'' situation where the current passing through $\omega$ is constant, with amplitude equal to $1$. 

\begin{remark}\label{rem.explcapaneu}
In spite of its intuitive physical interpretation, the quantity $e(\omega)$ is not very explicit, 
since it involves the solution of a boundary value problem posed on the whole ambient space $\R^d$. 
For this reason, we derive in \cref{app.eom} several interesting surrogate quantities, depending only on the geometry of $\omega$, 
which in some particular cases are equivalent to $e(\omega)$. 

In \cref{sec.calcneu}, we shall conduct explicit calculations of the solution $u_\e$ to \cref{eq.uepsneu}, in the particular case where the inclusion set $\omega_\e$ is a ``surfacic ball'' on $\partial \Omega$. 
The following estimates for the ``smallness'' of the planar disk $\D_\e$ defined in \cref{eq.defDecapa}, which follow straighforwardly from \cref{app.eom} (see in particular \cref{rem.eDe}), will be used repeatedly:
\begin{equation}\label{eq.capaneuDe}
 e(\D_\e) \leq C_2 \e^2 \text{ if } d =2, \text{ and } e(\D_\e) \leq C_3 \e^3 \text{ if } d =3, 
 \end{equation}
for some universal constants $C_2$, $C_3$.
\end{remark}

\subsection{Preliminary estimates}

\noindent We start with a preliminary result, which is analogous to \cref{lem.capas},  
and is essential for the derivation of our asymptotic representation formula. Let $\zeta_\e$ be the unique $H^1(\Omega)$ solution to 
\begin{equation}\label{eq.zepsneum}
\left\{ 
\begin{array}{cl}
-\Delta \zeta_\e = 0 & \text{in } \Omega~, \\
\zeta_\e = 0 & \text{on } \Gamma_D \setminus \overline{\omega_\e}~, \\
\frac{\partial \zeta_\e}{\partial n} = 1 & \text{on } \omega_\e~,\\
\frac{\partial \zeta_\e}{\partial n} = 0 & \text{on } \Gamma_N~.
\end{array}
\right.
\end{equation}
The following lemma relates the energy of $\zeta_\e$ with the quantity $e(\omega_\e)$ defined in \cref{eq.capaneu}.  

\begin{lemma}\label{lem.zetaecapaneu}
Let $\omega_\e$ be an open Lipschitz subset of the region $\Gamma_D \subset \partial \Omega$, 
which lies ``far'' from $\Gamma_N$ in the sense that \cref{assum.far} holds,
and let $\zeta_\e$ be the solution to \cref{eq.zepsneum}.
There exist two constants $ 0 < m \leq M$, which depend only on $\Omega$, $\Gamma_D$ and the lower bound $\dmin$ on the distance from $\omega_\e$ to $\Gamma_N$, but are otherwise independent of $\omega_\e$,  such that 
 $$ m\: e(\omega_\e) \leq || \zeta_\e ||_{H^1(\Omega)}^2 \leq M\:  e(\omega_\e).$$
\end{lemma}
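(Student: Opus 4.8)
The plan is to follow the template of the proof of \cref{lem.capas}, with the Dirichlet capacity replaced by $e(\omega_\e)$, the potential $\chi_\e$ replaced by $\zeta_\e$, and the trivial extension replaced by a \emph{reflection across $\partial\Omega$}. First, by \cref{assum.far} every $\omega_\e$ is contained in the fixed compact set $K:=\{y\in\overline{\Gamma_D}:\dist(y,\Gamma_N)\ge\dmin\}\Subset\Gamma_D$, so $\zeta_\e$ vanishes on the fixed, non-empty, relatively open set $\Gamma_D\setminus K$; hence a Poincar\'e inequality $\|\zeta_\e\|_{H^1(\Omega)}\le C\|\nabla\zeta_\e\|_{L^2(\Omega)}$ holds with $C=C(\Omega,\Gamma_D,\dmin)$, and it suffices to compare $\|\nabla\zeta_\e\|_{L^2(\Omega)}^2$ with $e(\omega_\e)$. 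Testing the variational form of \cref{eq.zepsneum} with $\zeta_\e$ gives $\|\nabla\zeta_\e\|_{L^2(\Omega)}^2=\int_{\omega_\e}\zeta_\e\,\d s$, and optimising over rescalings of the test function yields the Rayleigh formula
\begin{equation*}
\|\nabla\zeta_\e\|_{L^2(\Omega)}^2=\sup\Big\{\frac{(\int_{\omega_\e}v\,\d s)^2}{\|\nabla v\|_{L^2(\Omega)}^2}\ :\ v\in H^1(\Omega),\ v=0\text{ on }\Gamma_D\setminus\overline{\omega_\e},\ \nabla v\not\equiv 0\Big\};
\end{equation*}
dually, $z_{\kappa}$ in \cref{eq.capaneu} is the Riesz representative in $H^1(\R^d\setminus\overline{\omega_\e})$ of $\phi\mapsto\int_{\omega_\e}\kappa\,[\phi]\,\d s$ (with $[\phi]$ the jump of $\phi$ across $\omega_\e$), so $e(\omega_\e)=\|z_{\kappa^*}\|_{H^1(\R^d\setminus\overline{\omega_\e})}^2$ is the supremum of $(\int_{\omega_\e}\kappa[\phi]\,\d s)^2/\|\phi\|_{H^1(\R^d\setminus\overline{\omega_\e})}^2$ over sign patterns $\kappa$ and $\phi$. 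Finally, since $\Omega$ is smooth, fix a tubular neighbourhood $\mathcal W$ of $\partial\Omega$ and a smooth involution $\sigma:\mathcal W\to\mathcal W$ exchanging the two sides of $\partial\Omega$ and fixing $\partial\Omega$ pointwise (composition with $\sigma$ being bounded on the relevant $H^1$ spaces), together with a cut-off $\theta\in{\mathcal C}^\infty_c(\mathcal W)$ with $\theta\equiv1$ near $K$, $\theta\equiv0$ near $\overline{\Gamma_N}$, and $\|\theta\|_{{\mathcal C}^1}\le C(\dmin)$.

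\emph{Upper bound $\|\zeta_\e\|_{H^1(\Omega)}^2\le M\,e(\omega_\e)$.} Let $\widehat\zeta_\e:=\theta\zeta_\e$ on $\overline\Omega$, $\widehat\zeta_\e:=-(\theta\zeta_\e)\circ\sigma$ on $\mathcal W\setminus\overline\Omega$, and $\widehat\zeta_\e:=0$ elsewhere. Since $\theta\zeta_\e$ vanishes on $\partial\Omega\setminus\omega_\e$ (on $\Gamma_D\setminus\overline{\omega_\e}$ because $\zeta_\e$ does, elsewhere because $\theta$ does), $\widehat\zeta_\e\in H^1(\R^d\setminus\overline{\omega_\e})$, its jump across $\omega_\e$ equals $2\zeta_\e$ (as $\theta\equiv1$ there), and $\|\widehat\zeta_\e\|_{H^1(\R^d\setminus\overline{\omega_\e})}^2\le C\|\theta\zeta_\e\|_{H^1(\Omega)}^2\le C\|\zeta_\e\|_{H^1(\Omega)}^2\le C\|\nabla\zeta_\e\|_{L^2(\Omega)}^2$. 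Inserting $\widehat\zeta_\e$ into the variational characterisation of $e(\omega_\e)$, with the sign pattern for which $\int_{\omega_\e}\kappa[\widehat\zeta_\e]\,\d s=2\int_{\omega_\e}\zeta_\e\,\d s=2\|\nabla\zeta_\e\|_{L^2(\Omega)}^2$, gives $e(\omega_\e)\ge 4\|\nabla\zeta_\e\|_{L^2(\Omega)}^4/\|\widehat\zeta_\e\|_{H^1}^2\ge c\|\nabla\zeta_\e\|_{L^2(\Omega)}^2\ge c'\|\zeta_\e\|_{H^1(\Omega)}^2$.

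\emph{Lower bound $m\,e(\omega_\e)\le\|\zeta_\e\|_{H^1(\Omega)}^2$.} Assume for brevity that $\omega_\e$ is connected, and let $z_\e:=z_{\kappa^*}$, so $e(\omega_\e)=\|z_\e\|_{H^1(\R^d\setminus\overline{\omega_\e})}^2=|\int_{\omega_\e}[z_\e]\,\d s|$. Set $v_\e:=\theta\,(z_\e|_\Omega-z_\e\circ\sigma)$ on $\Omega$. Because $z_\e$ is continuous across $\partial\Omega\setminus\overline{\omega_\e}$ and $\sigma$ fixes $\partial\Omega$, the difference $z_\e|_\Omega-z_\e\circ\sigma$ vanishes on $\partial\Omega\setminus\overline{\omega_\e}$; hence $v_\e\in H^1(\Omega)$ vanishes on $\Gamma_D\setminus\overline{\omega_\e}$, its trace on $\omega_\e$ equals the jump $[z_\e]$ (since $\theta\equiv1$ there and $z_\e\circ\sigma$ supplies the opposite side of $\omega_\e$), and $\|\nabla v_\e\|_{L^2(\Omega)}^2\le C\|z_\e\|_{H^1(\R^d\setminus\overline{\omega_\e})}^2=C\,e(\omega_\e)$. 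Therefore, by the Rayleigh formula, $\|\zeta_\e\|_{H^1(\Omega)}^2\ge\|\nabla\zeta_\e\|_{L^2(\Omega)}^2\ge(\int_{\omega_\e}[z_\e]\,\d s)^2/\|\nabla v_\e\|_{L^2(\Omega)}^2\ge e(\omega_\e)^2/(C\,e(\omega_\e))=e(\omega_\e)/C$.

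All constants are $\e$-independent since only the fixed data $K$, $\mathcal W$, $\sigma$, $\theta$ and the fixed number $\dmin$ enter, exactly as in \cref{lem.capas}. The step I expect to require the most care is a clean treatment of the exterior problem \cref{eq.capaneu}: its variational formulation and the precise jump/boundary pairing in the unbounded domain $\R^d\setminus\overline{\omega_\e}$, the well-posedness and the identification $e(\omega_\e)=\|z_{\kappa^*}\|^2_{H^1(\R^d\setminus\overline{\omega_\e})}=|\int_{\omega_\e}\kappa^*[z_{\kappa^*}]\,\d s|$, and—when $\omega_\e$ is disconnected—the bookkeeping of the sign patterns $\kappa$, which forces one to localise the reflection construction near each of the (mutually separated) connected components of $\omega_\e$.
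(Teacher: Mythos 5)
Your proof is correct (modulo a couple of inconsequential sign slips — e.g., with $\sigma$ fixing $\partial\Omega$ pointwise the odd reflection gives $[\widehat\zeta_\e]=-2\zeta_\e$ on $\omega_\e$, and the trace of $v_\e$ on $\omega_\e$ is $-[z_\e]$ rather than $[z_\e]$; since you immediately take absolute values or choose the sign of $\kappa$, nothing breaks). However, the route is genuinely different from the paper's. The paper never introduces a reflection across $\partial\Omega$: for the upper bound it tests $\|\nabla\zeta_\e\|^2=\int_{\omega_\e}\zeta_\e\,ds$ against the paper's $z_\e$ directly by integrating by parts \emph{inside} $\Omega$ (using $-\Delta z_\e+z_\e=0$ there and a cutoff $\eta$), and for the lower bound it uses a bounded extension operator $E:H^{1/2}(\partial\Omega)\to H^1(\Omega)$ to turn the outer trace $z_\e^+$ into a function on $\Omega$, then estimates $\int_{\partial\Omega}\frac{\partial\zeta_\e}{\partial n}|Ez_\e^+-z_\e|$. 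You instead make both test functions explicit via an odd reflection in a tubular neighbourhood ($\widehat\zeta_\e$ for the upper bound, $v_\e=\theta(z_\e-z_\e\circ\sigma)$ for the lower bound) and read off the two inequalities from the Riesz/Rayleigh characterisations of $\|\nabla\zeta_\e\|^2$ and of $e(\omega_\e)$. The payoff of your approach is that the duality structure is completely transparent and the two bounds become manifestly symmetric (each is ``plug the natural competitor into the other side's variational problem''); the cost is that the reflection construction requires the tubular neighbourhood and, as you correctly flag, the disconnected case requires inserting (a smooth extension of) the optimal sign pattern $\kappa^*$ into the definition of $v_\e$ so that $\int_{\omega_\e}v_\e\,ds$ actually reproduces $e(\omega_\e)$ rather than the unsigned integral $\int_{\omega_\e}[z_\e]\,ds$. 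The paper's argument handles arbitrary sign patterns from the start by estimating $-\int_{\omega_\e}\kappa[z_\e]\,ds\le\int_{\omega_\e}\frac{\partial\zeta_\e}{\partial n}|[z_\e]|\,ds$, so no component-by-component bookkeeping is needed.
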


\begin{proof} \textit{We start by looking at the right-hand inequality.}
The latter is actually quite natural, since $\zeta_\e$ can be seen as arising from the solution $z$ to (an equation like) \cref{eq.funccapneu}, for a suitable function $\kappa$, by ``adding Dirichlet boundary conditions''. An adapted version of the Poincar\'e inequality for functions with vanishing trace on the set 
$$ \left\{ x \in \Gamma_D, \:\: \dist(x,\Gamma_N) < \dmin /3 \right\}$$ 
reveals that there exists a constant $C >0$ which only depends on $\Omega$, $\Gamma_N$ and $\dmin$ such that 
\begin{equation}\label{eq.poincareze}
 || \zeta_\e ||_{H^1(\Omega)} \leq  C || \nabla \zeta_\e ||_{L^2(\Omega)^d}~.
 \end{equation} 
Let $z_\e$ be the solution to \cref{eq.funccapneu}, 
where $n$ is the unit normal vector to $\partial \Omega$ pointing outward from $\Omega$ (in particular, it is normal to $\omega_\e$) 
and $\kappa$ constantly equals $1$ on $\omega_\e$. 
An integration by parts, using the boundary conditions satisfied by $\zeta_\e$ and $z_\e$, yields
$$  \begin{array}{>{\displaystyle}cc>{\displaystyle}l}
 || \nabla \zeta_\e ||_{L^2(\Omega)^d}^2 &=&  \int_\Omega{\nabla \zeta_\e \cdot \nabla \zeta_\e \: \d x} \\[0.75em]
 &=& \int_{\omega_\e}{\frac{\partial \zeta_\e}{\partial n} \zeta_\e \: \d s} \\[0.75em]
 &=& \int_{\omega_\e}{\frac{\partial z_\e}{\partial n} \zeta_\e \: \d s} \\[0.75em]
 &=& \int_{\partial \Omega}{\frac{\partial z_\e}{\partial n} (\eta \zeta_\e) \: \d s}~, \\
 \end{array}
 $$ 
 where $\eta$ is a smooth function such that 
 $$\eta \equiv 1 \text{ on } \left\{ x \in \Gamma_D, \:\: \dist(x,\Gamma_N) > \dmin /2 \right\} \text{ and } 
 \eta \equiv 0 \text{ on } \left\{ x \in \partial \Omega, \:\: \dist(x,\Gamma_N) < \dmin /3 \right\}~.$$
 It follows that 
 $$  \begin{array}{>{\displaystyle}cc>{\displaystyle}l}
 || \nabla \zeta_\e ||_{L^2(\Omega)^d}^2 &=&\int_\Omega{\Big( \nabla z_\e \cdot \nabla (\eta \zeta_\e) + z_\e \eta \zeta_\e \Big)\: \d x} \\
 &\leq& C ||z_\e ||_{H^1(\Omega)}   ||\zeta_\e ||_{H^1(\Omega)}\\
 &\leq& C ||z_\e ||_{H^1(\R^d \setminus \overline \omega_\e)}   ||\nabla \zeta_\e ||_{L^2(\Omega)^d}~,
  \end{array}
 $$ 
where we have used the Poincar\'e inequality \cref{eq.poincareze}. 
The desired inequality now follows from the definition \cref{eq.capaneu} of $e(\omega_\e)$ and repeated use of  the Poincar\'e inequality \cref{eq.poincareze}. 
\par\medskip
\noindent \textit{Let us now turn to the left-hand inequality.}
To this end, let $z_\e$ be the $H^1(\R^d \setminus \overline \omega_\e)$ solution to (an equation like) \cref{eq.funccapneu}, 
where $\kappa$ is any ${\mathcal C}^\infty_c(\R^d)$ function taking values $1$ or $-1$ on $\omega_\e$, and $n$ again is chosen to be the unit normal to $\partial\Omega$, pointing outward $\Omega$. 
The variational formulation associated to (an equation like) \cref{eq.funccapneu} and an integration by parts immediately imply that
$$  \begin{array}{>{\displaystyle}cc>{\displaystyle}l}
 || z_\e ||_{H^1(\R^d\setminus \overline{\omega_\e})}^2 &=&  \int_{\R^d \setminus \overline \omega_\e}{\Big( z_\e ^2 + \nabla z_\e \cdot \nabla z_\e \Big) \: \d x} \\
 &=& -\int_{\omega_\e}{\kappa (z_\e^+ - z_\e^-) \: \d s}~. \\
 \end{array}
 $$ 
Here we have denoted by $z_\e^+$ and $z_\e^-$ the one-sided traces of $z_\e$ on $\omega_\e$ from the exterior and  the interior of $\Omega$, respectively (see \cref{eq.oslimits}).
 We obtain
 $$  \begin{array}{>{\displaystyle}cc>{\displaystyle}l}
 || z_\e ||_{H^1(\R^d\setminus \overline{\omega_\e})}^2 &\leq & \int_{\omega_\e}{\frac{\partial \zeta_\e}{\partial n} |z_\e^+ - z_\e^-| \: \d s}\\
 &=& \int_{\partial \Omega}{\frac{\partial \zeta_\e}{\partial n}| z_\e^+ - z_\e^-| \: \d s}~, \\
 \end{array}
 $$ 
 where we have used the fact that $z_\e$ is continuous across $\partial \Omega$ (in the sense of traces) except on $\omega_\e$. Since $\Omega$ is smooth, there exists a bounded linear extension operator $E : H^{1/2}(\partial \Omega) \to H^1(\Omega)$ such that
 $$\forall u \in H^{1/2}(\partial \Omega), \:\: || E u ||_{H^1(\Omega)} \leq C|| u ||_{H^{1/2}(\partial \Omega)}  \text{ and } E u = u \text{ on } \partial \Omega~,$$
 for a constant $C$ which depends only on $\Omega$.
Based on the previous estimate we calculate
 $$  \begin{array}{>{\displaystyle}cc>{\displaystyle}l}
 || z_\e ||_{H^1(\R^d\setminus \overline{\omega_\e})}^2 &\leq& \int_{\partial \Omega}{\frac{\partial \zeta_\e}{\partial n}| E z_\e^+ - z_\e^-| \: \d s}\\
 &=& \int_{\Omega}{\nabla \zeta_\e \cdot \nabla | E z_\e^+ - z_\e| \: \d x}\\
 &\leq& C || \nabla \zeta_\e ||_{L^2(\Omega)^d} || Ez_\e^+ -z_\e ||_{H^1(\Omega)} \\
 &\leq& C || \nabla \zeta_\e ||_{L^2(\Omega)^d} \left(|| z_\e^+ ||_{H^{1/2}(\partial \Omega)}+ ||z_\e ||_{H^1(\Omega)}\right)\\
 &\leq& C || \nabla \zeta_\e ||_{L^2(\Omega)^d} || z_\e ||_{H^1(\R^d \setminus \overline \omega_\e)}~,
 \end{array}
 $$ 
 which finally results in the desired inequality 
 $$  || z_\e ||_{H^1(\R^d\setminus \overline{\omega_\e})} \leq C || \nabla \zeta_\e ||_{L^2(\Omega)^d}~.$$
 Since this holds for any choice of the function $\kappa \in {\mathcal C}^\infty_c(\R^d)$ having values $1$ or $-1$ on $\omega_\e$, 
 the desired inequality follows by taking the maximum with respect to any such choice.
\end{proof}

We now consider the $H^1(\Omega)$ solution $v_\e$ to the boundary value problem
\begin{equation}\label{eq.vepsneu}
\left\{ 
\begin{array}{cl}
-\dv(\gamma \nabla v_\e) = 0 & \text{in } \Omega~, \\
v_\e = 0 & \text{on } \Gamma_D \setminus \overline{\omega_\e}~, \\
\gamma\frac{\partial v_\e}{\partial n} = g & \text{on } \omega_\e~,\\
\gamma\frac{\partial v_\e}{\partial n} = 0 & \text{on } \Gamma_N~,
\end{array}
\right.
\end{equation}
where $g$ is a given ${\mathcal C}^0(\overline \Omega)$ function.
Our next result provides norm bounds for $v_\e$ in terms of the expression $e(\omega_\e)$.

\begin{lemma}\label{prop.prelestneu}
Suppose $d=2$ or $d=3$. Let $\omega_\e$ be an open Lipschitz subset of the region $ \Gamma_D \subset \partial \Omega$, which lies ``far" from $\Gamma_N$ in the sense that \cref{assum.far} holds. There exists a constant $M$, which depends only on $\alpha$, $\beta$, the coercivity constants of $\gamma$, $\Omega$, $\Gamma_N$ and the lower bound $\dmin$ on the distance from $\omega_\e$ to $\Gamma_N$, but is otherwise independent of $\omega_\e$,  such that 
the function $ v_\e$ in \cref{eq.vepsneu} satisfies the following $H^1$ estimate
\begin{equation}\label{eq.H1estneu}
 || v_\e ||_{H^1(\Omega)} \leq M e(\omega_\e)^{\frac{1}{2}} \: || g ||_{{\mathcal C}^0(\overline \Omega)}~. 
 \end{equation}
In addition, the following ``improved'' $L^2$ estimate holds
\begin{equation}\label{eq.L2estneu}
 || v_\e ||_{L^2(\Omega)} \leq M e(\omega_\e)^{\frac{3}{4}} \: || g ||_{{\mathcal C}^0(\overline \Omega)}~.
 \end{equation}
The quantity $e(\omega_\e)$ is that defined in \cref{eq.capaneu}.
\end{lemma}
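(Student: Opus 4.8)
The plan is to mirror the proof of \cref{lem.firstest}: obtain \cref{eq.H1estneu} by testing the variational formulation of \cref{eq.vepsneu} against $v_\e$ itself, and obtain the sharper bound \cref{eq.L2estneu} by an Aubin--Nitsche duality argument. In both steps the crucial auxiliary object is the capacitary-type potential $\zeta_\e$ of \cref{eq.zepsneum}, for which \cref{lem.zetaecapaneu} provides $\Vert \zeta_\e\Vert_{H^1(\Omega)}^2\le M\,e(\omega_\e)$. The variational formulation of \cref{eq.zepsneum} reads $\int_\Omega \nabla\zeta_\e\cdot\nabla\psi\,\d x = \int_{\omega_\e}\psi\,\d s$ for every $\psi\in H^1(\Omega)$ with $\psi=0$ on $\Gamma_D\setminus\overline{\omega_\e}$.

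\textbf{A trace estimate.} First I would record the following consequence, which plays the role that \cref{eq.L2estve} (applied to $\chi_\e$) plays in \cref{lem.firstest}: for any $\varphi\in H^1(\Omega)$ vanishing on $\Gamma_D\setminus\overline{\omega_\e}$, choosing $\psi=\varphi$ above and using Cauchy--Schwarz and \cref{lem.zetaecapaneu} gives
$$ \Big| \int_{\omega_\e}\varphi\,\d s\Big| = \Big| \int_\Omega \nabla\zeta_\e\cdot\nabla\varphi\,\d x\Big| \le \Vert\nabla\zeta_\e\Vert_{L^2(\Omega)^d}\,\Vert\nabla\varphi\Vert_{L^2(\Omega)^d}\le M^{1/2}\,e(\omega_\e)^{1/2}\,\Vert\nabla\varphi\Vert_{L^2(\Omega)^d}~. $$
Applied to $\varphi=|v_\e|$ (which vanishes on $\Gamma_D\setminus\overline{\omega_\e}$, belongs to $H^1(\Omega)$, and satisfies $|\nabla|v_\e||=|\nabla v_\e|$ a.e.) this yields $\int_{\omega_\e}|v_\e|\,\d s\le M^{1/2}e(\omega_\e)^{1/2}\Vert\nabla v_\e\Vert_{L^2(\Omega)^d}$. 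For \cref{eq.H1estneu}, taking $v_\e$ as a test function in \cref{eq.vepsneu} gives $\alpha\Vert\nabla v_\e\Vert_{L^2(\Omega)^d}^2\le\int_\Omega\gamma|\nabla v_\e|^2\,\d x = \int_{\omega_\e}g\,v_\e\,\d s\le\Vert g\Vert_{{\mathcal C}^0(\overline\Omega)}\int_{\omega_\e}|v_\e|\,\d s$; combining with the previous display and an $\e$-independent Poincar\'e inequality of the type \cref{eq.poincareze} (legitimate because $v_\e=0$ on $\Gamma_D\setminus\overline{\omega_\e}\supset\{x\in\Gamma_D:\dist(x,\Gamma_N)<\dmin/3\}$) gives \cref{eq.H1estneu}.

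\textbf{The $L^2$ estimate.} For the improved bound I would run the Aubin--Nitsche trick. Let $w_\e\in H^1_{\Gamma_D}(\Omega)$ solve the \emph{background}-type problem $-\dv(\gamma\nabla w_\e)=v_\e$ in $\Omega$, $w_\e=0$ on $\Gamma_D$, $\gamma\,\partial w_\e/\partial n=0$ on $\Gamma_N$. Since $\dist(\omega_\e,\Sigma)\ge\dmin$, the function $w_\e$ is smooth in a fixed neighborhood of all the $\omega_\e$; introducing a cut-off $\eta$ equal to $1$ near all the $\omega_\e$ and vanishing near $\Sigma$ (as around \cref{ellregwe}), interior elliptic regularity together with the embedding $H^3(\Omega)\subset{\mathcal C}^1(\overline\Omega)$ (here $d\le 3$) gives $\Vert\gamma\,\partial w_\e/\partial n\Vert_{{\mathcal C}^0(\omega_\e)}\le C\Vert\eta w_\e\Vert_{{\mathcal C}^1(\overline\Omega)}\le C\Vert v_\e\Vert_{H^1(\Omega)}$. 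Green's formula then yields
$$ \Vert v_\e\Vert_{L^2(\Omega)}^2 = \int_\Omega\bigl(-\dv(\gamma\nabla w_\e)\bigr)v_\e\,\d x = \int_\Omega\gamma\nabla w_\e\cdot\nabla v_\e\,\d x - \int_{\omega_\e}\gamma\frac{\partial w_\e}{\partial n}\,v_\e\,\d s~, $$
the boundary term localizing to $\omega_\e$ because $\gamma\,\partial w_\e/\partial n=0$ on $\Gamma_N$ and $v_\e=0$ on $\Gamma_D\setminus\overline{\omega_\e}$. The key point -- and the reason $L^2$ gains over $H^1$ -- is that the first term on the right-hand side \emph{vanishes}: $w_\e$ is an admissible test function in the variational formulation of \cref{eq.vepsneu} (it vanishes on $\Gamma_D\setminus\overline{\omega_\e}$), so $\int_\Omega\gamma\nabla w_\e\cdot\nabla v_\e\,\d x=\int_{\omega_\e}g\,w_\e\,\d s=0$ since $\omega_\e\subset\Gamma_D$ and $w_\e$ vanishes on all of $\Gamma_D$. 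It remains to bound $\Vert v_\e\Vert_{L^2(\Omega)}^2=-\int_{\omega_\e}\gamma\frac{\partial w_\e}{\partial n}v_\e\,\d s\le\Vert\gamma\,\partial w_\e/\partial n\Vert_{{\mathcal C}^0(\omega_\e)}\int_{\omega_\e}|v_\e|\,\d s$; inserting the ${\mathcal C}^0$ bound on $\gamma\,\partial w_\e/\partial n$, the trace estimate $\int_{\omega_\e}|v_\e|\,\d s\le M^{1/2}e(\omega_\e)^{1/2}\Vert\nabla v_\e\Vert_{L^2(\Omega)^d}$ from the second paragraph, and finally \cref{eq.H1estneu}, one gets $\Vert v_\e\Vert_{L^2(\Omega)}^2\le C\,e(\omega_\e)^{1/2}\Vert v_\e\Vert_{H^1(\Omega)}^2\le C\,e(\omega_\e)^{3/2}\Vert g\Vert_{{\mathcal C}^0(\overline\Omega)}^2$, which is \cref{eq.L2estneu}.

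\textbf{Main difficulty.} The delicate point is the rigorous justification of Green's formula and of the identity $\int_\Omega\gamma\nabla w_\e\cdot\nabla v_\e\,\d x=\int_{\omega_\e}g\,w_\e\,\d s$, together with the localization of the boundary term to $\omega_\e$: although $v_\e$ and $w_\e$ are merely $H^1(\Omega)$ and may be singular at $\Sigma$, the relevant traces are supported away from $\Sigma$ (the trace of $v_\e$ on $\Gamma_D$ is supported in $\overline{\omega_\e}$, while $\gamma\,\partial w_\e/\partial n$ is a genuine continuous function in a fixed neighborhood of $\omega_\e$ in $\partial\Omega$), so the $H^{-1/2}$--$H^{1/2}$ pairings collapse to ordinary integrals over $\omega_\e$. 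The remaining ingredients -- the $\e$-independent Poincar\'e inequality and the $\e$-independent interior elliptic estimate for $w_\e$ -- are obtained exactly as in \cref{lem.firstest} and \cref{lem.zetaecapaneu} using $\dist(\omega_\e,\Sigma)\ge\dmin$; as in \cref{rem.prelestdiraniso}, the whole argument carries over to anisotropic conductivities satisfying \cref{eq.anisomat}.
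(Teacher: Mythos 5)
Your proof is correct and follows essentially the same route as the paper's: test the variational formulation of \eqref{eq.vepsneu} against $v_\e$ and then against the duality solution $w_\e$, control $\int_{\omega_\e}|v_\e|\,\d s$ using the auxiliary potential $\zeta_\e$ of \eqref{eq.zepsneum} and \cref{lem.zetaecapaneu}, and gain the extra factor $e(\omega_\e)^{1/4}$ from the ${\mathcal C}^1$ bound on $\eta w_\e$ furnished by interior elliptic regularity and the Sobolev embedding. The only difference from the paper is cosmetic — you extract the inequality $\int_{\omega_\e}|\varphi|\,\d s\le e(\omega_\e)^{1/2}\Vert\zeta_\e\Vert_{H^1}^{-1}\cdot\Vert\zeta_\e\Vert_{H^1}\Vert\nabla\varphi\Vert_{L^2}$ as a standalone trace lemma and you spell out explicitly why $\int_\Omega\gamma\nabla w_\e\cdot\nabla v_\e\,\d x=0$, which the paper leaves implicit — and the reference to \cref{rem.prelestdiraniso} at the end should be to \cref{rem.prelestneuaniso}.
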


\begin{proof} \textit{We start by proving \cref{eq.H1estneu}.} Since $\omega_\e$ lies inside $\Gamma_D$ with $\dist(\omega_\e,\Gamma_N)>\dmin>0$, 
a variant of the  Poincar\'e's inequality for functions whose trace vanishes on the fixed region $\{ x \in \Gamma_D, \:\: \dist(x,\Gamma_N)<\dmin \}$ yields the existence of a constant $C >0$, 
depending only on $\partial\Omega$, $\Gamma_D$ and $\dmin$, such that
\begin{equation}\label{eq.poincarerepneu}
 || v_\e ||_{H^1(\Omega)} \leq C || \nabla v_\e ||_{L^2(\Omega)^d}~.
 \end{equation}
We then calculate 
$$  \begin{array}{>{\displaystyle}cc>{\displaystyle}l}
 || \nabla v_\e ||_{L^2(\Omega)}^2 &\leq& C \int_\Omega{\gamma \nabla v_\e \cdot \nabla v_\e \: \d x}  \\[0.75em]
 &=& C \int_{\partial \Omega}{\gamma \frac{\partial v_\e}{\partial n} v_\e \: \d s} \\[0.75em]
 &=& C \int_{\omega_\e}{ g v_\e \: \d s}~.
\end{array}
$$
An application of \cref{eq.poincarerepneu} and introduction of the function $\zeta_\e$ -- defined in \cref{eq.zepsneum} and estimated in \cref{lem.zetaecapaneu} -- now yields
\begin{eqnarray*}
|| v_\e ||_{H^1(\Omega)}^2  &\leq& C || g||_{{\mathcal C^0}(\overline \Omega)}  \int_{\omega_\e} | v_\e | \: \d s \\
&=& C || g||_{{\mathcal C^0}(\overline \Omega)}  \int_{\partial \Omega} | v_\e |  \frac{\partial \zeta_\e}{\partial n}\: \d s\\
&=& C || g||_{{\mathcal C^0}(\overline \Omega)} \int_{\Omega}  \nabla | v_\e | \cdot \nabla \zeta_\e \:\d x \\
&\leq& C || g||_{{\mathcal C^0}(\overline \Omega)} \Vert v_\e\Vert_{H^1(\Omega)} e(\omega_\e)^{\frac12}~,
\end{eqnarray*}
and the desired estimate \cref{eq.H1estneu} follows.\par\medskip 

\noindent \textit{Let us now consider the improved $L^2$ estimate \cref{eq.L2estneu}. }
To establish this, we proceed along the lines of the proof of \cref{lem.firstest}. As in that proof, let $w_\e$ denote the unique $H^1(\Omega)$ solution to the boundary value problem
$$ 
\left\{\begin{array}{ccl}
-\dv(\gamma\nabla w_\e) = v_\e & \text{in } \Omega~, \\
w_\e = 0 & \text{on } \Gamma_D~, \\
\gamma\frac{\partial w_\e}{\partial n} = 0 & \text{on } \Gamma_N~.
\end{array}
\right.
$$
Taking advantage of the separation assumption \cref{assum.far}, we may introduce a cut-off function $\eta \in {\mathcal C}^\infty_c(\R^d)$ with the property
$$\eta = 1 \text{ on a fixed neighborhood of all the } \omega_\e \text{ and } \eta = 0 \text{ on an open set } \calU \text{ in } \R^d \text{ with } \Gamma_N \Subset \calU~.$$
The function $w_\e$ shows improved regularity with respect to $v_\e$, away from the interface $\Sigma$ between the Dirichlet and Neumann regions $\Gamma_D$ and $\Gamma_N$. 
More precisely, arguing as in the proof of \cref{lem.firstest} (see in particular \cref{ellregwe}), one obtains that
\begin{equation}\label{ellregweneu}
\Vert  \eta w_\e  \Vert_{C^{1}(\overline \Omega)} \le C  || \eta w_\e ||_{H^3(\Omega)} \leq C || v_\e ||_{H^1(\Omega)}~.
\end{equation}
We now calculate
$$  \begin{array}{>{\displaystyle}cc>{\displaystyle}l}
\int_\Omega{v_\e ^2 \: \d x} &=&  -\int_{\Omega}\dv(\gamma\nabla w_\e)\, v_\e \: \d x\\
&=&\int_\Omega{\gamma \nabla w_\e \cdot \nabla v_\e \:\d x} -\int_{\omega_\e} \gamma \frac{\partial w_\e}{\partial n} v_\e \:\d s \\
&=& - \int_{\omega_\e}{\gamma \frac{\partial}{\partial n} (\eta w_\e) v_\e \:\d s}~. \\
\end{array}$$
Using the regularity estimate \cref{ellregweneu} for $\eta w_\e$ and introducing the function $\zeta_\e$,
 -- defined in \cref{eq.zepsneum}, and estimated in \cref{lem.zetaecapaneu} -- we are now led to
\begin{eqnarray*}
\int_\Omega{v_\e ^2 \: \d x} &\leq& C \left\lvert\left\lvert  \frac{\partial (\eta w_\e)}{\partial n} \right\lvert\right\lvert_{C^0(\partial \Omega)}   \int_{\omega_\e}| v_\e|\: \d s  \\
&=& C \left\lvert\left\lvert  \frac{\partial (\eta w_\e)}{\partial n} \right\lvert\right\lvert_{C^0(\partial \Omega)}\int_{\partial \Omega} | v_\e |  \frac{\partial \zeta_\e}{\partial n}\: \d s\\
&=&C \left\lvert\left\lvert  \frac{\partial (\eta w_\e)}{\partial n} \right\lvert\right\lvert_{C^0(\partial \Omega)}\int_{\Omega} \nabla | v_\e | \cdot \nabla  \zeta_\e\: \d x\\
&\leq& C  || v_\e ||_{H^1(\Omega)}^2 e(\omega_\e)^{\frac12}~.
\end{eqnarray*}
In combination with the already established estimate \cref{eq.H1estneu}, this yields
$$
\int_\Omega{v_\e ^2 \: \d x} \leq\, C\, e(\omega_\e)^\frac32 || g ||^2_{{\mathcal C}^0(\overline \Omega)}~,
$$
exactly as asserted in \cref{eq.L2estneu}.
\end{proof}

\begin{remark}\label{rem.prelestneuaniso}
As in \cref{sec.dir} (see \cref{rem.prelestdiraniso}) close 
inspection of the above proof reveals that both estimates \cref{eq.H1estneu,eq.L2estneu} still hold true when the function $v_\e$ from \cref{eq.vepsneu} is replaced by the solution to the following anisotropic boundary value problem
$$\left\{ 
\begin{array}{cl}
-\dv(A \nabla v_\e) = 0 & \text{in } \Omega, \\
v_\e = 0 & \text{on } \Gamma_D  \setminus \overline{\omega_\e}, \\
(A\nabla v_\e)\cdot n= g & \text{on } \omega_\e,\\
(A\nabla v_\e)\cdot n= 0 & \text{on } \Gamma_N~,\\
\end{array}
\right.
$$
where $A \in {\mathcal C}(\overline\Omega)^{d\times d}$ is a smooth conductivity matrix satisfying the bounds \cref{eq.anisomat}.  
\end{remark}

\subsection{The representation formula}

\noindent One of our main results in this section is the following representation theorem.

\begin{theorem}\label{th.repneu}
Suppose that $d=2$ or $d=3$ and that $\omega_\e$ is a sequence of non-empty, open Lipschitz subsets of $\partial \Omega$, which are all contained in $\Gamma_D$ and well-separated from $\Gamma_N$ in the sense that \cref{assum.far} holds. Let $u_\e$ denote the solution to \cref{eq.uepsneu}. Assume that the quantity $e(\omega_\e)$, given by \cref{eq.capaneu}, goes to $0$ as $\e \rightarrow 0$. Then there exists a subsequence, still labeled by $\e$, and a non-trivial distribution $\mu$ in the dual space of ${\mathcal C}^1(\partial \Omega)$ such that 
for any fixed point $x \in \Omega$, and any $\eta \in C^\infty(\partial \Omega)$ with  $\eta =1$ on $\{ y \in \partial \Omega,~\dist(y,\Gamma_N)>\dmin/2\}$ and
$\eta =0$ on $\{ y \in \partial \Omega,~\dist(y,\Gamma_N)<\dmin/3\}$ 
 \begin{equation}\label{eq.repforneu}
 u_\e (x) = u_0(x) + e(\omega_\e) \: \mu_y\left(\eta(y) \frac{\partial u_0}{\partial n}(y) \gamma(y) \frac{\partial N}{\partial n_y} (x,y) \right) + \o (e(\omega_\e))~.
 \end{equation}
The term $\o(e(\omega_\e))$ goes to zero faster than $e(\omega_\e)$, uniformly for $x$ in any fixed compact subset $K$ of $\Omega$. The distribution $\mu$ depends only on the subsequence $\omega_\e$, $\Omega$, and $\Gamma_D$.
\end{theorem}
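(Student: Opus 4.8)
The plan is to mirror, mutatis mutandis, the argument carried out in the proof of \cref{th.repdir}, replacing the harmonic capacity by the ``Neumann capacity'' $e(\omega_\e)$ and the capacitary potential $\chi_\e$ (solution of \cref{eq.chieps}) by the function $\zeta_\e$ (solution of \cref{eq.zepsneum}), and using \cref{lem.zetaecapaneu} and \cref{prop.prelestneu} in place of \cref{lem.capas} and \cref{lem.firstest}. First I would observe that $r_\e := u_\e - u_0$ is the unique $H^1(\Omega)$ solution to
\begin{equation*}
\left\{
\begin{array}{cl}
-\dv(\gamma \nabla r_\e) = 0 & \text{in } \Omega, \\
r_\e = 0 & \text{on } \Gamma_D \setminus \overline{\omega_\e}, \\
\gamma \frac{\partial r_\e}{\partial n} = -\gamma \frac{\partial u_0}{\partial n} & \text{on } \omega_\e, \\
\gamma \frac{\partial r_\e}{\partial n} = 0 & \text{on } \Gamma_N,
\end{array}
\right.
\end{equation*}
so that, thanks to the separation hypothesis \cref{assum.far}, local elliptic regularity gives $\Vert u_0 \Vert_{C^1(K)} \leq C \Vert f \Vert_{H^m(\Omega)}$ for a suitable compact $K$ with $\omega_\e \Subset K \Subset \Gamma_D$ and all $\e$; one then extends $-\gamma \frac{\partial u_0}{\partial n}$ to a $C^0(\overline\Omega)$ function $g_0$ with $\Vert g_0 \Vert_{C^0(\overline\Omega)} \leq C \Vert f \Vert_{H^m(\Omega)}$, and \cref{prop.prelestneu} yields $\Vert r_\e \Vert_{H^1(\Omega)} \leq C \Vert f \Vert_{H^m(\Omega)} e(\omega_\e)^{1/2}$ and $\Vert r_\e \Vert_{L^2(\Omega)} \leq C \Vert f \Vert_{H^m(\Omega)} e(\omega_\e)^{3/4}$.

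Next I would derive the integral representation. Using the fundamental solution $N(x,y)$ of \cref{fundsol} and integrating by parts twice, exactly the boundary terms that survive are those coming from $\gamma \frac{\partial r_\e}{\partial n}$ on $\partial\Omega$ and from $\gamma \frac{\partial N}{\partial n_y}$; since $N(x,\cdot)$ vanishes on $\Gamma_D$ and $\gamma\frac{\partial N}{\partial n_y}(x,\cdot)$ vanishes on $\Gamma_N$, while $r_\e$ vanishes on $\Gamma_D \setminus \overline{\omega_\e}$ and $\gamma\frac{\partial r_\e}{\partial n}$ vanishes on $\Gamma_N$, one is left with
\begin{equation*}
r_\e(x) = \int_{\omega_\e} \gamma(y) \frac{\partial r_\e}{\partial n}(y)\, N(x,y) \, \d s(y) - \int_{\omega_\e} \gamma(y)\frac{\partial N}{\partial n_y}(x,y)\, r_\e(y)\, \d s(y).
\end{equation*}
The first term is handled directly: on $\omega_\e$ one has $\gamma\frac{\partial r_\e}{\partial n} = -\gamma\frac{\partial u_0}{\partial n}$, and the factor $N(x,\cdot)$ on $\omega_\e$ can be replaced by $\zeta_\e$ times $N(x,\cdot)$ up to a controlled error, because $\partial\zeta_\e/\partial n = 1$ there. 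The second term is the one requiring the ``clever integration by parts'' of \cite{capdeboscq2003general, murat2018h}: for an arbitrary $\phi \in C^1(\partial\Omega)$ vanishing near $\Gamma_N$ I would extend it to $\psi \in C^1(\overline\Omega)$, write $\int_{\omega_\e} r_\e \phi\, \d s = \int_{\partial\Omega} r_\e \phi \frac{\partial \zeta_\e}{\partial n}\, \d s = \int_\Omega \nabla(r_\e \psi)\cdot\nabla\zeta_\e\,\d x + \int_\Omega r_\e \nabla\psi\cdot\nabla\zeta_\e\,\d x$ (using that $\gamma\frac{\partial\zeta_\e}{\partial n}=1$ on $\omega_\e$ and $=0$ on $\Gamma_N$, and splitting off lower-order terms), then move derivatives back onto $\zeta_\e$; the terms not of the form $\int \psi\, |\nabla\zeta_\e|^2$-type get estimated by $C e(\omega_\e)^{5/4}\Vert f\Vert_{H^m}\Vert\phi\Vert_{C^1}$ using the $L^2$ bound on $r_\e$ from \cref{prop.prelestneu} applied to $\zeta_\e$ (via \cref{lem.zetaecapaneu}) and the $H^1$ bound on $r_\e$. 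After this reorganization everything reduces to the single ``capacitary'' functional $\phi \mapsto \frac{1}{e(\omega_\e)}\int_\Omega \nabla\zeta_\e\cdot\nabla(\zeta_\e\psi)\,\d x$, which by \cref{lem.zetaecapaneu} is bounded on $C^1(\partial\Omega)$ uniformly in $\e$; Banach--Alaoglu then furnishes a subsequence and a limiting functional $\mu$ on $C^1(\partial\Omega)$. Choosing $\phi(\cdot) = \eta(\cdot)\gamma(\cdot)\frac{\partial u_0}{\partial n}(\cdot)$ composed appropriately with $\frac{\partial N}{\partial n_y}(x,\cdot)$, and noting that $\frac{\partial N}{\partial n_y}(x,\cdot)\eta(\cdot)$ is a fixed $C^1$ function on $\partial\Omega$ vanishing near $\Gamma_N$ and agreeing with $\frac{\partial N}{\partial n_y}(x,\cdot)$ on $\omega_\e \cup \Gamma_D$, collecting the two pieces gives \cref{eq.repforneu}. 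Non-triviality of $\mu$ ($\mu(1)>0$, say) follows from the lower bound in \cref{lem.zetaecapaneu} together with the Poincar\'e inequality, and uniformity of the $\o(e(\omega_\e))$ remainder for $x$ in a compact $K\Subset\Omega$ follows because $\{\gamma(\cdot)\frac{\partial u_0}{\partial n}(\cdot)\eta(\cdot)\frac{\partial N}{\partial n_y}(x,\cdot)\}_{x\in K}$ is a compact subset of $C^1(\partial\Omega)$, since $(x,y)\mapsto \frac{\partial N}{\partial n_y}(x,y)$ is smooth for $x\in K$ and $y$ in a neighborhood of $\omega_\e$ away from $\Sigma$ and $\{x\}$.

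The main obstacle, compared with the Dirichlet case, is that the natural boundary datum on $\omega_\e$ is now a \emph{Neumann} datum ($\gamma\frac{\partial r_\e}{\partial n} = -\gamma\frac{\partial u_0}{\partial n}$), so the trace $r_\e|_{\omega_\e}$ is not prescribed and the substitution ``$r_\e = -u_0\chi_\e$ on $\omega_\e$'' used in \cref{th.repdir} has no exact analogue; one instead uses $\zeta_\e$ to convert the $\omega_\e$-integral of $r_\e$ against a test function into a volume integral over $\Omega$, where the $L^2$ smallness of $r_\e$ (the factor $e(\omega_\e)^{3/4}$) can be exploited. Keeping track of which error terms are genuinely $\o(e(\omega_\e))$ — in particular verifying that the ``bad'' cross terms carry a power $e(\omega_\e)^{5/4}$ and not merely $e(\omega_\e)$ — is the delicate bookkeeping step, and it hinges essentially on the improved $L^2$ estimate \cref{eq.L2estneu} and its counterpart for $\zeta_\e$.
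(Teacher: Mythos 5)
Your overall plan -- mimic \cref{th.repdir} with $\chi_\e$ replaced by $\zeta_\e$ and $\capa(\omega_\e)$ by $e(\omega_\e)$, invoking \cref{lem.zetaecapaneu} and \cref{prop.prelestneu} in place of \cref{lem.capas} and \cref{lem.firstest}, and perform the Capdeboscq--Vogelius style ``compensated compactness'' integration by parts using $\zeta_\e$ as the corrector -- is precisely the route the paper takes: one arrives at $-\int_{\omega_\e} r_\e\phi\,\d s = \int_{\partial\Omega}\bigl(\tfrac{\partial\zeta_\e}{\partial n}\zeta_\e\bigr)\tfrac{\partial u_0}{\partial n}\phi\,\d s+\O(e(\omega_\e)^{5/4})\|f\|_{H^m}\|\phi\|_{C^1}$, normalizes by $e(\omega_\e)$, applies Banach--Alaoglu for $\mu$, and finally substitutes $\phi(y)=\eta(y)\gamma(y)\tfrac{\partial N}{\partial n_y}(x,y)$. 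The non-triviality of $\mu$ and the uniformity on compacts are argued as you say.

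There is, however, a genuine confusion in your integral representation step. You derive
$$r_\e(x) = \int_{\omega_\e} \gamma(y) \frac{\partial r_\e}{\partial n}(y)\, N(x,y) \, \d s(y) - \int_{\omega_\e} \gamma(y)\frac{\partial N}{\partial n_y}(x,y)\, r_\e(y)\, \d s(y)$$
and then propose to ``handle the first term directly'' by substituting $\gamma\tfrac{\partial r_\e}{\partial n}=-\gamma\tfrac{\partial u_0}{\partial n}$ and replacing $N(x,\cdot)$ by $\zeta_\e N(x,\cdot)$ ``because $\tfrac{\partial\zeta_\e}{\partial n}=1$ there''. That proposed manipulation is not meaningful, and more to the point it is unnecessary: since $\omega_\e\subset\Gamma_D$ and $N(x,\cdot)$ satisfies a homogeneous Dirichlet condition on $\Gamma_D$ (see \cref{fundsol}), the trace of $N(x,\cdot)$ vanishes identically on $\omega_\e$, so the first integral is exactly zero. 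You even state ``$N(x,\cdot)$ vanishes on $\Gamma_D$'' but do not use it to kill this term. Without noticing this, the ``direct handling'' you sketch would send you down a blind alley, since there is no controlled-error argument that turns $N(x,\cdot)$ into $\zeta_\e N(x,\cdot)$ here (the substitution $\chi_\e\psi$ for $\psi$ in \cref{th.repdir} relies on $\chi_\e=1$ on $\omega_\e$, a Dirichlet-trace fact; there is no analogue for a vanishing Dirichlet trace). The remaining term is the one the paper works with, and your treatment of it is correct. Separately, your intermediate identity $\int_{\partial\Omega} r_\e\phi\tfrac{\partial\zeta_\e}{\partial n}\,\d s = \int_\Omega\nabla(r_\e\psi)\cdot\nabla\zeta_\e\,\d x + \int_\Omega r_\e\nabla\psi\cdot\nabla\zeta_\e\,\d x$ double-counts the cross term -- presumably you intended $\int_\Omega\psi\nabla r_\e\cdot\nabla\zeta_\e\,\d x+\int_\Omega r_\e\nabla\psi\cdot\nabla\zeta_\e\,\d x$ -- and the boundary condition on $\omega_\e$ for $\zeta_\e$ is $\tfrac{\partial\zeta_\e}{\partial n}=1$, not $\gamma\tfrac{\partial\zeta_\e}{\partial n}=1$ (recall $\zeta_\e$ solves the Laplace equation in \cref{eq.zepsneum}, not the $\gamma$-conductivity equation). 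These are minor slips but worth fixing since they would propagate into the quantitative bookkeeping.
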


\begin{proof}
The proof parallels that of \cref{th.repdir} with appropriate changes. We give a fairly detailed outline of it, except in a few places where we refer back to the proof of \cref{th.repdir}. 
Let $r_\e$ denote the remainder $r_\e := u_\e - u_0$, which is now the unique $H^1(\Omega)$ solution to the following problem
\begin{equation}\label{eq.repsneu}
\left\{
\begin{array}{cl}
-\dv(\gamma \nabla r_\e) = 0 & \text{in } \Omega~, \\
r_\e = 0 & \text{in } \Gamma_D \setminus \overline{\omega_\e}~, \\
\gamma \frac{\partial r_\e}{\partial n} = -\gamma \frac{\partial u_0}{\partial n} & \text{on } \omega_\e~, \\
\gamma \frac{\partial r_\e}{\partial n} = 0 & \text{on } \Gamma_N~. \\
\end{array}
\right.
\end{equation}
Let $x$ be a fixed point inside  $\Omega$. From the definition \cref{fundsol} of the fundamental solution $N(x,y)$ to the background equation, we obtain after integration by parts
$$  \begin{array}{>{\displaystyle}cc>{\displaystyle}l}
r_\e(x) &=& \int_\Omega{r_\e(y) (-\dv_y(\gamma(y) \nabla_y N(x,y))) \: \d y}\\
&=&  -\int_{\omega_\e }{r_\e(y) \gamma (y) \frac{\partial N}{\partial n_y}(x,y) \: \d s(y)} + \int_\Omega{\gamma (y) \nabla r_\e(y) \cdot \nabla_y N(x,y) \: \d y}~.\\
\end{array}
$$
Another integration by parts of the second term in the above right-hand side reveals that the latter actually vanishes, so in conclusion
\begin{equation}\label{firsteq}
\begin{array}{>{\displaystyle}cc>{\displaystyle}l}
r_\e(x) &=&  -\int_{\omega_\e }{r_\e(y) \gamma(y) \frac{\partial N}{\partial n_y}(x,y) \: \d s(y)}\\ 
&= & -\int_{\omega_\e }{r_\e(y) \eta(y)  \gamma(y) \frac{\partial N}{\partial n_y}(x,y) \: \d s(y)} ~.
\end{array}
\end{equation}
Following the proof of \cref{th.repdir}, we now proceed to calculate, for any given function $\phi \in {\mathcal C}^1(\partial\Omega)$ vanishing on $\{ y \in \partial \Omega,~\dist(y,\Gamma_N)<\dmin/3\}$,  the limit of the quantity
$$ -\int_{\omega_\e}{r_\e(y) \phi(y) \: \d s(y)}~.$$
For this purpose we introduce an extension $\psi \in {\mathcal C}^1(\overline\Omega)$ of $\phi$ satisfying the properties (see \cref{eq.extphidir}) 
$$\psi = \phi \text{ on } \partial \Omega, \text{ and } || \psi ||_{{\mathcal C}^1(\overline{\Omega})} \leq C || \phi ||_{{\mathcal C}^1(\partial \Omega)}~,$$
and we consider the unique $H^1(\Omega)$ solution $\zeta_\e$ to the boundary value problem \cref{eq.zepsneum}.
We calculate
$$   \begin{array}{>{\displaystyle}cc>{\displaystyle}l}
-\int_{\omega_\e}{r_\e(y) \phi(y) \: \d s(y)} &=& -\int_{\partial\Omega}{  \frac{\partial \zeta_\e}{\partial n} r_\e \psi \: \d s } \\
&=& -\int_{\Omega}{( \nabla \zeta_\e \cdot \nabla r_\e) \psi \: \d y} -\int_{\Omega}{( \nabla \zeta_\e \cdot  \nabla\psi) r_\e \: \d y} \\
&=& -\int_{\Omega}{(\nabla \zeta_\e \cdot \gamma \nabla r_\e) \frac{\psi}{\gamma} \: \d y}  \: + \: \O(e(\omega_\e)^{\frac54})\Vert f\Vert_{H^m(\Omega)} \Vert \phi \Vert_{C^1(\partial \Omega)}~,
\end{array}
$$
where the last identity follows from the improved $L^2$ estimate (applied to $r_\e$) and the $H^1$ estimate (applied to $\zeta_\e$) from \cref{prop.prelestneu}; 
see the proof of \cref{th.repdir} for details. A repeated use of the same estimates (with the roles of $r_\e$ and $\zeta_\e$ interchanged) followed by an integration by parts yields
$$\begin{array}{>{\displaystyle}cc>{\displaystyle}l}
 -\int_{\omega_\e}{r_\e(y) \phi(y) \: \d s(y)} &=&  -\int_{\Omega}{\gamma \nabla r_\e \cdot \nabla \left(\frac{\psi \zeta_\e}{\gamma}\right)  \: \d x}  + \O(e(\omega_\e)^{\frac54})\Vert f\Vert_{H^m(\Omega)} \Vert \phi \Vert_{C^1(\partial \Omega)} \\
 &=& - \int_{\partial \Omega}{ \frac{\partial r_\e}{\partial n} \phi \zeta_\e \: \d s} + \O(e(\omega_\e)^{\frac54})\Vert f\Vert_{H^m(\Omega)} \Vert \phi \Vert_{C^1(\partial \Omega)};
 \end{array}$$
see the proof of \cref{th.repdir}.  Using the boundary conditions satisfied by $r_\e$ and $\zeta_\e$ we finally end up with
\begin{equation}
\label{secondeq}
 -\int_{\omega_\e}{r_\e(y) \phi(y) \: \d s(y)}=  \int_{\partial\Omega}{\left( \frac{\partial \zeta_\e}{\partial n} \zeta_\e \right) \frac{\partial u_0}{\partial n} \phi \: \d s} + \O(e(\omega_\e)^{\frac54})\Vert f\Vert_{H^m(\Omega)} \Vert \phi \Vert_{C^1(\partial \Omega)}~.
\end{equation}
From \cref{lem.zetaecapaneu}, we infer that the sequence $\frac{1}{e(\omega_\e)}\left(\frac{\partial \zeta_\e}{\partial n} \zeta_\e \right)$ 
has bounded norm in the dual space of ${\mathcal C}^1(\partial \Omega)$; see more precisely \cref{eq.boundedBAdir} in the proof of \cref{th.repdir}. From the Banach-Alaoglu theorem, it now follows, after extraction of a subsequence (still labeled by $\e$), that there exists a bounded linear functional $\mu $ on ${\mathcal C}^1(\partial \Omega)$ such that
\begin{equation}
\label{thirdeq}
 \forall \varphi \in {\mathcal C}^1(\partial \Omega) , \:\: \int_{\partial \Omega}{\frac{1}{e(\omega_\e)}\left( \frac{\partial \zeta_\e}{\partial n} \zeta_\e \right) \varphi \: \d s} \xrightarrow{\e\to 0} \mu(\varphi)~. 
\end{equation} 
Also, due to \cref{lem.zetaecapaneu}, it follows that $\mu(1)>0$, thus revealing that $\mu$ is non trivial.
Insertion of $\phi(y) = \eta(y)\gamma(y) \frac{\partial N}{\partial n_y}(x,y)$ into \cref{secondeq} and application of \cref{thirdeq} with $\varphi(y)=\frac{\partial u_0}{\partial n}(y)\eta(y)\gamma(y) \frac{\partial N}{\partial n_y}(x,y)$ now gives
$$
-\int_{\omega_\e }{r_\e(y) \eta(y)  \gamma(y) \frac{\partial N}{\partial n_y}(x,y) \: \d s(y)} = e(\omega_\e) \mu_y \left[  \eta(y) \frac{\partial u_0}{\partial n}(y) \gamma(y) \frac{\partial N}{\partial n_y}(x,y)\right] + \o(e(\omega_\e))~,
$$
which in combination with \cref{firsteq} leads to the desired representation formula \cref{eq.repforneu}. The uniformity of the convergence of the remainder $\o(e(\omega_\e))$, when $x$ is confined to a fixed compact subset of $\Omega$, follows as in the proof of \cref{th.repdir}.
\end{proof}

Just as in \cref{sec.dir} we may show that the distribution $\mu$ is a non negative Radon measure compactly supported ``near" the sets  $\omega_\e$; in other words, the following analogue of \cref{prop.caracmudir} holds in the present context, whose nearly identical proof is left to the reader.
\begin{proposition}
The limiting distribution $\mu$ introduced in \cref{th.repneu} is a non negative Radon measure on $\partial \Omega$.
Moreover, the support of $\mu$ is contained in any compact subset $K$ of $\partial \Omega$ such that $\omega_\e \subset K$ for $\e >0$ sufficiently small. 
\end{proposition}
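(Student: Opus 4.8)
The plan is to follow the proof of \cref{prop.caracmudir} almost verbatim, replacing the capacitary potential $\chi_\e$ of \cref{eq.chieps} by the function $\zeta_\e$ of \cref{eq.zepsneum}, the capacity $\capa(\omega_\e)$ by $e(\omega_\e)$, and the estimates of \cref{lem.capas,lem.firstest} by those of \cref{lem.zetaecapaneu,prop.prelestneu}. I would start from the characterization of $\mu$ obtained in the proof of \cref{th.repneu} (see \cref{thirdeq}): along the fixed subsequence, $\mu(\varphi)=\lim_{\e\to0}\frac{1}{e(\omega_\e)}\int_{\partial\Omega}\frac{\partial\zeta_\e}{\partial n}\,\zeta_\e\,\varphi\,\d s$ for $\varphi\in{\mathcal C}^1(\partial\Omega)$. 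Given such a $\varphi$, I would pick an extension $\widetilde\psi\in{\mathcal C}^1(\overline\Omega)$ with $\widetilde\psi=\varphi$ on $\partial\Omega$ and $\|\widetilde\psi\|_{{\mathcal C}^0(\overline\Omega)}=\|\varphi\|_{{\mathcal C}^0(\partial\Omega)}$, exactly as in \cref{psidef}, and apply Green's formula (using $\Delta\zeta_\e=0$) to write
\begin{equation*}
\int_{\partial\Omega}\frac{\partial\zeta_\e}{\partial n}\,\zeta_\e\,\varphi\,\d s=\int_\Omega|\nabla\zeta_\e|^2\,\widetilde\psi\,\d x+\int_\Omega(\nabla\zeta_\e\cdot\nabla\widetilde\psi)\,\zeta_\e\,\d x.
\end{equation*}
By the improved $L^2$ estimate $\|\zeta_\e\|_{L^2(\Omega)}\le M\,e(\omega_\e)^{3/4}$ of \cref{prop.prelestneu} (applied to $\zeta_\e$) combined with the bound $\|\zeta_\e\|_{H^1(\Omega)}^2\le M\,e(\omega_\e)$ of \cref{lem.zetaecapaneu}, the last term is $\O(e(\omega_\e)^{5/4})$ and hence negligible after division by $e(\omega_\e)$, so I would be left with $\mu(\varphi)=\lim_{\e\to0}\frac{1}{e(\omega_\e)}\int_\Omega|\nabla\zeta_\e|^2\,\widetilde\psi\,\d x$.

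Next I would exploit the non-negativity of the density $|\nabla\zeta_\e|^2$: by \cref{lem.zetaecapaneu} one has $\frac{1}{e(\omega_\e)}\bigl|\int_\Omega|\nabla\zeta_\e|^2\,\psi\,\d x\bigr|\le M\,\|\psi\|_{{\mathcal C}^0(\overline\Omega)}$ for every $\psi\in{\mathcal C}^0(\overline\Omega)$, so the Banach--Alaoglu theorem allows me to extract a further subsequence along which $\frac{1}{e(\omega_\e)}\int_\Omega|\nabla\zeta_\e|^2\,\psi\,\d x\to\int_{\overline\Omega}\psi\,\d\nu$ for some non negative Radon measure $\nu$ on $\overline\Omega$. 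This gives $\mu(\varphi)=\int_{\overline\Omega}\widetilde\psi\,\d\nu$, hence $|\mu(\varphi)|\le\nu(\overline\Omega)\,\|\varphi\|_{{\mathcal C}^0(\partial\Omega)}$; therefore $\mu$ extends to a bounded linear functional on ${\mathcal C}^0(\partial\Omega)$, i.e.\ a Radon measure on $\partial\Omega$, which is non negative because $\nu$ is, and non-trivial because $\mu(1)>0$ by the lower bound in \cref{lem.zetaecapaneu}.

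For the support statement I would argue exactly as in \cref{prop.caracmudir}: fix a compact set $K\subset\partial\Omega$ with $\omega_\e\subset K$ for all small $\e$, and let $\varphi\in{\mathcal C}^1(\partial\Omega)$ be supported in the relatively open set $\partial\Omega\setminus K$; extending $\varphi$ to some $\widetilde\varphi\in{\mathcal C}^1(\overline\Omega)$, the product $\zeta_\e\widetilde\varphi$ lies in $H^1(\Omega)$ and vanishes on $\Gamma_D\setminus\overline{\omega_\e}$ (since $\zeta_\e$ does), so it is an admissible test function in the variational formulation of \cref{eq.zepsneum}; since moreover $\varphi$ vanishes on $\omega_\e\subset K$, this yields $\int_{\partial\Omega}\frac{\partial\zeta_\e}{\partial n}\,\zeta_\e\,\varphi\,\d s=\int_\Omega\nabla\zeta_\e\cdot\nabla(\zeta_\e\widetilde\varphi)\,\d x=\int_{\omega_\e}\zeta_\e\,\varphi\,\d s=0$, and letting $\e\to0$ gives $\mu(\varphi)=0$. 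As this holds for all such $\varphi$, the support of $\mu$ is contained in $K$.

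The only point that is not a mechanical transcription of the proof of \cref{prop.caracmudir} is the elimination of the cross term $\int_\Omega(\nabla\zeta_\e\cdot\nabla\widetilde\psi)\,\zeta_\e\,\d x$, which is exactly where the \emph{improved} $L^2$ estimate of \cref{prop.prelestneu} is indispensable: with only the crude bound $\|\zeta_\e\|_{L^2}\le\|\zeta_\e\|_{H^1}=\O(e(\omega_\e)^{1/2})$ this term would merely be $\O(e(\omega_\e))$ instead of $\o(e(\omega_\e))$, and the identity $\mu(\varphi)=\lim_{\e\to0}\frac{1}{e(\omega_\e)}\int_\Omega|\nabla\zeta_\e|^2\,\widetilde\psi\,\d x$ --- on which the whole non-negativity argument rests --- would fail. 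Everything else is a routine adaptation of \cref{prop.caracmudir}.
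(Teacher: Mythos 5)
Your proposal is correct, and it takes essentially the same route as the paper, which simply invokes the fact that the proof is ``nearly identical'' to that of \cref{prop.caracmudir} and leaves it to the reader. You have carried out exactly the expected transcription: $\chi_\e \to \zeta_\e$, $\capa(\omega_\e)\to e(\omega_\e)$, \cref{lem.capas}/\cref{lem.firstest} $\to$ \cref{lem.zetaecapaneu}/\cref{prop.prelestneu}, and the support argument uses the correct structural fact that $\frac{\partial\zeta_\e}{\partial n}\zeta_\e$ is supported on $\overline{\omega_\e}$ (here because $\zeta_\e=0$ on $\Gamma_D\setminus\overline{\omega_\e}$ and $\partial\zeta_\e/\partial n=0$ on $\Gamma_N$, rather than because $\chi_\e=0$ on $\Gamma_D\cup\omega_\e$ in the Dirichlet case). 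One small remark: the point you single out as ``not mechanical'' --- discarding the cross term $\int_\Omega(\nabla\zeta_\e\cdot\nabla\widetilde\psi)\zeta_\e\,\d x$ via the improved $L^2$ estimate --- is in fact exactly parallel to what \cref{prop.caracmudir} already does with \cref{eq.ctr2termrepdir}, so it is mechanical too, although your observation that the $\O(e(\omega_\e)^{3/4})$ gain is indispensable here is correct and worth recording. Note also that applying \cref{prop.prelestneu} to $\zeta_\e$ requires reading that lemma with the constant conductivity $\gamma\equiv 1$; this causes no circularity, since the proof of \cref{prop.prelestneu} only uses $\zeta_\e$ through the $H^1$ bound of \cref{lem.zetaecapaneu}, which is established independently, and the paper already implicitly makes the same use of \cref{prop.prelestneu} for $\zeta_\e$ inside the proof of \cref{th.repneu}.
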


This proposition immediately leads to the following corollary to \cref{th.repneu}. 
\begin{corollary}\label{repneu2}
Suppose $d=2$ or $d=3$ and suppose $\omega_\e$ is a sequence of non-empty, open Lipschitz subsets of $\partial \Omega$, which are all contained in  $\Gamma_D$ and are well-separated from $\Gamma_N$, in the sense that \cref{assum.far} holds; let $u_\e$ denote the solution to \cref{eq.uepsneu}.
Assume that  the quantity $e(\omega_\e)$, defined by \cref{eq.capaneu}, goes to $0$ as $\e \rightarrow 0$. 
Then there exists  a subsequence, still labeled by $\e$, and a non-trivial, non negative Radon measure $\mu$ on $\partial \Omega$, whose support is included in any compact subset $K \subset \partial \Omega$ containing the $\omega_\e$ for $\e >0$ small enough, such that the following asymptotic expansion 
$$ 
u_\e(x) = u_0(x) +e(\omega_\e) \int_{\partial \Omega} \frac{\partial u_0}{\partial n}(y) \gamma(y) \frac{\partial N}{\partial n_y} (x,y) \:\d \mu(y)  + \o(e(\omega_\e))~,
$$
holds at any fixed point $x \in \Omega$.
The term $\o(e(\omega_\e))$ goes to zero faster than $e(\omega_\e)$ uniformly (in $x$) on compact subsets of $\Omega$. The measure $\mu$ depends only on the subsequence $\omega_\e$, $\Omega$, and $\Gamma_D$.
\end{corollary}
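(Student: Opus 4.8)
The statement follows at once by combining \cref{th.repneu} with the Proposition established just above, which identifies the limiting distribution $\mu$ as a non-negative Radon measure on $\partial\Omega$ whose support is contained in any compact set of $\partial\Omega$ that contains the $\omega_\e$ for $\e>0$ small enough. The plan is thus: first apply \cref{th.repneu} to extract the subsequence (still labelled $\e$), together with the bounded linear functional $\mu$ on ${\mathcal C}^1(\partial\Omega)$ and the cut-off $\eta$, so that the expansion \cref{eq.repforneu} holds; then invoke the preceding Proposition to upgrade $\mu$ to a non-negative Radon measure with the stated support property; and finally rewrite the pairing $\mu_y(\cdot)$ in \cref{eq.repforneu} as an integral against $\d\mu$, while discarding the cut-off $\eta$.

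Only this last rewriting requires a word of justification. By the separation hypothesis \cref{assum.far} we may fix, once and for all, a compact set $K$ containing $\omega_\e$ for all small $\e$, on a neighbourhood of which $\eta\equiv1$, and with $\dist(K,\Sigma)>0$; by the Proposition the support of $\mu$ is then a compact subset of $K$, hence also disjoint from $\Sigma$. For a fixed $x\in\Omega$, the function
$$ y\longmapsto \frac{\partial u_0}{\partial n}(y)\,\gamma(y)\,\frac{\partial N}{\partial n_y}(x,y) $$
is of class ${\mathcal C}^\infty$ on a neighbourhood of $K$: $\gamma\in{\mathcal C}^\infty(\overline\Omega)$, $u_0$ is smooth up to $\partial\Omega$ away from $\Sigma$ by elliptic regularity for \cref{eq.bg}, and $y\mapsto N(x,y)$ is smooth on $\overline\Omega$ away from $\Sigma\cup\{x\}$ thanks to the splitting $N(x,y)=\gamma(x)^{-1}G(x,y)+R(x,y)$ recalled in \cref{sec.fundasol}. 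Therefore the ${\mathcal C}^1(\partial\Omega)$ test function $y\mapsto\eta(y)\frac{\partial u_0}{\partial n}(y)\gamma(y)\frac{\partial N}{\partial n_y}(x,y)$ appearing in \cref{eq.repforneu} agrees with the displayed function on a neighbourhood of the support of $\mu$, so that $\mu$ evaluated on it equals $\int_{\partial\Omega}\frac{\partial u_0}{\partial n}(y)\gamma(y)\frac{\partial N}{\partial n_y}(x,y)\,\d\mu(y)$. Substituting this into \cref{eq.repforneu} produces the claimed expansion.

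Finally, the non-triviality and non-negativity of $\mu$, the inclusion of its support in $K$, and the fact that it depends only on the subsequence $\omega_\e$, $\Omega$ and $\Gamma_D$, are already supplied by \cref{th.repneu} and the Proposition; the uniformity of the remainder $\o(e(\omega_\e))$ over compact subsets $K'\Subset\Omega$ is inherited verbatim from \cref{th.repneu}, since $\{y\mapsto\eta(y)\frac{\partial u_0}{\partial n}(y)\gamma(y)\frac{\partial N}{\partial n_y}(x,y)\}_{x\in K'}$ is compact in ${\mathcal C}^1(\partial\Omega)$. The only genuinely non-mechanical point is the verification that the support of $\mu$ stays at positive distance from $\Sigma$, which is what makes it legitimate to pair the a priori singular integrand $\frac{\partial u_0}{\partial n}\gamma\frac{\partial N}{\partial n_y}(x,\cdot)$ with $\mu$ and to drop the cut-off $\eta$.
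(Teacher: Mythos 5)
Your proof is correct and follows exactly the route the paper intends: combine \cref{th.repneu} with the preceding Proposition (the analogue of \cref{prop.caracmudir}), then drop the cut-off $\eta$ using the fact that the support of $\mu$ lies in a compact subset of $\Gamma_D$ at positive distance from $\Sigma$, on a neighbourhood of which $\eta\equiv1$ and the integrand $\frac{\partial u_0}{\partial n}\gamma\frac{\partial N}{\partial n_y}(x,\cdot)$ is smooth. The paper states this step as immediate and gives no details, so your write-up actually supplies a more careful justification than the source does, without deviating from its approach.
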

\begin{remark}
\noindent 
From the physical viewpoint, the second term in the representation formula of \cref{repneu2} accounts for the potential created at $x$ by a distribution of dipoles located at the ``limiting position'' of the sets $\omega_\e$. We notice the sign change, when compared to the second term of the expansion in \cref{sec.dir}. A calculation similar to that found in \cref{rem.cply} (and under the same assumptions regarding the source term $f$) now leads to a non negative first term in the perturbation of the compliance, reflecting the intuitive fact that the compliance of $\Omega$ necessarily (asymptotically) increases when the homogeneous Dirichlet boundary condition on $\omega_\e$  is turned into a homogeneous Neumann condition.
\end{remark}

\section{An explicit asymptotic formula for the case of substituting Dirichlet conditions}\label{sec.calcdir}

\noindent In this section, we investigate a particular instance of the general situation of \cref{sec.dir}, where
the homogeneous Neumann boundary condition satisfied by the background potential $u_0$ on the whole region $\Gamma_N$ is modified to a Dirichlet boundary condition 
on a subset $\omega_\e \subset \Gamma_N$ taking the form of a vanishing ``surfacic ball''.

Without loss of generality, we assume that the origin $0$ belongs to $\Gamma_N$, 
and that the normal vector $n(0)$ at $0$ coincides with the last coordinate vector $e_d$. 
We select a smooth bounded domain ${\mathcal O} \subset \R^d$, and construct a smooth diffeomorphism $T: \R^d \to \R^d$ such that $\Omega = T({\mathcal O})$, and 
$$
\begin{minipage}{0.93\textwidth}
\begin{enumerate}[(i)]
\item The domain ${\mathcal O}$ lies inside the lower half-space $H$, and it coincides with $H$ in a fixed open neighborhood $U$ of $0$: 
$${\mathcal O} \subset H, \text{ and } {\mathcal O}\cap U= H \cap U, \text{ where } H := \left\{ x=(x_1,\ldots, x_{d}) \in \R^d , \:\: x_d < 0\right\}.$$ 
\item $T(0) = 0$ and $\nabla T(0) = \Id$. 
\end{enumerate}
\end{minipage}
$$
Given such $T$ and ${\mathcal O}$, the subset $\omega_\e \subset \Gamma_N$ is now defined as follows:
\begin{equation}\label{eq.GNflat}
 \omega_\e = T (\D_\e), \text{ where } \D_\e := \left\{ x = (x_1,\ldots, x_{d-1}, 0) \in \partial H, \:\: |x| < \e \right\},
 \end{equation}
for $\e$ sufficiently small, see \cref{fig.mapflat} for an illustration.
We denote by $\widehat{\Gamma_N}\subset \partial {\mathcal O}$ the boundary set $\widehat{\Gamma_N} := T^{-1}(\Gamma_N)$, and purely for simplicity we also assume that ${\mathcal O}$ and $T$  are selected in such aa way that $T$ coincides with the identity mapping ``far'' from $0$, so that in particular $T^{-1}(\Gamma_D) = \Gamma_D$ (in terms of the original domain $\Omega$ this is achievable through the assumption that $\Omega$ lies below its tangent plane at $0$).

\begin{figure}[!ht]
\centering
\begin{minipage}{1.0\textwidth}
\includegraphics[width=1.0\textwidth]{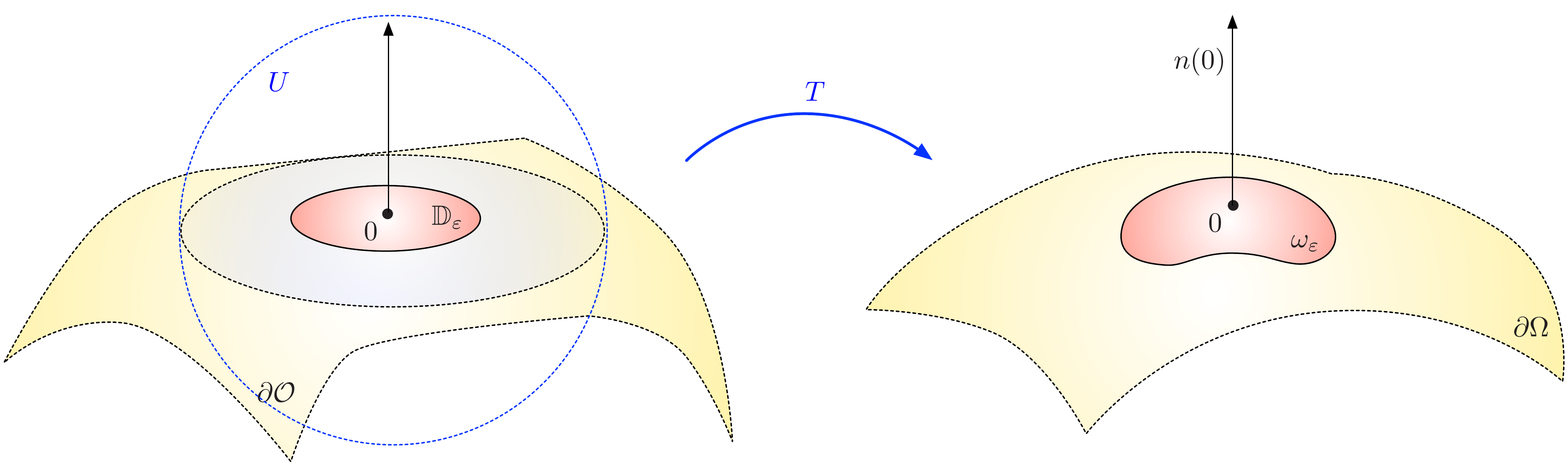}
\end{minipage} 
\caption{\it The setting in \cref{sec.calcdir}.}
\label{fig.mapflat}
\end{figure}

The ``background'' and perturbed potentials $u_0$ and $u_\e$ are the $H^1(\Omega)$ solutions to the following equations:
\begin{equation}\label{eq.ueu0si}
\left\{ 
\begin{array}{cl}
-\dv(\gamma \nabla u_0) = f & \text{in } \Omega, \\
u_0 = 0 & \text{on } \Gamma_D, \\
\gamma\frac{\partial u_0}{\partial n} = 0 & \text{on } \Gamma_N,
\end{array}
\right.
\text{ and } 
 \left\{
\begin{array}{cl}
-\dv(\gamma \nabla u_\e) = f & \text{in } \Omega, \\
u_\e = 0 & \text{on } \Gamma_D \cup \omega_\e, \\
\gamma\frac{\partial u_\e}{\partial n} = 0 & \text{on } \Gamma_N \setminus \overline{\omega_\e},
\end{array}
\right.
\end{equation}
where the source term $f \in {\mathcal C}^\infty(\overline\Omega)$ is smooth.
Invoking classical elliptic regularity results, we observe that $u_0$ and $u_\e$ are smooth, 
except in the vicinity of the points $x \in \partial \Omega$ where boundary conditions change type. More precisely, with $\Sigma= \overline{\Gamma_D} \cap \overline{\Gamma_N}$
\begin{itemize}
\item The function $u_0$ is of class ${\mathcal C}^\infty$ in a neighborhood of any point $x \in \overline\Omega \setminus \Sigma$; 
\item The function $u_\e$ is of class ${\mathcal C}^\infty$ in a neighborhood of any point $x \in \overline\Omega \setminus \left( \Sigma \cup \partial \omega_\e \right) $;
\end{itemize}
see for instance \cite{brezis2010functional}, \S 9.6, or \cite{gilbarg2015elliptic}.

We aim to derive a precise first order asymptotic expansion of $u_\e$ when $\e \to 0$, thus exemplifying the abstract structure of \cref{th.repdir}.
We start by providing the complete analysis for the two-dimensional case in \cref{sec.dir2dasym}.
The analysis for the three-dimensional case, which is quite similar in many aspects, is outlined in \cref{sec.dir3dasym}. 

\subsection{Asymptotic expansion of the perturbed potential $u_\e$ in 2d}\label{sec.dir2dasym}

\noindent This section deals with the case $d=2$, and our main result is 

\begin{theorem}\label{th.expdir2d}
The following asymptotic expansion holds at any point $x\in \overline{\Omega}$, $x\notin \Sigma \cup \left\{ 0 \right\}$: 
\begin{equation}\label{eq.expuedir2d}
 u_\e(x) = u_0(x) - \frac{\pi}{| \log \e|} \gamma(0) u_0(0) N(x,0) + \o\left(\frac{1}{|\log \e|}\right).
 \end{equation}
\end{theorem}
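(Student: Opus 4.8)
The plan is to reduce everything to a boundary integral equation on the small set $\omega_\e = T(\D_\e)$ and to analyze it after rescaling, the key new ingredient (compared with \cref{th.repdir}) being that near $0 \in \Gamma_N$ the domain $\Omega$ is the image under $T$ of a half-space, which makes the singular part of $N$ completely explicit. First I would set $r_\e := u_\e - u_0$, which solves \cref{eq.reps}, and integrate by parts against the fundamental solution $N(x,\cdot)$ of \cref{fundsol}, exactly as in the proof of \cref{th.repdir}. Since $N(x,\cdot) = 0$ on $\Gamma_D$ and $\gamma \partial_{n_y} N(x,\cdot) = 0$ on $\Gamma_N$, while $r_\e = 0$ on $\Gamma_D$ and $\gamma \partial_n r_\e = 0$ on $\Gamma_N \setminus \overline{\omega_\e}$, this yields
\[
r_\e(x) = \int_{\omega_\e} \phi_\e(y)\, N(x,y)\,\d s(y), \qquad \phi_\e := \gamma \tfrac{\partial r_\e}{\partial n}\big|_{\omega_\e} \in \widetilde H^{-1/2}(\omega_\e),
\]
and imposing the Dirichlet condition $r_\e = -u_0$ on $\omega_\e$ gives the integral equation $\int_{\omega_\e} \phi_\e(y) N(x,y)\,\d s(y) = -u_0(x)$ for $x \in \omega_\e$. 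A solution $\phi_\e$ exists by construction, so only the leading asymptotics of $m_\e := \int_{\omega_\e} \phi_\e \,\d s$ has to be extracted.

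Next I would rescale via $y = T(\e \widehat y)$, $\widehat y \in \D_1$, pushing $\phi_\e\,\d s$ forward to a distribution $\widehat\phi_\e$ on $\D_1$ of total mass $m_\e$. The crucial point is that, since $0 \in \Gamma_N$ and $T$ straightens $\Omega$ into the half-space $H$ near $0$ with $\nabla T(0) = \Id$, the method of images applied to \cref{fundsol} gives, for $x,y$ on $\omega_\e$,
\[
N(x,y) = -\frac{1}{\pi \gamma(0)} \log|x-y| + \O(1) = \frac{|\log \e|}{\pi \gamma(0)} - \frac{1}{\pi \gamma(0)} \log|\widehat x - \widehat y| + \O(1),
\]
where the coefficient $\tfrac{1}{\pi}$ — rather than the $\tfrac{1}{2\pi}$ of \cref{eq.GreenLap} — comes from the charge reflected across the locally flat Neumann boundary. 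This doubling is precisely what produces the constant $\pi$ in \cref{eq.expuedir2d}, and it is consistent with the abstract formula of \cref{repdir2}. Consequently the rescaled integral operator splits as the rank-one operator $\widehat\phi \mapsto \tfrac{|\log\e|}{\pi\gamma(0)} \int_{\D_1} \widehat\phi\,\d \widehat s$ plus an operator bounded on $\widetilde H^{-1/2}(\D_1)$ uniformly in $\e$.

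Using $u_0(T(\e\widehat x)) = u_0(0) + \O(\e)$, the a priori bound $\Vert r_\e \Vert_{H^1(\Omega)} \le C\,\capa(\omega_\e)^{1/2} \le C|\log\e|^{-1/2}$ from \cref{reh1est} and \cref{rem.explicitcapa} (which, after rescaling, controls $\widehat\phi_\e$ in $\widetilde H^{-1/2}(\D_1)$), and the uniform invertibility of the rescaled operator on the orthogonal complement of the constant mode — where the logarithmic blow-up is absent — a perturbation argument yields $m_\e = -\pi \gamma(0) u_0(0)/|\log\e| + \o(1/|\log\e|)$. Finally, for $x \in \overline\Omega \setminus (\Sigma \cup \{0\})$ fixed, $y \mapsto N(x,y)$ is smooth near $0$, so $N(x,y) = N(x,0) + \O(\e)$ for $y \in \omega_\e$; pairing this with $\phi_\e$ and using the rescaled bound on $\widehat\phi_\e$ gives $r_\e(x) = m_\e N(x,0) + \o(1/|\log\e|)$, which is \cref{eq.expuedir2d}. (For $x \in \Gamma_D$ both sides vanish, and the argument for $x \in \Gamma_N \setminus \{0\}$ is unchanged.)

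The main obstacle is the rescaled integral-equation analysis. One must track carefully how the $\widetilde H^{\pm 1/2}$ norms transform under the $\e$-dilation — in two dimensions the $H^{1/2}$-seminorm is scale invariant, which is exactly the source of the logarithmic subtlety — establish uniform-in-$\e$ mapping properties of the rescaled single-layer operator restricted to mean-zero densities (so that the full operator is invertible only because of the large factor $|\log\e|$ multiplying the constant mode), and control the errors coming from the curvature of $\partial \Omega$, the variation of $\gamma$ near $0$, and the regular part $R$ of $N$ near its singularity. Once these uniform estimates are in place, the expansion of $m_\e$, and hence \cref{eq.expuedir2d}, follows from a standard fixed-point argument.
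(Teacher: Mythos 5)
Your proposal follows essentially the same route as the paper: representation of $r_\e$ via the background Green's function $N$, rescaling to the unit segment $\D_1$, identification of the leading $-\frac{1}{\pi\gamma(0)}\log|\cdot|$ singularity of $N$ near $0\in\Gamma_N$ by the method of images (the source of the factor $\pi$), and analysis of the resulting rescaled integral equation in which the $|\log\e|$ blow-up is confined to the constant mode. The paper carries this out by introducing the explicit constant-coefficient half-space Neumann function $L_{A(\e x)}$ on the straightened domain, reducing to the operator $V_\e\varphi=(|\log\e|+\alpha)\langle\varphi,1\rangle+2\pi S_1\varphi$ whose inverse is computed explicitly from the equilibrium distribution of the logarithmic single-layer operator $S_1$, and proving that the remainder $\eta_\e$ vanishes strongly in $H^{1/2}(\D_1)$ (\cref{lem.etae,lem.Teps,lem.S1}) --- precisely the uniform-in-$\e$ estimates you flag as the main technical obstacle.
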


\begin{proof}
We proceed in four steps, relying on several intermediate, technical results, whose proofs are postponed to the end of the section
for the sake of clarity. \par\medskip 

\noindent \textit{Step 1.} We establish a representation formula for $u_\e$ which relates 
its value at a point $x \in \partial \Omega$ ``far'' from the inclusion set $\omega_\e$ to its values inside $\omega_\e$ by means of the fundamental solution $N(x,y)$ 
to the background operator, defined by \cref{fundsol}.

Considering a fixed point $x \in \Omega$, using the definition of $N(x,y)$ and integrating by parts twice, we obtain 
$$\begin{array}{>{\displaystyle}cc>{\displaystyle}l}
 u_\e(x) &=& - \int_\Omega{\dv_y(\gamma(y) \nabla_y N(x,y)) u_\e(y) \: \d y} \\
 &=&  \int_{\Omega}{\gamma(y) \nabla_y N(x,y) \cdot \nabla u_\e(y) \: \d y} \\[0.7em]
 &=& \int_{\partial \Omega}{\gamma(y)\frac{\partial u_\e}{\partial n_y}(y) N(x,y)\:\d s(y)} + \int_{\Omega}{f(y) N(x,y) \:\d y},
 \end{array}$$
 where the second line follows from the facts that $\gamma(y) \frac{\partial N}{\partial n_y}(x,y) = 0$ on $\Gamma_N$ and $u_\e = 0$ on $\Gamma_D$; see \cref{eq.ueu0si}. 
Using the background problem \cref{eq.ueu0si} satisfied by $u_0$ and the boundary conditions for $u_\e$ and $N$, we 
 arrive at the following formula, for any point $x \in \Omega$ 
 \begin{equation}\label{eq.repformulauesi0} 
 \begin{array}{>{\displaystyle}cc>{\displaystyle}l}
 u_\e(x) &=& u_0(x) + \int_{\omega_\e}{\gamma(y)\frac{\partial u_\e}{\partial n_y}(y) N(x,y)\:\d s(y)}\\
 &=&u_0(x) + \int_{\omega_\e}{\gamma(y)\frac{\partial r_\e}{\partial n_y}(y) N(x,y)\:\d s(y)}~.
 \end{array}
 \end{equation}
Here we have taken advantage of the fact that $u_0$ satisfies homogeneous Neumann boundary conditions on $\Gamma_N$
 to introduce the error $r_\e= u_\e - u_0$ in the last integral of the above right-hand side.
Note that
the identity \cref{eq.repformulauesi0} 
 extends to the case of points $x \in \partial \Omega$, $x \notin \Sigma \cup \left\{0\right\}$ in the sense of traces, provided $\e$ is small enough, since all the quantities involved are smooth at such points.
 
Next, we introduce the mapped potentials $v_0 := u_0 \circ T$ and $v_\e := u_\e \circ T$. 
A change of variables in the variational formulations of \cref{eq.ueu0si} reveals that $v_0$ and $v_\e$ are the unique $H^1({\mathcal O})$ solutions to the problems
 \begin{equation}\label{eq.vev0si}
\left\{ 
\begin{array}{cl}
-\dv(A \nabla v_0) = g & \text{in } {\mathcal O}, \\
v_0 = 0 & \text{on } \Gamma_D, \\
(A \nabla v_0) \cdot n = 0 & \text{on } \widehat{\Gamma_N},
\end{array}
\right.
\text{ and } 
 \left\{
\begin{array}{cl}
-\dv(A \nabla v_\e) =  g & \text{in } {\mathcal O}, \\
v_\e = 0 & \text{on } \Gamma_D \cup \D_\e, \\
(A\nabla v_\e)\cdot n = 0 & \text{on } \widehat{\Gamma_N} \setminus \overline{\D_\e},
\end{array}
\right.
\end{equation}
where $g \in {\mathcal C}^\infty(\R^2)$ and $A \in {\mathcal C}^\infty(\R^2 ,\R^{2\times 2})$ are the smooth function and the matrix field defined by 
\begin{equation}\label{eq.defgA}
 g(y) = |\det (\nabla T(y))| f(T(y))~, \text{ and } A(y) = |\det (\nabla T(y))| \gamma(T(y)) \nabla T(y)^{-1} (\nabla T(y)^t)^{-1}.
\end{equation}

Recalling the definition \cref{eq.GNflat} of $\omega_\e$, we now change variables in \cref{eq.repformulauesi0} and then rescale the resulting integral to obtain
 \begin{equation}\label{eq.repformulauesi} 
 \begin{array}{>{\displaystyle}cc>{\displaystyle}l}
 u_\e(x) &=& u_0(x) + \int_{\D_\e}{(A(y) \nabla s_\e(y))\cdot n(y) N(x,T(y))\:\d s(y)}\\
 &=&u_0(x) + \int_{\D_1}{ \varphi_\e(z) N(x,T(\e z))\:\d s(z)}~,
 \end{array}
 \end{equation}
where we have introduced $s_\e := v_\e - v_0 = r_\e \circ T$, and the quantity 
\begin{equation}\label{eq.vphie2ddir}
\varphi_\e(z) = \e (A\nabla s_\e \cdot n)(\e z) \in \widetilde{H}^{-1/2}(\D_1)~.
\end{equation} 
The formula \cref{eq.repformulauesi} leads us to study the asymptotic behavior of $\varphi_\e$ as $\e \to 0$.
 \par\medskip 
 
 \noindent \textit{Step 2.} We characterize $\varphi_\e$ as the solution to an integral equation. 
 To this end, we essentially repeat the derivation of Step 1,
 except that we now use an approximate, explicit fundamental solution instead of the function $N(x,y)$.  
 
 For any symmetric, positive definite matrix $A$, and any $x\in H$, let $L_A(x,y)$ be a solution to the following equation posed on the lower half-space $H$: 
 \begin{equation}\label{eq.defLB}
\left\{
\begin{array}{cl}
-\dv_y(A \nabla_y L_A(x,y)) = \delta_{y=x}  & \text{in } H~,  \\
A\nabla_y L_A (x,y) \cdot n(y) = 0 & \text{on } \partial H~.
\end{array}
 \right.
\end{equation}
The next lemma provides an explicit expression for such a function; its proof is postponed to the end of the present section. 

\begin{lemma}\label{lem.LB}
Let $A$ be a symmetric, positive definite $2 \times 2$ matrix, and let $M := A^{-1/2}$.
Let $G(x,y)$ be the fundamental solution of the operator $-\Delta$ in the free space, defined in \cref{eq.GreenLap}.
The function
\begin{equation}\label{eq.exprLB}
  L_A(x,y) = |\det M| \left( G(Mx,My) + G\left(Mx, My - 2 y_2 \frac{M^{-1}e_2}{|M^{-1}e_2|^2}\right)\right), \quad x \neq y,
 \end{equation}
 satisfies the equation \cref{eq.defLB}.
\end{lemma}
\begin{remark}\label{rem.LAnorm}
A straightforward calculation shows that,  for $x \in \partial H$, $y \in \R^2$, $y\neq x$, 
$$ | Mx - My | = \left\lvert Mx - My + 2 y_2 \frac{M^{-1}e_2}{|M^{-1}e_2|^2} \right\lvert~.$$
\end{remark}
For a given point $x\in H$, we now consider the function $y \mapsto L_{A(x)}(x,y)$ (by substituting $A(x)$ for $A$ in  \cref{eq.exprLB}) which satisfies
 \begin{equation}\label{eq.defLAx}
\left\{
\begin{array}{cl}
-\dv_y(A(x) \nabla_y L_{A(x)}(x,y)) = \delta_{y=x}  & \text{in } H,  \\
A(x)\nabla_y L_{A(x)} (x,y) \cdot n(y) = 0 & \text{on } \partial H.
\end{array}
 \right.
\end{equation}
For a point $x \in {\mathcal O}$, we obtain from \cref{eq.defLAx} and integration by parts that
 $$\begin{array}{>{\displaystyle}cc>{\displaystyle}l}
 v_\e(x) &=& - \int_{\mathcal O}{\dv_y(A(x) \nabla_y L_{A(x)}(x,y)) v_\e(y) \: \d y} \\[0.7em]
 &=& - \int_{\partial {\mathcal O}}{ A(x) \nabla_y L_{A(x)}(x,y) \cdot n(y) v_\e(y)\:\d s(y) } + \int_{{\mathcal O}}{A(x) \nabla_y L_{A(x) }(x,y) \cdot \nabla v_\e(y) \: \d y} \\[0.7em]
 &=& - \int_{\partial {\mathcal O} \setminus {U}}{ A(x) \nabla_y L_{A(x)}(x,y) \cdot n(y) v_\e(y)\:\d s(y) }   + \int_{{\mathcal O}}{(A(x)-A(y)) \nabla_y L_{A(x) }(x,y) \cdot \nabla v_\e(y) \: \d y} \\ 
 && + \int_{{\mathcal O}}{A(y) \nabla_y L_{A(x) }(x,y) \cdot \nabla v_\e(y) \: \d y} \\[0.7em]
  &=& - \int_{\partial {\mathcal O} \setminus {U}}{ A(x) \nabla_y L_{A(x)}(x,y) \cdot n(y) v_\e(y)\:\d s(y) }   + \int_{{\mathcal O}}{(A(x)-A(y)) \nabla_y L_{A(x) }(x,y) \cdot \nabla v_\e(y) \: \d y} \\ 
 && + \int_{\partial {\mathcal O}}{(A\nabla v_\e \cdot n)(y) \: L_{A(x) }(x,y) \: \d s(y)} + \int_{{\mathcal O}}{ g(y) L_{A(x) }(x,y) \: \d y}~,
 \end{array}$$
 where the third line follows from the equation \cref{eq.defLAx} satisfied by $L_{A(x)}(x,y)$, and the fact that $\partial {\mathcal O} \cap U = \partial H\cap U$. A similar calculation applied to $v_0$ instead of $v_\e$ yields 
\begin{multline*}
v_0(x) = - \int_{\partial {\mathcal O} \setminus {U}}{ A(x) \nabla_y L_{A(x)}(x,y) \cdot n(y) v_0(y)\:\d s(y) }   + \int_{{\mathcal O}}{(A(x)-A(y)) \nabla_y L_{A(x) }(x,y) \cdot \nabla v_0(y) \: \d y} \\ 
 \hskip -50pt + \int_{\partial {\mathcal O}}{(A\nabla v_0 \cdot n)(y) \: L_{A(x) }(x,y) \: \d s(y)} + \int_{{\mathcal O}}{ g(y) L_{A(x) }(x,y) \: \d y}~.
 \end{multline*}
Forming the difference of these identities, we get 
 \begin{multline*}
s_\e(x) = - \int_{\partial {\mathcal O} \setminus {U}}{ A(x) \nabla_y L_{A(x)}(x,y) \cdot n(y) s_\e(y)\:\d s(y) }   + \int_{{\mathcal O}}{(A(x)-A(y)) \nabla_y L_{A(x) }(x,y) \cdot \nabla s_\e(y) \: \d y} \\ 
 \hskip -50pt + \int_{\Gamma_D}{(A\nabla s_\e \cdot n)(y) \: L_{A(x) }(x,y) \: \d s(y)} + \int_{\D_\e}{(A\nabla s_\e \cdot n)(y) \: L_{A(x) }(x,y) \: \d s(y)}~.
  \end{multline*}
Letting $x$ tend to $\D_\e$, and invoking the boundary continuity of single layer potentials (as in the last term), we obtain for a.e. $x \in \D_\e$
   \begin{eqnarray}\label{eq.v0De}
   -v_0(x) = - \int_{\partial {\mathcal O} \setminus {\mathcal U}}{ A(x) \nabla_y L_{A(x)}(x,y) \cdot n(y) s_\e(y)\:\d s(y) }   + \int_{{\mathcal O}}{(A(x)-A(y)) \nabla_y L_{A(x) }(x,y) \cdot \nabla s_\e(y) \: \d y}\nonumber \\ 
+ \int_{\Gamma_D}{(A\nabla s_\e \cdot n)(y) \: L_{A(x) }(x,y) \: \d s(y)} + \int_{\D_\e}{(A\nabla s_\e \cdot n)(y) \: L_{A(x) }(x,y) \: \d s(y)}~.
  \end{eqnarray}
Rescaling the above equation, we finally obtain, for a.e. $x \in \D_1$  
   \begin{multline*}
   -v_0(\e x) = - \int_{\partial {\mathcal O} \setminus {\mathcal U}}{ A(\e x) \nabla_y L_{A(\e x)}(\e x,y) \cdot n(y) s_\e(y)\:\d s(y) }   + \int_{{\mathcal O}}{(A(\e x)-A(y)) \nabla_y L_{A(\e x) }(\e x,y) \cdot \nabla s_\e(y) \: \d y} \\ 
+ \int_{\Gamma_D}{(A\nabla s_\e \cdot n)(y) \: L_{A(\e x) }(\e x,y) \: \d s(y)} + \int_{\D_1}{ \varphi_\e(z) \: L_{A(\e x) }(\e x,\e z) \: \d s(z)},
  \end{multline*}
  where $\varphi_\e \in \widetilde{H}^{-1/2}(\D_1)$ is the function \cref{eq.vphie2ddir} introduced in the course of Step 1.
  We recast this equation in the form 
  \begin{equation}\label{eq.inteq}
   T_\e \varphi_\e = -u_0(0) + \eta_\e~,
   \end{equation}
  where $T_\e: \widetilde{H}^{-1/2}(\D_1) \to H^{1/2}(\D_1) $ is the integral operator defined by 
  \begin{equation}\label{eq.defTe2ddir}
   T_\e \varphi (x) =  \int_{\D_1}{ \varphi(z) \: L_{A(\e x) }(\e x,\e z) \: \d s(z)}~,  
   \end{equation}
  and where the remainder $\eta_\e \in H^{1/2}(\D_1)$ is given by
  \begin{multline}\label{eq.defetae} 
 \eta_\e(x) = \Big( u_0(0) - v_0(\e x) \Big)  +  \int_{\partial {\mathcal O} \setminus {\mathcal U}}{ A(\e x) \nabla_y L_{A(\e x)}(\e x,y) \cdot n(y) s_\e(y)\:\d s(y) }  \\
 -  \int_{{\mathcal O}}{(A(\e x)-A(y)) \nabla_y L_{A(\e x) }(\e x,y) \cdot \nabla s_\e(y) \: \d y}  - \int_{\Gamma_D}{(A\nabla s_\e \cdot n)(y) \: L_{A(\e x) }(x,y) \: \d s(y)} ~.
 \end{multline}

 \par\medskip  
 \noindent \textit{Step 3.} We infer the asymptotic behavior of $\varphi_\e$ from the analysis of the integral equation \cref{eq.inteq}.
 The key ingredients in this direction are the next two lemmas; for clarity, their proofs are postponed to the end of this section.
 \begin{lemma}\label{lem.etae}
 The quantity $\eta_\e \in H^{1/2}(\D_1)$, defined in \cref{eq.defetae}, satisfies
 \begin{equation}\label{eq.cvetaeps}
  \eta_\e \xrightarrow{\e \to 0} 0 \text{ strongly in } H^{1/2}(\D_1)~.
  \end{equation}
 \end{lemma}
 
 \begin{lemma}\label{lem.Teps}
 The following asymptotic expansion holds 
 $$ \sup\limits_{\varphi \in \widetilde{H}^{-1/2}(\D_1) \atop || \varphi ||_{ \widetilde{H}^{-1/2}(\D_1)} \leq 1} \left\lvert\left\lvert \: T_\e \varphi - \frac{1}{\pi \gamma(0)} (  |\log \e|  + \alpha) \int_{\D_1}{\varphi \: \d s} - \frac{2}{\gamma(0)} S_1 \varphi \: \right\lvert\right\lvert_{H^{1/2}(\D_1)} \xrightarrow{\e \to 0}0~, $$
 where $\alpha= \frac{1}{2} \log \gamma(0)$ and $S_1 : \widetilde{H}^{-1/2}( \D_1) \to H^{1/2}(\D_1)$ is the self-adjoint operator defined by
\begin{equation}\label{eq.S1}
S_1\varphi(x) = - \frac{1}{2\pi}\int_{\D_1}{\log|x-y| \varphi (y) \: \d s(y)}, \:\: x\in \D_1~.
\end{equation}
 \end{lemma}\par\medskip
 
By use of these results, the integral equation \cref{eq.inteq} may be rewritten 
 \begin{equation}\label{eq.inteqSe2}
(|\log \e | + \alpha) \langle \varphi_\e,1 \rangle + 2\pi S_1 \varphi_\e + R_\e \varphi_\e= - \pi \gamma(0) u_0(0) + \pi \gamma(0) \eta_\e~,
  \end{equation}
  where $\eta_\e$ converges to $0$ strongly in $H^{1/2}(\D_1)$ and $R_\e: \widetilde{H}^{-1/2}(\D_1) \to H^{1/2}(\D_1)$ 
  is a sequence whose operator norm converges to $0$. The study of the approximate version \cref{eq.inteqSe2} of our integral equation \cref{eq.inteq}
is based on yet another lemma, whose proof is also postponed.
\begin{lemma}\label{lem.S1}
\noindent \begin{enumerate}[(i)]
\item The operator $S_1 : \widetilde{H}^{-1/2}( \D_1) \to H^{1/2}(\D_1)$ 
is invertible. 
\item For $\e >0$ small enough, the operator $V_\e : \widetilde{H}^{-1/2}(\D_1) \to H^{1/2}(\D_1)$, defined by
$$ V_\e \varphi = (|\log \e |+  \alpha ) \langle \varphi, 1 \rangle + 2\pi S_1\varphi~,$$
is invertible with the uniformly bounded inverse 
\begin{equation}\label{eq.Vem1}
 V_\e^{-1} g = \frac{1}{2\pi}S^{-1}_1 g - \frac{(|\log \e| + \alpha) \: \langle S_1^{-1} g ,1 \rangle }{2\pi + ( |\log \e| +\alpha)  \:  \langle S_1^{-1} 1 ,1 \rangle} \frac{1}{2\pi}S_1^{-1} 1~,
 \end{equation}
$\langle S_1^{-1} 1 ,1 \rangle = 2\pi/\log 2$. In particular, 
\begin{equation}\label{eq.moyVe}
 \langle V_\e^{-1} g, 1\rangle = \frac{ \langle S_1^{-1}g ,1 \rangle}{2\pi+(|\log \e| +\alpha) \langle S_1^{-1} 1, 1 \rangle}~.
 \end{equation}
\end{enumerate}
\end{lemma}

Since the operator norm of $R_\e: \widetilde{H}^{-1/2}(\D_1) \to H^{1/2}(\D_1)$ tends to $0$, 
invoking \cref{eq.inteqSe2}, \cref{lem.S1}, 
and a Neumann series for the solution of \cref{eq.inteqSe2}, 
we see that the function $\varphi_\e \in \widetilde{H}^{-1/2}(\D_1)$ satisfies 
$$ \varphi_\e = -\pi \gamma(0) u_0(0) V_\e^{-1} 1 + V_\e^{-1} \widetilde{\eta_\e}~,$$
for a sequence $\widetilde{\eta_\e}$ converging to $0$ strongly in $H^{1/2}(\D_1)$.
In particular, there exists a constant $C$ such that
\begin{equation}\label{eq.stabphie}
|| \varphi_\e ||_{\widetilde{H}^{-1/2}(\D_1)} \leq C~.
\end{equation}
Moreover, using \cref{eq.moyVe}, we calculate
\begin{equation}\label{eq.moyphie}
 \begin{array}{>{\displaystyle}cc>{\displaystyle}l}
\langle \varphi_\e,1\rangle &=& -\pi \gamma(0) u_0(0)  \langle V_\e^{-1} 1, 1 \rangle + \langle V_\e^{-1} \eta_\e , 1 \rangle\\
&= & -\frac{ \pi}{|\log \e|} \gamma(0) u_0(0) +  \o\left( \frac{1}{|\log\e |} \right)~,
\end{array}
\end{equation}
which is the needed information about $\varphi_\e$ for the following Step 4.
  \par\medskip

 \noindent \textit{Step 4.} We pass to the limit in the initial representation formula \cref{eq.repformulauesi}.
Since $x$ does not belong to $\Sigma \cup \{ 0 \}$, we obtain from the estimate \cref{eq.stabphie} and a Taylor expansion of the smooth function $y \mapsto N(x,y)$ in a neighborhood of $0$
$$
 \begin{array}{>{\displaystyle}cc>{\displaystyle}l}
\left\lvert \int_{\D_1}{\varphi_\e(z) (N(x,T(\e z)) - N(x,0)) \d s(z)}  \right\lvert & \leq & || \varphi_\e||_{\widetilde{H}^{-1/2}(\D_1)}  || N(x,T(\e \cdot)) - N(x,0) ||_{H^{1/2}(\D_1)} \\  
&\leq & C\e~, 
\end{array}
$$
and so
$$ u_\e(x) = u_0(x) + \left( \int_{ \D_1 }{ \varphi_\e (z) \d s(z)}\right) N(x,0) +  \O(\e)~.$$
Hence the desired expansion \cref{eq.expuedir2d} follows from \cref{eq.moyphie}. 
 \end{proof}
 \par\bigskip
 
 We now provide the proofs of the missing links in the above analysis. 
 \begin{proof}[Proof of \cref{lem.LB}]
 We seek a function $L_A(x,y)$ that satisfies, for any point $x \in H$, and any smooth function $\psi \in {\mathcal C}^\infty_c(\overline H)$,
\begin{equation}\label{eq.varfL2} 
 \psi(x) = \int_H{A \nabla_y L_A(x,y) \cdot \nabla \psi(y) \: \d y }~.
\end{equation}
Introducing the symmetric, positive definite matrix $M \in \R^{2\times 2}$ for which $M^{-2} = A$, we may write the latter requirement as follows
$$\forall \psi \in {\mathcal C}^\infty_c(\overline H), \:\: \psi(x) = \int_H (M^{-1} \nabla_y L_A(x,y)) \cdot (M^{-1} \nabla \psi(y)) \: \d y ~.$$
Changing variables and using test functions $\psi \in {\mathcal C}^\infty_c(\overline H)$ of the form $\psi(y) = \widetilde{\psi}(My)$, $\widetilde{\psi} \in {\mathcal C}^\infty_c(M\overline H)$, we arrive at
$$\forall x \in MH, \:\: \forall \widetilde{\psi} \in {\mathcal C}^\infty_c(M \overline H), \:\: \widetilde{\psi}(x) = \int_{MH}{ |\det M^{-1}| \: \nabla_z (L_A(M^{-1} x,M^{-1}z)) \cdot \nabla \widetilde{\psi}(z) \: \d z }~.$$
Therefore, it suffices that the function $(x,y) \mapsto \frac{1}{|\det M|} L_A(M^{-1}x, M^{-1}y)$ be a Neumann function for the Laplacian  on the rotated half-space $MH$. Such a function can easily be constructed  by reflection -- more precisely
\begin{equation}\label{eq.LAconst}
\frac{1}{|\det M|}  L_A(M^{-1}x, M^{-1}y) = G(x,y) + G(x,s_M(y)),
\end{equation}
where 
$$s_M(y) := y - 2\left(y \cdot  \frac{M^{-1}e_2}{|M^{-1}e_2|}\right)  \frac{M^{-1}e_2}{|M^{-1}e_2|} $$
is the symmetric image of a point $y \in MH$ with respect to the hyperplane $\partial (MH)$ (whose unit normal vector equals $\frac{M^{-1} e_2}{| M^{-1}e_2|}$). The desired expression \cref{eq.exprLB} for $L_A(x,y)$ follows immediately. 

\end{proof}
We next turn to the proof of \cref{lem.etae} concerning the remainder $\eta_\e$. 

\begin{proof}[Proof of \cref{lem.etae}]
The definition of $\eta_\e$ as the right-hand side of \cref{eq.defetae} features four terms, which we denote by $I^i_\e(x)$, $i=1,\ldots,4$, respectively. 
We prove that each of these converges to $0$ strongly in $H^{1/2}(\D_1)$. 

First, using the smoothness of $v_0$ near the point $0$ together with the fact that $v_0(0) = u_0(0)$, we get
\begin{equation}\label{eq.u0v0e}
I_\e^1(x) := u_0(0) - v_0(\e x) \xrightarrow{\e \to 0} 0 \text{ strongly in } H^{1/2}(\D_1)~.
 \end{equation}
\noindent
Secondly, the term
\begin{equation}\label{eq.I2rep}
 I_\e^2(x) :=  \int_{\partial {\mathcal O} \setminus {U}}{ A(\e x) \nabla_y L_{A(\e x)}(\e x,y) \cdot n(y) s_\e(y)\:\d s(y) }
 \end{equation}
 is an integral over the set $\partial {\mathcal O} \setminus {U}$, which lies ``far'' from  $\D_\e$. 
Since the function $(x,y) \mapsto A(\e x) \nabla_y L_{A(\e x)}(\e x,y)$ is smooth for $x \in \D_1$ and $y \in \partial {\mathcal O} \setminus {U}$ (uniformly with respect to $\e$), and since
$s_\e \to 0$ strongly in $H^1(\O)$ by virtue of \cref{lem.firstest} ( \cref{rem.prelestdiraniso}) and \cref{eq.capaDe}, it follows easily that $I_\e^2(x) \to 0$ strongly in $H^{1/2}(\D_1)$. 

For the very same reason, the term 
\begin{equation}\label{eq.I3rep}
 I_\e^4(x) := - \int_{\Gamma_D}{(A\nabla s_\e \cdot n)(y) \: L_{A(\e x) }(x,y) \: \d s(y)} 
 \end{equation}
also converges to $0$ strongly in $H^{1/2}(\D_1)$. Finally, we consider the term
\begin{equation}\label{eq.Kepsrep} 
I^3_\e(x) :=  - \int_{{\mathcal O}}{(A(\e x)-A(y)) \nabla_y L_{A(\e x) }(\e x,y) \cdot \nabla s_\e(y) \: \d y}~.
\end{equation}
Using \cref{lem.LB} and the subsequent \cref{rem.LAnorm}, we see that, for $x\in \D_1$ and $y \in {\mathcal O}$,   
$$ \nabla_y L_{A(\e x)}(\e x , y) = 
 \frac{-1}{\pi \sqrt{\det(A(\e x))}} \frac{M^2(\e x) (y-\e x)}{|M(\e x)(y-\e x)|^2}.
$$
As the matrix field $A(y)$ is smooth, there exists a constant $C>0$ such that:
$$\forall x \in \D_1, \: y \in {\mathcal O}, \quad ||A(\e x) - A(y)|| \leq C|\e x - y|~,$$
where $|| \cdot  ||$ denotes any matrix norm.  We then estimate
\begin{equation}\label{eq.estI4edir}
 |I^3_\e(x)| \leq C \int_{{\mathcal O}}{ |\nabla s_\e (y)|\: \d y} \leq C || s_\e ||_{H^1({\mathcal O})}~.
 \end{equation}
Invoking again \cref{lem.firstest} (\cref{rem.prelestdiraniso}) and \cref{eq.capaDe}, we conclude that
$$ |I_\e^3(x)| \xrightarrow{\e \to 0} 0 \text{ uniformly in } x \in \D_1~,$$
which implies, in particular, the strong $L^2(\D_1)$ convergence of $I_\e^3$ to $0$. It remains to prove that $I_\e^3$ converges to $0$ strongly in $H^{1/2}(\D_1)$. 
To this end, we return to the formula \cref{eq.v0De}, which reads
\begin{multline*}
 I_\e^3\left(\frac{x}{\e}\right) = -s_\e(x) - \int_{\partial {\mathcal O} \setminus {U}}{ A(x) \nabla_y L_{A(x)}(x,y) \cdot n(y) s_\e(y)\:\d s(y) }   + 
 \int_{\Gamma_D}{(A\nabla s_\e \cdot n)(y) \: L_{A(x) }(x,y) \: \d s(y)} \\
 + \int_{\widehat{\Gamma_N}}{(A\nabla s_\e \cdot n)(y) \: L_{A(x) }(x,y) \: \d s(y)}, \quad x \in \D_\e~,
 \end{multline*}
where we have taken advantage of the fact that $A\nabla s_\e \cdot n$ belongs to $\widetilde{H}^{-1/2}(\D_\e)$ (and vanishes in $\widehat{\Gamma_N}\setminus \D_\e$) to express the last integral in the above right-hand side as an integral on the whole  set $\widehat{\Gamma_N}$.
Using the mapping properties of the integral operator with kernel $L_{A(x) }(x,y)$ (see \cref{th.TK}), we obtain 
$$ \left\lvert I^3_\e\left(\frac{\cdot }{\e}\right) \right\lvert_{H^{1/2}(\D_\e)} \leq C || s_\e ||_{H^1(\mathcal O)}~,$$
where we recall the definition \cref{eq.normHs} of the semi-norm $|\cdot|_{H^{1/2}(\D_\e)}$.
Changing variables in the definition of this semi-norm to rescale the above left-hand side, we now get 
$$ \left\lvert I^3_\e \right\lvert_{H^{1/2}(\D_1)} \leq C || s_\e ||_{H^1(\mathcal O)}~.$$
We conclude from \cref{lem.firstest} ( \cref{rem.prelestdiraniso}) and \cref{eq.capaDe}, that $\left\lvert I_\e^3 \right\lvert_{H^{1/2}(\D_1)} \to 0$.
Finally, as we already know that $I_\e^3$ converges to $0$ strongly in $L^2(\D_1)$, it follows that $I_\e^3$ converges to $0$ strongly in  $H^{1/2}(\D_1)$, which completes the proof of the lemma. 
 \end{proof}
 
 We next turn to the proof of the approximation \cref{lem.Teps}.
  \begin{proof}[Proof of \cref{lem.Teps}]
Let $D \subset \R^2$ be a smooth bounded domain, 
whose boundary $\partial D$ is a closed curve containing $\D_1$ as a subset.  
Since $\widetilde{H}^{-1/2}(\D_1)$ is the space of distributions in $\D_1$ whose extension by $0$ to $\partial D$ 
belongs to $H^{-1/2}(\partial D)$, and since $H^{1/2}(\D_1)$ is the space of restrictions to $\D_1$ of elements from $H^{1/2}(\partial D)$ (see \cref{sec.Hs}), it is enough to prove that the asymptotic formula in the statement of \cref{lem.Teps} holds when all the operators at play are seen as operators from $H^{-1/2}(\partial D)$ into $H^{1/2}(\partial D)$. 
  
To this end, let us first simplify the expression \cref{eq.exprLB} for the function $L_{A(\e x)}(\e x , \e y)$ featured in the definition \cref{eq.defTe2ddir} of the operator $T_\e$
$$ \forall x,y \in \D_1, \: x \neq y, \quad L_{A(\e x)}(\e x,\e y) = \frac{-1}{\pi \sqrt{ |\det A(\e x)|}} \log \left\lvert M(\e x) ( \e x-\e y)\right\lvert~ .$$
The matrix field $A$ is given by \cref{eq.defgA}, and its definition readily implies that $A(\e x)$ tends to $\gamma(0) \I$ in ${\mathcal C}^k(V)$ for any integer $k \geq 0$ and any relatively compact open neighborhood $V$ of $0$ in $\R^2$. 
Hence, $T_\e$ may be decomposed as: 
\begin{multline}\label{eq.Tedecomp2d}
 T_\e \varphi =  \frac{1}{\pi \gamma(0)}  (|\log \e| + \alpha) \int_{\D_1}{\varphi \: \d s} + \frac{2}{\gamma(0)} S_1 \varphi \\
+ \left(  \frac{1}{\pi \sqrt{ |\det A(\e x)|}} -  \frac{1}{\pi \gamma(0)}  \right) ( |\log \e| + \alpha)  \int_{\D_1}{\varphi \: \d s} + T_{K_\e} \varphi~,
 \end{multline}
where $T_{K_\e}$ is the integral operator with kernel $K_\e(x,x-y)$, and $K_\e$ is given by
$$ K_\e(x,z) := \frac{1}{\pi \gamma(0)} \log|z| - \frac{1}{\pi \sqrt{ |\det A(\e x)|}} \log \left\lvert \sqrt{\gamma(0)} M(\e x) z \right\lvert~.$$
The first two terms in the right-hand side of \cref{eq.Tedecomp2d} correspond to the desired limiting behavior for $T_\e$, and the third term is easily seen to converge to $0$ as an operator from $H^{-1/2}(\partial D)$ into $H^{1/2}(\partial D)$. 
We then focus on the operator $T_{K_\e}$. It is easy to verify that $K_\e$ is a homogeneous kernel of class $-1$ in the sense of \cref{def.homkernel}.
Hence, \cref{th.TK} implies that $T_{K_\e}$ maps $H^{-1/2}(\partial D)$ into $H^{1/2}(\partial D)$. Note that we may modify $K_\e$, in such a way that it vanishes outside a sufficently large compact set (since the definition of $T_{K_\e}$ only involves values $K_\e(x,x-y)$ for $x,y \in \partial D$). With this modification we have 
$$\sup\limits_{|\alpha| \leq k \atop |\beta| \leq k}\sup \limits_{x\in \R^d} \sup\limits_{|z|=1} \left\lvert \frac{\partial^\alpha}{\partial x^\alpha} \frac{\partial^\beta}{\partial z^\beta} K_\e(x,z) \right\lvert \xrightarrow{\e \to 0} 0~,$$
for any integer $k$.
In view of \cref{th.TK}, $T_{K_\e}$ converges to $0$ in the operator norm
$$ \sup\limits_{\varphi \in H^{-1/2}(\partial D) \atop || \varphi ||_{H^{-1/2}(\partial D)} \leq 1} || T_{K_\e} \varphi ||_{H^{1/2}(\partial D)} \xrightarrow{\e \to 0} 0~,$$ 
which finishes the proof.  
\end{proof}

\begin{proof}[Proof of \cref{lem.S1}]

\noindent \textit{Proof of (i).} Let $D \subset \R^2$ be a smooth bounded domain, 
whose boundary $\partial D$ is a closed curve containing ${\D}_1$ as a subset. 
We also introduce another bounded Lipschitz domain $V \subset \R^2$ such that $D \Subset V$, and a smooth cut-off function $\chi \in {\mathcal C}^\infty_c(\R^2)$
such that $\chi \equiv 1$ on a neighborhood of $\overline{D}$ and $\chi \equiv 0$ on $\R^2 \setminus \overline V$. 

The proof follows an idea in \cite{stephan1986boundary,stephan1987boundary}; it relies on the connection between $S_1$ and the single layer potential
${\mathcal S}_D : H^{-1/2}(\partial D) \to H^{1}_{\text{loc}}(\R^2) $ associated with  $D$, as defined in \cref{eq.SD}. More precisely
$$\forall \varphi \in \widetilde{H}^{-1/2}({\D}_1), \:\: S_1 \varphi = ({\mathcal S}_D \varphi) \lvert_{{\D}_1}  ~,$$
where the density $\varphi$ in the right hand side is extended by $0$ outside $\D_1$. We first show that $S_1: \widetilde{H}^{-1/2}({\D}_1) \to H^{1/2}({\D}_1) $ 
is a Fredholm operator with index $0$ by adapting the argument of the proof of Th. 7.6 in \cite{mclean2000strongly}. 
The classical mapping properties of the single layer potential ${\mathcal S}_D$ imply that there exists a constant $C >0$ such that 
for any density $\varphi \in  \widetilde{H}^{-1/2}({\D}_1) $, the associated potential $u  = {\mathcal S}_D\varphi \in H^1_{\text{loc}}(\R^2)$ satisfies
$$ || \chi u ||_{H^1(\R^2)} \leq C || \varphi ||_{H^{-1/2}(\partial D)}~. $$
Conversely, we infer from the jump relations \cref{eq.jumpSD} of the single layer potential that
\begin{equation}\label{eq.estvphichiu}
 || \varphi ||_{H^{-1/2}(\partial D)} = \left\lvert \left\lvert  \frac{\partial (\chi u)^+}{\partial n} -  \frac{\partial (\chi u)^-}{\partial n}  \right\lvert\right\lvert_{H^{-1/2}(\partial D)} \leq C || \chi u||_{H^1(\R^2)}~.
 \end{equation}
Now, using again \cref{eq.jumpSD} together with integration by parts, we obtain that, for $\varphi \in \widetilde H^{-1/2}(\D_1)$, 
\begin{eqnarray*}
 \langle S_1\varphi, \varphi \rangle = \int_{\partial D}{{\mathcal S}_D \varphi \: \varphi \: \d s} &=& \int_{\partial D}{ \chi u \left(\frac{\partial (\chi u)^+}{\partial n} - \frac{\partial (\chi u)^-}{\partial n}\right) \: \d s} \\
&=& - \int_{\R^2}{|\nabla (\chi u ) |^2 \:\d x} - \int_{\R^2 \setminus \overline{D}}{\Delta (\chi u) \chi u \:\d x}\\
&=& - \int_{\R^2}{|\nabla (\chi u ) |^2 \:\d x} - \int_{\R^2 \setminus \overline{D}}{\Big(\Delta \chi u + 2\nabla \chi \cdot \nabla u \Big) \chi u \:\d x}\\
&=& - \int_{\R^2}{|\nabla (\chi u ) |^2 \:\d x} - \int_{\R^2 \setminus \overline{D}}{\Big(\chi (\Delta \chi) u^2 + 2u \nabla \chi \cdot \nabla (\chi u) - 2 u^2 |\nabla \chi|^2 \Big) \:\d x}~,\\
\end{eqnarray*}
which we rewrite as
\begin{equation}\label{eq.estchiu}
 \int_{\R^2}{|\nabla (\chi u ) |^2 \:\d x}  = -\langle S_1\varphi, \varphi \rangle - \int_{\R^2 \setminus \overline{D}}{\Big(\chi (\Delta \chi) u^2 + 2u \nabla \chi \cdot \nabla (\chi u) - 2 u^2 |\nabla \chi|^2 \Big) \:\d x}~. 
 \end{equation}
The Cauchy-Schwarz inequality (and rearrangement) now implies the existence of a constant $C$ such that 
$$|| \nabla (\chi u ) ||_{L^2(\R^2)^2}^2  \leq C\Big(|| S_1 \varphi ||_{H^{1/2}(\D_1)} || \varphi ||_{\widetilde{H}^{-1/2}(\D_1)} +  || u ||_{L^2(V)}^2  \Big)~.$$
A combination with \cref{eq.estvphichiu}, and insertion of $u= {\mathcal S}_D\varphi$, yields the existence of a constant $C$ such that, for arbitrary $\varphi \in  \widetilde{H}^{-1/2}({\D_1})$ (extended by $0$ outside $\D_1$)
\begin{equation}\label{eq.estpeetrevphi}
||\varphi ||_{\widetilde{H}^{-1/2}(\D_1)}  = ||\varphi ||_{H^{-1/2}(\partial D)} \leq C \Big( ||S_1 \varphi ||_{H^{1/2}(\D_1)} + || {\mathcal S}_D\varphi ||_{L^2(V)} \Big)~. 
\end{equation}
Since the mapping $H^{-1/2}(\partial D) \ni \varphi \mapsto {\mathcal S}_D \varphi \in H^1(V)$ is continuous
and the injection $H^1(V) \to L^2(V)$ is compact, an application of  Peetre's \cref{lem.peetre} to \cref{eq.estpeetrevphi} reveals that $S_1$ has finite dimensional kernel $\Ker(S_1) \subset \widetilde{H}^{-1/2}(\D_1)$, 
and closed range $\Ran(S_1) \subset H^{1/2}(\D_1)$. Finally, since $S_1$ is self-adjoint, it follows that
$$ \Ker(S_1) = \Ran(S_1)^\perp, \text{ and so } \Ran(S_1) = \Ker(S_1)^\perp~.$$
In summary $S_1$ is a Fredholm operator with index $0$.\par\medskip

In order to prove that $S_1$ is invertible, it thus suffices to prove that it is injective on $ \widetilde{H}^{-1/2}({\D}_1)$. 
To this end, let $\varphi \in  \widetilde{H}^{-1/2}({\D}_1)$ be such that $S_1 \varphi = 0$ on ${\D}_1$. We assume first that $\varphi$ has mean $0$, that is $\langle \varphi , 1\rangle = \int_{\partial D}{\varphi \: \d s} = 0$. 
Then, the associated single layer potential ${\mathcal S}_D \varphi : \R^2 \to \R$ satisfies the decay property 
\begin{equation}\label{eq.decaySDvphiS1}
 | {\mathcal S}_D\varphi (x) | = {\mathcal O}\left( \frac{1}{|x|}\right) \text{ as } |x|\to \infty;
 \end{equation}
see \cref{eq.decaySDmean0}.
From the same integration by parts which led to \cref{eq.estchiu} (and which can now be carried out without introducing a cut-off function $\chi$ because of the decay property \cref{eq.decaySDvphiS1}), 
we obtain 
\begin{equation}\label{eq.ippSDmean0}
 \langle S_1\varphi, \varphi \rangle = 0 = - \int_{\R^2}{|\nabla ({\mathcal S}_D\varphi )|^2 \:\d x},
 \end{equation}
and so ${\mathcal S}_D\varphi = 0$ on $\R^2$. 
As a result, 
$$ \varphi = \frac{\partial ({\mathcal S}_D\varphi)^+}{\partial n} - \frac{\partial ({\mathcal S}_D\varphi)^-}{\partial n}  = 0 \text{ on } \partial D, $$
as desired. Finally, let us consider the general case where $S_1\varphi = 0$ but $\langle \varphi, 1 \rangle$ does not necessarily vanish. 
From \cref{prop.eqdistrib}, the function $\varphi_c \in \widetilde{H}^{-1/2}({\D}_1)$ defined by:
$$ \varphi_c(x_1) = \frac{1}{\pi\sqrt{1-x_1^2}}$$
is such that: 
$$\langle \varphi_c, 1\rangle = 1, \text{ and } S_1 \varphi_c =  \frac{\log 2}{2\pi} \text{ on } \D_1.$$
Hence, the element $\varphi_0 \in  \widetilde{H}^{-1/2}(\D_1)$ defined by: 
$$ \varphi_0 = \varphi - \langle\varphi,1\rangle \varphi_c,$$
satisfies the following properties:
$$ \langle \varphi_0, 1 \rangle = 0 \text{ and } S_1\varphi_0 = -\langle \varphi, 1 \rangle S_1 \varphi_c = -\frac{ \log 2}{2\pi} \: \langle\varphi,1\rangle,$$ 
so that $\langle S_1 \varphi_0, \varphi_0 \rangle = 0$. 
The same calculation as in \cref{eq.ippSDmean0} reveals that $\varphi_0=0$, and so
$\varphi = \langle \varphi,1 \rangle \varphi_c$. 
Eventually, since $S_1 \varphi = 0$, we obtain $\langle \varphi,1\rangle = 0$, so that $\varphi = 0$, as desired. \par\medskip

\noindent \textit{Proof of (ii).} Both formulas \cref{eq.Vem1,eq.moyVe} follow from simple calculations.
\end{proof}

\begin{remark}
A significantly simpler proof of \cref{th.expdir2d} can be given, under the additional assumption that the boundary $\partial \Omega$ is completely flat in a fixed neighborhood $U$ of the $\omega_\e$ (i.e. $\partial \Omega \cap U = \partial H \cap U$) and that the conductivity $\gamma$ is constant in such a neighborhood. 
\end{remark}

\subsection{Adaptation to the three-dimensional case}\label{sec.dir3dasym}

\noindent We proceed with the three-dimensional version of the general problem 
described at the beginning of this section: the background and perturbed potentials $u_0$ and $u_\e$ are still characterized by the equations \cref{eq.ueu0si}, 
and we look for the asymptotic expansion of $u_\e$ as the size $\e$ of the subset $\omega_\e \subset \partial \Omega$, defined by \cref{eq.GNflat}, vanishes.  

The counterpart of \cref{th.expdir2d} is the following. Since the proof is quite similar in most aspects, we only elaborate on the differences.  
 \begin{theorem}\label{th.expdir3d}
The following asymptotic expansion holds at any point $x\in \overline \Omega$, $x\notin \Sigma\cup \left\{ 0 \right\}$: 
$$ u_\e(x) = u_0(x) - 4 \e \gamma(0) u_0(0) N(x,0) + \o(\e)~.$$
\end{theorem}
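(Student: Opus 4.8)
The plan is to follow the four-step strategy used in the proof of \cref{th.expdir2d}, adapting each ingredient to the three-dimensional geometry where $\D_\e$ is a planar disk of radius $\e$ rather than a segment. First I would establish the same representation formula \cref{eq.repformulauesi0}: integrating by parts twice against the background fundamental solution $N(x,y)$ and using the boundary conditions for $u_0$, $u_\e$ and $N$, one gets $u_\e(x) = u_0(x) + \int_{\omega_\e}{\gamma(y)\tfrac{\partial r_\e}{\partial n_y}(y) N(x,y)\:\d s(y)}$ for $x\in\Omega$, and by smoothness this extends in the trace sense to $x\in\partial\Omega\setminus(\Sigma\cup\{0\})$. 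After the change of variables $T$ and the rescaling $y = \e z$, introducing $\varphi_\e(z) = \e^{2}\,(A\nabla s_\e\cdot n)(\e z)\in\widetilde{H}^{-1/2}(\D_1)$ (the scaling exponent is now $\e^2$ because $\d s$ on a $2$-dimensional surface scales like $\e^2$), the formula becomes $u_\e(x) = u_0(x) + \int_{\D_1}{\varphi_\e(z)\,N(x,T(\e z))\:\d s(z)}$, so everything reduces to understanding $\varphi_\e$ and in particular $\int_{\D_1}\varphi_\e\,\d s$.

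Next I would derive the integral equation for $\varphi_\e$. The three-dimensional analogue of \cref{lem.LB} gives the half-space Neumann function for the constant-coefficient operator $-\dv(A\nabla\cdot)$ by the same reflection-across-$\partial H$ construction, $L_A(x,y) = |\det M|\big(G(Mx,My)+G(Mx,s_M(My))\big)$ with $M=A^{-1/2}$ and $G(x,y)=\tfrac{1}{4\pi|x-y|}$ in $d=3$; after rescaling, $L_{A(\e x)}(\e x,\e y) = \tfrac{1}{2\pi\,\e\,\sqrt{|\det A(\e x)|}}\,\tfrac{1}{|M(\e x)(x-y)|}$ for $x\neq y$. Repeating the ``two integrations by parts against $L_{A(\e x)}$'' argument as in Step 2, with the error terms collected into a remainder $\eta_\e$, one arrives at $T_\e\varphi_\e = -u_0(0) + \eta_\e$ where $T_\e\varphi(x) = \int_{\D_1}{\varphi(z)\,L_{A(\e x)}(\e x,\e z)\:\d s(z)}$. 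The crucial structural difference from the $2$d case is that there is no logarithm: $T_\e\varphi \to \tfrac{1}{2\pi\gamma(0)}\,\tfrac{1}{\e}\,S_1^{3d}\varphi$ where $S_1^{3d}\varphi(x) = \tfrac{1}{4\pi}\int_{\D_1}{\tfrac{\varphi(z)}{|x-z|}\:\d s(z)}$ is the single-layer-type operator on the unit disk, scaled by $\e^{-1}$. So after multiplying through by $\e$, the equation reads (schematically) $\tfrac{2}{\gamma(0)}S_1^{3d}\varphi_\e + R_\e\varphi_\e = -\e\,u_0(0) + \e\,\eta_\e$ with $\|R_\e\|\to 0$ and $\eta_\e\to 0$ in $H^{1/2}(\D_1)$, exactly mirroring \cref{eq.inteqSe2} but with the $(|\log\e|+\alpha)\langle\cdot,1\rangle$ term absent.

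The remaining analytic inputs are: (a) the analogue of \cref{lem.etae}, that $\eta_\e\to 0$ strongly in $H^{1/2}(\D_1)$ — this goes through verbatim, using \cref{lem.firstest}/\cref{rem.prelestdiraniso} together with the $3$d capacity bound $\capa(\D_\e)\le C_3\e$ from \cref{eq.capaDe} to kill the ``far field'' terms $I^1_\e,I^2_\e,I^4_\e$ and the smoothness of $A$ together with the half-space Green-function estimate to control $I^3_\e$; (b) the analogue of \cref{lem.Teps}, that $\e T_\e\to \tfrac{2}{\gamma(0)}S_1^{3d}$ in operator norm $H^{-1/2}(\partial D)\to H^{1/2}(\partial D)$, proven by writing $\e L_{A(\e x)}(\e x,\e y)$ as $\tfrac{1}{2\pi\gamma(0)|x-y|}$ plus a homogeneous kernel of class $-1$ whose seminorms vanish as $\e\to 0$, then invoking \cref{th.TK}; (c) the analogue of \cref{lem.S1}(i), the invertibility of $S_1^{3d}:\widetilde{H}^{-1/2}(\D_1)\to H^{1/2}(\D_1)$, which follows from the Stephan–Wendland/McLean Fredholmness-plus-positivity argument exactly as in the $2$d case (the integration-by-parts identity and Peetre's lemma give index $0$; injectivity follows since $\langle S_1^{3d}\varphi,\varphi\rangle = \int_{\R^3}|\nabla\mathcal{S}_D\varphi|^2 = 0$ forces $\mathcal{S}_D\varphi=0$ hence $\varphi=0$, with no need for the mean-zero subtlety because the $3$d single layer potential decays like $1/|x|$ automatically). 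Because there is no $\langle\cdot,1\rangle$ term, no Neumann-series step with the $V_\e^{-1}$ formula is needed: directly, $\varphi_\e = -\e\,u_0(0)\,(\tfrac{\gamma(0)}{2}) (S_1^{3d})^{-1}1 + o(\e)$ in $\widetilde{H}^{-1/2}(\D_1)$, so $\|\varphi_\e\| = O(\e)$ and $\int_{\D_1}\varphi_\e\,\d s = -\e\,u_0(0)\,\tfrac{\gamma(0)}{2}\langle (S_1^{3d})^{-1}1,1\rangle + o(\e)$. Finally, in Step 4, the Taylor expansion $N(x,T(\e z)) = N(x,0)+O(\e)$ combined with $\|\varphi_\e\|_{\widetilde{H}^{-1/2}(\D_1)}=O(\e)$ gives $u_\e(x) = u_0(x) + \big(\int_{\D_1}\varphi_\e\,\d s\big)N(x,0) + O(\e^2) = u_0(x) - \e\,u_0(0)\,\tfrac{\gamma(0)}{2}\langle (S_1^{3d})^{-1}1,1\rangle\,N(x,0) + o(\e)$.

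The main obstacle is pinning down the constant: matching $-\tfrac{\gamma(0)}{2}\langle (S_1^{3d})^{-1}1,1\rangle$ to the claimed $-4\gamma(0)$ requires the explicit value $\langle (S_1^{3d})^{-1}1,1\rangle = 8$, i.e. that the equilibrium density $\varphi_c$ on the unit disk with $S_1^{3d}\varphi_c\equiv$ const satisfies the right normalization. Concretely, the classical electrostatics fact (the capacity of a conducting disk of radius $1$ is $2/\pi$ in the $4\pi G$ normalization) says that the charge density producing unit potential on $\D_1$ is $\varphi_c(r) = \tfrac{1}{\pi^2\sqrt{1-r^2}}$... — I would need the analogue of \cref{prop.eqdistrib} for the disk, namely that $\tfrac{1}{4\pi}\int_{\D_1}\tfrac{\d s(z)}{|x-z|\sqrt{1-|z|^2}} $ is constant on $\D_1$ with an explicitly computable value, from which $\langle (S_1^{3d})^{-1}1,1\rangle$ and hence the factor $4$ drops out. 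Verifying this constant carefully (rather than the index bookkeeping, which is routine) is where the real work lies, and it is presumably handled in the paper via an appendix analogous to the one underlying \cref{prop.eqdistrib}.
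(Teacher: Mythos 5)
Your overall strategy follows the paper's proof of \cref{th.expdir3d} closely: the same representation formula, the same reduction to an integral equation for $\varphi_\e(z)=\e^2(A\nabla s_\e\cdot n)(\e z)$, the same reflection-built half-space Neumann function, the same approximation of $T_\e$ by a scaled single-layer operator, invertibility of $S_1$ by the Fredholm-plus-positivity argument, and the constant $\langle S_1^{-1}1,1\rangle=8$ coming from the equilibrium distribution on the disk (\cref{prop.eqdistrib}(iii), $\phi=\tfrac{4}{\pi\sqrt{1-|x|^2}}$, total charge $8$ — consistent with the $1/(4\pi)$ normalization of $S_1$, so your $\tfrac{1}{\pi^2\sqrt{1-r^2}}$ was off by a factor $4\pi$, though you flagged the normalization as unfinished business).

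There is one genuine gap: your claim that the remainder estimate ``goes through verbatim'' to give $\eta_\e\to 0$ \emph{strongly} in $H^{1/2}(\D_1)$ is false in three dimensions, precisely because the $H^{1/2}$ seminorm does not rescale the same way on a $2$-dimensional disk as it does on a segment. On $\D_\e\subset\R^3$ one has $d-1+2s=3$, $\d s(x)\,\d s(y)$ scales like $\e^4$ and $|x-y|^3$ like $\e^3$, so $\lvert I^3_\e(\cdot/\e)\rvert_{H^{1/2}(\D_\e)} = \e^{1/2}\lvert I^3_\e\rvert_{H^{1/2}(\D_1)}$; combining this with $\lvert I^3_\e(\cdot/\e)\rvert_{H^{1/2}(\D_\e)}\le C\lVert s_\e\rVert_{H^1(\mathcal O)}\le C\e^{1/2}$ (from \cref{lem.firstest} and \cref{eq.capaDe}) only yields \emph{boundedness} of $\lvert I^3_\e\rvert_{H^{1/2}(\D_1)}$, not decay. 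In $2$d the two $\e^{1/2}$'s are absent and one \emph{does} get strong convergence, which is why the $2$d argument is misleading here. The paper's \cref{lem.etae3d} correctly claims only \emph{weak} $H^{1/2}(\D_1)$ convergence of $\eta_\e$, obtained by pairing the $H^{1/2}$-boundedness of $I^3_\e$ with its uniform pointwise (hence strong $L^2(\D_1)$) convergence to zero. This weakening is harmless for the final conclusion, since you only pair the remainder against the fixed function $1$ (equivalently $S_1^{-1}1\in\widetilde H^{-1/2}(\D_1)$), so weak convergence suffices to extract $\langle\varphi_\e,1\rangle = -\tfrac{\e\gamma(0)}{2}u_0(0)\langle S_1^{-1}1,1\rangle + \o(\e)$; but as written your argument claims more than it can deliver, and a referee would object. (Minor additional point: you will still need a Neumann series to absorb the small operator $R_\e$; it is only the $V_\e^{-1}$ machinery around the $\langle\cdot,1\rangle$-term that disappears in $3$d.)
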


\begin{proof}
As in the two-dimensional case, we introduce the transported functions $v_0 := u_0 \circ T$ and $v_\e := u_\e \circ T$. 
These are characterized as the unique $H^1({\mathcal O})$ solutions to the problems in \cref{eq.vev0si}, 
which feature the smooth matrix field $A \in {\mathcal C}^\infty(\R^3, \R^{3\times 3})$ and source term $g \in {\mathcal C}^\infty(\mathbb{R}^3)$ defined as in \cref{eq.defgA}.
We also introduce the error $r_\e := u_\e - u_0 \in H^1(\Omega)$ and its transformed version $s_\e := v_\e - v_0 \in H^1({\mathcal O})$. The proof of the theorem again proceeds in four steps. \par\medskip

\noindent \textit{Step 1.} We construct a representation formula for $u_\e$ in terms of the values of $r_\e$ inside $\omega_\e$. 
Arguing as in the first step of the proof of \cref{th.expdir2d}, we prove that, for any point $x \in \Omega$
 \begin{equation*}\label{eq.repformulauesi3d} 
 u_\e(x) =u_0(x) + \int_{\omega_\e}{\gamma(y)\frac{\partial r_\e}{\partial n_y}(y) N(x,y)\:\d s(y)}~,
 \end{equation*}
 an identity which also holds for $x \in \partial \Omega$ in the sense of traces in $H^{1/2}(\partial \Omega)$. 
 Performing a change of variables based on the diffeomorphism $T$  we arrive at 
 \begin{equation}\label{eq.repformulauesi3d} 
 u_\e(x) = u_0(x) + \int_{\D_1}{ \varphi_\e(z) N(x,T(\e z))\:\d s(z)}~,
 \end{equation}
where the rescaled density $\varphi_\e$ is given by
$$\varphi_\e(z) = \e^2 (A\nabla s_\e \cdot n)(\e z) \in \widetilde{H}^{-1/2}(\D_1)~.$$ 
\par\medskip

\noindent \textit{Step 2.} We characterize $\varphi_\e$ as the solution to an integral equation. 
 To this end, again, we rely on a variant of the representation formula \cref{eq.repformulauesi3d} adapted to the function $v_\e$, and obtained 
 with the use of a special function $y \mapsto L_{A(x)}(x,y)$ which satisfies, for given $x \in H$
 \begin{equation}\label{eq.defLAx3d}
\left\{
\begin{array}{cl}
-\dv_y(A(x) \nabla_y L_{A(x)}(x,y)) = \delta_{y=x}  & \text{in } H,  \\
A(x)\nabla_y L_{A(x)} (x,y) \cdot n(y) = 0 & \text{on } \partial H.
\end{array}
 \right.
\end{equation}
The construction of such a function is accomplished exactly as in the two-dimensional case; see \cref{eq.defLB} and \cref{lem.LB}. 
The same calculations as in Step 2 of the proof of \cref{th.expdir2d} then yield, for a.e. $x \in \D_\e$ 
   \begin{eqnarray}\label{eq.v0De3d}
   -v_0(x) = - \int_{\partial {\mathcal O} \setminus {U}}{ A(x) \nabla_y L_{A(x)}(x,y) \cdot n(y) s_\e(y)\:\d s(y) }   + \int_{{\mathcal O}}{(A(x)-A(y)) \nabla_y L_{A(x) }(x,y) \cdot \nabla s_\e(y) \: \d y} \nonumber \\ 
+ \int_{\Gamma_D}{(A\nabla s_\e \cdot n)(y) \: L_{A(x) }(x,y) \: \d s(y)} + \int_{\D_\e}{(A\nabla s_\e \cdot n)(y) \: L_{A(x) }(x,y) \: \d s(y)}~,
  \end{eqnarray}
which, after rescaling, reads   
   \begin{multline*}
   -v_0(\e x) = - \int_{\partial {\mathcal O} \setminus {U}}{ A(\e x) \nabla_y L_{A(\e x)}(\e x,y) \cdot n(y) s_\e(y)\:\d s(y) }   + \int_{{\mathcal O}}{(A(\e x)-A(y)) \nabla_y L_{A(\e x) }(\e x,y) \cdot \nabla s_\e(y) \: \d y} \\ 
+ \int_{\Gamma_D}{(A\nabla s_\e \cdot n)(y) \: L_{A(\e x) }(\e x,y) \: \d s(y)} + \int_{\D_1}{ \varphi_\e(z) \: L_{A(\e x) }(\e x,\e z) \: \d s(z)}~,
  \end{multline*}
  for a.e. $x \in \D_1$. This can be recast in the form of an integral equation
  \begin{equation}\label{eq.inteq3d}
   T_\e \varphi_\e = -u_0(0) + \eta_\e~,
   \end{equation}
  where the operator $T_\e: \widetilde{H}^{-1/2}(\D_1) \to H^{1/2}(\D_1) $ is defined by 
  $$ T_\e \varphi (x) =  \int_{\D_1}{ \varphi(z) \: L_{A(\e x) }(\e x,\e z) \: \d s(z)}~,  $$
  and $\eta_\e \in H^{1/2}(\D_1)$ denotes the remainder 
  \begin{multline}\label{eq.defetae3d} 
 \eta_\e(x) = u_0(0) - v_0(\e x)  +  \int_{\partial {\mathcal O} \setminus {U}}{ A(\e x) \nabla_y L_{A(\e x)}(\e x,y) \cdot n(y) s_\e(y)\:\d s(y) }  \\
 -  \int_{{\mathcal O}}{(A(\e x)-A(y)) \nabla_y L_{A(\e x) }(\e x,y) \cdot \nabla s_\e(y) \: \d y}  - \int_{\Gamma_D}{(A\nabla s_\e \cdot n)(y) \: L_{A(\e x) }(\e x,y) \: \d s(y)}~.
 \end{multline}
 \par\medskip  
 \noindent \textit{Step 3.} We analyze the integral equation \cref{eq.inteq3d} 
 to obtain information about the asymptotic behavior of $\varphi_\e$.
To this end, we rely on the following two lemmata, which are the exact counterparts of \cref{lem.etae} and \cref{lem.Teps} in the present 3d situation; 
their proofs are outlined at the end of this section.
 \begin{lemma}\label{lem.etae3d}
 The remainder term $\eta_\e \in H^{1/2}(\D_1)$, defined in \cref{eq.defetae3d}, satisfies
 \begin{equation}\label{eq.cvetaeps}
  \eta_\e \xrightarrow{\e \to 0} 0 \text{ weakly in } H^{1/2}(\D_1)~.
  \end{equation}
 \end{lemma}
 
 \begin{lemma}\label{lem.Teps3d}
The following asymptotic expansion holds
\begin{equation}\label{eq.asymL3d}
\sup\limits_{\varphi \in \widetilde{H}^{-1/2}(\D_1) \atop || \varphi ||_{\widetilde{H}^{-1/2}(\D_1) } \leq 1}  \e \left\lvert \left\lvert T_\e \varphi 
- \frac{2}{\e\gamma(0)} S_1 \varphi \right\lvert  \right\lvert_{H^{1/2}(\D_1)}  \xrightarrow{\e \to 0} 0~,
\end{equation}
where the operator $S_1 : \widetilde{H}^{-1/2}({\D}_1) \to H^{1/2}(\D_1)$ is defined by
\begin{equation}\label{eq.defS13ddir}
S_1\varphi(x) = \frac{1}{4\pi}\int_{{\D}_1}{\frac{1}{|x-y|} \varphi(y) \: \d s(y) }, \:\: x\in{\D}_1~.
\end{equation}
 \end{lemma}
 
Using this result in combination with the integral equation \cref{eq.inteq3d}, we see that the function $\varphi_\e \in  \widetilde{H}^{-1/2}(\D_1)$ 
satisfies the integral equation
\begin{equation}\label{eq.inteqS1dir3d}
 S_1 \varphi_\e +  R_\e \varphi_\e = - \frac{\e}{2} \gamma(0) u_0(0) + \frac{\e\gamma(0)}{2} \eta_\e~, 
 \end{equation}
where $R_\e:  \widetilde{H}^{-1/2}(\D_1) \to H^{1/2}(\D_1)$ is a sequence of operators whose norms converge to $0$ and the sequence $\eta_\e$ converges to $0$ weakly in $H^{1/2}(\D_1)$. The study of this approximate version of our integral equation \cref{eq.inteq3d}
relies on the following lemma, whose proof is also postponed.
\begin{lemma}\label{lem.S13d}
The operator $S_1 : \widetilde{H}^{-1/2}( \D_1) \to H^{1/2}(\D_1)$ 
is invertible. 
\end{lemma}

It then follows from \cref{eq.inteqS1dir3d}, \cref{lem.S13d}, 
and the use of a Neumann series, that the function $\varphi_\e \in \widetilde{H}^{-1/2}(\D_1)$ satisfies: 
$$ \varphi_\e = -\frac{\e}{2} \gamma(0) u_0(0) S^{-1} 1 + \e\widetilde{\eta_\e}~,$$
where $\widetilde{\eta_\e}$ is a sequence converging to $0$ weakly in $H^{1/2}(\D_1)$, and 
$S_1^{-1}1$ is the equilibrium distribution associated with the operator $S_1$, which is explicitly given by  \cref{eq.eqdist3ddir} in  \cref{prop.eqdistrib} of the appendix. In particular, we infer from \cref{eq.eqdist3ddirprops} that
\begin{equation}\label{eq.moyphie3d}
\langle \varphi_\e,1\rangle = -4\e \gamma(0) u_0(0)   + \o\left( \e \right),
\end{equation}
which is the needed information about $\varphi_\e$ for the next step.
  \par\medskip

 \noindent \textit{Step 4.} We pass to the limit in the representation formula \cref{eq.repformulauesi3d}, 
 which is valid for any point $x \in \overline\Omega$, $x\notin \Sigma \cup \{0\}$.
Arguing as in the final step of the proof of \cref{th.expdir2d}, we obtain
$$ u_\e (x) = u_0(x) + \int_{\D_1}{\varphi_\e(z) N(x,0) \: \d s(z)} + \o(\e)~,$$
and the result follows from \cref{eq.moyphie3d}. 
\end{proof}

We now provide a few details about the missing ingredients in the above proof.

\begin{proof}[Proof of \cref{lem.etae3d}]
As in the proof of \cref{lem.etae}, we denote the four terms in the right-hand side of \cref{eq.defetae3d} by $I^i_\e(x)$, $i=1,\ldots,4$. 
The exact same arguments as in the two-dimensional case show that $I_\e^1$, $I_\e^2$ and $I_\e^4$ converge to $0$ strongly in $H^{1/2}(\D_1)$, 
and we focus on the treatment of the last term
\begin{equation}\label{eq.Kepsrep3d} 
I_\e^3(x) :=  - \int_{{\mathcal O}}{(A(\e x)-A(y)) \nabla_y L_{A(\e x) }(\e x,y) \cdot \nabla s_\e(y) \: \d y}~.
\end{equation}
From the explicit expression for the function $L_{A(\e x)}$ supplied by \cref{lem.LB,rem.LAnorm}, a simple calculation yields, for $x \in \D_1$  
$$ \nabla_y L_{A(\e x)}(\e x , y) = 
\frac{-1}{2\pi \sqrt{\det(A(\e x))}} \frac{M^2(\e x) (y-\e x)}{|M(\e x)(y-\e x)|^3}~.
$$
Hence, using the Cauchy-Schwarz inequality and a switch to polar coordinates, we obtain 
$$\begin{array}{>{\displaystyle}cc>{\displaystyle}l}
 |I^3_\e(x)|& \leq & C \int_{\mathcal O}{\frac{1}{|\e x-y|} |\nabla s_\e(y) | \:\d y}\\
 &\leq& C \left( \int_{{\mathcal O}}{\frac{1}{|\e x-y|^2} \: \d y }\right)^{1/2} || s_\e ||_{H^1({\mathcal O})} \\
 &\leq& C ||s_\e ||_{H^1({\mathcal O})}~.
 \end{array}$$
Invoking \cref{lem.firstest} ( \cref{rem.prelestdiraniso}) about the asymptotic behavior of $s_\e$ together with the estimate \cref{eq.capaDe}, we conclude that
$$ |I^3_\e(x)| \xrightarrow{\e \to 0} 0 \text{ uniformly in } x \in \D_1~,$$
and in particular, 
\begin{equation}\label{eq.cv0L2strong3ddir}
|| I^3_\e ||_{L^2(\D_1)} \to 0~.
\end{equation}

Let us now consider the $H^{1/2}(\D_1)$ convergence of $I_\e^3$. 
To this end, we return to the formula \cref{eq.v0De3d}, which we rewrite
\begin{multline*}
 I_\e^3\left(\frac{x}{\e}\right) = -s_\e(x) - \int_{\partial {\mathcal O} \setminus {U}}{ A(x) \nabla_y L_{A(x)}(x,y) \cdot n(y) s_\e(y)\:\d s(y) }   + 
 \int_{\Gamma_D}{(A\nabla s_\e \cdot n)(y) \: L_{A(x) }(x,y) \: \d s(y)} \\
 + \int_{\widehat{\Gamma_N}}{(A\nabla s_\e \cdot n)(y) \: L_{A(x) }(x,y) \: \d s(y)}, \quad x \in \D_\e~.
 \end{multline*}
This identity, and the mapping properties of the integral operator with kernel $ L_{A(x) }(x,y)$ stated in \cref{th.TK} readily imply that
$$ \left\lvert I^3_\e\left(\frac{\cdot }{\e}\right) \right\lvert_{H^{1/2}(\D_\e)} \leq C || s_\e ||_{H^1(\mathcal O)}~.$$
After a change of variables in the semi-norm $|\cdot |_{H^{1/2}(\D_\e)}$, the above estimate yields
$$\e^{\frac{1}{2}} | I^3_\e |_{H^{1/2}(\D_1)} \leq C  || s_\e ||_{H^1(\mathcal O)}~,$$
and since $||s_\e ||_{H^1({\mathcal O})} \leq C \e ^{\frac{1}{2}}$ as a consequence of \cref{lem.firstest} ( \cref{rem.prelestdiraniso}) and \cref{eq.capaDe}, 
it follows that the function $I^3_\e$ is bounded in $H^{1/2}(\D_1)$. 
Hence, up to a subsequence, it converges to a limit weakly in $H^{1/2}(\D_1)$, which is necessarily $0$ by virtue of \cref{eq.cv0L2strong3ddir}. 
Finally, by uniqueness of the weak limit (that is, regardless of the chosen subsequence for the weak $H^{1/2}(\D_1)$ convergence of $I_\e^3$), 
the whole sequence $I_\e^3$ converges to $0$ weakly in $H^{1/2}(\D_1)$, 
which completes the proof.
 \end{proof}
 
 We next turn to the proof of the approximation \cref{lem.Teps3d}.
  \begin{proof}[Proof of \cref{lem.Teps3d}]
  Let $D \subset \R^3$ be a smooth bounded domain whose boundary contains $\D_1$. 
The kernel $L_{A(\e x)}(\e x , \e y)$ of the operator $T_\e$ reads, for $x,y \in \D_1$, $x\neq y$
$$  L_{A(\e x)}(\e x,\e y) = \frac{1}{ 2\pi \e \sqrt{ |\det A(\e x)|}} \frac{1}{ \left\lvert M(\e x) ( x-y)\right\lvert  }~,$$
see \cref{eq.exprLB}.
Let us now recall from \cref{eq.defgA} that the matrix field $A(\e x)$ tends to $\gamma(0) \I$ in ${\mathcal C}^k(V)$ for any integer $k \geq 0$ and any open, relatively compact neighborhood $V$ of $0$ in $\R^3 $. $T_\e$ may be decomposed as
$$ T_\e \varphi =  \frac{2}{\e \gamma(0)}  S_1\varphi  + T_{K_\e} \varphi~,$$
where $T_{K_\e}$ is defined as the integral operator with kernel $K_\e(x,x-y)$, and $K_\e$ denotes the following homogeneous kernel of class $-1$, in the sense of \cref{def.homkernel},
$$ K_\e(x,z) :=  \frac{1}{2\pi \e \sqrt{\det(A(\e x))}} \frac{1}{|M(\e x) z|} - \frac{1}{2\pi \e \gamma(0) |z|}~.$$
According to \cref{th.TK}, this operator maps $H^{-1/2}(\partial D)$ into $H^{1/2}(\partial D)$. For any integer $k\ge 0$, we furthermore have
$$\sup\limits_{|\alpha| \leq k \atop |\beta| \leq k}\sup \limits_{x\in \R^d} \sup\limits_{|z|=1} \left\lvert \frac{\partial^\alpha}{\partial x^\alpha} \frac{\partial^\beta}{\partial z^\beta} K_\e(x,z) \right\lvert \xrightarrow{\e \to 0} 0~. $$
Here we have, again, ``cut off" $K_\e$ outside a sufficiently large compact set. In light of \cref{th.TK}, this limiting behaviour implies that $T_{K_\e}$ converges to $0$ as an operator from $H^{-1/2}(\partial D)$ into $H^{1/2}(\partial D)$, and so as an operator from $\widetilde{H}^{-1/2}(\D_1)$ into $H^{1/2}(\D_1)$, which is the desired result.  
\end{proof}

\begin{proof}[Sketch of proof of \cref{lem.S13d}.] The proof is very similar to that of \cref{lem.S1}, and we only point out the differences. Repeating mutatis mutandis the argument presented in the two-dimensional case,
one sees that the operator $S_1$ is still Fredholm with index $0$, and so, it suffices to prove that it is injective. 
To achieve this, let $\phi \in \widetilde{H}^{-1/2}(\D_1)$ be a density such that $S_1 \phi = 0$, and let $u ={\mathcal S}_D \phi \in H^1_{\text{\rm loc}}(\R^3)$ be the associated potential.
Because of the decay properties at infinity \cref{eq.decayS3d} of the single layer potential in three space dimensions (which hold even if $\langle \varphi, 1 \rangle \neq 0$), an integration by parts similar to that which led to \cref{eq.estchiu}, reveals that
$$ 0= \langle S_1 \phi, \phi \rangle = -\int_{\R^3}{|\nabla u|^2 \:\d x}~.$$
Hence $u$ is constant on $\R^3$. Since $|u(x)| \to 0$ as $|x| \to \infty$, it follows that $u$ vanishes identically, and  so does $\phi = -\left( \frac{\partial u}{\partial n}^+ - \frac{\partial u}{\partial n}^-\right)$. This shows the injectivity (and thus the bijectivity) of $S_1$.
\end{proof}

\section{An explicit asymptotic formula for the case of substituting Neumann conditions}\label{sec.calcneu}

\noindent This section exemplifies the general physical setting of \cref{sec.neu}: 
we consider a smooth, bounded domain $\Omega$ in $\R^d$, 
whose boundary is made of two disjoint, open Lipschitz subregions $\Gamma_D$, $\Gamma_N$: $\partial \Omega= \overline{\Gamma_D}\cup \overline{\Gamma_N}$. $\Sigma$ denotes the interface between $\Gamma_D$ and $\Gamma_N$. 
The geometric setting is exactly as in \cref{sec.calcdir}, only with the roles of $\Gamma_D$ and $\Gamma_N$ interchanged. The vanishing subset $\omega_\e \subset \Gamma_D$ is of the same nature as in \cref{eq.GNflat}: 
it is the image of the planar disk $\D_\e$ with radius $\e$ around $0$ by the smooth diffeomorphism $T$ that maps the domain ${\mathcal O}$ (whose boundary is flat in a fixed neighborhood $U$ of $0$) onto $\Omega$. We also denote $\widehat{\Gamma_D} = T^{-1}(\Gamma_D)$ and we assume for convenience that $T$ coincides with the identity mapping far from $0$, so that $T^{-1}(\Gamma_N) = \Gamma_N$.
The background potential and the perturbed potential, $u_0$ and $u_\e$, respectively, are the solutions to the equations
\begin{equation}\label{eq.ueu0sineu}
\left\{ 
\begin{array}{cl}
-\dv(\gamma \nabla u_0) = f & \text{in } \Omega, \\
u_0 = 0 & \text{on } \Gamma_D, \\
\gamma\frac{\partial u_0}{\partial n} = 0 & \text{on } \Gamma_N,
\end{array}
\right.
\text{ and } 
 \left\{
\begin{array}{cl}
-\dv(\gamma \nabla u_\e) = f & \text{in } \Omega, \\
u_\e = 0 & \text{on } \Gamma_D \setminus \overline{\omega_\e}, \\
\gamma\frac{\partial u_\e}{\partial n} = 0 & \text{on } \Gamma_N \cup \omega_\e.
\end{array}
\right.
\end{equation}
Our aim is to derive a precise first order asymptotic expansion of $u_\e$ when $\e \to 0$.
In order to emphasize the similarity of this study with that conducted in \cref{sec.calcdir}, we use the same notation whenever possible. Our main result is the following.

\begin{theorem}\label{th.asymneu}
Let $d = 2$ or $3$ and let $x\in \overline\Omega$, $x \notin \Sigma \cup \left\{ 0 \right\}$. One has the asymptotic expansion 
$$ u_\e(x) = u_0(x) + a_d \e^d \gamma(0) \frac{\partial u_0}{\partial n}(0) \frac{\partial N}{\partial n_y}(x,0) + \o (\e^d)~, $$
where the constant $a_d$ is given by
 $$ a_d = 
 \left\{
 \begin{array}{cl} 
 \frac{\pi}{2} &\text{if }d=2, \\
 \frac{1}{3} & \text{if } d=3.
 \end{array}
 \right.
 $$
\end{theorem}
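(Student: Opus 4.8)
The plan is to mirror, step by step, the four-step strategy used to establish \cref{th.expdir2d} and \cref{th.expdir3d}, but working within the ``Neumann'' framework of \cref{sec.neu}: the relevant smallness quantity is now $e(\omega_\e)$, the relevant preliminary estimates are those of \cref{prop.prelestneu} (together with \cref{lem.zetaecapaneu}), and the explicit bound $e(\D_\e) \leq C_d \e^d$ from \cref{eq.capaneuDe} will furnish the correct scaling. Throughout, I would transport everything to the reference domain ${\mathcal O}$ via $T$: setting $v_0 = u_0 \circ T$, $v_\e = u_\e \circ T$, and $s_\e := v_\e - v_0$, these solve anisotropic analogues of \cref{eq.ueu0sineu} with coefficient matrix $A$ and source $g$ as in \cref{eq.defgA}, and the Neumann datum $\gamma \frac{\partial u_\e}{\partial n}$ on $\omega_\e$ transports to $(A\nabla v_\e)\cdot n$ on $\D_\e$. \textbf{Step 1} is the representation formula: using the fundamental solution $N(x,y)$ of \cref{fundsol} and integrating by parts twice, exactly as in the proof of \cref{th.repneu} (see \cref{firsteq}), one obtains for $x \in \Omega$ (and, by elliptic regularity, for $x \in \overline\Omega \setminus (\Sigma \cup \{0\})$ in the trace sense)
$$ u_\e(x) = u_0(x) - \int_{\omega_\e}{r_\e(y)\,\gamma(y)\frac{\partial N}{\partial n_y}(x,y)\:\d s(y)}, $$
where $r_\e = u_\e - u_0$; after change of variables and rescaling this becomes $u_\e(x) = u_0(x) - \int_{\D_1}{ \varphi_\e(z)\, \frac{\partial}{\partial n_y}\big(N(x,\cdot)\circ T\big)(\e z)\:\d s(z)}$ for a suitably rescaled density $\varphi_\e := \e^{-(d-1)} s_\e(\e \cdot)|_{\D_1}$ (the precise power of $\e$ to be fixed so that $\varphi_\e$ stays bounded in the right norm), with $\varphi_\e \in \widetilde H^{1/2}(\D_1)$ in this Neumann case — in contrast to the Dirichlet case where the density lived in $\widetilde H^{-1/2}$.

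\textbf{Step 2} characterizes $\varphi_\e$ through an integral equation. Here I would introduce the \emph{Dirichlet} half-space fundamental solution: for $x \in H$ and symmetric positive-definite $A$, let $D_A(x,y)$ solve $-\dv_y(A\nabla_y D_A(x,y)) = \delta_{y=x}$ in $H$ with $D_A(x,y) = 0$ on $\partial H$. This is constructed by a reflection formula analogous to \cref{lem.LB} but with a minus sign in front of the image charge (image of opposite sign), so $D_A(x,y) = |\det M|\big(G(Mx,My) - G(Mx, My - 2y_d \tfrac{M^{-1}e_d}{|M^{-1}e_d|^2})\big)$ with $M = A^{-1/2}$. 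Repeating the double integration-by-parts of Step 2 of \cref{th.expdir2d} with $L_{A(x)}$ replaced by $D_{A(x)}(x,\cdot)$ — note that now the boundary term on $\widehat{\Gamma_D}$ survives (since $D_A = 0$ but $\partial_n v_\e \neq 0$ is where we have control) while the $\D_\e$ contribution involves $\frac{\partial D_{A(x)}}{\partial n}$ — and subtracting the analogous identity for $v_0$, one arrives at an equation of the form
$$ T_\e \varphi_\e = -\frac{\partial u_0}{\partial n}(0) + \eta_\e, $$
where $T_\e : \widetilde H^{1/2}(\D_1) \to H^{-1/2}(\D_1)$ has kernel built from $\frac{\partial^2 D_{A(\e x)}}{\partial n_x \partial n_y}(\e x, \e y)$ (a hypersingular/finite-part kernel, homogeneous of class $+1$), and $\eta_\e$ collects the ``far'' boundary integrals, the $(A(\e x)-A(y))$ commutator term, and the $u_0(0)$-vs-$v_0(\e x)$ discrepancy.

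\textbf{Step 3} is the asymptotic analysis of this integral equation, and it is the analogue of \cref{lem.etae,lem.Teps,lem.S1} (and their 3d counterparts). One needs: (a) $\eta_\e \to 0$ strongly (2d) or weakly (3d) in $H^{-1/2}(\D_1)$, which follows from \cref{prop.prelestneu} applied to $s_\e$ together with $e(\D_\e) \leq C_d \e^d$, the smoothness of $u_0$ near $0$, and the mapping properties of homogeneous-kernel operators from \cref{th.TK}; (b) an expansion $T_\e = \frac{2}{\gamma(0)} R_1 + (\text{lower order})$, where $R_1 : \widetilde H^{1/2}(\D_1) \to H^{-1/2}(\D_1)$ is the Laplace hypersingular operator $R_1 \varphi(x) = -\mathrm{f.p.}\,\frac{\partial}{\partial n_x}\int_{\D_1}\frac{\partial G}{\partial n_y}(x,y)\varphi(y)\,\d s(y)$ — in 2d there will additionally be a $|\log\e|$-type term as in \cref{lem.Teps}, which I expect to be \emph{absent} here because the hypersingular kernel is more singular and its leading behavior is scale-covariant without a logarithm; this degree-counting is something to check carefully; (c) invertibility of $R_1 : \widetilde H^{1/2}(\D_1) \to H^{-1/2}(\D_1)$, proved exactly as in \cref{lem.S1} via the connection to the double layer potential ${\mathcal D}_D$ and its jump relations \cref{eq.jumpDD}, Peetre's lemma \cref{lem.peetre}, self-adjointness, and an energy/injectivity argument using the decay \cref{eq.decayDD}. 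Granting these, a Neumann series gives $\varphi_\e = -\frac{\e^d}{?}\,\gamma(0)\frac{\partial u_0}{\partial n}(0)\, R_1^{-1}1 + o(\e^d)$ in the appropriate sense, and then $\langle \varphi_\e, 1\rangle = a_d\, \e^d\, \gamma(0)\frac{\partial u_0}{\partial n}(0) + o(\e^d)$, where the constant $a_d$ is read off from the explicit equilibrium density $R_1^{-1}1$ on $\D_1$ (the analogue of \cref{prop.eqdistrib}; these are classical and give $\frac{\pi}{2}$ in 2d and $\frac13$ in 3d). \textbf{Step 4}: substitute into the Step-1 representation formula, Taylor-expand $\frac{\partial N}{\partial n_y}(x, T(\e z))$ around $0$ using the $\widetilde H^{1/2}(\D_1)$-boundedness of $\varphi_\e$ and the smoothness of $N(x,\cdot)$ near $0$ (valid since $x \notin \Sigma \cup \{0\}$), to get $u_\e(x) = u_0(x) + \langle\varphi_\e,1\rangle \frac{\partial N}{\partial n_y}(x,0) + O(\e^{d+1})$, whence the claimed expansion.

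The main obstacle I anticipate is \textbf{Step 3(b)}, the precise asymptotic expansion of $T_\e$ and in particular verifying that the logarithmic term present in the 2d Dirichlet analysis (\cref{lem.Teps}) does not reappear: the hypersingular operator and its $\e$-rescaling behave differently from the single-layer operator, and one must confirm that the leading term of $L_{A(\e x)}(\e x, \e y)$'s normal-normal derivative scales cleanly as $\e^{-1}$ times a fixed operator plus a genuinely $o(1)$ remainder (as in \cref{eq.asymL3d}), with the error term controlled uniformly in the operator norm via the homogeneous-kernel machinery of \cref{th.TK}. A secondary technical point is the $H^{-1/2}(\D_1)$ convergence (as opposed to merely $L^2$) of the commutator remainder $I_\e^3$ in \cref{eq.defetae3d}'s analogue, which in the Dirichlet case required going back to the representation formula \cref{eq.v0De}; the same trick should work here, re-deriving the identity on $\D_\e$ and invoking the mapping properties of the kernel operator, but the bookkeeping is delicate. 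Everything else — the reflection construction of $D_A$, the Peetre-lemma Fredholm argument for $R_1$, the energy-method injectivity, and the final Taylor expansion — is a faithful transcription of the arguments already given for the Dirichlet case.
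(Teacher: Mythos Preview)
Your four-step outline matches the paper's strategy, but you have missed the chief new obstacle that distinguishes the Neumann case from the Dirichlet case. In Step 2 you must take the \emph{co-normal derivative} trace on $\D_\e$ of the representation formula for $s_\e$. That formula contains the ``double-layer-type'' term $M_\e s_\e(x) = \int_{\D_\e} A(x)\nabla_y L_{A(x)}(x,y)\cdot n(y)\, s_\e(y)\,\d s(y)$, and unlike a genuine double-layer potential this is \emph{not} annihilated by $u \mapsto \dv_x(A(x)\nabla_x u)$, because the coefficient $A(x)$ inside the kernel varies with $x$. One therefore needs a dedicated estimate (the paper's \cref{lem.estdivop}) showing $\|\dv_x(A(x)\nabla_x(M_\e\varphi))\|_{L^2(\mathcal O)} \le C\|\varphi\|_{\widetilde H^{1/2}(\D_\e)}$; the proof expands this divergence via the chain rule, separates derivatives falling on the matrix entries $a_{kl}$ from those falling on the spatial variables, and uses that the top-order piece $\sum a_{ij}\partial^3_{x_ix_jy_d}L$ vanishes by the PDE for $L$. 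Without this, the passage from the $H^1$ representation to the $H^{-1/2}(\D_\e)$ integral equation is not justified.

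Three smaller corrections. First, the hypersingular kernel of $R_1$ does \emph{not} fit \cref{def.homkernel}, so its mapping property cannot come from \cref{th.TK} directly; the paper writes $|x-y|^{-2} = -\partial_{y_1}\big((y_1-x_1)|x-y|^{-2}\big)$ on $\D_1$ to express $R_1\varphi$ as a class-$0$ operator acting on $\partial_{y_1}\varphi$. Second, your scaling is off: the normal-normal derivative of the Green function is homogeneous of degree $-d$, so $T_\e \sim \tfrac{2\gamma(0)}{\e^d} R_1$ (not $\tfrac{2}{\gamma(0)}R_1$, and not $\e^{-1}$), which in turn forces $\varphi_\e = \e^{d-1} s_\e(\e\cdot)$. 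Third, the commutator piece of $\eta_\e$, namely $(A\nabla K_\e\cdot n)(\e x)$, is not handled by the semi-norm trick of \cref{lem.etae}; the paper instead shows the vector field $\sigma_\e = (A\nabla K_\e)(\e\cdot)$ is bounded in $H_{\dv}(D)$ and identifies its weak limit as $0$ --- in 2d this requires a pointwise H\"older-type estimate on $K_\e(\e x)-K_\e(\e y)$ that is considerably more delicate than what you sketch.
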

\begin{proof}[Sketch of the proof.]
As in the proof of \cref{eq.expuedir2d}, we proceed in four steps, introducing the difference $r_\e := u_\e - u_0 \in H^1(\Omega)$. \par\medskip

\noindent \textit{Step 1.} We construct a representation formula for $u_\e$ which only involves the values of $r_\e$ inside $\omega_\e$, and the fundamental solution $N(x,y)$ to the background equation in \cref{eq.ueu0sineu}. 

To this end, let $x \in \Omega$ be arbitrary; using the definition of $N(x,y)$ and integrating by parts twice, we obtain
$$\begin{array}{>{\displaystyle}cc>{\displaystyle}l}
 u_\e(x) &=& - \int_\Omega{\dv_y(\gamma(y) \nabla_y N(x,y)) u_\e(y) \: \d y} \\
 &=& - \int_{\partial \Omega}{\gamma(y) \frac{\partial N}{\partial n_y}(x,y) u_\e(y)\:\d s(y) } + \int_{\Omega}{\gamma(y) \nabla_y N(x,y) \cdot \nabla u_\e(y) \: \d y} \\
 &=& - \int_{\omega_\e}{\gamma(y) \frac{\partial N}{\partial n_y}(x,y) u_\e(y)\:\d s(y) } + \int_{\Omega}{f(y) N(x,y) \:\d y}~,
 \end{array}$$
where the last line follows from the facts that
$$ \gamma(y)\frac{\partial N}{\partial n}(x,y)=\gamma(y)\frac{\partial u_\e}{\partial n}(y) = 0 \text{ for } y\in \Gamma_N, \:\: \, N(x,y) = 0 \text{ for } y\in \Gamma_D \text{ and } u_\e(y) = 0 \text{ for } y\in \Gamma_D\setminus \overline{\omega_\e}~.$$
Using that $u_\e=u_\e-u_0=r_\e$ on $\omega_\e$ in the previous equation, we get for $x \in \Omega$
\begin{equation}\label{eq.repneusi} 
u_\e(x) = u_0(x) - \int_{\omega_\e}{\gamma(y) \frac{\partial N}{\partial n_y}(x,y) r_\e(y) \: \d s(y)}~.
\end{equation}
The above identity also holds for $x \in \overline \Omega$, $x \notin \Sigma \cup \left\{0 \right\}$ provided $\e$ is small enough, since all the quantities involved are smooth in a neighborhood of such points. 

 Next, we introduce the transformed potentials $v_0 := u_0 \circ T$ and $v_\e := u_\e \circ T$ on the domain ${\mathcal O}$. A change of variables in the variational formulations for \cref{eq.ueu0sineu} reveals that $v_0$ and $v_\e$ are the unique $H^1({\mathcal O})$ solutions to the equations 
 \begin{equation}\label{eq.vev0sineu}
\left\{ 
\begin{array}{cl}
-\dv(A \nabla v_0) = g & \text{in } {\mathcal O}, \\
v_0 = 0 & \text{on } \widehat{\Gamma_D}, \\
(A \nabla v_0) \cdot n = 0 & \text{on }\Gamma_N,
\end{array}
\right.
\text{ and } 
 \left\{
\begin{array}{cl}
-\dv(A \nabla v_\e) =  g & \text{in } {\mathcal O}, \\
u_\e = 0 & \text{on } \widehat{\Gamma_D} \setminus \overline{\D_\e}, \\
(A\nabla v_\e)\cdot n = 0 & \text{on } \Gamma_N \cup \D_\e,
\end{array}
\right.
\end{equation}
where $g \in {\mathcal C}^\infty(\R^d)$ and $A \in {\mathcal C}^\infty(\R^d, \R^{d\times d})$ are the smooth function and the matrix field defined by 
\begin{equation}\label{eq.defgAneu}
 g = |\det (\nabla T(y))| f(T(y)), \text{ and } A(y) = |\det (\nabla T(y))| \gamma(T(y)) \nabla T(y)^{-1}( \nabla T(y)^t)^{-1}~.
\end{equation}
Changing variables in the integral featured in \cref{eq.repneusi} and rescaling, we arrive at 
\begin{equation}\label{eq.repuecalcneu}
u_\e(x) = u_0(x) - \int_{ \D_1}{\gamma(T(\e y)) \frac{\partial N}{\partial n_y}(x,T(\e y)) \varphi_\e(y) \: \d s(y)}~,
\end{equation}
where we have introduced the function $s_\e := r_\e \circ T$, and the quantity $\varphi_\e \in \widetilde{H}^{1/2}(\D_1)$ defined by
\begin{equation}\label{eq.vphieNeu} 
\varphi_\e(y) = \e^{d-1} s_\e(\e y)~.
\end{equation}
This is the desired representation formula. \par\medskip

 \noindent \textit{Step 2.} We characterize $\varphi_\e$ as the solution to an integral equation.
 This arises from a representation formula for $u_\e$ which differs slightly from \cref{eq.repuecalcneu}: it is obtained by repeating the derivation of Step 1, 
 except that a different, explicit fundamental solution $L_{A(x)}(x,y)$ is used in place of $N(x,y)$. For any symmetric, positive definite matrix $A$, and any $x \in H$, let $L_A(x,y)$ be a solution to the following boundary value problem  posed on the lower half-space $H$
 \begin{equation}\label{eq.defLBdir}
\left\{
\begin{array}{cl}
-\dv_y(A \nabla_y L_A(x,y)) = \delta_{y=x}  & \text{in } H,  \\
L_A(x,y) = 0 & \text{on } \partial H.
\end{array}
 \right.
\end{equation}
An explicit formula for one such function is provided by the next lemma, whose proof is completely analogous to that of \cref{lem.LB} and is therefore omitted.
\begin{lemma}\label{lem.LBneu}
Let $A$ be a symmetric, positive definite $d \times d$ matrix, and let $M := A^{-1/2}$.
Let $G(x,y)$ be the fundamental solution of the operator $-\Delta$ in free space, cf. \cref{eq.GreenLap}.
The function $L_A(x,y)$ defined by
\begin{equation}\label{eq.exprLBneu}
  L_A(x,y) = |\det M| \left( G(Mx,My) - G(Mx, My - 2 y_d \frac{M^{-1}e_d}{|M^{-1}e_d|^2}\right), \quad  \:\: x \neq y~,
 \end{equation}
satisfies \cref{eq.defLBdir}.
\end{lemma}

For a point $x \in {\mathcal O}$, we obtain from two successive integrations by parts
 $$\begin{array}{>{\displaystyle}cc>{\displaystyle}l}
 v_\e(x) &=& - \int_{\mathcal O}{\dv_y(A(x) \nabla_y L_{A(x)}(x,y)) v_\e(y) \: \d y} \\
  &=& - \int_{\partial {\mathcal O}}{ A(x) \nabla_y L_{A(x)}(x,y) \cdot n(y) v_\e(y)\:\d s(y) }   + \int_{{\mathcal O}}{(A(x)-A(y)) \nabla_y L_{A(x) }(x,y) \cdot \nabla v_\e(y) \: \d y} \\ 
 && + \int_{\partial {\mathcal O}}{(A\nabla v_\e \cdot n)(y) \: L_{A(x) }(x,y) \: \d s(y)} + \int_{{\mathcal O}}{ g(y) L_{A(x) }(x,y) \: \d y}~.
 \end{array}$$
The same calculation based on the function $v_0$, instead of $v_\e$, yields 
\begin{multline*}
v_0(x) = - \int_{\partial {\mathcal O} }{ A(x) \nabla_y L_{A(x)}(x,y) \cdot n(y) v_0(y)\:\d s(y) }   + \int_{{\mathcal O}}{(A(x)-A(y)) \nabla_y L_{A(x) }(x,y) \cdot \nabla v_0(y) \: \d y} \\ 
 + \int_{\partial {\mathcal O}}{(A\nabla v_0 \cdot n)(y) \: L_{A(x) }(x,y) \: \d s(y)} + \int_{{\mathcal O}}{ g(y) L_{A(x) }(x,y) \: \d y}~.
 \end{multline*}
Forming the difference between these identities, and using the boundary conditions for $v_0$ and $v_\e$ we obtain
 \begin{multline}\label{eq.repseneuLA}
s_\e(x) = - \int_{\Gamma_N}{ A(x) \nabla_y L_{A(x)}(x,y) \cdot n(y) s_\e(y)\:\d s(y) }   + \int_{{\mathcal O}}{(A(x)-A(y)) \nabla_y L_{A(x) }(x,y) \cdot \nabla s_\e(y) \: \d y} \\ 
 + \int_{\partial {\mathcal O} \setminus U}{(A\nabla s_\e \cdot n)(y) \: L_{A(x) }(x,y) \: \d s(y)} - \int_{\D_\e}{A(x) \nabla_y L_{A(x)}(x,y) \cdot n(y) s_\e(y) \: \d s(y)}.
  \end{multline}
We now wish to take the trace of a co-normal derivative of  the above identity on $\D_\e$. This is possible owing to the next lemma, whose proof is postponed to the end of this section. 
 \begin{lemma}\label{lem.estdivop}
Let us define the operator $M_\e: \widetilde{H}^{1/2}(\D_\e) \to H^1({\mathcal O})$ by
$$ M_\e \varphi(x) = \int_{\D_\e}{A(x) \nabla_y L_{A(x)}(x,y) \cdot n(y) \varphi(y) \: \d s(y)}~.$$
There exists a constant $C$, depending only on the matrix field $A(x)$ and the domain ${\mathcal O}$ such that, for all $\varphi \in \widetilde{H}^{1/2}(\D_\e)$, 
\begin{equation*}\label{eq.estMeHdiv}
 || M_\e \varphi ||_{H^1({\mathcal O})} + || \dv_x(A(x) \nabla_x (M_\e \varphi))||_{L^2({\mathcal O})} \leq C || \varphi||_{\widetilde{H}^{1/2}(\D_\e)}~.
 \end{equation*}
\end{lemma} 
  
%
Using this lemma we obtain the following identity between elements of $H^{-1/2}(\D_\e)$:
\begin{eqnarray}\label{eq.ANv0DeNeu}
   -A(x)\nabla v_0(x)\cdot n(x) &=& - \int_{\Gamma_N}{  A(x) \nabla_x\left( A(x) \nabla_y L_{A(x)}(x,y) \cdot n(y)\right) \cdot n(x) \: s_\e(y)\:\d s(y) }  \nonumber \\
    &&\hskip 10pt  + A(x) \nabla \left( \int_{{\mathcal O}}{(A(x)-A(y)) \nabla_y L_{A(x) }(x,y) \cdot \nabla s_\e(y) \: \d y}\right)\cdot n(x) \\
&&\hskip 20pt +  \int_{\partial {\mathcal O} \setminus {\mathcal U}}{(A\nabla s_\e \cdot n)(y) \: A(x) \nabla_x ( L_{A(x) }(x,y)) \cdot n(x) \: \d s(y)}  \nonumber \\ 
&&\hskip 30pt - \int_{\D_\e}{\Big(A(x) \nabla_x \left(A(x) \nabla_y L_{A(x)}(x,y) \cdot n(y) \right) \cdot n(x) \Big)\: s_\e(y) \: \d s(y)}~. \nonumber
\end{eqnarray}
We rewrite the latter as
\begin{equation}\label{eq.ANv0DeNeu2}
   -A(x)\nabla v_0(x)\cdot n(x) = R_\e^1(x) + A(x) \nabla K_\e(x) \cdot n(x) + R_\e^2(x) -  \int_{\D_\e}{P(x,y)\: s_\e(y) \: \d s(y)}~, 
\end{equation}
where we have defined the following quantities on $\D_\e$
\begin{equation}\label{eq.Re1}
R_\e^1(x) :=  - \int_{\Gamma_N}{  A(x) \nabla_x\left( A(x) \nabla_y L_{A(x)}(x,y) \cdot n(y)\right) \cdot n(x) \: s_\e(y)\:\d s(y) }~,
\end{equation}
$$ K_\e(x) :=  \int_{{\mathcal O}}{(A(x)-A(y)) \nabla_y L_{A(x) }(x,y) \cdot \nabla s_\e(y) \: \d y}~,$$
$$ R_\e^2(x) :=  \int_{\partial {\mathcal O} \setminus {U}}{(A\nabla s_\e \cdot n)(y) \: A(x) \nabla_x ( L_{A(x) }(x,y)) \cdot n(x) \: \d s(y)}~,$$
and the kernel
 \begin{equation}\label{eq.defNneu}
  P(x,y) := A(x) \nabla_x( A(x)\nabla_y L_{A(x) }(x,y) \cdot n(y))\cdot n(x)~, \quad x, y \in \D_\e~.
  \end{equation}
Rescaling \cref{eq.ANv0DeNeu2}, we finally arrive at the following integral equation on $\D_1$  
  \begin{equation}\label{eq.inteqNeu}
   T_\e \varphi_\e =  \gamma(0) \frac{\partial u_0}{\partial n}(0) + \eta_\e,
   \end{equation}
     where the unknown $\varphi_\e \in \widetilde{H}^{1/2}(\D_1)$ is the quantity introduced in \cref{eq.vphieNeu}, 
   the operator  $T_\e: \widetilde{H}^{1/2}(\D_1) \to H^{-1/2}(\D_1) $ is defined by 
 \begin{equation}\label{eq.defTeNeu}
  T_\e \varphi (x) =  \int_{\D_1}{ \varphi(z) \: P(\e x,\e z) \: \d s(z)}~,  
  \end{equation}
  and the remainder $\eta_\e \in H^{-1/2}(\D_1)$ is given by 
  \begin{equation}\label{eq.defetaeneu} 
 \eta_\e(x) =  \Big( (A\nabla v_0\cdot n)(\e x)  - \gamma(0)\frac{\partial u_0}{\partial n}(0)  \Big)
 + R_\e^1(\e x)  + R_\e^2(\e x)
  + (A\nabla K_\e \cdot n)(\e x)~.
 \end{equation}

 \par\medskip  
 \noindent \textit{Step 3.} We study the integral equation \cref{eq.inteqNeu} to obtain information about the limiting behavior of $\varphi_\e$ as $\e \to 0$.
To this end, we estimate the remainder $\eta_\e$ and we approximate the operator $T_\e$; this is possible due to the following lemmata, whose proofs are detailed at the end of this section.
 
  \begin{lemma}\label{lem.etaeneu}
 The remainder term $\eta_\e \in H^{-1/2}(\D_1)$ defined in \cref{eq.defetaeneu} satisfies
 \begin{equation}\label{eq.cvetaeps}
  \eta_\e \xrightarrow{\e \to 0} 0 \text{ weakly in } H^{-1/2}(\D_1)~.
  \end{equation}
 \end{lemma} 
 
 \begin{lemma}\label{lem.approxd2N}
The operator $T_\e$ in \cref{eq.defTeNeu} satisfies the following expansion
\begin{equation}\label{eq.asym2dN}
\sup\limits_{\varphi \in \widetilde{H}^{1/2}(\D_1) \atop || \varphi ||_{\widetilde{H}^{1/2}(\D_1) } \leq 1} \: \e^d \left\lvert \left\lvert T_\e \varphi -
\frac{2\gamma(0)}{\e^d} R_1 \varphi \right\lvert  \right\lvert_{H^{-1/2}(\D_1)} \xrightarrow{\e \to 0} 0~,
\end{equation}
where the hypersingular operator $R_1 : \widetilde{H}^{1/2}( \D _1) \to H^{-1/2}(\D_1)$ is defined by
\begin{equation}\label{eq.R1}
R_1\varphi(x) = \left\{
\begin{array}{>{\displaystyle}c>{\displaystyle}l}
 \frac{1}{2\pi}\int_{\D_1}{\frac{1}{|x-y|^2} \varphi (y) \: \d s(y)} & \text{if } d = 2~, \\
 \frac{1}{4\pi}\int_{\D_1}{\frac{1}{|x-y|^3} \varphi (y) \: \d s(y)} & \text{if } d = 3~,
\end{array}
\right.
\end{equation}
and the above integrals are understood as finite parts; see \cref{sec.pot}.
\end{lemma}

Inserting the approximation \cref{eq.asym2dN} in the integral equation \cref{eq.inteqNeu}, 
the function $\varphi_\e \in \widetilde{H}^{1/2}(\D_1)$ satisfies: 
\begin{equation}\label{eq.inteqd2N}
R_1 \varphi_\e + R_\e \varphi_\e  = \frac{\e^d}{2} \frac{\partial u_0}{\partial n}(0) + \e^d\widetilde{\eta_\e}~, 
\end{equation}
for some sequence $\widetilde{\eta_\e} \in H^{-1/2}(\D_1)$ which converges weakly to $0$, and some operators $R_\e: \widetilde{H}^{1/2}(\D_1) \to H^{-1/2}(\D_1)$, which converge to zero in the operator norm. 
This integral equation can now be solved owing to the next lemma, whose proof is also postponed to the end of this section.
\begin{lemma}\label{lem.hypersinginvert}
The operator $R_1: \widetilde{H}^{1/2}(\D_1) \to H^{-1/2}(\D_1)$ defined in \cref{eq.R1} is invertible.
\end{lemma}
Using this result together with Neumann series to invert the integral equation \cref{eq.inteqd2N}, we obtain the existence of a constant $C >0$ such that 
\begin{equation}\label{eq.estvphied2N}
 || \varphi_\e ||_{\widetilde{H}^{1/2}(\D_1)} \leq C\e^d, 
 \end{equation}
as well as the following asymptotic expansion
\begin{equation}\label{eq.expmoyvphieneu}
 \langle \varphi_\e , 1\rangle = \frac{\e^d}{2} \frac{\partial u_0}{\partial n}(0) \langle R_1^{-1}1,1\rangle + \o(\e^d)~,
 \end{equation}
where the explicit expression for the constant $\langle R_1^{-1}1,1\rangle$ is given by \cref{prop.eqdistrib} (ii), (iv).\par\medskip

\noindent \textit{Step 4.} We pass to the limit in the representation formula \cref{eq.repuecalcneu} for $u_\e$. 
Arguing as in the proof of \cref{th.expdir2d}, that is, combining a Taylor expansion of the function $z \mapsto \gamma(T(\e z)) \frac{\partial N}{\partial n_y}(x,T(\e z))$ with the estimate \cref{eq.estvphied2N}, we obtain: 
$$ \begin{array}{>{\displaystyle}cc>{\displaystyle}l}
 u_\e(x) &=& u_0(x) - \left(\int_{\D_1}{ \varphi_\e(z) \: \d s(z)} \right)\gamma(0) \frac{\partial N}{\partial n_y}(x,0) + \o(\e^d)~,\\
 &=& u_0(x) -  \frac{\e^d}{2} \langle R_1^{-1}1,1\rangle\gamma(0) \frac{\partial u_0}{\partial n}(0) \frac{\partial N}{\partial n_y}(x,0) + \o(\e^d)~,
 \end{array}
 $$
where the second line follows from \cref{eq.expmoyvphieneu}. 
The explicit expressions for the constant $\langle R_1^{-1}1,1\rangle$ in 2d and 3d provided in \cref{prop.eqdistrib} (ii), (iv) lead to the statement of the Theorem. 
\end{proof}

We conclude this section with the missing arguments in the above proof.  

\begin{proof}[Proof of \cref{lem.estdivop}]
The intuition behind the technical argument below is the following: if $M_\e \varphi$ was the double layer potential associated with the operator $u \mapsto \dv_y(A(y)\nabla_y u)$ (see \cref{sec.pot}), the quantity $\dv(A\nabla (M_\e \varphi))$ would vanish exactly on ${\mathcal O}$. 
Unfortunately, this is not the case since $(x,y) \mapsto L_{A(x)}(x,y)$ is not the fundamental solution of this operator. However,
the following calculations show that $M_\e \varphi$ is ``not too far'' from this double layer potential, so that 
the terms of highest-order derivatives vanish in the expression of $\dv(A\nabla (M_\e \varphi))$, 
and the lower-order terms can be controlled. 

Before starting, let us introduce some notations. For the sake of clarity, we denote by
$ L(A,x,y) := L_A(x,y)$ the function defined in \cref{lem.LBneu}. The corresponding partial derivatives with respect to the entries $a_{jk}$ ($j,k=1,\ldots d$) of the matrix $A$, and with respect to the components $x_i$, $y_i$ of $x$ and $y$ ($i,=1,\ldots,d$) are denoted by $\frac{\partial L}{\partial a_{jk}}$, $\frac{\partial L}{\partial x_i}$, $\frac{\partial L}{\partial y_i}$. Throughout the proof, $r(\varphi) \in L^2({\mathcal O})$ stands for a remainder term, which may vary from one line to the other, but which consistently satisfies the following estimate 
$$ || r(\varphi) ||_{L^2({\mathcal O})} \leq C || \varphi ||_{\widetilde{H}^{1/2}(\D_e)}~.$$ 
 
At first, using the expression for $L(A(x),x,y)$ given in \cref{lem.LBneu}, we calculate
\begin{equation}\label{eq.AdLAdn}
 A(x) \nabla_y L(A(x), x , y) \cdot n(y) = \left\{
\begin{array}{cl}
 \frac{-1}{\pi \sqrt{\det A(x)}} \frac{ (y- x) \cdot n(y)}{|M( x)(y- x)|^2} & \text{if } d =2, \\
 \frac{-1}{2\pi \sqrt{\det A(x)}} \frac{ (y- x) \cdot n(y)}{|M( x)(y- x)|^3} & \text{if } d =3,
 \end{array}\right. \quad  x\in \R^d, \:\: y \in \D_\e, \:\: x \neq y~.   
\end{equation}
Recalling from \cref{sec.Hs} the definition of the space $\widetilde{H}^{1/2}(\D_\e)$, and notably the fact that the associated norm is $|| u ||_{\widetilde{H}^{1/2}(\D_\e)}  =  || u ||_{{H}^{1/2}(\partial {\mathcal O})} $, \cref{th.TKpot} then implies that $M_\e \varphi \in H^1({\mathcal O})$ and that there exists a constant $C >0$ independent of $\e$ such that 
$$ || M_\e \varphi ||_{H^1({\mathcal O})} \leq  C || \varphi||_{\widetilde{H}^{1/2}(\D_\e)}~.$$
We now proceed to prove the estimate 
 \begin{equation}\label{eq.estdivMe}
  || \dv_x (A(x) \nabla_x (M_\e \varphi))||_{L^2({\mathcal O})} \leq C || \varphi||_{\widetilde{H}^{1/2}(\D_\e)}~.
  \end{equation}
For an arbitrary point $x \in {\mathcal O}$, the definition of $M_\e \varphi$ boils down to:
$$ M_\e \varphi (x) = \sum\limits_{i,j=1}^d a_{ij}(x) \int_{\D_\e} { \frac{\partial L}{\partial y_j} (A(x),x,y ) n_i(y) \varphi(y)  \: \d s(y)}~.$$
Since $n_i(y) = 0$ on $\D_\e$ for $i=1,\ldots,d-1$, and since the matrix field $A(x)$ is smooth and the function $y \mapsto L(A(x),x,y)$ satisfies homogeneous Dirichlet boundary conditions on $\partial H$, 
the above expression actually simplifies into 
$$M_\e \varphi (x) = a_{dd}(x)  \int_{\D_\e} { \frac{\partial L}{\partial y_d} (A(x),x,y )  \varphi(y)  \: \d s(y)}~. $$
Taking derivatives, we now get, for $x \in {\mathcal O}$, and $i=1,\ldots,d$~,
\begin{equation}\label{eq.dupsi} 
\begin{array}{>{\displaystyle}cc>{\displaystyle}l} 
\frac{\partial}{\partial x_i}(M_\e\varphi) (x) &=& \frac{\partial a_{dd}}{\partial x_i}(x)  \int_{\D_\e} { \frac{\partial L}{\partial y_d} (A(x),x,y )  \varphi(y)  \: \d s(y)}  \\&& \hskip 20pt + a_{dd}(x) \frac{\partial }{\partial x_i} \left( \int_{\D_\e} { \frac{\partial L}{\partial y_d} (A(x),x,y )  \varphi(y)  \: \d s(y)} \right) =: V_i(\varphi) (x) + W_i(\varphi)(x)~,
\end{array}
\end{equation}
with obvious notations. We infer from this expression that
\begin{equation}\label{eq.divAupsi}
 \dv_x(A(x)\nabla_x (M_\e \varphi)) = \dv(A(x) V(\varphi)) + \dv(A(x) W(\varphi)).
 \end{equation}
 Each function $V_i(\varphi)$ is the multiple of a smooth function with a potential associated to the kernel $\frac{\partial L}{\partial y_d}(A(x),x,y)$, $y\in \D_\e$. 
A simple calculation, similar to \cref{eq.AdLAdn}, reveals that the latter is (the restriction of) a homogeneous kernel of class $0$ in the sense of \cref{def.homkernel}. It then follows from \cref{th.TKpot} that 
 $$ || V(\varphi) ||_{H^1({\mathcal O})^d} \leq  C || \varphi ||_{H^{1/2}(\partial {\mathcal O})} = C || \varphi ||_{\widetilde{H}^{1/2}(\D_\e)}~,$$
 and so 
 \begin{equation}\label{eq.estdivAV}
  || \dv (A(x) V(\varphi)) ||_{L^2({\mathcal O})} \leq C || \varphi ||_{\widetilde{H}^{1/2}(\D_\e)}~. 
  \end{equation} 
We then focus on the second term $\dv(A(x)W(\varphi))$ in \cref{eq.divAupsi}. A straightforward calculation yields, for $x\in{\mathcal O}$,
 \begin{equation*}
\begin{array}{>{\displaystyle}cc>{\displaystyle}l} 
\dv(A(x)W(\varphi))(x)&=& \sum\limits_{i,j=1}^d{\frac{\partial }{\partial x_i}(a_{ij}a_{dd})(x) \frac{\partial}{\partial x_j} \left( \int_{\D_\e} { \frac{\partial L}{\partial y_d} (A(x),x,y )  \varphi(y)  \: \d s(y)} \right) }  \\
&& + \:\: a_{dd}(x) \sum\limits_{i,j=1}^d{ a_{ij} (x) \frac{\partial^2}{\partial x_i\partial x_j} \left( \int_{\D_\e} { \frac{\partial L}{\partial y_d} (A(x),x,y )  \varphi(y)  \: \d s(y)} \right) } \\ 
&=&  r ( \varphi) + a_{dd}(x) \sum\limits_{i,j=1}^d{ a_{ij} (x) \frac{\partial^2}{\partial x_i\partial x_j} \left( \int_{\D_\e} { \frac{\partial L}{\partial y_d} (A(x),x,y )  \varphi(y)  \: \d s(y)} \right) }~,
\end{array}
\end{equation*}
where we have used a similar argument to that used in the treatment of the functions $V_i(\varphi)$ to pass from the first line to the second. 
Using the chain rule to proceed, we obtain 
 \begin{equation}\label{eq.divAW}
\begin{array}{>{\displaystyle}cc>{\displaystyle}l} 
\dv(A(x)W(\varphi)) &=& a_{dd}(x) \sum\limits_{i,j=1}^d a_{ij}(x) \frac{\partial }{\partial x_i} \Bigg( \sum\limits_{k,l=1}^d \frac{\partial a_{kl}}{\partial x_j}(x)  \int_{\D_\e} { \frac{\partial^2 L}{\partial a_{kl} \partial y_d} (A(x),x,y )  \varphi(y)  \: \d s(y)}   \\
&& +  \int_{\D_\e} { \frac{\partial^2 L}{\partial x_j \partial y_d} (A(x),x,y )  \varphi(y)  \: \d s(y)}  \Bigg) + r(\varphi) \\ 
 &=& a_{dd}(x) \sum\limits_{i,j=1}^d \sum\limits_{k,l=1}^d a_{ij}(x)  \frac{\partial^2 a_{kl}}{\partial x_i\partial x_j}(x)  \int_{\D_\e} { \frac{\partial^2 L}{\partial a_{kl} \partial y_d} (A(x),x,y )  \varphi(y)  \: \d s(y)}   \\
  &&+ a_{dd}(x) \sum\limits_{i,j=1}^d \sum\limits_{k,l=1}^d a_{ij}(x)  \frac{\partial a_{kl}}{\partial x_j}(x) \frac{\partial}{\partial x_i} \left( \int_{\D_\e} { \frac{\partial^2 L}{\partial a_{kl} \partial y_d} (A(x),x,y )  \varphi(y)  \: \d s(y)} \right)  \\  
  &&+ a_{dd}(x) \sum\limits_{i,j=1}^d \sum\limits_{k,l=1}^d a_{ij}(x)  \frac{\partial a_{kl}}{\partial x_i}(x) \int_{\D_\e} { \frac{\partial^3 L}{\partial a_{kl} \partial x_j \partial y_d} (A(x),x,y )  \varphi(y)  \: \d s(y)}   \\  
&& +  a_{dd}(x) \sum\limits_{i,j=1}^d a_{ij}(x) \int_{\D_\e} { \frac{\partial^3 L}{\partial x_i \partial x_j \partial y_d} (A(x),x,y )  \varphi(y)  \: \d s(y)}  + r(\varphi), \\[1.5em]
&=:& a_{dd}(x)\Big(Z_1(\varphi) + Z_2(\varphi) + Z_3(\varphi) + Z_4(\varphi )\Big)+ r(\varphi)~, 
\end{array}
\end{equation}
with obvious notations for $Z_m$, $m=1,~\ldots~,4.$ 

 We now remark that the function $Z_1(\varphi)$ is a linear combination of integral operators with smooth coefficients; the kernels of these operators are $\frac{\partial^2 L}{\partial a_{kl} \partial y_d}$, $y\in \D_\e$, and they are (restrictions of) homogeneous kernels of class $0$ in the sense of \cref{def.homkernel}. Here, we use the fact that taking derivatives with respect to one of the matrix entries $a_{kl}$ changes neither the order, the homogeneity, nor the parity of the function involved. It then follows from \cref{th.TKpot} that 
 \begin{equation}\label{eq.estZ1}
  || Z_1(\varphi) ||_{H^1({\mathcal O})} \leq C || \varphi ||_{\widetilde{H}^{1/2}(\D_\e)}~.
  \end{equation}
  By the same token, we obtain
  \begin{equation}\label{eq.estZ2}
 || Z_2(\varphi) ||_{L^2({\mathcal O})} \leq C || \varphi ||_{\widetilde{H}^{1/2}(\D_\e)}~.
 \end{equation}
In order to estimate $Z_3(\varphi)$, we rewrite this quantity as
\begin{multline*}
Z_3(\varphi) = \sum\limits_{i,j=1}^d \sum\limits_{k,l=1}^d a_{ij}(x)  \frac{\partial a_{kl}}{\partial x_i}(x) \frac{\partial}{\partial x_j} \left( \int_{\D_\e} { \frac{\partial^2 L}{\partial a_{kl}  \partial y_d} (A(x),x,y )  \varphi(y)  \: \d s(y)} \right) \\
-  \sum\limits_{i,j=1}^d \sum\limits_{k,l=1}^d \sum\limits_{k^\prime,l^\prime=1}^d a_{ij}(x)  \frac{\partial a_{kl}}{\partial x_i}(x) \frac{\partial a_{k^\prime l^\prime}}{\partial x_j}(x)   \int_{\D_\e} { \frac{\partial^3 L}{\partial a_{kl}  \partial a_{k^\prime l^\prime}\partial y_d} (A(x),x,y )  \varphi(y)  \: \d s(y)}~,
\end{multline*} 
and arguing as above, we obtain 
 \begin{equation}\label{eq.estZ3}
 || Z_3(\varphi) ||_{L^2({\mathcal O})} \leq C || \varphi ||_{\widetilde{H}^{1/2}(\D_\e)}~.
\end{equation}
This leaves us with the task of estimating $Z_4(\varphi)$. 
To accomplish this, we rewrite the equation \cref{eq.defLBdir} satisfied by $L(A,x,y)$ as
  \begin{equation*}
   \sum\limits_{i,j=1}^d a_{ij} \frac{\partial ^2 L}{\partial y_i \partial y_j} (A,x,y) = 0, \quad x, y  \in \overline{H}, \:\: x \neq y~;
   \end{equation*}
note that this holds for an arbitrary, symmetric, positive definite matrix $A \in \R^{d\times d}$, with entries $a_{ij}$. Due to the symmetry property
$$\forall x,y \in \overline{H}, \: x \neq y, \quad L(A,x,y) = L(A,y,x)~, $$
it follows that 
  \begin{equation*}
   \sum\limits_{i,j=1}^d a_{ij} \frac{\partial ^2 L}{\partial x_i \partial x_j} (A,x,y) = 0, \quad  x, y  \in \overline{H}, \:\: x \neq y~.
   \end{equation*}
Substituting $A(x)=\left\{a_{ij}(x) \right\}$ for $A$ and taking a derivative with respect to the $y_d$ variable, we get
   $$   \sum\limits_{i,j=1}^d a_{ij}(x) \frac{\partial ^3 L}{\partial x_i \partial x_j \partial y_d} (A(x),x,y) =0~.$$
It follows that
  \begin{equation}\label{eq.estZ4}
Z_4(\varphi) = 0~. 
\end{equation}
Combining the estimates \cref{eq.estZ1,eq.estZ2,eq.estZ3,eq.estZ4} with \cref{eq.divAupsi,eq.estdivAV,eq.divAW} we obtain the desired conclusion.
\end{proof}

We proceed with the proof of \cref{lem.etaeneu}. 
\begin{proof}[Sketch of the proof of \cref{lem.etaeneu}] 
Let us denote by $I_\e^i(x)$, $i=1,\ldots,4$ the four terms in the right-hand side of \cref{eq.defetaeneu}. We prove that each of these contributions 
tends to $0$ weakly in $H^{-1/2}(\D_1)$ as $\e \to 0$. 

At first, since $v_0$ is smooth and $(A\nabla v_0 \cdot n)(0) = \gamma(0) \frac{\partial u_0}{\partial n}(0)$, 
the difference 
$$I_\e^1(x) = \Big(  (A\nabla v_0\cdot n)(\e x) - \gamma(0)\frac{\partial u_0}{\partial n}(0) \Big)$$
 is easily seen to converge to $0$ strongly in $H^{-1/2}(\D_1)$. Furthermore, since the support of the integral 
$$R_\e^1(x)=  - \int_{\Gamma_N}{  A(x) \nabla_x\left( A(x) \nabla_y L_{A(x)}(x,y) \cdot n(y)\right) \cdot n(x) \: s_\e(y)\:\d s(y) }$$
 is ``far'' from $\D_\e$, 
the convergence properties of $s_\e$ expressed in \cref{prop.prelestneu} (\cref{rem.prelestneuaniso}) and \cref{eq.capaneuDe} imply that $R_\e^1(x)$ converges to $0$ uniformly for $x$ in a fixed neighborhood of the sets $\D_\e$; 
in particular, $I_\e^2(x) = R_\e^1(\e x)$ converges to $0$ strongly in $H^{-1/2}(\D_1)$.
The same argument shows that $I_\e^3(x) = R_\e^2(\e x)$ also converges to $0$ strongly in $H^{-1/2}(\D_1)$.\par\medskip

This leaves us with the task of proving that 
$ I_\e^4(x) = (A\nabla K_\e \cdot n)(\e x)$ converges to $0$ weakly in $H^{-1/2}(\D_1)$. 
Let us introduce a smooth bounded domain $D \subset {\mathcal O}$, whose boundary contains $\D_1$. Furthermore, select $D$ so that $D$ is bounded away from $\Gamma_N$ and $\partial {\mathcal O}\setminus U$.
Recalling the definition of $H^{-1/2}(\D_1)$ as the space of restrictions to $\D_1$ of distributions in $H^{-1/2}(\partial {\mathcal O})$ (see \cref{sec.Hs}), 
it suffices to show that the vector-valued function
$$ \sigma_\e(x) := (A\nabla K_\e)(\e x), \quad x \in D~,$$
converges to $0$ weakly in the Hilbert space 
$$ H_{\dv}(D) := \left\{ \sigma \in L^2(D)^d, \:\: \dv \sigma \in L^2(D) \right\}~.$$
We proceed in two steps to achieve this. \par\medskip

\noindent \textit{Step 1.} We prove that $\sigma_\e$ is a bounded sequence in $H_{\dv}(D)$. 
To this end, we return to \cref{eq.repseneuLA}, which, for $x \in D \subset {\mathcal O}$, reads 
 \begin{multline}\label{eq.repseneuLA2}
s_\e(x) = - \int_{\Gamma_N}{ A(x) \nabla_y L_{A(x)}(x,y) \cdot n(y) s_\e(y)\:\d s(y) }   \\
 + \int_{\partial {\mathcal O} \setminus U}{(A\nabla s_\e \cdot n)(y) \: L_{A(x) }(x,y) \: \d s(y)} + K_\e(x) - M_\e s_\e (x),
  \end{multline}
with
$$K_\e(x) = \int_{{\mathcal O}}{(A(x)-A(y)) \nabla_y L_{A(x) }(x,y) \cdot \nabla s_\e(y) \: \d y}~,$$
and the quantity $M_\e s_\e$ is as in \cref{lem.estdivop}.
It follows from \cref{lem.estdivop} that $M_\e s_\e$ satisfies the following estimate
$$ || M_\e s_\e(x) ||_{H^1({\mathcal O})} +  || \dv(A(x) \nabla (M_\e s_\e)) ||_{L^2({\mathcal O})} \leq C || s_\e ||_{H^{1/2}(\partial {\mathcal O})}~.$$
From \cref{eq.repseneuLA2}, and the fact that $D$ is bounded away from $\Gamma_N$ and $\partial {\mathcal O}\setminus U$, we now see that the function $ K_\e(x)$ satisfies the similar estimate
$$ || K_\e ||_{H^{1}(D)} + || \dv(A(x) \nabla K_\e) ||_{L^2(D)} \leq C || s_\e ||_{H^1({\mathcal O})}~.$$
Rescaling the above inequality (note that $\e D \subset D$ for $\e$ sufficiently small) and using the estimate
$$ || s_\e||_{H^1({\mathcal O})} \leq C \e^\frac{d}{2}~,$$
which follows readily from  \cref{prop.prelestneu} (\cref{rem.prelestneuaniso}) and \cref{eq.capaneuDe}, we now obtain
$$ \e^{\frac{d}{2}} || \sigma_\e ||_{L^2(D)^d} +  \e^{\frac{d-2}{2}} ||\dv \sigma_\e ||_{L^2(D)} \leq C \e ^{\frac{d}{2}}~. $$
Hence, $\sigma_\e$ is a bounded sequence in $H_{\dv}(D)$, and so, up to a subsequence (which we still index by $\e$) it converges weakly to a limit $\sigma^*$ in this space. 
\par\medskip
\noindent \textit{Step 2.} We prove that the weak limit $\sigma^*$ is $0$, and this task requires separating the cases $d=2$ and $d=3$. 

When $d=3$, we observe that, by definition, 
\begin{equation}\label{eq.relsigmaeKte}
 \sigma_\e(x) = A(\e x) \nabla \widetilde{K_\e}(x), \text{ where } \widetilde{K_\e}(x) := \frac{1}{\e} K_\e(\e x)~,
 \end{equation}
and the same calculation as in the proof of \cref{lem.etae} (see notably \cref{eq.estI4edir}) reveals that the quantity $K_\e(x)$ satisfies 
$$ |K_\e(x) | \leq C || s_\e ||_{H^1({\mathcal O})}, \text{ for all } x \in D~.$$
Hence, we obtain 
$$ \int_{D}{|K_\e (\e z)|^2 \: \d z} \leq C \e^3,$$
which proves that
$$ || \widetilde{K_\e} ||_{L^2(D)} \xrightarrow{\e \to 0} 0~.$$
It follows from \cref{eq.relsigmaeKte} and the continuity of derivatives in the sense of distributions that $\sigma^* = 0$. \par\medskip

The case where $d=2$ is a little more involved, and we need to estimate the quantity $K_\e$ more carefully. 
The argument performed for $d=3$ in this case only allows us to infer that $\widetilde{K}_\e$ is a bounded sequence in $L^2(D)$; 
we also know from Step 1 that its gradient is bounded in $L^2(D)^2$, and so (up to a subsequence) 
$\widetilde{K}_\e$ converges strongly to a function $K^* \in L^2(D)$, which we need to analyze further. 
For any point $x \in \R^d$ and positive real number $h >0$, we denote by $B(x,h)$ the open ball with radius $h$ centered at $x$. 

We observe that, for $x,y\in \D_\e$,  
\begin{multline*}
 |K_\e(x) - K_\e(y)| = \frac{1}{\pi} \left\lvert \int_{\mathcal O}\left((A(x)-A(z)) \frac{1}{ \sqrt{\det(A(x))}} \frac{M^2(x) (x-z)}{|M(x)(z- x)|^2} \cdot \nabla s_\e(z)  \right. \right.\\
 \left.\left. -(A( y)-A(z)) \frac{1}{ \sqrt{\det(A( y))}} \frac{M^2( y) (y- z)}{|M( y)(z- y)|^2} \cdot \nabla s_\e(z) \right) \: \d z \right\lvert~.
  \end{multline*}
 Denoting by $h := |x-y|$ we get, since $B(y,h) \subset B(x,2h) \subset B(y, 4h)$,
  $$\begin{array}{>{\displaystyle}cc>{\displaystyle}l}
 |K_\e(x) - K_\e(y)| & \leq & \frac{1}{\pi}  \int_{B(x, 2h)\cap {\mathcal O}}\left\lvert (A(x)-A(z)) \frac{1}{ \sqrt{\det(A(x))}} \frac{M^2(x) (x-z)}{|M( x)(z-x)|^2} \cdot \nabla s_\e(z)  \right\lvert   \: \d z\\
 & &+ \frac{1}{\pi}  \int_{B(y, 4h)\cap {\mathcal O}}\left\lvert (A(y)-A(z)) \frac{1}{ \sqrt{\det(A(y))}} \frac{M^2(y) (y-z)}{|M(y)(z- y)|^2} \cdot \nabla s_\e(z) \right\lvert  \: \d z  \\
 &&+ \frac{1}{\pi}  \int_{{\mathcal O}\setminus B(x,2h)} \left\lvert \left((A(x)-A(z)) \nabla_z L_{A(x)}(x,z)   -(A(y)-A(z)) \nabla_z L_{A(y)} (y , z)  \right)\cdot \nabla s_\e(z)\right\lvert  \: \d z \\
 &=:& J_1+J_2+J_3,
 \end{array}$$ 
 with obvious notations. 
 Due to the smoothness of the matrix field $A$
  $$
   |J_1 | \leq C \int_{B(x,2h)\cap {\mathcal O}}{|\nabla s_\e (z)| \: \d z}
 \leq C h ||\nabla s_\e ||_{L^2({\mathcal O})^d}~,  
   $$
 and a similar estimate holds for $J_2$. When it comes to $J_3$, we remark that for $z \notin B(x, 2h)$
 $$ 2h \leq |x - z |, \quad h \leq |y - z |, \text{ and } \frac{1}{2}|x - z | \leq |y -z| \leq \frac{3}{2} | x - z|~.$$
 We now decompose
$$ \left((A(x)-A(z)) \nabla_z L_{A(x)}(x,z)   -(A(y)-A(z)) \nabla_z L_{A(y)} (y , z)  \right) = b_1 + b_2~,$$
where 
$$ b_1 =  (A(x)-A(y)) \nabla_z L_{A(x)}(x,z) \text{ and } b_2 = (A(y)-A(z)) \left( \nabla_z L_{A(x)}(x,z) - \nabla_z L_{A(y)} (y , z) \right).$$
A simple calculation yields that
$$| b_1 | \leq \frac{C h }{|x - z|}~,$$
and regarding $b_2$, we calculate 
$$ \begin{array}{>{\displaystyle}cc>{\displaystyle}l}
| b_2| &\leq & C|z- y|  \left\lvert  \nabla_z L_{A(x)}(x,z) - \nabla_z L_{A(y)} (y , z)\right\lvert  \\
&= & C|z- x| \left\lvert  \frac{1}{ \sqrt{\det(A(x))}} \frac{M^2(x) (x-z)}{|M(x)(z-x)|^2} -  \frac{1}{ \sqrt{\det(A(y))}} \frac{M^2(y) (y-z)}{|M(y)(z-y)|^2} \right\lvert \\
&\leq& C |z- x|  \left\lvert  \frac{1}{ \sqrt{\det(A(x))}} -   \frac{1}{ \sqrt{\det(A(y))}}  \right\lvert  \left\lvert \frac{M^2(x) (x-z)}{|M(x)(z-x)|^2} \right\lvert \\
&&+C|z- x|  \left\lvert   \frac{M^2(x) (x-z)}{|M(x)(z-x)|^2} -  \frac{M^2(y) (y-z)}{|M(y)(z- y)|^2} \right\lvert \\
&\leq & Ch + C \frac{|z-  x| }{|M(x)(z-x)|^2} \left\lvert  M^2(x) (x-z) - M^2( y) (y- z) \right\lvert \\
&&+ C|z-  x|  |M^2( y) (y- z)| \left\lvert  \frac{1}{|M( x)(z- x)|^2}  -  \frac{1}{|M( y)(z- y)|^2} \right\lvert  \\
&\leq & Ch + C \frac{|z-  x| }{|M( x)(z-x)|^2} \left\lvert  M^2( y) ( x-y)  \right\lvert + C|z-  x|^2  \left\lvert  \frac{1}{|M( x)(z- x)|^2}  -  \frac{1}{|M( y)(z- y)|^2} \right\lvert \\
&\leq & Ch +  \frac{Ch}{|z-  x| }  + C\frac{1}{|z-  x|^2}  \left\lvert  |M( x)(z- x)|^2  -  |M( y)(z- y)|^2 \right\lvert \\
&\leq & Ch +  \frac{Ch}{|z-  x| }  + C\frac{1}{|z-  x|}  \left\lvert  |M( x)(z- x)|  -  |M( y)(z- y)| \right\lvert \hskip 10pt \leq   Ch+ \frac{Ch}{|z-  x| }~. 
\end{array}$$ 
Summarizing, we now have 
$$ |J_3| \leq Ch \int_{{\mathcal O} \setminus B( x,2h)} {\left(\frac{1}{|z -  x| }+1 \right) |\nabla s_\e| \: \d z} ~,$$
and so
\begin{equation*}
 \begin{array}{>{\displaystyle}cc>{\displaystyle}l}
 |J_3| &\leq& Ch \left( \int_{{\mathcal O} \setminus B(x,2h)}{ \frac{1}{|z- x|^2} \:\d z} \right)^{\frac{1}{2}} || \nabla s_\e ||_{L^2({\mathcal O})^2} \\ 
 &\leq& Ch \left( \int_{2h}^M{ \frac{\d r} {r}} \right)^{\frac{1}{2}} || \nabla s_\e ||_{L^2({\mathcal O})^2} \\
 &\leq& C h |\log h |^{\frac{1}{2}} || \nabla s_\e ||_{L^2({\mathcal O})^2}~.
\end{array}
 \end{equation*}
With $x$ and $y$ replaced by $\e x$ and $\e y$, for $x, y \in \D$, we now conclude 
$$ |K_\e(\e x) - K_\e(\e y) | \leq C\e | x -  y|\big| \log|\e x - \e y| \big|^{1/2}\,|| \nabla s_\e ||_{L^2({\mathcal O})^2}~,$$
and so
$$  \begin{array}{>{\displaystyle}cc>{\displaystyle}l}
 |\widetilde{K}_\e(x) - \widetilde{K}_\e(y) |^2 &\leq& C |x-y|^2  (|\log \e | + \big|\log | x - y| \big|\,)\, || \nabla s_\e ||^2_{L^2({\mathcal O})^2}\\
 &\leq& C \e^2 \left( |x-y|^2 |\log \e | +|x-y|^2 \big|\log |x-y|\big|\right) ~,
 \end{array} $$
 where we have used again \cref{prop.prelestneu} (\cref{rem.prelestneuaniso}) and \cref{eq.capaneuDe} to estimate $|| \nabla s_\e ||^2_{L^2({\mathcal O})^2}$.
Integrating the terms in the previous inequality and passing to the limit as $\e \to 0$, we obtain
$$\int_{D}\int_{D}{|K^*(x) - K^*(y)|^2 \: \d x \d y} = \lim_{\e \rightarrow 0}\int_{D}\int_{D}{|\widetilde{K}_\e(x) - \widetilde{K}_\e(y)|^2 \: \d x \d y}=0~,$$
which proves that $K^*$ is a constant function over $D$. This completes the proof of the fact that $\sigma^*=0$, for $d=2$.

\end{proof}

\begin{proof}[Proof of \cref{lem.approxd2N}]
We only provide the proof in the two-dimensional case, the three-dimensional proof being very similar.

Using the definition of the fundamental solution $L_A(x,y)$ given by \cref{lem.LBneu}, we get, for arbitrary $y \in \D_\e$ and $x \in \overline{H}$, $x \neq y$,
$$ A(x) \nabla_y L_{A(x)}(x,y) \cdot n(y) = -\frac{1}{\pi\sqrt{\det A(x)}} \frac{y_2 - x_2}{|M(x)(y-x)|^2}~.$$
Hence, a straightforward calculation yields the following expression of the kernel $P(\e x,\e y)$ of the operator $T_\e$, defined in \cref{eq.defNneu}
$$ P(\e x,\e y) =  \frac{1}{\pi\sqrt{\det A(\e x)}}\frac{A(\e x)e_2 \cdot e_2}{|M(\e x) (\e y-\e x)|^2} = \e^{-2} \frac{A(\e x)e_2 \cdot e_2}{\pi\sqrt{\det A(\e x)} |M(\e x) e_1 \cdot e_1|^2}\frac{1}{|y-x|^2}~ , \quad x, y \in \D_1, \:\: x\neq y~,$$
and this immediately leads to 
$$
T_\e \varphi (x) -\frac{2 \gamma(0)}{\e^2} R_1 \varphi (x) =\e^{-2}\left( \frac{2 A(\e x)e_2 \cdot e_2}{\sqrt{\det A(\e x)} |M(\e x) e_1 \cdot e_1|^2} -2 \gamma(0)\right) R_1 \varphi (x)~.
$$
Since the matrix fields $A(x)$ and $M(x)$ are smooth, with values $A(0) = \gamma(0)\I$ and $M(0)=\gamma(0)^{-1/2}I$ at $x =0$, we have that 
$$
\left\lvert\left\lvert \frac{2 A(\e x)e_2 \cdot e_2}{\sqrt{\det A(\e x)} |M(\e x) e_1 \cdot e_1|^2} - 2\gamma(0)\right\lvert\right\lvert_{{\mathcal C}^1(\overline{\D_1})} \le C\e~;
$$
in order to verify \cref{lem.approxd2N} it thus suffices to show that the operator 
$$
R_1 \varphi (x) = \frac{1}{2 \pi} \int_{\D_1}  \frac {1}{|x-y|^2} \varphi(y)\, ds(y)
$$
(interpreted in terms of finite parts) is a bounded operator from $\widetilde{H}^{1/2}(\D_1)$ into $H^{-1/2}(\D_1)$. For this purpose we can, unfortunately, not directly use the results from \cref{sec.homogkernel}, since the hypersingular kernel of the above operator does not fit within that framework. To remedy this, we rely on a classical trick for hypersingular operators of the form $R_1$, using an alternate representation in terms of a homogeneous kernel operator, and a surface differentiation operator (see e.g. \cite{hsiao2008boundary}, \S 1.2). More precisely, we observe that
$$ \frac{1}{|x-y |^2 } = - \frac{\partial}{\partial y_1} \left( \frac{y_1-x_1}{| x-y|^2}\right) \text{ for } x,y \in \D_1, \:\: x\neq y,$$
due to the fact that $|x-y| = |x_1-y_1 |$ when $x,y \in \D_1$. 
It follows that, for an arbitrary density $\varphi \in \widetilde{H}^{1/2}(\D_1)$, 
$$ R_1 \varphi = \frac{1}{2 \pi}\int_{\D_1}{\frac{y_1-x_1}{|x-y|^2} \frac{\partial \varphi}{\partial y_1}(y)  \:\d s(y)}~,$$
where the right hand side represents a Cauchy principal value. The kernel $\frac{y_1-x_1}{|x-y|^2}$ fits within the framework of \cref{sec.homogkernel}, and it gives rise to an operator of class $0$, {\it i.e.}, a bounded operator from $\widetilde{H}^{-1/2}(\D_1)$ into $H^{-1/2}(\D_1)$. Since the 
operator $\varphi \rightarrow \frac{\partial \varphi}{\partial y_1}$ is bounded from $\widetilde{H}^{1/2}(\D_1)$ to $\widetilde{H}^{-1/2}(\D_1)$, we conclude that $R_1$ is a bounded operator from $\widetilde{H}^{1/2}(\D_1)$ into $H^{-1/2}(\D_1)$, as needed.
\end{proof}

\begin{proof}[Proof of \cref{lem.hypersinginvert}]
As in the proof of \cref{lem.S1} we introduce a smooth bounded domain $D \subset \R^d$, 
whose boundary contains the set $\D_1$, and a bounded Lipschitz domain $V$ with $D \Subset  V$.
We first prove that $R_1$ is a Fredholm operator with index $0$.
To achieve this, let $\varphi$ be an arbitrary element in $\widetilde{H}^{1/2}(\D_1)$ (extended by $0$ to all of $\partial D$) and set $u = {\mathcal D}_D \varphi \in H^1_{\text{\rm loc}}(\R^d \setminus \partial D)$.
Using the jump relations \cref{eq.jumpDD}, and then integrating by parts on all of $\mathbb{R}^d$ (which is possible because of the decay properties \cref{eq.decayDD}) we obtain
\begin{equation}\label{eq.interm1}\begin{array}{>{\displaystyle}cc>{\displaystyle}l}
 \langle R_1 \varphi,\varphi \rangle &=& \int_{\partial D}{\frac{\partial u}{\partial n} (u^+ - u^-) \: \d s}\\
 &=& - \int_{\R^d\setminus \partial D}{|\nabla u|^2 \: \d x}~.
 \end{array}
\end{equation}
 Since 
 $$ || \varphi ||_{\widetilde{H}^{1/2}(\D_1)} =  ||(u^+ - u^-) ||_{H^{1/2}(\partial D)} \leq  C \Big(\Vert \nabla u \Vert_{L^2(\R^d\setminus \partial D)} + \Vert u \Vert_{L^2(V)}\Big)~,  $$
 it follows from \cref{eq.interm1} that 
 $$
 || \varphi ||_{\widetilde{H}^{1/2}(\D_1)} \leq C \left(\Vert R_1\varphi \Vert_{H^{-1/2}(\D_1)}+ \Vert {\mathcal D}_D \varphi \Vert_{L^2(V)}\right)~.
 $$
 It now follows as in the proof of \cref{lem.S1} that $R_1$ is Fredholm with index $0$. Hence, we are left to show that $R_1$ is injective. But if $R_1 \varphi = 0$ for some $\varphi \in \widetilde{H}^{1/2}(\D_1)$, the previous calculation with $u = {\mathcal D}_D\varphi$ yields
$$ \langle R_1 \varphi , \varphi \rangle =- \int_{\R^d\setminus \partial D}{|\nabla u |^2 \: \d x} = 0,$$
so that $u$ is constant on $D$ and on $\R^d \setminus \overline{D}$. Since $u \to 0$ as $|x | \to \infty$, the value of this constant on $\R^d \setminus \overline D$ must be $0$. 
Since $\varphi = u^+ - u^-$ vanishes on $\partial D \setminus{\D}_1$, the value of this constant inside $D$ is also $0$; hence, $u = 0$ and $\phi = u^+ - u^- = 0$, which completes the proof. 
\end{proof}

\section{Conclusion and future Directions}\label{sec.persp}

\noindent 
In this article, we have analyzed the asymptotic behavior of the solution to an elliptic partial differential equation posed on a domain $\Omega \subset \R^d$
when the accompanying boundary conditions change type on a vanishing subset $\omega_\e$ of the boundary $\partial \Omega$.
More precisely, in the model context of the conductivity equation complemented with mixed homogeneous Dirichlet and Neumann boundary conditions on the respective regions $\Gamma_D, \Gamma_N \subset \partial \Omega$, 
we have derived a general representation formula
for the asymptotic structure of the potential $u_\e$ when the homogeneous Neumann boundary condition is replaced with a homogeneous Dirichlet boundary condition on an arbitrary ``small'' subset $\omega_\e \subset \Gamma_N$ (and vice-versa, when the homogeneous Dirichlet condition is replaced with a homogeneous Neumann condition on $\omega_\e \subset \Gamma_D$). 
Furthermore, in the particular situation where $\omega_\e$ is a vanishing surfacic ball, we have given precise, explicit asymptotic formulas for $u_\e$. The present findings suggest various directions for further investigations. 
\begin{itemize}
\item A natural extension of the present work is to investigate the case where the homogeneous Dirichlet boundary condition on $\Gamma_D$, 
or the homogeneous Neumann boundary condition on $\Gamma_N$, 
is replaced by yet another type of boundary condition on $\omega_\e$, for instance an \textit{inhomogeneous} Neumann boundary condition, or an inhomogeneous Dirichlet boundary condition. A perhaps even more interesting setting involves a Robin boundary condition, and thus consists in investigating the asymptotic behavior of the solution to the problem
$$
\left\{ 
\begin{array}{cl}
-\dv(\gamma \nabla u_\e) = f & \text{in } \Omega~, \\
u_\e = 0 & \text{on } \Gamma_D~, \\
\gamma\frac{\partial u_\e}{\partial n} = 0 & \text{on } \Gamma_N \setminus \overline{\omega_\e}~,\\
\gamma\frac{\partial u_\e}{\partial n} + k u_\e = 0 & \text{on } \omega_\e~,
\end{array}
\right.
$$
or the solution to the problem
$$
\left\{ 
\begin{array}{cl}
-\dv(\gamma \nabla u_\e) = f & \text{in } \Omega~, \\
u_\e = 0 & \text{on } \Gamma_D \setminus \overline{\omega_\e}~, \\
\gamma\frac{\partial u_\e}{\partial n} = 0 & \text{on } \Gamma_N~,\\
\gamma\frac{\partial u_\e}{\partial n} + k u_\e = 0 & \text{on } \omega_\e~.
\end{array}
\right.
$$ 
The understanding of  this limiting process, uniformly with respect to the parameter $k$ of the Robin condition, would provide a key insight into the nature of the transition between the Dirichlet and Neumann behaviors (established in this paper). In this spirit, see for instance \cite{charnley2019uniformly1,dapogny2017uniform,nguyen2009representation} concerning 
small volume asymptotic formulas, which are uniform with respect to the properties of the material occupying the vanishing inclusions. 

\item Beyond the realm of the conductivity equation, the present study could be extended to other, more challenging physical contexts, e.g., the system of linear elasticity -- where homogeneous Dirichlet boundary conditions account for ``clamping" and homogeneous Neumann boundary conditions represent absence of traction.

\item Last but not least, it would be interesting to explore the practical applications of these results. 
As we have illustrated at in \cref{rem.cply}, asymptotic formulas for the solution to ``small'' perturbations of a ``background'' boundary value problem 
allow to appraise the sensitivity of a quantity of interest (or a performance criterion) with respect to such perturbations. This idea plays into the concepts of topological derivative \cite{novotny2012topological} and ``topological ligaments'' \cite{nazarov2004topological,dapogny2020topolig} in optimal design. 
In our context it would allow us to appraise the sensitivity of a performance criterion with respect to the introduction of a new, ``small'' region supporting Dirichlet or Neumann boundary condition in the physical boundary value problem. Such a program appears 
especially interesting in the context of linear elasticity, where it  would significantly complement the study in \cite{dapogny2019optimization}; see also \cite{lalainamaster}.
\end{itemize}\par\medskip

\noindent \textbf{Acknowledgements.} The authors are grateful to Jean-Claude N\'ed\'elec, who pointed several useful references
concerning Sections 5 and 6 of this work. EB is partially supported by the ANR Multi-Onde. The work of CD is partially supported by the project ANR-18-CE40-0013 SHAPO, financed by the French Agence Nationale de la Recherche (ANR).
This work was carried out while MSV was on sabbatical at the University of Copenhagen and the Danish Technical University. 
This visit was in part supported by the Nordea Foundation and the Otto Mo\hskip -4.6pt $/$nsted Foundation. The work of MSV was also partially supported by NSF grant DMS-12-11330.

\appendix
\section{A closer look to the quantity $e(\omega)$}\label{app.eom}

\noindent The purpose of this appendix is to analyze more in depth the quantity $e(\omega)$, defined in \cref{eq.capaneu}, and  
used in \cref{sec.neu,sec.calcneu} to assess the ``smallness'' of a subset $\omega$ of $\partial \Omega$, when homogeneous Dirichlet boundary conditions are replaced by homogeneous Neumann conditions. More precisely, we construct explicit quantities which bound $e(\omega)$, and which are not excessively conservative --  quantities that  do not require the solution of any boundary value problems.

\subsection{Some differential geometry facts}\label{sec.geom}

\noindent We shall need some basic facts from differential geometry on hypersurfaces in $\R^d$. All of these results are well-known, however,
some are not so easily found in the literature, and for the convenience of the reader we include their proofs in this section. We refer to classical books, such as \cite{do1992riemannian,lang2012fundamentals}, for further details.  \par\medskip

Let $\Omega \subset \R^d$ be a smooth bounded domain. We first recall some terminology: 
\begin{itemize}
\item The tangent plane to $\partial \Omega$ at a point $x \in \partial\Omega$ is the hyperplane of $\R^d$ which is orthogonal to the unit normal vector $n(x)$. 
The orthogonal projection $P_x v \in T_x \partial \Omega$ of a vector $v \in \R^d$ onto $T_x \partial \Omega$ is given by
$$P_x v := v-(v \cdot n(x))n(x)~.$$
\item The length of a piecewise differentiable curve $\gamma: [a,b] \to \partial \Omega$ is defined by 
$$ \ell(\gamma) := \int_a^b{|\gamma^\prime (t)| \: \d t}~,$$
where the derivative of $t\mapsto \gamma(t)$ is calculated as that of an $\R^d$-valued function.
This quantity is obviously independent of the particular parametrization chosen for $\gamma$. 
\item A differentiable curve $\gamma: [a,b] \to \partial \Omega$ is called a (constant speed) geodesic segment joining the endpoints $\gamma(a)$ and $\gamma(b)$ if it satisfies: 
$$\forall t \in (a,b), \:\:  P_{\gamma(t)}(\gamma^{\prime\prime}(t))  = 0~.$$
\item A geodesic segment $\gamma:[a,b] \to \partial \Omega$ is called minimizing if $\ell(\gamma) \leq \ell(\widetilde \gamma)$ for any piecewise differentiable curve $\widetilde\gamma(t)$
 joining $\gamma(a)$ to $\gamma(b)$.
 \item The geodesic distance between two points $x, y \in \partial \Omega$ is defined by:
$$ d^{\partial \Omega}(x,y) = \inf\limits_{\gamma:[a,b] \to \partial \Omega \atop \gamma(a) = x, \: \gamma(b) = y}{\ell(\gamma)}~.$$
\item Likewise, the geodesic distance $\dist^{\partial \Omega}(x,K)$ from a point $x \in \partial \Omega$ to a closed subset $K \subset \partial \Omega$ is: 
$$ \dist^{\partial \Omega}(x,K)=  \inf\limits_{y\in K} d^{\partial \Omega}(x,y)~.$$
\end{itemize}

The distance between two points $x, y \in\partial \Omega$ can be measured either in terms of the (extrinsic) Euclidean distance $|x-y|$ of $\R^d$ 
or by means of the (intrinsic) geodesic distance $d^{\partial\Omega}(x,y)$. 
It turns out that these notions are equivalent in the present context, as stated in the next lemma.

\begin{lemma}\label{lem.eqdist}
There exists a constant $c>0$ which only depends on $\Omega$ such that the following inequalities hold: 
$$\forall x, y \in \partial \Omega~, \:\: c \: d^{\partial \Omega}(x,y) \leq |x-y | \leq d^{\partial \Omega}(x,y)~. $$
\end{lemma}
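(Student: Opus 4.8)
The right-hand inequality $|x-y| \leq d^{\partial\Omega}(x,y)$ is immediate: for any piecewise differentiable curve $\gamma:[a,b]\to\partial\Omega$ joining $x$ to $y$, the fundamental theorem of calculus gives $|x-y| = |\gamma(b)-\gamma(a)| = \left| \int_a^b \gamma'(t)\,\d t \right| \leq \int_a^b |\gamma'(t)|\,\d t = \ell(\gamma)$, and taking the infimum over all such $\gamma$ yields the claim. So the entire content of the lemma is the left-hand inequality $c\,d^{\partial\Omega}(x,y) \leq |x-y|$, which says that points which are Euclidean-close are also geodesically close, with a uniform constant.

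The plan for the left-hand inequality is a standard compactness-plus-localization argument exploiting the smoothness and compactness of $\partial\Omega$. First I would establish the \emph{local} statement: for each $p\in\partial\Omega$ there is a radius $\rho_p>0$ and a constant $c_p>0$ such that whenever $x,y\in\partial\Omega$ with $|x-y|<\rho_p$ and both lie in $B(p,\rho_p)$, one has $c_p\,d^{\partial\Omega}(x,y)\leq|x-y|$. To see this, use a smooth local graph parametrization: near $p$, after a rigid motion, $\partial\Omega$ is the graph $\{(u,\varphi(u)): u\in W\}$ of a smooth function $\varphi:W\subset\R^{d-1}\to\R$ with $\varphi(0)=0$, $\nabla\varphi(0)=0$. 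Given $x=(u,\varphi(u))$ and $y=(v,\varphi(v))$, the straight segment $t\mapsto u+t(v-u)$ in parameter space lifts to a curve on $\partial\Omega$ whose length is $\int_0^1 \sqrt{|v-u|^2 + |\nabla\varphi(u+t(v-u))\cdot(v-u)|^2}\,\d t \leq |v-u|\sqrt{1+\|\nabla\varphi\|_{L^\infty(W')}^2}$ on a slightly smaller neighborhood $W'$. Hence $d^{\partial\Omega}(x,y)\leq C_p|u-v|$. On the other hand $|x-y|\geq|u-v|$ trivially. Combining, $d^{\partial\Omega}(x,y)\leq C_p|x-y|$, which is the local inequality with $c_p=1/C_p$.

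Next I would globalize. By compactness of $\partial\Omega$, finitely many balls $B(p_i,\rho_{p_i}/2)$, $i=1,\dots,M$, cover $\partial\Omega$; let $\rho:=\tfrac12\min_i\rho_{p_i}>0$ and $c_0:=\min_i c_{p_i}>0$. If $x,y\in\partial\Omega$ satisfy $|x-y|<\rho$, then $x$ lies in some $B(p_i,\rho_{p_i}/2)$, so $y\in B(p_i,\rho_{p_i})$ and $|x-y|<\rho_{p_i}$, and the local estimate applies: $c_0\,d^{\partial\Omega}(x,y)\leq c_{p_i}\,d^{\partial\Omega}(x,y)\leq|x-y|$. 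It remains to handle the ``far'' regime $|x-y|\geq\rho$. Here I would use that $\partial\Omega$ is a smooth compact connected hypersurface (or, if it has several components, that $d^{\partial\Omega}$ restricted to the relevant component is finite), hence its diameter $L:=\sup_{x,y}d^{\partial\Omega}(x,y)$ is finite; this finiteness follows by chaining together finitely many short geodesic-length bounds along a covering, or simply by noting that any two points can be joined by a path of bounded length because $\partial\Omega$ is a compact embedded submanifold. Then for $|x-y|\geq\rho$ we trivially get $d^{\partial\Omega}(x,y)\leq L = \frac{L}{\rho}\cdot\rho \leq \frac{L}{\rho}|x-y|$. Setting $c:=\min\{c_0,\rho/L\}$ finishes the proof.

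The main obstacle is essentially bookkeeping rather than conceptual: one must be careful that the local graph parametrizations have uniformly bounded gradients (guaranteed by smoothness and compactness, so a finite subcover has a uniform bound), and that the lifted parameter-space segment actually stays inside the chart — which is why one passes to the slightly smaller radius $\rho_{p_i}/2$ and requires $|x-y|$ small. The only place requiring a word of care is the connectedness issue if $\partial\Omega$ has several connected components: then $d^{\partial\Omega}(x,y)$ is $+\infty$ for $x,y$ on different components, so the lemma must be read (and is only used in the paper) for $x,y$ on the same component, or one restricts to connected $\Omega$; in either case the diameter $L$ of each component is finite and the argument goes through componentwise.
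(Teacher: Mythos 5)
Your proof is correct and follows essentially the same route as the paper's: the obvious right-hand inequality, then a local graph parametrization over a convex parameter domain (lifting the straight segment in parameter space to bound $d^{\partial\Omega}$ by a multiple of the parameter-space distance, which is in turn $\leq |x-y|$), combined with a two-case split into near ($|x-y|$ small, use the chart) and far ($|x-y|\geq\rho$, bound $d^{\partial\Omega}$ by its finite supremum). The only cosmetic differences are that the paper invokes a Lebesgue number of the cover where you use the half-radius trick, and that you additionally flag the connected-components caveat, which the paper leaves implicit.
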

\begin{proof}
The right inequality is obvious, and we focus on the proof of the left one. 
To this end, we introduce a finite open covering $\left\{ U_i \right\}_{i=1,\ldots,N}$ of the smooth, compact hypersurface $\partial \Omega$ with the following property: for each $i=1,\ldots,N$, there exist a convex open subset $V_i \subset \mathbb{R}^{d-1}$ and a function $f_i : V_i \to \R$ which is smooth on an open neighborhood of $\overline{V_i}$,
such that (up to a relabeling of coordinates in $\R^d$) the mapping
$$ \sigma_i : V_i \ni (x_1,\ldots,x_{d-1}) \mapsto (x_1,\ldots,x_{d-1},f_i(x_1,\ldots,x_{d-1})) \in U_i$$ 
realizes a diffeomorphism from $V_i$ onto $U_i$. 
We also denote by $\delta >0$ a Lebesgue number associated to this covering, that is
$$ \forall \omega \subset \partial \Omega, \:\: \text{\rm diam}(\omega) \leq \delta \:\: \Rightarrow \:\: \omega \subset U_i \text{ for some } i=1,\ldots,N~,$$
where the diameter $\text{diam}(\omega) := \sup_{x,y\in \omega}{|x-y|}$ is understood in the sense of the Euclidean distance. 

Now considering two given points $x, y \in \partial \Omega$, we distinguish two cases. \par\medskip

\noindent \textit{Case 1: $|x-y| > \delta$.} By introducing the quantity $M := \sup\limits_{p,q \in \partial \Omega} d^{\partial \Omega}(p,q)$, we obtain
$$ d^{\partial \Omega}(x,y) \leq M \leq \frac{M}{\delta} |x-y|~.$$
\par\medskip
\noindent \textit{Case 2: $|x-y| \leq \delta$}. Then $x$ and $y$ belong to a common open subset $U_i$, and we let $\widehat{x}, \widehat{y}$ be the points in $V_i$ such that $x = \sigma_i (\widehat{x})$ and $y = \sigma_i(\widehat{y})$. We also introduce the differentiable curve $\gamma(t) = \sigma_i(\widehat{x} + t(\widehat{y}-\widehat{x}))$ connecting $x$ to $y$. It follows from the very definition of the geodesic distance $d^{\partial \Omega}(x,y)$ that
$$ \begin{array}{>{\displaystyle}cc>{\displaystyle}l}
d^{\partial \Omega}(x,y) &\leq& \int_0^1{\sqrt{\gamma^\prime(t) \cdot \gamma^\prime(t)} \: \d t} \\
&=& \int_{0}^1{\sqrt{\nabla\sigma_i(\widehat{x} + t(\widehat{y}-\widehat{x}))^t \nabla\sigma_i (\widehat{x} + t(\widehat{y}-\widehat{x})) (\widehat{y}-\widehat{x}) \cdot (\widehat{y}-\widehat{x})}\: \d t}~.
\end{array}$$
For $i=1,\ldots,N$ and $\widehat z \in V_i$, we introduce the eigenvalues $1 \leq \lambda^i_1(\widehat z) \leq \ldots \leq \lambda^i_{d-1}(\widehat z)$ of the $(d-1)\times (d-1)$ matrix $\nabla \sigma_i(\widehat z)^t\nabla \sigma_i(\widehat z)$, and
$$ M:=\max\limits_{i=1,\ldots,N} \sup_{\widehat z \in V_i}\lambda_{d-1}^i(\widehat z) < \infty~. $$
The bound $M$ depends only on the properties of the hypersurface $\partial \Omega$.
We now have
$$d^{\partial \Omega}(x,y) \leq \sqrt{M} |\widehat{x} - \widehat{y} | \leq \sqrt{M}|x-y|~,$$
as desired.
\end{proof}

\noindent
We recall the definition and the main properties of the \textit{exponential map}, $\exp_x$, at a point $x \in \partial \Omega$:
\begin{itemize}
\item The mapping $\exp_x: U \to \partial \Omega$ is defined on an open neighborhood $U$ of $0$ in the tangent plane $T_x \partial \Omega$ by the formula:
$$\forall v \in U, \:\:  \exp_x(v) = \gamma(1,x,v),$$
where $t\mapsto \gamma(t,x,v)$ is the unique geodesic curve on $\partial \Omega$ passing through $x$ at $t=0$ with velocity $v$:
\begin{equation}\label{eq.geodx}
 \gamma(0,x,v) = x, \text{ and } \gamma^\prime(0,x,v) = v.
 \end{equation}
\item For any point $x \in \partial \Omega$, there exists a number $\rinj(x) >0$ -- the injectivity radius of $x$ -- such that $\exp_x$ is a diffeomorphism from the $(d-1)$ dimensional ball $B(0,\rinj(x)) \subset T_x\partial \Omega$ onto the geodesic ball 
$$B^{\partial \Omega}(x,\rinj(p)) :=\{ y \in \partial \Omega,\: d^{\partial \Omega}(x,y)< \rinj(x)\}$$ 
on $\partial \Omega$. 
In particular, $\exp_x : B(0,\rinj(x)) \subset T_x\partial \Omega \to \partial \Omega$ is a local chart for $\partial \Omega$ around $x$. 
\item At an arbitrary point $x \in \partial \Omega$, the following identity holds: 
\begin{equation}\label{eq.distexp}
d^{\partial \Omega}(x,\exp_x(v)) = |v|, \quad v \in B(0,\rinj(x)) \subset T_x\partial\Omega.
\end{equation}
\item Since $\partial \Omega$ is smooth and compact, there exists a number $\rinj>0$ -- the injectivity radius of $\partial \Omega$ -- such that for all $x\in \partial \Omega$, 
$\rinj < \rinj(x)$.\par\medskip
\end{itemize}

Let us finally state a useful consequence of the change of variables formula, applied to the exponential mapping. 

\begin{lemma}\label{lem.chgvarexp}
Let $f \in L^1_{\text{\rm loc}}(\partial \Omega)$; 
then for any point $x \in \partial \Omega$ and $r < \rinj(x)$, 
$$ \int_{B^{\partial \Omega}(x,r)}{f(y) \: \d s(y)} = \int_{B(0,r)}{f(\exp_x(v))g(v) \: \d v}~,$$
where $B(0,r)$ is the ball with center $0$ and radius $r$ in $T_x \partial \Omega$, and $g$ is given by 
$$g(v) := \sqrt{\det(M_{ij}(v))}, \:\: M_{ij}(v) := \Big(\d \exp_x(v)(e_i)\Big) \cdot \Big(\d \exp_x(v) (e_j)\Big), \:\: i,j=1\ldots,d-1~,$$
is bounded uniformly from below and above by positive constants which depend only on the properties of $\partial \Omega$. 
The tangent vectors $\d \exp_x(v)(e_i) \in T_{\exp_x(v)}\partial \Omega$ featured in these equations are given by
$$\d \exp_x(v)(e_i)= \left. \frac{\d}{\d s} \gamma(1,x,v+s e_i) \right\lvert_{s=0}~,$$
where $t\mapsto \gamma(t,x,v)$ is the unique geodesic passing through $x$ at $t=0$ with velocity $v$, see \cref{eq.geodx}.
\end{lemma}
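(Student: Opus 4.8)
The plan is to recognize the right-hand side as the classical expression for the surface integral over $B^{\partial\Omega}(x,r)$ written in the geodesic normal coordinate chart furnished by $\exp_x$, and then to extract the uniform two-sided bound on the Jacobian factor $g$ from a compactness argument on (a neighborhood of) the unit tangent bundle of $\partial\Omega$.

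First I would recall, from the listed properties of the exponential map, that for $r < \rinj(x)$ the map $\exp_x : B(0,r) \subset T_x\partial\Omega \to B^{\partial\Omega}(x,r)$ is a diffeomorphism onto its image, hence a legitimate local parametrization of $\partial\Omega$. Fixing an orthonormal basis $(e_1,\dots,e_{d-1})$ of $T_x\partial\Omega$, the composition $\Phi: v = \sum_i v_i e_i \mapsto \exp_x(v)$ is a smooth embedding of $B(0,r)$ into $\R^d$, whose coordinate tangent fields are $\partial_i\Phi(v) = \d\exp_x(v)(e_i)$ and span $T_{\exp_x(v)}\partial\Omega$. By definition of the surface measure, the pullback of $\d s$ under $\Phi$ is $\sqrt{\det\big(D\Phi(v)^t D\Phi(v)\big)}\,\d v = \sqrt{\det(M_{ij}(v))}\,\d v = g(v)\,\d v$, where $M_{ij}(v) = \partial_i\Phi(v)\cdot\partial_j\Phi(v)$ is the first fundamental form of $\Phi$. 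The change of variables formula for integrals over a submanifold of $\R^d$ then yields the asserted identity; one first checks it for continuous $f$ and passes to general $f \in L^1_{\mathrm{loc}}(\partial\Omega)$ by the standard density/monotone convergence argument.

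Next, for the two-sided bound on $g$: since $\exp_x$ is a local diffeomorphism, its differential $\d\exp_x(v)$ is injective for $|v| < \rinj(x)$, so the vectors $\d\exp_x(v)(e_i)$ are linearly independent and $M_{ij}(v)$ is symmetric positive definite; in particular $g(v) = \sqrt{\det M_{ij}(v)} > 0$. To make this bound uniform in $x$, I would invoke the smooth dependence of geodesics on their initial data: the geodesic flow $(t,x,v)\mapsto \gamma(t,x,v)$ solving the second-order ODE \cref{eq.geodx} is smooth jointly in all its arguments, so $(x,v)\mapsto g_x(v)$ is continuous on the compact set $\{(x,v): x\in\partial\Omega,\ |v|\le \rinj\}$ (recall that $\rinj < \rinj(x)$ for every $x\in\partial\Omega$). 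A continuous, strictly positive function on a compact set attains a positive minimum and a finite maximum, which produces constants $0 < c_1 \le g_x(v) \le c_2$ depending only on the properties of $\partial\Omega$.

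The main obstacle I anticipate is purely the bookkeeping required to make the uniformity genuinely rigorous: one must realize the geodesic flow as a smooth map on a neighborhood of the (compact) unit tangent bundle, note that the density $g_x(v)$ is intrinsically defined (independent of the choice of orthonormal frame, being a volume density), and cover $\partial\Omega$ by finitely many charts in which an orthonormal frame depends smoothly on the base point so that the continuity of $(x,v)\mapsto g_x(v)$ is unambiguous. None of these steps is deep, but they demand some care; by contrast, the change-of-variables identity itself is routine once the geodesic chart is in place.
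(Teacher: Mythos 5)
The paper does not actually provide a proof of \cref{lem.chgvarexp}; it states the lemma as a ``useful consequence of the change of variables formula, applied to the exponential mapping'' and moves on. Your argument therefore cannot be compared against a paper proof—rather it fills in what the paper treats as standard.

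Your proof is correct and is exactly the intended argument: recognize that $\exp_x$ restricted to $B(0,r)\subset T_x\partial\Omega$ is a parametrization, identify $g(v)\,\d v$ as the pullback of the surface measure (the square root of the Gram determinant of the first fundamental form in that chart), invoke the change-of-variables formula, and obtain uniform bounds on $g$ by continuity of the geodesic flow and compactness of $\{(x,v):x\in\partial\Omega,\ |v|\le \rinj\}$. One small remark worth making explicit: the uniform two-sided bound as you prove it holds for $|v|\le\rinj$ (the global injectivity radius), not for all $|v|<\rinj(x)$ as a literal reading of the lemma might suggest; indeed, as $|v|\to\rinj(x)$ the Jacobian $g$ could vanish at a conjugate point, so the stated uniformity cannot hold over the full pointwise injectivity radius. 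This is harmless, because the paper only ever invokes the lemma with $r\le\rinj$ (see the proofs of \cref{lem.lowerDbound} and \cref{lem.ineqcvx}), and your compactness argument covers precisely that regime. Your anticipated ``bookkeeping'' caveats (frame-independence of the density $g_x$, smoothness of the geodesic flow jointly in $(x,v)$) are the right ones to flag and are indeed standard.
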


\subsection{Derivation of ``geometric'' upper bounds for the quantity $e(\omega)$}

\noindent Throughout this section $\omega$ is an open Lipschitz subset of $\partial \Omega$. $\omega$ lies strictly inside $\Gamma_D$, and the setting is as in \cref{sec.neu}. We start with the following result.

\begin{lemma}
\label{lem.explupbd}
Let $\omega$ be an open Lipschitz subset of $\Gamma_D \subset \partial \Omega$, which is well-separated from $\Gamma_N$, i.e. \cref{assum.far} holds. 
There exists a constant $C >0$, depending only on $\Omega$, $\Gamma_D$ and $\dmin$ such that
$$
e(\omega) \le C \int_{\omega} \frac{1}{\rho_{\omega}(x)}~\d s(x)
$$
where $\rho_{\omega}(x)$ denotes the weight function defined by
$$
\forall x \in \omega , \quad \rho_{\omega}(x) := \int_{\partial \Omega \setminus \overline{ \omega}}{\frac{1}{|x-y|^d}\:\d s(y)}~.
$$
\end{lemma}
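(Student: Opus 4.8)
The plan is to convert the PDE-dependent quantity $e(\omega)$ into a sharp trace inequality, and then estimate that trace term by splitting the weight $1=\rho_\omega^{-1/2}\,\rho_\omega^{1/2}$ and invoking the weighted $H^{1/2}$-bound \cref{normcalc} from \cref{sec.Hs}.

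First I would establish the dual bound
$$
e(\omega)\ \le\ \sup_{w\in H^1(\R^d\setminus\overline\omega),\ w\neq 0}\ \frac{\Big(\int_\omega |w^+-w^-|\,\d s\Big)^{2}}{\Vert w\Vert_{H^1(\R^d\setminus\overline\omega)}^{2}}~.
$$
Fix an admissible density $\kappa$ and let $z_\kappa$ be the corresponding solution of (the analogue of) \cref{eq.funccapneu}. As in the proof of \cref{lem.zetaecapaneu} one has $\Vert z_\kappa\Vert_{H^1(\R^d\setminus\overline\omega)}^2=-\int_\omega\kappa\,(z_\kappa^+-z_\kappa^-)\,\d s$, while the variational formulation gives $\langle z_\kappa,w\rangle_{H^1(\R^d\setminus\overline\omega)}=-\int_\omega\kappa\,(w^+-w^-)\,\d s$ for every $w\in H^1(\R^d\setminus\overline\omega)$, where $\langle\cdot,\cdot\rangle_{H^1(\R^d\setminus\overline\omega)}$ is the inner product $\int(\nabla\cdot\nabla+\cdot\,)$. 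Since in a Hilbert space $\Vert z_\kappa\Vert=\sup_{\Vert w\Vert=1}|\langle z_\kappa,w\rangle|$, and since $|\kappa|=1$ on $\omega$, this yields $\Vert z_\kappa\Vert_{H^1(\R^d\setminus\overline\omega)}\le \sup_{\Vert w\Vert=1}\int_\omega|w^+-w^-|\,\d s$, a bound independent of $\kappa$; squaring and taking the maximum over $\kappa$ gives the displayed inequality.

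The heart of the argument is a weighted trace estimate for the jump. For $w\in H^1(\R^d\setminus\overline\omega)$ the traces $w^+$ (from the exterior of $\Omega$) and $w^-$ (from $\Omega$) lie in $H^{1/2}(\partial\Omega)$ and coincide on $\partial\Omega\setminus\overline\omega$, because $w$ has no cut there; hence the jump $[w]:=w^+-w^-$ is supported in $\overline\omega$, so $[w]\in\widetilde H^{1/2}(\omega)$ with
$$
\Vert [w]\Vert_{\widetilde H^{1/2}(\omega)}=\Vert [w]\Vert_{H^{1/2}(\partial\Omega)}\le \Vert w^+\Vert_{H^{1/2}(\partial\Omega)}+\Vert w^-\Vert_{H^{1/2}(\partial\Omega)}\le C\,\Vert w\Vert_{H^1(\R^d\setminus\overline\omega)}~,
$$
using the trace theorems for $\Omega$ and for the exterior domain $\R^d\setminus\overline\Omega$ (the constant $C$ depending only on $\Omega$). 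Applying \cref{normcalc} with $\Gamma=\omega$ then gives $\int_\omega\rho_\omega\,[w]^2\,\d s\le\tfrac12\Vert [w]\Vert_{\widetilde H^{1/2}(\omega)}^2\le\tfrac{C^2}{2}\Vert w\Vert_{H^1(\R^d\setminus\overline\omega)}^2$. Finally Cauchy--Schwarz with the splitting $|[w]|=\rho_\omega^{-1/2}\cdot\rho_\omega^{1/2}|[w]|$ yields $\int_\omega|[w]|\,\d s\le\big(\int_\omega\rho_\omega^{-1}\,\d s\big)^{1/2}\big(\int_\omega\rho_\omega[w]^2\,\d s\big)^{1/2}$; inserting this into the dual bound gives $e(\omega)\le \tfrac{C^2}{2}\int_\omega\rho_\omega^{-1}\,\d s$. (If $\int_\omega\rho_\omega^{-1}\,\d s=+\infty$ the asserted inequality is trivial, so one may assume it finite; note also $\rho_\omega\in(0,\infty)$ a.e. on $\omega$ since $\partial\Omega\setminus\overline\omega\supseteq\Gamma_N$ has positive surface measure.)

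The main obstacle is the middle paragraph: rigorously verifying that $[w]$ genuinely belongs to $\widetilde H^{1/2}(\omega)$ — i.e. that the jump has a ``vanishing $H^{1/2}$-trace'' at $\partial\omega$ and is controlled by $\Vert w\Vert_{H^1(\R^d\setminus\overline\omega)}$ — because \cref{normcalc} is available only on $\widetilde H^{1/2}$; once that is in place, the remaining steps are the Hilbert-space duality identity and a one-line Cauchy--Schwarz.
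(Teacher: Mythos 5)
Your proof is correct, and it takes a genuinely different route from the one in the paper. The paper's argument passes through the auxiliary potential $\zeta$ solving \cref{eq.zepsneum}: using \cref{lem.zetaecapaneu} and integration by parts it bounds $e(\omega) \le C\int_\omega\zeta\,\d s$, then needs a \emph{generalization} of \cref{normcalc} because $\zeta$ vanishes only on $\Gamma_D\setminus\overline\omega$ (not on all of $\partial\Omega\setminus\overline\omega$), so the contribution of $\Gamma_N$ to $\rho_\omega$ has to be controlled separately via the separation assumption \cref{assum.far}; after Cauchy--Schwarz this gives $e(\omega)\le C\bigl(\int_\omega\rho_\omega^{-1}\,\d s\bigr)^{1/2}\Vert\zeta\Vert_{H^1(\Omega)}$, and the second half of \cref{lem.zetaecapaneu} ($\Vert\zeta\Vert_{H^1(\Omega)}\le C\,e(\omega)^{1/2}$) is invoked to cancel a half-power of $e(\omega)$. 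You instead apply Riesz duality to $e(\omega)$ itself and work with the jump $[w]=w^+-w^-$ of a test function $w\in H^1(\R^d\setminus\overline\omega)$, which you correctly identify as an element of $\widetilde H^{1/2}(\omega)$, so that \cref{normcalc} applies \emph{verbatim} with $\Gamma=\omega$ — no generalization, no auxiliary $\zeta$, no cancellation step. The point you flag as the main obstacle is in fact straightforward: $w^\pm\in H^{1/2}(\partial\Omega)$ by the interior and exterior trace theorems, and $w^+=w^-$ a.e.\ on $\partial\Omega\setminus\overline\omega$ because every such point has a neighborhood disjoint from the crack $\overline\omega$ on which $w$ is an ordinary $H^1$ function across $\partial\Omega$; hence $[w]\in H^{1/2}(\partial\Omega)$ with support in $\overline\omega$, which is precisely the definition of $\widetilde H^{1/2}(\omega)$ given in \cref{sec.Hs}. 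A bonus of your route is that you never use \cref{assum.far} — the constant you obtain depends only on $\Omega$, not on $\Gamma_D$ or $\dmin$ — so your argument proves a (very slightly) stronger statement. The only thing the paper's approach buys that yours does not is the explicit passage through $\zeta$, which the authors reuse elsewhere; as a self-contained proof of \cref{lem.explupbd}, yours is shorter and cleaner.
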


\begin{proof}
Let us introduce the solution $\zeta \in H^1(\Omega)$ to \cref{eq.zepsneum}; 
it follows from a simple adaptation of \cref{lem.zetaecapaneu} and integration by parts that
\begin{equation}
\label{eq.firstestome}
e(\omega) \leq C \int_\Omega  |\nabla \zeta|^2 \d x = C\int_{\omega} \zeta~\d s~,
\end{equation}
where the constant $C$ depends only on $\Omega$, $\Gamma_D$ and $\dmin$.

A slight generalization of the argument leading to  the estimate \cref{normcalc} in \cref{sec.Hs}, 
using that $\zeta$ vanishes on $\Gamma_D \setminus \overline \omega$, gives that for some constant $C$, depending on $\Omega$, $\Gamma_D$ and $\dmin$
\begin{equation}
\label{eq.secestome}
\left(\int_{\omega} |\zeta(x)|^2  \rho_{\omega}(x)~\d s(x)\right)^{1/2} \le C \Vert \zeta \Vert_{H^{1/2}(\partial \Omega)}~. 
\end{equation}
A combination of \cref{eq.firstestome} and \cref{eq.secestome} now yields
\begin{eqnarray*}
e(\omega)& \le& C \int_{\omega} \zeta(x)~\d s(x) \\
&\le& C \left( \int_{\omega} \rho_{\omega}(x)^{-1}~\d s(x) \right)^{1/2}\left(\int_{\omega} |\zeta(x)|^2  \rho_{\omega}(x)~\d s(x)\right)^{1/2} \\
&\le& C \left( \int_{\omega} \rho_{\omega}(x)^{-1}~\d s(x) \right)^{1/2} \Vert \zeta \Vert_{H^{1/2}(\partial \Omega)} \\
&\le& C\left( \int_{\omega} \rho_{\omega}(x)^{-1}~\d s(x) \right)^{1/2} \Vert \zeta \Vert_{H^{1}( \Omega)}~.
\end{eqnarray*}
Adapting the proof of \cref{lem.zetaecapaneu}, we may prove
$$
\Vert \zeta \Vert_{H^1(\Omega)} \le  C e (\omega)^{1/2}~,
$$
and after insertion of this into the last line of the previous estimate (and cancellation), we obtain
$$
e(\omega)^{1/2} \le  C \left( \int_{\omega} \rho_{\omega}(x)^{-1}~\d s(x) \right)^{1/2}~,
$$
which is the desired conclusion.
\end{proof}

Let us introduce the notation
$$
D(\omega) = \int_{\omega} \frac{1}{\rho_{\omega}(x)}~\d s(x)~.
$$
It follows from \cref{lem.explupbd} that $D(\omega)$ is an upper bound for $e(\omega)$ (up to constants involving the chosen domain $\Omega$ and the regions $\Gamma_D$, $\Gamma_N$ of its boundary), 
which has the appealing feature that it depends solely on the geometry of $\omega$. 
We believe there are many interesting examples where $D(\omega)$ is equivalent to $e(\omega)$; actually we have provided such an example in \cref{sec.calcneu}. For this reason we also find it useful to derive an equivalent, but simpler, expression for the measure $D(\omega)$. The remainder of this subsection is devoted to this task, and we start with a lemma.

\begin{lemma}
\label{lem.lowerDbound}
Let $\omega$ be an open Lipschitz subset of $\partial \Omega$. There exists a constant $c>0$, which depends only on $\partial \Omega$, such that
$$ c \int_\omega{\dist^{\partial \Omega}(x,\partial \omega) \: \d s(x)}\le D(\omega)~.$$
\end{lemma}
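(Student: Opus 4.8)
The goal is to bound $D(\omega) = \int_\omega \rho_\omega(x)^{-1}\,\d s(x)$ from below by a constant multiple of $\int_\omega \dist^{\partial\Omega}(x,\partial\omega)\,\d s(x)$. Since $\rho_\omega$ appears in the denominator, this amounts to proving an \emph{upper} bound of the form $\rho_\omega(x) \le C / \dist^{\partial\Omega}(x,\partial\omega)$ for each $x\in\omega$, and then integrating over $\omega$. So the plan is: fix $x\in\omega$, write $\delta(x):=\dist^{\partial\Omega}(x,\partial\omega)$, and estimate the integral defining $\rho_\omega(x)$, namely $\int_{\partial\Omega\setminus\overline\omega} |x-y|^{-d}\,\d s(y)$, by splitting the domain of integration into dyadic geodesic annuli around $x$ and using the equivalence of Euclidean and geodesic distance from \cref{lem.eqdist}.

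First I would observe that since the integration in $\rho_\omega(x)$ runs over $\partial\Omega\setminus\overline\omega$, every point $y$ in that set satisfies $d^{\partial\Omega}(x,y)\ge \delta(x)$ (because any geodesic from $x$ to $y$ must cross $\partial\omega$). By \cref{lem.eqdist} there is $c>0$ with $c\,d^{\partial\Omega}(x,y)\le|x-y|$, hence $|x-y|\ge c\,\delta(x)$ for all such $y$, and more generally $|x-y|\ge c\,d^{\partial\Omega}(x,y)$. Now decompose $\partial\Omega\setminus\overline\omega$ into the geodesic annuli $A_k := \{y\in\partial\Omega : 2^k\delta(x)\le d^{\partial\Omega}(x,y) < 2^{k+1}\delta(x)\}$ for $k=0,1,2,\dots$; only finitely many are nonempty since $\partial\Omega$ has finite diameter. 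On $A_k$ we have $|x-y|\ge c\,2^k\delta(x)$, so the integrand is at most $(c\,2^k\delta(x))^{-d}$. The surface measure of $A_k$ is $O\big((2^k\delta(x))^{d-1}\big)$: indeed $A_k$ is contained in a geodesic ball of radius $2^{k+1}\delta(x)$, and using the exponential-map change of variables (\cref{lem.chgvarexp}) together with the lower/upper bounds on the Jacobian $g$, the $(d-1)$-dimensional surface area of a geodesic ball of radius $r$ on $\partial\Omega$ is comparable to $r^{d-1}$ (for $r$ up to the injectivity radius; for larger $r$ one simply uses the total area of $\partial\Omega$ as a crude bound, which only improves matters). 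Therefore
\[
\int_{A_k} \frac{\d s(y)}{|x-y|^d} \le \frac{C\,(2^k\delta(x))^{d-1}}{(c\,2^k\delta(x))^{d}} = \frac{C'}{2^k\,\delta(x)}.
\]
Summing the geometric series over $k\ge 0$ gives $\rho_\omega(x) \le C''/\delta(x)$, with $C''$ depending only on $\partial\Omega$ (through $c$, the injectivity radius, the Jacobian bounds, and the diameter).

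Finally, from $\rho_\omega(x)\le C''/\delta(x)$ we get $\rho_\omega(x)^{-1}\ge (C'')^{-1}\delta(x)$ pointwise on $\omega$; integrating over $\omega$ yields $D(\omega) \ge (C'')^{-1}\int_\omega \dist^{\partial\Omega}(x,\partial\omega)\,\d s(x)$, which is exactly the claim with $c = (C'')^{-1}$. The main technical point — really the only nonroutine step — is the control of the surface measure of geodesic annuli by $r^{d-1}$; this is where \cref{lem.chgvarexp} and the uniform two-sided bounds on its Jacobian $g$ do the work, and one must be slightly careful when $2^{k+1}\delta(x)$ exceeds the injectivity radius $\rinj$, which is handled by bounding the measure of any such larger annulus by the fixed total area $|\partial\Omega|$ (and noting that $2^k\delta(x)$ is then bounded below, so the corresponding terms in the sum are still controlled by a convergent tail). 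Everything else is the dyadic summation and \cref{lem.eqdist}.
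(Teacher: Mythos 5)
Your proof is correct and follows essentially the same strategy as the paper: reduce the claim to the pointwise bound $\rho_\omega(x)\leq C/\dist^{\partial\Omega}(x,\partial\omega)$, using \cref{lem.eqdist} to pass between Euclidean and geodesic distances and \cref{lem.chgvarexp} to control the surface measure of geodesic balls via the exponential map. The only (cosmetic) difference is that you organize the radial estimate as a dyadic sum over geodesic annuli while the paper performs the equivalent continuous polar-coordinate integration, splitting into the ``near'' ball $B^{\partial\Omega}(x,\rinj)$ and its complement; both yield the same $1/\delta$ bound.
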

\begin{proof}
The lemma follows immediately, by integration over $\omega$, if we prove that, for all points $x \in \omega$
\begin{equation}\label{eq.targubD}
c \, \dist^{\partial \Omega}(x,\partial \omega)\le \rho_\omega(x)^{-1}~.
\end{equation}
To achieve this goal, we distinguish between two cases, depending on the size of $\dist^{\partial \Omega}(x,\partial \omega)$ relative to the injectivity radius $\rinj$ of $\partial\Omega$.\par\medskip

\noindent \textit{Case 1: $\dist^{\partial\Omega}(x,\partial \omega)\geq \rinj$.} From the definition of $\rho_\omega(x)$ and \cref{lem.eqdist}, we have 
 $$
 \begin{array}{>{\displaystyle}cc>{\displaystyle}l}
 \rho_\omega(x) &\leq & C \int_{\partial \Omega \setminus \omega}{\frac{\d s(y)}{d^{\partial \Omega}(x,y)^d}} \\
 &\leq & \frac{C}{\dist^{\partial \Omega}(x,\partial \omega)^d} \int_{\partial \Omega}{\d s(y)} \\[0.5em]
 &\leq&  \frac{C}{\rinj^{d-1}} \frac{1}{\dist^{\partial \Omega}(x,\partial \omega)} \\
 &=&\frac{C}{\dist^{\partial \Omega}(x,\partial \omega)}~,
 \end{array}
 $$
where the constant $C$ is changing from one instance to the next, but depends only on $\partial \Omega$, and not on $\omega$.  
Hence, \cref{eq.targubD} holds in this case.\par\medskip

\noindent \textit{Case 2: $ \dist^{\partial \Omega}(x,\partial \omega)< \rinj$.} The exponential mapping $\exp_x$ induces a diffeomorphism from the ball $B(0,\dist^{\partial \Omega}(x,\partial \omega)) \subset T_x\partial \Omega$ onto the geodesic ball $B^{\partial \Omega}(x,\dist^{\partial \Omega}(x,\partial \omega))$. Since $B^{\partial \Omega}(x,\dist^{\partial \Omega}(x,\partial \omega))$ lies inside $\omega$, it follows that
\begin{equation}\label{eq.majrho}
 \begin{array}{>{\displaystyle}cc>{\displaystyle}l}
\rho_\omega(x) &\leq& C \int_{\partial \Omega\setminus B^{\partial \Omega}(x,\dist^{\partial \Omega}(x,\partial \omega))} \frac{\d s(y)}{d^{\partial \Omega}(x,y)^{d}} \\
& = & C \left( \int_{\partial \Omega\setminus B^{\partial \Omega}(x,\rinj)} \frac{\d s(y)}{d^{\partial \Omega}(x,y)^{d}} +  \int_{ B^{\partial \Omega}(x,\rinj) \setminus B^{\partial\Omega}(x,\dist^{\partial \Omega}(x,\partial \omega))} \frac{\d s(y)}{d^{\partial \Omega}(x,y)^{d}} \right)~.
\end{array}
\end{equation}
As in Case 1, the first integral in the above right-hand side is easily estimated by
\begin{equation}\label{eq.majrho1int}
 \int_{\partial \Omega\setminus B^{\partial \Omega}(x,\rinj)} \frac{\d s(y)}{d^{\partial \Omega}(x,y)^{d}}  \leq \frac{C}{\rinj^d} \leq \frac{C}{\rinj^{d-1}} \frac{1}{\dist^{\partial \Omega}(x,\partial \omega)}=  \frac{C}{\dist^{\partial \Omega}(x,\partial \omega)}~.
\end{equation}
As for the second integral, the exponential change of variables of \cref{lem.chgvarexp}, followed by a change to polar coordinates yields 
\begin{equation}\label{eq.majrho2int}
 \begin{array}{>{\displaystyle}cc>{\displaystyle}l}
 \int_{ B^{\partial \Omega}(x,\rinj) \setminus B^{\partial\Omega}(x,\dist^{\partial \Omega}(x,\partial \omega))} \frac{\d s(y)}{d^{\partial \Omega}(x,y)^{d}}  &\leq& C \int_{B(0,\rinj) \setminus B(0,\dist^{\partial\Omega}(x,\partial \omega))} {\frac{\d u}{d^{\partial\Omega}(x,\exp_x(u))^d}}\\
 &=& C \int_{B(0,\rinj) \setminus B(0,\dist^{\partial \Omega}(x,\partial \omega))} {\frac{\d u}{|u|^d}}\\[1em]
&\leq& C \int_{\dist^{\partial \Omega}(x,\partial \omega))}^{\rinj}{\frac{t^{d-2}}{t^d} \: \d t} \\
&=& C \left(\frac{1}{\dist^{\partial \Omega}(x,\partial \omega))} - \frac{1}{\rinj}\right).
 \end{array}
\end{equation}
A combination of \cref{eq.majrho,eq.majrho1int,eq.majrho2int} leads to
$$
\rho_\omega(x) \le \frac{C}{\dist^{\partial \Omega}(x,\partial \omega)}~,
$$
which is exactly \cref{eq.targubD}, thus completing the proof of the lemma.
\end{proof}
%
%

The reverse inequality is more subtle, and it holds only under additional assumptions on the set $\omega \subset \partial \Omega$. Let us introduce a few related definitions.
\begin{definition}
Let $\omega \subset \partial \Omega$ be an open Lipschitz subset.
The set $\omega$ is called geodesically convex if for any two points $p,q \in \omega$, there exists a unique
 minimizing geodesic segment $\gamma: [0,1] \to \partial \Omega$ joining $p$ to $q$, with $\gamma([0,1]) \subset \omega$. 
\end{definition}

\begin{definition}
Let $\omega \subset \partial \Omega$ be a geodesically convex, open Lipschitz subset.
\begin{itemize}
\item For any $p \in \partial \omega$, the tangent cone $C_p \subset T_p \partial \Omega$ to $\omega$ at $p$ is defined by
$$ C_p := \left\{ v \in T_p\partial\Omega, \:\: \exp_p\left(t \frac{v}{|v|} \right) \in \omega \text{ for some } 0 < t < \rinj(p) \right\} \cup \left\{ 0 \right\}~.$$
\item For any $p \in \partial \omega$, an open half-space $H \subset T_p\partial \Omega$ is called a supporting half-space for $\omega$ at $p$ if $C_p \subset \overline H$.
\end{itemize}
\end{definition}

%
%

The following result generalizes well-known properties of convex subsets of the Euclidean space $\R^d$,
in terms of supporting hyperplanes, to the setting of geodesically convex subsets of $\partial \Omega$. It is  a summary of the contents of Proposition 1.8 and Lemma 1.7 in \cite{cheeger1972structure}; 
see \cref{fig.cvx} for an illustration.

\begin{proposition}\label{th.cheeger}
Let $\omega \subset \partial \Omega$ be a geodesically convex, open Lipschitz subset of $\partial \Omega$, and let $p \in \partial \omega$. 
Then, the tangent cone $C_p \subset T_p \partial \Omega$ to $\omega$ at $p$ satisfies
$$ C_p \setminus \left\{ 0 \right\} = \bigcap{H_j}~,$$
where the intersection is taken over all the supporting half-spaces of $\omega$ at $p$.

In addition, if there exists $q \in \omega$ and a minimal geodesic segment $\gamma:[0,1] \to \partial\Omega$ from $q$ to $p$ such that $\ell(\gamma) = \dist^{\partial \Omega}(q,\partial \omega)$, 
then $C_p \setminus \left\{ 0 \right\}$ is exactly the open half-space 
\begin{equation}\label{eq.supporths}
 H = \left\{ v \in T_p\partial \Omega,\:\: v \cdot (-\gamma^\prime(1)) > 0 \right\}.
 \end{equation} 
\end{proposition}

\begin{figure}[!ht]
\centering
\begin{minipage}{0.95\textwidth}
\includegraphics[width=1.0\textwidth]{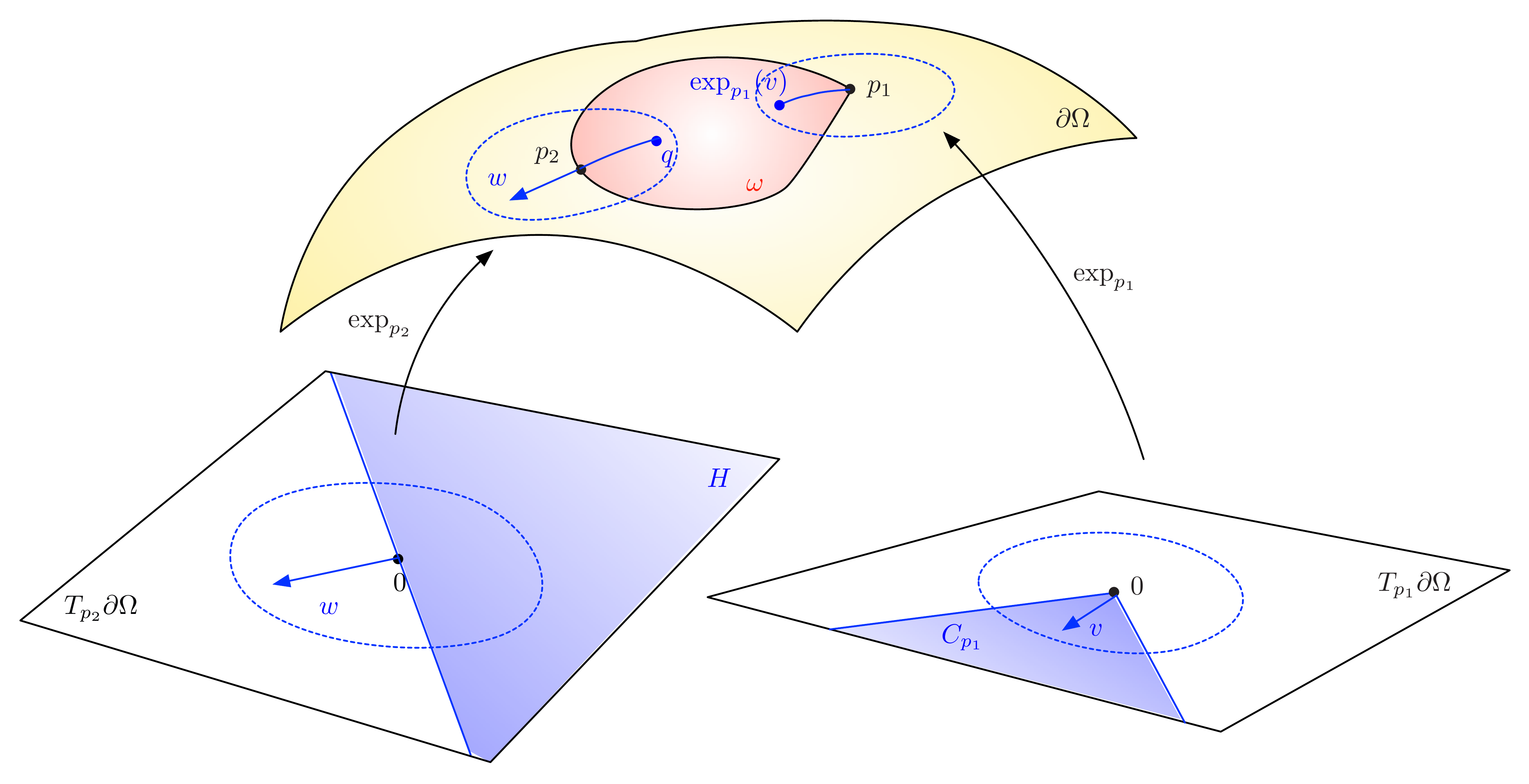}
\end{minipage} 
\caption{\it Illustration of the various objects attached to a geodesically convex open subset $\omega \subset \partial \Omega$; (on the right) the tangent cone $C_{p_1}$ to the point $p_1$; (on the left) the point $q$ is such that $d^{\partial \Omega}(p_2,q)= d^{\partial \Omega}(q,\partial \omega)$, and so the half-space $H \subset T_{p_2}\partial \Omega$ which is normal to the velocity vector $w = \gamma^\prime(1)$ of the associated minimizing geodesic $\gamma$ is exactly the tangent cone to $\omega$ at $p_2$.}
\label{fig.cvx}
\end{figure}

We are now in a position to establish an equivalence between $D(\omega)$ and a much simpler integral over $\omega$.

\begin{lemma}\label{lem.ineqcvx}
Let $\omega$ be a geodesically convex, open Lipschitz subset of $\partial \Omega$. 
Then
$$ c \int_\omega{\dist^{\partial \Omega}(x,\partial \omega) \: \d s(x)}\leq D(\omega) \leq C \int_\omega{\dist^{\partial \Omega}(x,\partial \omega) \: \d s(x)}~,$$
where the positive constants $c$ and $C$ depend only on $\partial \Omega$. 
\end{lemma}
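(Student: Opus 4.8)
The lower bound $c\int_\omega \dist^{\partial\Omega}(x,\partial\omega)\,\d s(x)\le D(\omega)$ is already the content of \cref{lem.lowerDbound}, so the only new work is the reverse inequality $D(\omega)\le C\int_\omega \dist^{\partial\Omega}(x,\partial\omega)\,\d s(x)$. Integrating over $\omega$, it suffices to establish the pointwise bound
$$\rho_\omega(x)^{-1}\le C\,\dist^{\partial\Omega}(x,\partial\omega)\qquad\text{for all }x\in\omega,$$
or equivalently $\rho_\omega(x)\ge c/\dist^{\partial\Omega}(x,\partial\omega)$. As in the proof of \cref{lem.lowerDbound} I would split into the two regimes $\dist^{\partial\Omega}(x,\partial\omega)\ge\rinj$ and $\dist^{\partial\Omega}(x,\partial\omega)<\rinj$, where $\rinj$ is the injectivity radius of $\partial\Omega$. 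In the first regime, since $\partial\Omega$ has finite diameter $M$, one has $\dist^{\partial\Omega}(x,\partial\omega)\le M$, while $\rho_\omega(x)\ge \int_{\partial\Omega\setminus\overline\omega}|x-y|^{-d}\,\d s(y)\ge c'>0$ uniformly (the complement $\partial\Omega\setminus\overline\omega$ is nonempty and bounded away from $x$ by at most diameter $M$, and contains a set of positive surface measure); hence $\rho_\omega(x)\ge c'\ge (c'/M)\,\dist^{\partial\Omega}(x,\partial\omega)^{-1}\cdot\dist^{\partial\Omega}(x,\partial\omega)^2$... more carefully, $\rho_\omega(x)\ge c'\ge (c'\,\dist^{\partial\Omega}/M)\cdot\dist^{\partial\Omega,-1}\ge (c'/M)\dist^{\partial\Omega}(x,\partial\omega)^{-1}$ using $\dist^{\partial\Omega}\le M$. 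This handles Case 1.

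The heart of the matter is Case 2, $r:=\dist^{\partial\Omega}(x,\partial\omega)<\rinj$, and this is where geodesic convexity of $\omega$ enters. Pick $p\in\partial\omega$ realizing $\dist^{\partial\Omega}(x,p)=r$, and let $\gamma:[0,1]\to\partial\Omega$ be the minimizing geodesic from $x$ to $p$. By \cref{th.cheeger}, the tangent cone to $\omega$ at $p$ is exactly the open half-space $H=\{v\in T_p\partial\Omega: v\cdot(-\gamma'(1))>0\}$. Now work in geodesic normal coordinates centered at $p$ via $\exp_p$, which is a diffeomorphism on $B(0,\rinj)\subset T_p\partial\Omega$ distorting the metric only by constants depending on $\partial\Omega$ (\cref{lem.chgvarexp}, \cref{eq.distexp}). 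Because $\omega$ is geodesically convex with supporting half-space $H$ at $p$, the image $\exp_p^{-1}(\omega)\cap B(0,\rinj)$ lies (up to a universal constant shrinkage of $\rinj$) on one side of the hyperplane $\partial H$ through $0$; equivalently, $\partial\Omega\setminus\overline\omega$ contains, near $p$, a full half-ball worth of surface measure in the complementary half-space $-H$. Concretely, there is a constant $c_0>0$ (depending only on $\partial\Omega$) so that
$$\{\,\exp_p(v): v\in -H,\ |v|<\rinj/2\,\}\subset \partial\Omega\setminus\overline\omega.$$
Then, using \cref{lem.eqdist} to pass between Euclidean and geodesic distance and \cref{lem.chgvarexp} to pass to the tangent plane,
$$\rho_\omega(x)\ge c\int_{\{v\in -H,\,|v|<\rinj/2\}}\frac{\d v}{d^{\partial\Omega}(x,\exp_p(v))^d}.$$
Since $d^{\partial\Omega}(x,p)=r$ and $d^{\partial\Omega}(p,\exp_p(v))=|v|$ by \cref{eq.distexp}, the triangle inequality gives $d^{\partial\Omega}(x,\exp_p(v))\le r+|v|$, so the integrand is at least $(r+|v|)^{-d}$. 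Switching to polar coordinates in $-H\subset T_p\partial\Omega\cong\R^{d-1}$ (which contributes a factor comparable to the $(d-2)$-dimensional measure of a half-sphere, a constant),
$$\rho_\omega(x)\ge c\int_0^{\rinj/2}\frac{t^{d-2}}{(r+t)^d}\,\d t\ge c\int_0^{r}\frac{t^{d-2}}{(2r)^d}\,\d t= \frac{c}{2^d\,d}\,\frac{r^{d-1}}{r^d}= \frac{c'}{r}= \frac{c'}{\dist^{\partial\Omega}(x,\partial\omega)},$$
which is precisely the desired lower bound. Combining the two cases yields $\rho_\omega(x)^{-1}\le C\,\dist^{\partial\Omega}(x,\partial\omega)$ for all $x\in\omega$, and integrating over $\omega$ completes the proof of the upper bound; together with \cref{lem.lowerDbound} this establishes the stated two-sided equivalence.

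\textbf{Main obstacle.} The delicate point is making rigorous the claim that geodesic convexity of $\omega$ forces a definite proportion of $\partial\Omega\setminus\overline\omega$ to occupy the complementary half-space $-H$ near the nearest boundary point $p$ — i.e. that the supporting-half-space description of \cref{th.cheeger} persists, with uniform constants, after transport through $\exp_p$ at a controlled scale comparable to $\rinj$ (and not merely infinitesimally at $p$). This requires combining the curvature bounds on $\partial\Omega$ (so that $\exp_p$ distorts distances and the splitting $\partial H$ only by constants on $B(0,\rinj)$) with the fact that $p$ is a \emph{nearest} point of $\partial\omega$ to $x$, which via \cref{th.cheeger} pins down $C_p\setminus\{0\}$ to be exactly the half-space orthogonal to $\gamma'(1)$ rather than a smaller cone; geodesic convexity then propagates this half-space containment to a fixed-radius neighborhood. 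Everything else reduces to the elementary polar-coordinate integral estimate carried out above and to bookkeeping with the constants from \cref{lem.eqdist} and \cref{lem.chgvarexp}.
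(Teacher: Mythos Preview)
Your approach is essentially the paper's: pick a nearest boundary point $p$, use \cref{th.cheeger} to get the supporting half-space $H$ at $p$, integrate the kernel over the complementary half-ball via $\exp_p$, bound $d^{\partial\Omega}(x,\exp_p(v))\le r+|v|$, and reduce to a one-variable integral. Two remarks, one on a genuine gap and one on your ``main obstacle''.

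\textbf{Case 1 does not work as written.} You claim $\rho_\omega(x)\ge c'>0$ uniformly because ``$\partial\Omega\setminus\overline\omega$ is nonempty and contains a set of positive surface measure''. But nothing in the hypotheses bounds $|\partial\Omega\setminus\overline\omega|$ from below independently of $\omega$; a geodesically convex $\omega$ could cover all of $\partial\Omega$ except a tiny set, and then $\rho_\omega(x)$ could be as small as you like. The fix is simply not to split: the half-space argument of Case~2 works for all $x\in\omega$ regardless of the size of $r=\dist^{\partial\Omega}(x,\partial\omega)$. The paper does exactly this, introducing the maximal geodesic length $\ell$ on $\partial\Omega$, noting $r\le\ell$, and after the change of variables $v=r z$ obtaining a fixed domain of integration $\{z\in -H:|z|<\rinj/\ell\}$ with integrand $(1+|z|)^{-d}$, yielding a constant depending only on $\partial\Omega$. (Your own integral bound $\int_0^{\rinj/2}\frac{t^{d-2}}{(r+t)^d}\,\d t\ge\int_0^r\frac{t^{d-2}}{(2r)^d}\,\d t$ also tacitly assumes $r\le\rinj/2$; the paper's rescaling avoids this.)

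\textbf{Your ``obstacle'' is not one.} You worry that the half-space description of \cref{th.cheeger} is only infinitesimal and must be ``propagated'' to scale $\rinj$ via curvature bounds. But look at the paper's definition of the tangent cone: $v\in C_p$ means $\exp_p(t\,v/|v|)\in\omega$ for \emph{some} $0<t<\rinj(p)$. So if $y=\exp_p(v)\in\omega$ with $0<|v|<\rinj$, then by definition $v\in C_p$, and \cref{th.cheeger} gives $v\in H$. Contrapositively, $v\notin H$ with $|v|<\rinj$ forces $\exp_p(v)\notin\omega$ (possibly $\in\partial\omega$, a null set). The containment $\{\exp_p(v):v\in -H,\ |v|<\rinj\}\subset\partial\Omega\setminus\omega$ is therefore immediate from the definitions; no curvature control or propagation argument is needed.
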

\begin{proof}
The lower bound was already established in \cref{lem.lowerDbound}, so it only remains to prove the upper bound.
Let $x \in \omega$ be given, and let $p\in \partial \omega$ be a point minimizing the geodesic distance  from $x$ to $\partial \omega$:
$$ \delta := \dist^{\partial \Omega}(x,\partial \omega) = d^{\partial \Omega}(x,p)~.$$
Let ${\mathcal H}$ denote the set
$$
{\mathcal H} := \left\{ \exp_p\left(t\frac{v}{\vert v\vert}\right)~:~ v\in H \hbox{ and } 0<t<\rinj ~ \right\}~,
$$
where $H$ is the supporting half-space to $\omega$ at $p$ characterized by \cref{eq.supporths}.
Due to \cref{th.cheeger}, we also obtain the following estimate
\begin{equation}\label{eq.majhs}
 \rho_\omega(x) = \int_{\partial \Omega \setminus \omega}{\frac{\d s(y)}{|x-y|^d}} \geq \int_{B^{\partial \Omega}(p,\rinj) \setminus {\mathcal H}} {\frac{\d s(y)}{|x-y|^d}}~.
 \end{equation}
Since the geodesic distance $d^{\partial \Omega}(x,y)$ between $x$ and any point $y \in \partial \Omega$ is always larger than the corresponding Euclidean distance $|x-y|$, we conclude that
$$ \rho_\omega(x) \geq  \int_{B^{\partial \Omega}(p,\rinj) \setminus {\mathcal H}} {\frac{\d s(y)}{d^{\partial \Omega}(x,y)^d}}~.$$
Using the change of variables of \cref{lem.chgvarexp} based on the exponential mapping $B(0,\rinj) \subset T_p\partial \Omega \to B^{\partial \Omega}(p,\rinj) \subset \partial \Omega$, we then get 
$$ \rho_\omega(x) \geq C \int_{\left\{y \in B(0,\rinj), \: y_1 > 0 \right\}}{\frac{\d y}{d^{\partial\Omega}(x,\exp_p(y))^d}}~, $$
where $y = (y_1,\ldots,y_{d-1})$ are the coordinates of the integration variable $y$ in an orthonormal frame of $T_p\partial\Omega$, with the first coordinate vector being the outer normal to $H$. The triangle inequality for the geodesic distance, and a change of variables yields
$$   \begin{array}{>{\displaystyle}cc>{\displaystyle}l}
\rho_\omega(x) &\geq& C \int_{\left\{y \in B(0,\rinj), \:y_1 > 0 \right\}}{\frac{\d y}{(\delta + d^{\partial \Omega}(p,\exp_p(y))^d}}\\[1em]
&=& \frac{C}{\delta^d} \int_{\left\{y \in B(0,\rinj), \:y_1 > 0 \right\}}{\frac{\d y}{\left(1 + \frac{1}{\delta}d^{\partial \Omega}(p,\exp_p(y)\right)^d}}\\[1em]
&=& \frac{C}{\delta} \int_{\left\{ z \in B(0,\frac{\rinj}{\delta}),\: z_1 > 0 \right\}}{\frac{\d z}{\left(1 + \frac{1}{\delta}d^{\partial \Omega}(p,\exp_p(\delta z)\right)^d}}~.
\end{array} $$
Let $\ell >0$ be the maximum length of a geodesic segment on $\partial \Omega$; obviously $\delta \leq \ell$, and so
\begin{equation} \label{eq.nexttolast}  \begin{array}{>{\displaystyle}cc>{\displaystyle}l}
\rho_\omega(x) &\geq& \frac{C}{\delta} \int_{\left\{ z \in B(0,\frac{\rinj}{\ell}), \: z_1 > 0 \right\}}{\frac{\d z}{\left(1 + \frac{1}{\delta}d^{\partial \Omega}(p,\exp_p(\delta z)\right)^d}}\\[1em]
&=&  \frac{C}{\delta} \int_{\left\{ z \in B(0,\frac{\rinj}{\ell}),\: z_1 > 0 \right\}}{\frac{\d z}{\left(1 + |z|\right)^d}}~.
\end{array}
\end{equation}
In the last line we have used that, according to \cref{eq.distexp}
$$ d^{\partial \Omega}(p,\exp_p(\delta z)) = \delta d^{\partial \Omega}(p,\exp_p(z)) = \delta |z|, \text{ as long as } \delta z \in B(0,\rinj)~,$$ 
together with the fact that $|z|< \frac{\rinj}{\ell}$ implies $\delta |z| < \frac{\delta}{\ell} \rinj \le \rinj$.
The estimate \cref{eq.nexttolast} immediately shows that there exists a constant $c >0$ which depends only on the properties of $\partial \Omega$ (and not on the set $\omega$) such that
$$\forall x \in \omega, \:\:  \rho_\omega(x) \geq \frac{c}{d^{\partial \Omega}(x,\partial \omega)}~.$$
Finally, this gives
$$ D(\omega) = \int_{\omega}{\frac{1}{\rho_\omega(x)} \d s(x)} \leq C \int_{\omega}{d^{\partial \Omega}(x,\partial \omega) \: \d s(x)}~,$$
which is the desired upper bound.
\end{proof}

\begin{remark}\label{rem.eDe}
Combining \cref{lem.explupbd} with \cref{lem.ineqcvx}, we immediately obtain that the ``capacity'' $e(\D_\e)$ of the planar disk $\D_\e$ with center $0$ and radius $\e$ defined in \cref{eq.defDecapa} satisfies the estimate
$$ e(\D_\e) \leq C_2 \e^2 \text{ if } d =2, \text{ and } e(\D_\e) \leq C_3 \e^3 \text{ if } d=3~,$$
for some universal constants $C_2$ and $C_3$.
\end{remark}

\section{The Peetre lemma}\label{app.peetre}

\noindent For the convenience of the reader, we recall Peetre's lemma, which provides a convenient sufficient condition for an operator to be Fredholm; 
see for instance \cite{peetre1961another}, \cite{lions1968problemes} (Chap. 2, \S 5.2), and also \cite{tartar1987lemme} for the precise version below, an interesting proof and useful application examples.

\begin{lemma}\label{lem.peetre}
Let $(E,|| \cdot ||_E)$ be a Banach space, $(F,|| \cdot ||_F)$ and $(G,|| \cdot ||_G)$ be normed vector spaces. 
Let $A: E \to F$ and $B: E \to G$ be bounded operators satisfying the following conditions:
\begin{enumerate}[(i)]
\item There exists a constant $C >0$ such that, 
$$\forall u \in E, \:\:  || u ||_E \leq C \Big(|| Au ||_F + || Bu ||_G \Big).$$
\item The operator $B$ is compact. 
\end{enumerate}
Then, $A$ has closed range in $F$ and finite-dimensional kernel in $E$. 
\end{lemma}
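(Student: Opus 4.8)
\textbf{Proof plan for the Peetre Lemma (\cref{lem.peetre}).}

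The plan is to prove the two conclusions essentially independently, both by reduction to the standard fact that in a normed space the unit sphere is compact if and only if the space is finite dimensional, together with the usual ``quotient/closed range'' arguments. First I would address the kernel. Restricting hypothesis (i) to $u \in \Ker(A)$, I get $\|u\|_E \le C\|Bu\|_G$ for all $u \in \Ker(A)$. This says that the restriction $B|_{\Ker(A)}$ is bounded below; since $B$ is compact, its restriction to the closed subspace $\Ker(A)$ is still compact, and a compact operator that is bounded below on a Banach space forces that space to be finite dimensional. Concretely: take a bounded sequence $(u_n)$ in $\Ker(A)$; by compactness of $B$, $(Bu_n)$ has a convergent subsequence, hence is Cauchy, hence by the lower bound $(u_n)$ is Cauchy, hence convergent (here I use that $\Ker(A)$ is closed in the Banach space $E$, being the preimage of $\{0\}$ under the continuous map $A$). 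So the closed unit ball of $\Ker(A)$ is sequentially compact, which forces $\dim \Ker(A) < \infty$.

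Next I would prove that $\Ran(A)$ is closed. The standard device is to quotient out the kernel: let $\pi : E \to E/\Ker(A)$ be the canonical projection, and let $\widetilde A : E/\Ker(A) \to F$ be the induced injective bounded operator, so $\Ran(\widetilde A) = \Ran(A)$. It suffices to show $\widetilde A$ is bounded below, i.e. there is $c>0$ with $\|\widetilde A \dot u\|_F \ge c \|\dot u\|_{E/\Ker(A)}$ for all $\dot u$; then $\Ran(\widetilde A)$ is complete (as the continuous image of a complete space under a map with bounded inverse on its range — here one must be a little careful since $E/\Ker(A)$ is a normed space but may not be complete, as $E$ is only Banach and the quotient of a Banach space by a closed subspace \emph{is} Banach, so this is fine) and hence closed in $F$. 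Suppose, for contradiction, that no such $c$ exists: choose $u_n \in E$ with $\|\pi(u_n)\|_{E/\Ker(A)} = 1$ (equivalently $\dist(u_n,\Ker(A))$ close to $1$, say between $1/2$ and $1$ after normalizing) and $\|A u_n\|_F \to 0$. The sequence $(u_n)$ is bounded in $E$ (we may pick representatives with $\|u_n\|_E \le 2$, say). By compactness of $B$, after passing to a subsequence $B u_n \to g$ in $G$. Then hypothesis (i) applied to the differences $u_n - u_m$ gives
$$
\|u_n - u_m\|_E \le C\big(\|A u_n - A u_m\|_F + \|B u_n - B u_m\|_G\big) \to 0,
$$
so $(u_n)$ is Cauchy in $E$, hence $u_n \to u$ for some $u \in E$. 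By continuity $A u = \lim A u_n = 0$, so $u \in \Ker(A)$, whence $\pi(u) = 0$ and $\|\pi(u_n)\|_{E/\Ker(A)} = \|\pi(u_n) - \pi(u)\|_{E/\Ker(A)} \le \|u_n - u\|_E \to 0$, contradicting $\|\pi(u_n)\|_{E/\Ker(A)} \ge 1/2$. This contradiction establishes the lower bound for $\widetilde A$, and closedness of $\Ran(A) = \Ran(\widetilde A)$ follows.

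The only genuinely delicate point — and the step I'd flag as the main obstacle — is making sure the quotient argument is carried out correctly when $E$ is merely Banach and $F, G$ are only normed: one should note that $\Ker(A)$ is closed (so $E/\Ker(A)$ is a genuine Banach space with the quotient norm), that a bounded-below bounded operator from a Banach space has closed range, and that everything else needs only the normed structure of $F$ and $G$ and the compactness of $B$. An alternative, slightly slicker packaging avoids the quotient entirely: show directly that $A$ has closed range by taking $f_n = A u_n \to f$ in $F$, replacing each $u_n$ by $u_n - P u_n$ where $P$ is a (continuous, finite-rank) projection onto $\Ker(A)$ — available precisely because $\Ker(A)$ was just shown to be finite dimensional, hence complemented — to arrange $\dist(u_n, \Ker(A)) = \|u_n\|_E$; if $\|u_n\|_E$ is bounded, the same Cauchy argument as above gives $u_n \to u$ with $A u = f$, so $f \in \Ran(A)$; and if $\|u_n\|_E \to \infty$ along a subsequence, normalizing $v_n = u_n/\|u_n\|_E$ gives $A v_n \to 0$, $\dist(v_n,\Ker(A)) = 1$, and the Cauchy argument produces $v_n \to v \in \Ker(A)$, contradicting $\dist(v_n,\Ker(A)) = 1$. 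Either packaging works; I would write up the second one to keep the exposition self-contained, citing \cite{peetre1961another,tartar1987lemme} for context.
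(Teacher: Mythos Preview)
Your proof is correct and follows the classical route (finite-dimensional kernel via compactness of the unit ball, then closed range via a Cauchy-sequence contradiction argument, using either the quotient by $\Ker(A)$ or a complementary projection onto the finite-dimensional kernel). Note, however, that the paper does not actually prove this lemma: it merely recalls the statement and refers to \cite{peetre1961another,lions1968problemes,tartar1987lemme} for a proof, so there is no ``paper's own proof'' to compare against --- your argument is essentially the one found in those references.
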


\section{Equilibrium distributions}\label{app}

\noindent In this appendix, we collect some useful results from the literature about the \textit{equilibrium distributions} associated with certain integral operators.

\begin{proposition}\label{prop.eqdistrib}
Let $\D_1 \subset \R^d$ be defined by $\D_1 := \left\{ x = (x_1,\ldots,x_{d-1}, 0) \in \R^d, \:\: |x| < 1 \right\}$. Then, 
\noindent \begin{enumerate}[(i)]
\item If $d=2$, the function $\phi \in \widetilde{H}^{-1/2}({\D}_1)$ defined by
$$ \forall (x,0) \in {\D}_1, \:\: \phi(x) = \frac{2}{\log 2\sqrt{1-x^2}}$$
satisfies 
$$ -\frac{1}{2\pi} \int_{{\D}_1}{\log | x- y| \phi(y) \: \d y} = 1 \text{ for } x \in \D_1, \text{ and } \int_{{\D}_1}{\phi(x) \: \d x} = \frac{2\pi}{\log 2}~.$$
\item If $d=2$, the function $\phi \in \widetilde{H}^{1/2}( \D)$ defined by
$$ \forall (x,0) \in{\D}_1, \:\: \phi(x) = -2 \sqrt{1-x^2}$$
satisfies
$$ \frac{1}{2\pi} \: \underset{\eta\downarrow 0}{ \text{\rm f.p.}} \int_{{\D}_1 \setminus (x-\eta,x+\eta)} {\frac{1}{ | x- y|^2} \phi(y) \: \d y} = 1 \text{ for } x \in \D_1, \text{ and } \int_{{\D}_1}{\phi(x) \: \d x} = -\pi~.$$
\item If $d=3$, the function $\phi \in \widetilde{H}^{-1/2}({\D}_1)$ defined by
\begin{equation}\label{eq.eqdist3ddir}
 \forall (x,0) \in{\D}_1, \:\: \phi(x) = \frac{4}{\pi\sqrt{1-|x|^2}}
 \end{equation}
satisfies 
\begin{equation}\label{eq.eqdist3ddirprops}
 \frac{1}{4\pi} \int_{{\D}_1}{\frac{1}{| x- y|} \phi(y) \: \d y} = 1 \text{ for } x \in \D_1, \text{ and } \int_{{\D}_1}{\phi(x) \: \d s(x)} = 8~.
 \end{equation}
\item If $d=3$, the function $\phi \in \widetilde{H}^{1/2}({\D}_1)$ defined by
$$ \forall x \in \D_1, \:\: \phi(x) = -\frac{1}{\pi} \sqrt{1-|x|^2}$$
satisfies 
$$ \frac{1}{4\pi} \: \underset{\eta \to 0}{\text{\rm f.p.}} \int_{{\D}_1 \setminus B(x,\eta)} {\frac{1}{ | x- y|^3} \phi(y) \: \d y} = 1 \text{ for } x \in \D_1, \text{ and } \int_{\D_1}{\phi(x) \: \d s(x)} = -\frac{2}{3}~.$$
\end{enumerate}
\end{proposition}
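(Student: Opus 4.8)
\textbf{Proof plan for Proposition~\ref{prop.eqdistrib}.}

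The plan is to verify each of the four identities by explicit computation, treating the two-dimensional cases ($d=2$) and the three-dimensional cases ($d=3$) separately, but exploiting the parallel structure. The first key observation is that all four integral operators are rotationally (resp. reflection-) invariant convolution-type operators acting on the disk $\D_1$, so it is natural to guess a radially symmetric equilibrium density $\phi$ of the form $\phi(x) = c\,(1-|x|^2)^{\pm 1/2}$ with an unknown constant $c$, and then pin down $c$ by evaluating the operator at a single convenient point (say $x=0$, or in 2d by using the known classical value). The second key observation is that items (i) and (iii) are essentially the classical electrostatic capacity computations for a charged conducting disk/segment, for which the equilibrium density $1/\sqrt{1-|x|^2}$ is textbook material (see e.g. Chapter~2 in \cite{landkof1972foundations}), while items (ii) and (iv) are their ``hypersingular'' counterparts, obtained by a differentiation (or integration-by-parts) trick from the former, as was already used in the proof of \cref{lem.approxd2N}.

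First I would treat (i) and (iii). For (i), I would recall the classical identity $-\frac{1}{2\pi}\int_{-1}^{1}\log|x-y|\,\frac{\d y}{\sqrt{1-y^2}} = \log 2 /2$ for $x \in (-1,1)$ — this can be proved via the substitution $y=\cos\theta$ and the Fourier series $\log|\cos\theta-\cos\varphi| = -\log 2 - 2\sum_{k\ge1}\frac{\cos k\theta\cos k\varphi}{k}$, integrated against $1$ in $\varphi$. Rescaling by $c = 2/\log 2$ gives the value $1$, and $\int_{-1}^1 c\,(1-x^2)^{-1/2}\,\d x = c\pi = 2\pi/\log 2$. For (iii), I would use the known fact that the harmonic capacity potential of the unit disk in $\R^3$ has surface charge density proportional to $(1-|x|^2)^{-1/2}$; evaluating $\frac{1}{4\pi}\int_{\D_1}\frac{1}{|y|}(1-|y|^2)^{-1/2}\,\d s(y)$ at $x=0$ in polar coordinates gives $\frac{1}{4\pi}\cdot 2\pi\int_0^1 \frac{r}{r\sqrt{1-r^2}}\,\d r = \frac12\cdot[\arcsin r]_0^1 = \pi/4$, hence the normalizing constant is $4/\pi$; and $\int_{\D_1}\frac{4}{\pi}(1-|x|^2)^{-1/2}\,\d s(x) = \frac{4}{\pi}\cdot 2\pi\int_0^1\frac{r}{\sqrt{1-r^2}}\,\d r = 8\int_0^1\frac{r\,\d r}{\sqrt{1-r^2}} = 8$. (The subtle point — that $\frac{1}{4\pi}\int\frac{\phi(y)}{|x-y|}\d s(y)$ is \emph{constant} in $x$ on $\D_1$, not merely equal to the stated value at the origin — follows from the classical oblate-spheroidal-coordinate computation, or equivalently from the fact that $S_1$ maps this density to the trace of a function harmonic off $\D_1$ and constant on $\D_1$; I would cite \cite{landkof1972foundations} for this.)

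Next I would derive (ii) and (iv) from the differentiation trick. Using $\frac{1}{|x-y|^2} = -\frac{\partial}{\partial y_1}\!\big(\frac{y_1-x_1}{|x-y|^2}\big)$ for $x,y\in\D_1\subset\R^2$ (valid since $|x-y|=|x_1-y_1|$ there), an integration by parts turns the finite-part hypersingular integral of $\phi(y)=-2\sqrt{1-y^2}$ into a Cauchy principal value integral of $\phi'(y_1) = 2 y_1/\sqrt{1-y_1^2}$ against the Cauchy kernel, which is a rescaling of the classical Hilbert-transform identity; matching constants gives the value $1$, and $\int_{-1}^1 (-2\sqrt{1-x^2})\,\d x = -2\cdot\frac{\pi}{2} = -\pi$. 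The analogous computation in $\R^3$ for (iv), starting from $\phi(x) = -\frac{1}{\pi}\sqrt{1-|x|^2}$, uses the relation between the hypersingular kernel $|x-y|^{-3}$ and the single-layer kernel $|x-y|^{-1}$ via a surface Laplacian (again as in \cite{hsiao2008boundary}, \S1.2), reducing it to (iii); the normalization check at $x=0$ and the mass computation $\int_{\D_1}(-\frac1\pi)\sqrt{1-|x|^2}\,\d s(x) = -\frac1\pi\cdot 2\pi\int_0^1 r\sqrt{1-r^2}\,\d r = -2\cdot\frac13 = -\frac23$ then complete the proof.

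The main obstacle I anticipate is not the normalizing constants — those are one-line integrals — but rather rigorously justifying that each operator produces a function that is genuinely \emph{constant} on all of $\D_1$ (and that the finite-part/principal-value interpretations are the correct ones in the $\widetilde H^{\pm 1/2}$ duality), rather than merely agreeing with the claimed constant at the center. For this I would either invoke the explicit spheroidal-coordinate solution of the corresponding mixed boundary value problem (equivalently, the representation $S_1\phi = ({\mathcal S}_D\phi)|_{\D_1}$ and $R_1\phi = ({\mathcal D}_D\phi/\partial n)|_{\D_1}$ already exploited in \cref{lem.S1} and \cref{lem.hypersinginvert}, together with uniqueness of the equilibrium potential), or cite the classical references \cite{landkof1972foundations,mclean2000strongly}. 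Since the invertibility of $S_1$ and $R_1$ has already been established, constancy pins down $\phi$ up to the scalar we compute, so no genuinely new functional-analytic input is needed — only bookkeeping of known special-function identities.
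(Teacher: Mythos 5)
The paper's own ``proof'' of this proposition is essentially a pointer to the literature (conformal mapping for (i)--(ii), Abel integral equation/oblate spheroidal coordinates for (iii), Krenk--Martin--Ramaciotti for (iv)), so your explicit, self-contained computational plan is a genuinely different route. For (i) and (ii) the plan is sound and in fact stronger than a mere normalization-at-the-center check: the Chebyshev/Fourier-cosine expansion of $\log|\cos\theta-\cos\varphi|$ and, for (ii), the integration-by-parts reduction $\mathrm{f.p.}\int\phi(y)|x-y|^{-2}\,\d y = -\,\mathrm{p.v.}\int\phi'(y)/(x-y)\,\d y$ together with the vanishing of the finite Hilbert transform of $(1-y^2)^{-1/2}$ produce the constant value on \emph{all} of $\D_1$, not just at the origin. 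For (iii) you correctly note that constancy is not delivered by the $x=0$ evaluation and you import it from the classical electrified-disc solution; that is appropriate, and your $x=0$ and mass integrals are correct.

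For (iv), however, the plan has two genuine gaps. First, the ``reduction via a surface Laplacian to (iii)'' is not available in the naive form you suggest: $\Delta_\Gamma\phi(y)\sim(1-|y|)^{-3/2}$ near $\partial\D_1$ is non-integrable, so one cannot simply push $\Delta_\Gamma$ onto $\phi$ and invoke (iii); the correct Maue/N\'ed\'elec identity uses tangential curls, not the scalar Laplacian, and its application here is not ``only bookkeeping.'' Second, and more seriously, the ``normalization check at $x=0$'' that you invoke but do not carry out does \emph{not} give the stated value. With the Hadamard finite part as defined in the paper (i.e.\ the limit $\eta\to0$ of $\int_{\D_1\setminus B(0,\eta)}$ after removing the $\mathcal{O}(1/\eta)$ singularity, which coincides with $\lim_{x_3\to 0}\partial_{x_3}\mathcal{D}_D\phi$), an elementary computation gives
$$\frac{1}{4\pi}\,\mathrm{f.p.}\!\int_{\D_1}\frac{-\frac{1}{\pi}\sqrt{1-|y|^2}}{|y|^3}\,\d s(y) = \frac{1}{4\pi}\cdot\left(-\frac{1}{\pi}\right)\cdot 2\pi\cdot\left(-\frac{\pi}{2}\right)=\frac14\,,$$
not $1$. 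An independent oblate-spheroidal calculation of the dipole (double-layer) density solving the Neumann screen problem $\partial_n u=1$ on $\D_1$ produces the equilibrium density $-\frac{4}{\pi}\sqrt{1-|x|^2}$, of mass $-\frac{8}{3}$, agreeing with the above and differing from the paper's $-\frac{1}{\pi}\sqrt{1-|x|^2}$ by a factor $4$. You should confront and resolve this discrepancy (either by identifying a different finite-part convention behind the paper's statement, or by concluding that the constants in item (iv) need correction and propagating the change through \cref{th.asymneu}); as written, the proposal for (iv) rests on a check that would not close.
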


The two-dimensional results (i) and (ii) can be proved by means of conformal mapping techniques; see \cite{mclean2000strongly}, Exercises 8.15 and 8.16. 
The item (iii) is a fairly well-known result about the capacitance of a flat disk in 3d, and we refer to \cite{jackson2007classical} Exercise 3.3, or to \cite{copson1947problem} 
for an elegant proof using the connection with Abel's integral equation. 
Finally, for the item (iv), we refer to the articles \cite{krenk1979circular,martin1986orthogonal}; see also \cite{ramaciotti2016theoretical} 
where these results are used to build a series expansions for the hypersingular operator, for the purpose of operator preconditioning. 

\begin{remark}
Let us comment about the physical significance of the formulas in \cref{prop.eqdistrib}. 
The points $(i)$ and $(iii)$ are concerned with the Newtonian potential. In particular, 
the equilibrium distribution $\phi$ is the charge distribution on $\D_1$ which ensures that the induced electrostatic potential is constant (equals $1$) on $\D_1$. 
The total charge $\int_{\D_1}{\varphi \: \d s}$ associated with this distribution corresponds to the Newtonian version of the capacity of $\D_1$. 

The points $(ii)$ and $(iv)$ are perhaps a little more unfamiliar. The function $\phi$ is the dipole distribution on $\D_1$ which ensures that the induced electric current through $\D_1$ is constant (equals $1$). The quantity $\int_{\D_1}{\phi \: \d s}$ is the associated total dipole charge.
\end{remark}

\section{Some useful results about integral operators with homogeneous kernels}\label{sec.homogkernel}

\noindent In this section, we collect some useful properties of integral operators
whose kernels satisfy specific homogeneity properties. This material is taken from Chap. 4 in \cite{nedelec2001acoustic}. 

\begin{definition}\label{def.homkernel}
Let $m$ be a non negative integer; a homogeneous kernel of class $-m$ is 
a function $K(x,z) \in {\mathcal C}^\infty(\R^d \times \R^d\setminus \left\{0\right\})$ which satisfies the following properties: 
\begin{itemize}
\item For all multi-indices $\alpha, \beta \in \N^d$,
$$\sup \limits_{x\in \R^d} \sup\limits_{|z|=1} \left\lvert \frac{\partial^\alpha}{\partial x^\alpha} \frac{\partial^\beta}{\partial z^\beta} K(x,z) \right\lvert < \infty~.$$ 
\item For all $x \in \R^d$ and all index $\beta$ with $|\beta|=m$, the function $z \mapsto \frac{\partial^\beta}{\partial z^\beta} K(x,z)$ 
is odd and homogeneous of degree $-(d-1)$, {\it i.e.},
\begin{equation*}
\forall x \in \R^d, \: z\in \R^d\setminus \left\{ 0\right\}, \:\: t >0, \quad \frac{\partial^\beta}{\partial z^\beta} K(x,-z) = -\frac{\partial^\beta}{\partial z^\beta} K(x,z), \text{ and } \\
\frac{\partial^\beta}{\partial z^\beta} K(x,tz) = t^{-(d-1)}\frac{\partial^\beta}{\partial z^\beta} K(x,z)~. 
\end{equation*}
\end{itemize}
\end{definition}

To each homogeneous kernel, it is possible to associate an integral operator $T_K$, acting on functions $\varphi : \partial D \to \R$, via the formula 
\begin{equation}\label{eq.defTK}
 T_K \varphi(x) = \int_{\partial D}{K(x,x-y) \varphi(y) \: \d y}. 
 \end{equation}
The following result specifies the mapping properties of this integral operator.

\begin{theorem}\label{th.TK}
Let $D \subset \R^d$ be a smooth bounded domain, and let $K(x,z)$ be a homogeneous kernel of class $-m$, 
with associated operator $T_K$ defined in \cref{eq.defTK}. Then for each $s \in \R$, the mapping $T_K$ defines a bounded operator
$$ T_K : H^s(\partial D) \to H^{s+m}(\partial D)~, $$
that is, there exists a constant $C_{s,D,K}$ such that 
$$ \forall \varphi \in H^s(\partial D), \:\: || T_K \varphi ||_{H^{s+m}(\partial D)} \leq C_{s,D,K}  || \varphi ||_{H^{s}(\partial D)} ~.$$
The constant  $C_{s,D,K}$, that is, the operator norm of $ T_K : H^s(\partial D) \to H^{s+m}(\partial D)$, 
depends only on $s$, $D$, and the kernel $K$. It can be estimated by 
\begin{equation}\label{eq.CsDK}
 C_{s,D,K} \leq C_{s,D} \sup\limits_{|\alpha| \leq k \atop |\beta| \leq k}\sup \limits_{x\in \partial D} \sup\limits_{|z|=1} \left\lvert \frac{\partial^\alpha}{\partial x^\alpha} \frac{\partial^\beta}{\partial z^\beta} K(x,z) \right\lvert,
 \end{equation}
where $k$ is a non negative integer which only depends on the space dimension $d$, and $C_{s,D}$ is a constant which depends only on $d$ and the domain $D$. 
\end{theorem}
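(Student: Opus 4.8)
The plan is to follow the localization-plus-symbolic-calculus strategy of \cite{nedelec2001acoustic}, Chapter 4. First I would fix a finite atlas $\{(U_j,\sigma_j)\}$ of the smooth compact hypersurface $\partial D$, with $\sigma_j : V_j \subset \R^{d-1} \to U_j$ smooth diffeomorphisms, together with a subordinate partition of unity $\{\chi_j\}$, and write $T_K = \sum_{j,l} \chi_j\, T_K\, \chi_l$. For the pairs with $\overline{U_j} \cap \overline{U_l} = \emptyset$ the kernel $(x,y) \mapsto K(x,x-y)$ is smooth on $U_j \times U_l$ (since $K \in {\mathcal C}^\infty$ away from $z=0$ and $|x-y|$ stays bounded away from $0$ there), so the corresponding piece maps any $H^s(\partial D)$ continuously into ${\mathcal C}^\infty(\partial D)$, with norm controlled — by homogeneity — by finitely many of the quantities appearing in \cref{eq.CsDK}. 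For a diagonal pair, pulling the operator back through $\sigma_j$ turns $\chi_j T_K \chi_j$ into an integral operator on $\R^{d-1}$ whose kernel, using $\sigma_j(u) - \sigma_j(u') = D\sigma_j(u')(u-u') + {\mathcal O}(|u-u'|^2)$ together with the homogeneity of order $-(d-1)$ of the top-order $z$-derivatives of $K$, splits as a leading term $\widetilde K_j(u, u-u')$ — a Calder\'on--Zygmund kernel on $\R^{d-1}$ that is odd and homogeneous of degree $-(d-1)$ in the tangential variable when $m=0$, and strictly less singular (a genuine smoothing kernel of order $m$) when $m \geq 1$ — plus remainders of strictly lower order, thanks to the extra ${\mathcal O}(|u-u'|)$ gain from the chart and the smooth $x$-dependence.

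Second, I would establish the $L^2$ (equivalently $H^0 \to H^m$) bound for the model operator. For $m=0$, freeze the base point: $K(x,w) = K(x_0,w) + (K(x,w) - K(x_0,w))$, where the second term gains a factor $|x-x_0|$ and is therefore weakly singular, while the first is a convolution in $w$ by an odd kernel homogeneous of degree $-(d-1)$ and smooth on the unit sphere, whose Fourier multiplier is, by the classical principal-value/Mikhlin computation, homogeneous of degree $0$ and bounded in sup norm by a fixed number of spherical derivatives of the kernel — hence bounded on $L^2(\R^{d-1})$. For $m \geq 1$, the top-order $z$-derivatives $\partial_z^\beta K$, $|\beta|=m$, are class-$0$ kernels in the above sense; integrating by parts expresses the composition of $T_K$ with $m$ tangential derivatives in terms of such class-$0$ operators, which yields $T_K : H^0(\partial D) \to H^m(\partial D)$.

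Third, I would pass to general $s \in \R$. For nonnegative integer $s$ this follows by induction using commutators: if $\partial^\tau$ is a first-order tangential differential operator, then $[\partial^\tau, T_K]$ is again an operator of the same class (the extra $x$-derivative falls on the smooth $x$-dependence and on the chart, leaving the homogeneity in $z$ unchanged), so $\partial^\tau T_K = T_K \partial^\tau + [\partial^\tau, T_K]$ maps $H^s \to H^{s+m-1}$ by the inductive hypothesis, giving $T_K : H^s \to H^{s+m}$. Negative $s$ is obtained by duality, the transpose of $T_K$ being again an operator of the class described in \cref{def.homkernel} (the relevant homogeneity and parity are preserved under $z \mapsto -z$ and under differentiation of the base-point variable), so that boundedness $H^{s} \to H^{s+m}$ is equivalent to boundedness of the transpose $H^{-s-m} \to H^{-s}$. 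The remaining real values of $s$ follow by interpolation between Sobolev spaces on $\partial D$. Finally, the operator-norm estimate \cref{eq.CsDK} is recovered by inspecting the constants at each step: the symbol bound, the freezing argument, the commutator iteration, duality and interpolation each cost only finitely many $x$- and $z$-derivatives of $K$ evaluated on $|z|=1$, the total number $k$ depending only on $d$, while the remaining constant $C_{s,D}$ collects the atlas, partition-of-unity and interpolation-exponent data and depends only on $s$ and $D$.

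The main obstacle is the genuinely analytic heart of the argument — the $L^2$-boundedness of the $x$-dependent singular integral in the case $m=0$, that is, a Calder\'on--Zygmund estimate on the manifold $\partial D$ — together with the bookkeeping needed to certify that the chart-change remainders are of strictly lower order, and that the resulting operator norm is controlled by the explicit seminorm of $K$ in \cref{eq.CsDK} rather than merely finite. The localization, commutator, duality and interpolation steps are routine; the complete details, including the precise value of $k$, can be found in \cite{nedelec2001acoustic}, Chapter 4.
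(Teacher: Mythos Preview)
Your outline is correct and follows exactly the route the paper takes: the paper does not give its own proof of this statement but simply cites Th.~4.3.1 in \cite{nedelec2001acoustic}, noting that the explicit bound \cref{eq.CsDK} follows by tracking the dependence on $K$ through that proof --- which is precisely the localization, Calder\'on--Zygmund, commutator, duality and interpolation scheme you have sketched.
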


\begin{remark}
The above statement is Th. 4.3.1 in \cite{nedelec2001acoustic}. 
In that reference, the continuity constant of the mappings $T_K : H^{s}(\partial D) \to H^{s+m}(\partial D)$ is not stated explicitly, 
but formula \cref{eq.CsDK} is obtained by tracking the dependence of this constant with respect to $K$ throughout the proof. 
\end{remark}

We finally state the following result about the potential operator induced by a homogeneous kernel of class $-m$; see \cite{costabel2012shape} and \cite{eskin1980boundary} (Lemma 21.7).

\begin{theorem}\label{th.TKpot}
Let $D \subset \R^d$ be a smooth bounded domain, and let $K(x,z)$ be a homogeneous kernel of class $-m$.
Then for each $s \in \R$, the associated potential operator: 
$$ T_K \varphi(x) = \int_{\partial D}{K(x,x-y) \varphi(y) \: \d y}, \quad x \notin \partial D, $$
is a bounded mapping from $H^s(\partial D)$ into $H^{s+m+\frac{1}{2}}(D)$ and $H^{s+m+\frac{1}{2}}_{\text{loc}}(\R^d \setminus \overline D)$.
For any compact subset $L \Subset \R^d \setminus D$, there exists a constant $C_{s,D,K,L}$ such that,
$$ \forall \varphi \in H^s(\partial D), \:\: || T_K \varphi ||_{H^{s+m+\frac{1}{2}}(D)} + || T_K \varphi ||_{H^{s+m+\frac{1}{2}}(L)} \leq C_{s,D,K,L}  || \varphi ||_{H^{s}(\partial D)} .$$
\end{theorem}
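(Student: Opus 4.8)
\textbf{Proof proposal for \cref{th.TKpot}.}

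The plan is to reduce the statement, via a partition of unity and local charts, to a model situation on a flat piece of boundary, where the potential operator becomes a convolution (in the tangential variables) that can be analyzed by the Fourier transform. First I would invoke a finite atlas $\{U_\ell, \kappa_\ell\}$ of $\partial D$ together with a subordinate smooth partition of unity $\{\theta_\ell\}$, and a second partition of unity in the source region, reducing the boundedness of $T_K : H^s(\partial D) \to H^{s+m+1/2}(D)$ to a finite sum of operators of the form $\varphi \mapsto \theta' \, T_K(\theta \varphi)$. For the ``diagonal'' pieces, where the cut-offs $\theta,\theta'$ have overlapping supports contained in a single chart $U_\ell$, I would flatten the boundary through $\kappa_\ell$ and a tubular-neighborhood coordinate $(x',x_d)$ with $\{x_d=0\}$ the boundary; in these coordinates the kernel, after absorbing the smooth Jacobian factors into the $x$-dependence, remains a homogeneous kernel of class $-m$ in the difference variable $z$, with the extra transversal coordinate $x_d$ now a parameter. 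For the ``off-diagonal'' pieces, where $\text{dist}(\operatorname{supp}\theta,\operatorname{supp}\theta')>0$, the kernel is smooth and compactly supported, so the resulting operator is smoothing of infinite order and the estimate is immediate; the same remark handles the bound on $\|T_K\varphi\|_{H^{s+m+1/2}(L)}$ for a compact $L\Subset\R^d\setminus D$, since then the kernel $K(x,x-y)$ is smooth jointly in $(x,y)$ over $L\times\partial D$ and bounded with all derivatives, making $T_K$ a bounded map into $C^\infty(L)\hookrightarrow H^{s+m+1/2}(L)$.

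The core is therefore the model estimate: if $k(x,x_d,z)$ is, for each $(x,x_d)$, a homogeneous kernel of class $-m$ in $z\in\R^{d-1}$ (in the sense of \cref{def.homkernel}, with $d-1$ playing the role of the ambient dimension on the boundary), with all the supremum bounds uniform in the parameter $x_d$ in a neighborhood of $0$, then
\[
 (\mathcal{T}\varphi)(x',x_d) = \int_{\R^{d-1}} k(x',x_d,x'-y')\,\varphi(y')\,\d y'
\]
maps $H^s(\R^{d-1})\to H^{s+m+1/2}(\R^{d-1}_+)$, locally. To see this I would freeze the $x'$-dependence (the variable-coefficient case then follows from \cref{th.TK}, whose continuity constant \cref{eq.CsDK} controls commutators and the error terms, exactly as in the proof of that theorem) and study the convolution operator $\varphi\mapsto k(x_d,\cdot)*\varphi$. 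Its symbol $\widehat{k}(x_d,\xi')$, obtained by Fourier transform in the tangential variables, is --- because $z\mapsto \partial_z^\beta k$ is homogeneous of degree $-(d-1)$ and odd for $|\beta|=m$ --- a classical symbol of order $-m$ in $\xi'$ that additionally decays like $\langle\xi'\rangle^{-1/2}$ for bounded $x_d$; more precisely, writing $k$ as the $x_d$-boundary trace of a function harmonic (or satisfying the relevant elliptic equation) in the half-space, one gains the extra half-derivative transversally, and the $\tfrac12$ in $H^{s+m+1/2}$ is precisely the trace-theorem gain. One then estimates $\int_{x_d>0}\|(\mathcal T\varphi)(\cdot,x_d)\|^2_{H^{s+m+1/2}(\R^{d-1})}\,\d x_d$ by Plancherel, bounding $\langle\xi'\rangle^{2(s+m+1/2)}|\widehat k(x_d,\xi')|^2$ integrated against $\d x_d$ by $C\langle\xi'\rangle^{2s}$, with the constant expressed through finitely many of the derivative-suprema of $k$, reproducing a bound of the shape \cref{eq.CsDK}. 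Finally I would reassemble: summing the finitely many chart contributions and using that the coordinate changes are bi-Lipschitz diffeomorphisms preserving the relevant Sobolev scales gives the global estimate with a constant $C_{s,D,K,L}$ depending only on $s$, $D$, the kernel seminorms of $K$, and $L$ through $\text{dist}(L,D)$.

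The main obstacle I anticipate is the bookkeeping around the transversal variable $x_d$: one must show cleanly that the potential kernel $K(x,x-y)$, restricted to $x$ ranging over a tubular coordinate patch and $y\in\partial D$, really does behave like the family of boundary convolution kernels of \cref{th.TK} with an $x_d$-parameter, that the half-derivative gain in $x_d$ is uniform, and that differentiating in $x_d$ (to land in $H^{s+m+1/2}$ rather than merely $L^2(x_d; H^{s+m}))$ costs only what the homogeneity budget allows. This is where I would lean hardest on the cited references --- Costabel's shape-derivative paper and Lemma~21.7 in Eskin --- to borrow the precise pseudodifferential-with-parameter machinery rather than reprove it; the rest is the standard localize--flatten--Fourier argument already used for \cref{th.TK}.
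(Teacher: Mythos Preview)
The paper does not give a proof of \cref{th.TKpot}; it is stated as a known result from the literature, with citations to Costabel's shape-derivative paper and Lemma~21.7 in Eskin, and no argument beyond those references is supplied. Your sketch --- localize via a partition of unity, flatten the boundary in a tubular neighborhood, analyze the resulting model operator on the half-space as a parameter-dependent pseudodifferential operator in the tangential variables, and recover the $\tfrac12$-gain from the transversal integration --- is precisely the standard route taken in those references, and you yourself note that you would ``lean hardest on the cited references'' for the parameter-dependent symbol estimates. So there is nothing to compare: the paper defers entirely to the literature, and your outline is a faithful summary of how that literature proceeds.
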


%
\bibliographystyle{siam}
\bibliography{/Users/charlesdapogny/Documents/genbib.bib}

\end{document}